\documentclass[12pt,oneside]{amsart}
\usepackage[
paper=letterpaper,
headsep=20pt,text={137mm,206mm},centering,includehead
]{geometry}

\usepackage[utf8]{inputenc}
\usepackage[T1]{fontenc}
\usepackage{hyperref}

\usepackage{manfnt}

\usepackage{lmodern}

\usepackage{fnpct} 

\usepackage{xfrac}
\usepackage{float,array,multirow,calc,booktabs,extpfeil}

\usepackage{parskip}

\usepackage{scalerel}
\usepackage{mathtools}
\usepackage{bm}
\usepackage{xcolor}
\usepackage{listings}
\usepackage{mathrsfs}
\usepackage{alltt}

\usepackage[scaled=0.92]{beramono} 

\definecolor{outblue}{RGB}{0,70,200}
\definecolor{newtext}{RGB}{0,140,0} 

\usepackage{amssymb,amsxtra,bbm}
\usepackage[shortlabels]{enumitem}
\setlist[enumerate,1]{label=\textup{(\arabic*)}}
\usepackage{cancel}

\usepackage{graphicx,color}
\usepackage{faktor}

\usepackage{comment}

\usepackage{tikz}
\usepackage{tikz-cd}
\usetikzlibrary{
  cd,
  calc,
  positioning,
  arrows,
  decorations.pathreplacing,
  decorations.markings
}

\usepackage{placeins}
\usepackage{stackrel}
\newcommand{\tikzmark}[1]{\begin{tikzpicture}[overlay,remember picture]\coordinate(#1);\end{tikzpicture}}

\usepackage[normalem]{ulem}
\usepackage{soul} 

\makeatletter
\def\part{\@startsection{part}{0}
  \z@{2\linespacing\@plus\linespacing}{.5\linespacing}
  {\Large\bfseries\centering}}
\makeatother

\definecolor{todo-background-color}{gray}{0.95}
\usepackage[
  backgroundcolor=todo-background-color,
  bordercolor=black,
  linecolor=black,
  textsize=footnotesize
]{todonotes}

{}

\newtheorem{theorem}{Theorem}[section]
\newtheorem*{Realization of $n$-Surface Theorem}{Theorem}

\newtheorem*{theorem*}{Theorem}
\newtheorem{proposition}[theorem]{Proposition}
\newtheorem{corollary}[theorem]{Corollary}

\newtheorem{lemma}[theorem]{Lemma}

\theoremstyle{definition}
\newtheorem{definition}[theorem]{Definition}
\newtheorem{question}{Question}
\newtheorem*{question*}{Question}
\newtheorem*{note*}{Note}
\newtheorem{comment*}{Comment}
\newtheorem*{lemma*}{Lemma}
\newtheorem*{remark*}{Remark}
\newtheorem{example}[theorem]{Example}
\newtheorem{remark}[theorem]{Remark}

\newtheoremstyle{theorem-giventitle}
        {}{}              
        {\itshape}                      
        {}                              
        {\bfseries}                     
        {.}                             
        { }                             
        {\thmnote{\bfseries#3}}
\theoremstyle{theorem-giventitle}
\newtheorem{theorem-named}{}
\newtheorem{question-named}{}
\newtheoremstyle{definition-giventitle}
        {}{}              
        {}                      
        {}                              
        {\bfseries}                     
        {.}                             
        {.7em}                             
        {\thmnote{\bfseries#3}}

\def\Z{\mathbb{Z}}
\def\Q{\mathbb{Q}}
\def\R{\mathbb{R}}

\def\qz{\mathbb{Z}_{(2)}/\mathbb{Z}}

\def\Im{\operatorname{Im}}

\makeatletter
\def\tpmod#1{{\@displayfalse\pmod{#1}}}
\makeatother

\def\lk{\mathrm{lk}}

\def\setminus{\smallsetminus}

\def\isomto{\mathrel{\smash{\xrightarrow{\smash{\lower.4ex\hbox{$\scriptstyle{\cong}$}}}}}}
\def\isomfrom{\mathrel{\smash{\xleftarrow{\smash{\lower.4ex\hbox{$\scriptstyle{\cong}$}}}}}}

\def\defeq{\stackrel{\mathrm{def}}{=}}

\makeatletter
\def\l@subsection{\@tocline{2}{0pt}{2.5pc}{5pc}{}}
\makeatother

\begin{document}

\title[Extending Quotients of Knot Groups]
{Extending Quotients of Knot Groups\\ Over Surfaces in $B^4$}

\author{Alexandra Kjuchukova}
\address{
Department of Mathematics\\
University of Notre Dame\\
Notre Dame, Indiana 46656, 
USA
}
\email{akjuchuk@nd.edu}

\author{Kent E. Orr}
\address{Department of Mathematics\\
  Indiana University\\
  Bloomington, Indiana 47405
  \\USA}
\email{korr@iu.edu}

\def\subjclassname{\textup{2010} Mathematics Subject Classification}
\expandafter\let\csname subjclassname@1991\endcsname=\subjclassname
\expandafter\let\csname subjclassname@2000\endcsname=\subjclassname
\subjclass{%
}

\begin{abstract}
Let $K\subseteq S^3$ be a knot with exterior $E_K$, and denote by $\rho\colon \pi_1(E_K)\twoheadrightarrow G$ a quotient of its group. We give a sharp obstruction to the existence of a connected, oriented, smooth surface $F\subseteq B^4$ with $\partial F = K$ over whose exterior $\rho$ extends surjectively. Equivalently, we determine whether the cover of $S^3$ branched over $K$ and induced by $\rho$ bounds a connected cover of $B^4$ branched along such a surface. When $G$ is a dihedral group, we show the obstruction can be computed by evaluating the Seifert form of $K$ on a single curve, a so-called characteristic knot associated to $\rho$. When the dihedral obstruction vanishes, we construct the surface $F$ explicitly.
\end{abstract}
\maketitle
\thispagestyle{empty}

\vspace{1cm}

\renewcommand{\contentsname}{}
{\small \tableofcontents
\setcounter{tocdepth}{2}}

\newpage

\part*{ Introduction}
\section{The central question}

In Theorem~\ref{theorem:G-surface} and Corollary~\ref{thm:main}, we address the following: {\it Does a given connected cover $f: M^3\to S^3$ branched over a knot $K$ bound a connected cover of $B^4$ branched over a connected, orientable, smooth or locally flat surface $F\subseteq B^4$ with $\partial(F)=K$?} 

To resolve this question, we reformulate it as follows:

\begin{question}\label{q:main-v2}
    Given a knot $K\subseteq S^3$ and a quotient $\rho: \pi_1(E_K)\to G$, does there exist a surface $F\subseteq B^4$ (connected; orientable; smooth or locally flat) with $\partial(F)=K$ and a homomorphism $\overline{\rho}:\pi_1(E_F)\twoheadrightarrow G$ such that the following diagram commutes?
    \begin{equation}
\label{eq:comm-triangle-general}
\begin{tikzcd} 
\pi_1(E_K) \arrow[r,"\rho", two heads]\ar[d, "i_\ast"] & G.\\
\pi_1(E_F)\arrow[ur,dashed, two heads, "\overline{\rho}" ']
\end{tikzcd}
\end{equation}
\end{question}

In Part~I of the paper we develop a general obstruction theory for the existence of such an extension, valid for homomorphisms onto any based group~$G$. Given a knot $K$ and a based surjection $\rho\colon\pi_1(E_K)\twoheadrightarrow G$, we construct a space $KG$, of well-defined homotopy type; and we associate to $\rho$ a homotopy class $\Theta_\rho(K)\in\pi_3(KG)$. We prove, in Theorem~\ref{theorem:G-surface}, that the vanishing of
$\Theta_\rho(K)\in\pi_3(KG)$ is equivalent to the existence of a $G$-surface in $B^4$ (Definition~\ref{def:G-surface}) with $\partial F = K$ over whose exterior $\rho$ extends.  Being a $G$-surface is a priori more restrictive than the condition in Question~\ref{q:main-v2}; however, for a large class of groups we call {\it taut} (Definition~\ref{tautgroup}), the two conditions coincide, so the vanishing of $\Theta_\rho(K)$ answers Question~\ref{q:main-v2} without modification.

Part~II of the paper specializes to the case of dihedral groups, for which we show that our obstruction can be related to classical invariants and evaluated easily. Applying the general theory to dihedral groups yields the following computable criterion, which confirms a conjecture of the first author.

\begin{theorem}\label{thm:intro-dihedral}
Let $K\subseteq S^3$ be a knot equipped with a surjection $\rho\colon\pi_1(E_K)\twoheadrightarrow D_n$. Let $S$ be a Seifert surface for $K$, $V$ a Seifert matrix determined by $S$, and $\beta\subseteq\mathring{S}$ a mod~$n$ characteristic knot $($Definition~\ref{def:chark}$)$ corresponding to $\rho$. Then $\rho$ extends surjectively over the exterior of an orientable, locally flat surface $F\subseteq B^4$ with $\partial F = K$ if and only if
\[
[\beta]^T\cdot(V+V^T)\cdot[\beta] \equiv 0 \pmod{n^2}.
\]
\end{theorem}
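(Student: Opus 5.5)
We will derive the statement from the general obstruction theory of Part~I, specialized to $G=D_n$ with $n$ odd. First we check that $D_n$ is \emph{taut} (Definition~\ref{tautgroup}). Then Theorem~\ref{theorem:G-surface} shows that the existence of $F$ as in Question~\ref{q:main-v2} is equivalent to the vanishing of $\Theta_\rho(K)\in\pi_3(KD_n)$. The rest of the plan computes this class. We aim to show that $\pi_3(KD_n)$, or at least the subgroup in which $\Theta_\rho(K)$ can lie, is detected by a homomorphism to $\Z/n$. We also aim to show that the image of $\Theta_\rho(K)$ is $\frac{1}{n}\,[\beta]^T(V+V^T)[\beta]$ read mod $n$. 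This quantity is well defined, and it is the value that makes the congruence mod $n^2$ the exact criterion.

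The computation goes through the two-fold branched cover. Since $\rho$ is dihedral, it factors through $\pi_1$ of the double branched cover $\Sigma_2(K)$, giving a character $\chi\colon H_1(\Sigma_2(K))\to\Z/n$. By the characteristic-knot construction (Definition~\ref{def:chark}), $\chi$ is dual under the linking form to the class of a lift of $\beta$. Concretely, $(V+V^T)[\beta]\equiv 0\pmod n$, and $V+V^T$ presents $H_1(\Sigma_2(K))$.

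The next step is to build a $4$-dimensional bordism. Push the interior of $S$ into $B^4$ and take the double branched cover $W$, which has intersection form $V+V^T$. The homomorphism $\chi$ extends over $W$ rationally but not integrally, and the obstruction to an integral extension is measured by $[\beta]^T(V+V^T)[\beta]/n \bmod n$. This is a Casson--Gordon-type self-linking of the $\Z/n$-class. We then identify $\Theta_\rho(K)$ with this self-linking, using the naturality of $\Theta$ under the map from $KD_n$ to the corresponding space for the index-two subgroup $\Z/n$.

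For sufficiency we give an explicit construction, as the abstract promises. Suppose $[\beta]^T(V+V^T)[\beta]\equiv 0 \pmod{n^2}$. Using that $\beta$ is a mod $n$ characteristic knot, we modify $S$ by tubing and band moves along curves in the kernel of the mod $n$ pairing with $[\beta]$. The aim is to reach a surface on which $\beta$, or a mod-$n$ equivalent curve, bounds in the pushed-in surface complement in a way that lets $\rho$ extend. Then we verify surjectivity of the extension directly.

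We expect the main obstacle to be the identification of $\pi_3(KD_n)$, or the relevant part of it, with $\Z/n$ via this self-linking. Computing $\pi_3$ of the constructed space needs the homology of $D_n$ together with the cell structure of $KG$. First we would pass to the $n$-fold cyclic cover corresponding to $\Z/n\subset D_n$, where the problem becomes abelian and the linking form computation is standard. Then we would check that the transfer is injective on the relevant summand, which holds since $n$ is odd.
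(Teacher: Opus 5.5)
Your reduction to $\Theta_\rho(K)$ via tautness and Corollary~\ref{thm:main} matches the paper, and your target value $\frac1n[\beta]^T(V+V^T)[\beta]\bmod n$ is correct. However, the detection step you flag as the main obstacle fails as you have set it up.

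\textbf{The detection step.} The $2$-cell of $KD_n$ is attached along $u$, which has order $2$, and $H_2(D_n)=0$. So the pair sequence gives $H_2(KD_n)\cong\ker\bigl(\Z\to H_1(D_n)\cong\Z_2\bigr)\cong\Z$. Thus $KD_n$ is simply connected but not $2$-connected. Whitehead's sequence, with $H_4(KD_n)\cong H_4(D_n)=0$, then makes $\pi_3(KD_n)$ an extension of $H_3(KD_n)\cong\Z_{2n}$ by $\Gamma(\Z)\cong\Z$. A transfer to $H_3(\Z_n)$ sees only the $n$-primary part of the Hurewicz image. So it cannot show that vanishing of your $\Z/n$-invariant forces $\Theta_\rho(K)=0$, which is exactly what sufficiency via Corollary~\ref{thm:main} needs. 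There is also no map from $KD_n$ to a space for $\Z/n$; the transfer has to be built by pulling back the branched double cover, as in Proposition~\ref{prop:bijection}.

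The repair is to lift $\rho$ to $\mathbb{D}_n$ (Corollary~\ref{cor:char-knots-induce-both}, Proposition~\ref{prop:quotients-are-equivalent}). Then $K\mathbb{D}_n$ is $2$-connected with $\pi_3\cong H_3\cong H_3(\Z_n)\cong\Z_n$ (Lemmas~\ref{H3(G)} and~\ref{lemma:1-1}). Since $p_*[\Sigma_2K]=2[S^3]$, this identifies $2\theta_*[S^3]$ with $\tilde\theta_*[\Sigma_2K]$. For $n$ odd, $\Theta=0$ iff $\tilde\theta_*[\Sigma_2K]=0$ iff $\lk([c],[c])=0$ (Lemma~\ref{lemma:CupFormula}).

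\textbf{The dual class.} The class dual to $\chi$ is not a lift of $\beta$. It is $[c]=\frac1n(V+V^T)[\beta]$ in the coordinates of~\eqref{Eq:characteristic}. For the trefoil, with $V=\left(\begin{smallmatrix}-1&1\\0&-1\end{smallmatrix}\right)$ and $[\beta]=(-1,1)^T$, the lift of $\beta$ is $0$ in $H_1(\Sigma_2K)\cong\Z_3$, while $[c]$ generates. Only this $[c]$ gives $\lk([c],[c])=\frac1{n^2}[\beta]^T(V+V^T)[\beta]$. Also, your $W$ is simply connected, so $\chi$ does not extend over it in any sense; $W$ only computes the linking form as $\pm(V+V^T)^{-1}$.

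Once repaired, your route yields both directions from Corollary~\ref{thm:main}. The paper instead proves necessity without $\Theta$: $\Sigma_2F$ null-bords $\tilde\theta$ over $B\Z_n$, so $\tilde\theta_*[\Sigma_2K]=0$. It proves sufficiency by an explicit construction.

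\textbf{The explicit construction.} Your sketch never says where the mod~$n^2$ hypothesis enters. Without that hypothesis the conclusion is false (the trefoil), so ``tubing and band moves along curves in the kernel'' is not a mechanism. The missing idea is framing:
\begin{itemize}
\item Write $[\beta]^T(V+V^T)[\beta]=an^2$. This equals $2[\beta]^TV[\beta]$ and $n$ is odd, so $a$ is even.
\item Stabilize $S$ by four trivial tori and add $na_i$ times each new pair of classes, where $a/2=\sum a_i^2$ by Lagrange's four-square theorem.
\item This produces a mod-$n$ equivalent characteristic knot with $[\beta]^T(V+V^T)[\beta]=0$ (Lemma~\ref{lem:0-framed-beta}). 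So the framing $S$ induces on $\beta$ agrees with the framing from a Seifert surface $M$ for $\beta$.
\item That agreement is what lets you sweep parallel copies of $M$ through a collar. This builds a Seifert solid $U$ with $\partial U=F\cup(-S)$ containing $G=(\beta\times[0,\tfrac12])\cup M_{1/2}$.
\item One checks that $G$ is a mod-$n$ characteristic surface, and Theorem~\ref{thm:Dn-iff-char-surface} then gives the extension.
\end{itemize}
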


Theorem~\ref{thm:intro-dihedral} gives one of several equivalent conditions we find for the existence of a surface $F$ as above. The other conditions are established in Section~\ref{sec:core-criterion}; see, in particular, Theorem~\ref{thm:equivalence}. 

We also note that, in the case $n=3$, the ``only if'' direction of the above theorem can be proved without the machinery developed here, as an application of the $\Xi_3$ formula from~\cite{kjuchukova2018dihedral}.  Proving the necessary condition for general~$n$ required developing the obstruction theory of Part~I. Our proof of the sufficient condition relies on the construction provided in Section~\ref{sec:n-bottle}.

\begin{remark*}
    Since $K$ is a knot, the existence of a surjective homomorphism $\rho$ as above implies that $n$ is odd, so we will work with odd $n$ whenever dihedral groups are concerned. This entails no loss of generality:  $H_1(E_K)\cong\mathbb{Z}$ while  $H_1(D_n)\cong\mathbb{Z}_2\times\mathbb{Z}_2$ for $n$ even. Thus, no surjection
$\pi_1(E_K)\twoheadrightarrow D_n$ can exist unless $n$ is odd. 
\end{remark*}

\subsection{Motivation}\label{sec:motivation}
Metacyclic invariants of knots and links are a classical subject stemming from seminal work by Casson-Gordon~\cite{cass-gor1986cobordism}, Cappell-Shaneson~\cite{CS1984linking} and others going back to Reidemeister~\cite{reidemeister1929knoten} and Fox~\cite{fox1962quick}. In their most general form, metacyclic knot invariants arise from a diagram of covering spaces (all except $\widehat{f}_p$ branched):

\begin{center} 
\begin{equation}\label{eq:cover-diagrams}
\noindent
\begin{minipage}{0.34\textwidth} 
\centering
\begin{tikzcd}[column sep=3pt] 
& \rho^{-1}(e) \ar[dl, hook', labels=above left] 
     \ar[dr, hook'] 
     \ar[dd, hook']
&
\\
\rho^{-1}(\mathbb{Z}_q) \ar[dr, hook']
& 
& \rho^{-1}(\mathbb{Z}_p) \ar[dl, hook']
\\
& \rho^{-1}(\mathbb{Z}_p\rtimes \mathbb{Z}_q)
&
\end{tikzcd}
\end{minipage}
\hspace{1.6cm}
\begin{minipage}{0.46\textwidth} 
\centering
\begin{tikzcd}[column sep=25pt]
& M_{pq}  
     \ar[dl, "\widehat{f}_q", labels=above left]  
     \ar[dr, "\widehat{f}_p"] 
     \ar[dd, "f_{pq}"]
&
\\
M_p \ar[dr, "f_p"']
& 
& \Sigma_q(K) \ar[dl, "f_q"]
\\
& S^3
&
\end{tikzcd}
\end{minipage}
\end{equation}
\end{center}

On the left is a diagram of subgroups of a metacyclic group $M_{pq}\cong\Z_p\rtimes_\varphi \Z_q$; on the right is the corresponding diagram of branched covering spaces of a knot $K\subseteq S^3$. As usual, $\Sigma_q(K)$ denotes the cyclic $q$-fold branched cover. The $p$-fold cover $M_p\to S^3$ is irregular, branched along $K$. Classical knot invariants --- Casson-Gordon signatures, Tristram-Levine signatures~\cite{levine1969invariants, tristram1969some}, and the $\Xi_p$ invariant\footnote{In the literature to date, the $\Xi_p$ invariant is restricted to the case where $q=2$ and the group $\Z_p\rtimes_\varphi\Z_q$ is dihedral. However, it exists in general~\cite{kjsh2026linear}.} introduced in~\cite{kjuchukova2018dihedral} --- can all be computed by extending the appropriate covers in~\eqref{eq:cover-diagrams} over $B^4$, branched along a surface $F$ with $\partial(F)=K$. This was one of the motivations for the extension problem posed in Question~\ref{q:main-v2} and answered in Theorem~\ref{theorem:G-surface} and Corollary~\ref{thm:main}.

When $f$ is a cyclic branched cover, an extension always exists, since any map $H_1(S^3\backslash K;\Z)\to \Z_n$ factors through $H_1(B^4\backslash F;\Z)\to \Z_n$ for any orientable surface $F\subseteq B^4$ with boundary $K$. For covers induced by metacyclic quotients, however, the answer is far from obvious. Prior to this work, as far as we are aware, the only known obstruction was in the case of $D_3,$ the dihedral group of order 6, as an application of~\cite{kjuchukova2018dihedral}; see Theorem~\ref{thm:n=3} and its proof. 

In the dihedral case, Cappell and Shaneson~\cite{CS1984linking, CS1975branched} give an explicit cobordism $W(K, \beta)$ between the dihedral branched cover of $K$ and a cyclic branched cover of a mod~$p$ characteristic knot $\beta$ for $K$ (Definition~\ref{def:chark}); moreover, $W(K, \beta)$ branch covers $S^3\times I$. This construction leads to a formula for the $\Xi_p$ invariant (Equation~\eqref{eq:Xi}; see Section~\ref{sec:xi}). However, the branching set in this construction is not a manifold, limiting the use of analytical tools such as the $G$-signature theorem. In Theorem~\ref{thm:equivalence}, we formulate a necessary and sufficient condition, in terms of the Seifert form of $K$, for when the singular branching set can be replaced by a manifold. When this is possible, we show how to construct such a surface explicitly; see Section~\ref{sec:n-bottle}.

\begin{remark}\label{rem:cone-point}
    Note that if we do not require that $F$ be locally flat, the existence question becomes trivial. Setting $F=c(K),$ the cone on $K,$ we have that $i_\ast: \pi_1(E_K)\xrightarrow{\cong} \pi_1(E_F),$ so $
    \bar{\rho}$ always exists. More generally, if we allow additional cone singularities on $F$, each cone point will contribute to the signature of its cover, as shown for dihedral covers in~\cite{kjuchukova2018dihedral}, which would hinder our effort to obtain invariants of $K$ from the intersection form of a cover branched along $F$. 
\end{remark}

\begin{remark} \label{rem:category}
    The existence of a surface over whose complement a given quotient $\rho: \pi_1(E_K)\to G$ extends is  {\em independent of category.} That is, if $\rho$ can be extended over a topologically locally flat surface,  then there is also a smooth surface which admits such an extension; see Corollary~\ref{cor:categories}.
\end{remark}

\begin{remark} \label{con:orientability}
    In the case where $G$ is a dihedral group, an orientable surface which fits into the commutative diagram~(\ref{eq:comm-triangle-general}) exists if and only if a non-orientable one does; this follows from Theorem~\ref{cor:nonor-variants}. We also show in Theorem~\ref{cor:disk-in-W} and that the same obstruction detects the existence of a surface in a general orientable four-manifold with $S^3$ boundary.
\end{remark}

Lastly, we emphasize that, while our initial interest in Question~\ref{q:main-v2} stemmed from a study of metabelian knot invariants, our Theorems~\ref{thm:main} and~\ref{theorem:G-surface} make no such restriction. It is classically known that every finitely generated group which has weight one is the quotient of a knot group~\cite{gonzalez1975homomorphs, johnson1980homomorphs}, and our results apply in this most general setting.

\subsection{Organization of the paper and summary of results} \label{sec:paper-summary}

In Section~\ref{sec:obstruction}, we define a space $KG$ whose homotopy type carries the obstruction to the existence of a surface $F$ as in Question~\ref{q:main-v2}, and we define the invariant $\Theta_\rho(K)\in\pi_3(KG)$.  In Section~\ref{sec:g-surf-thms} we prove the $G$-Surface Theorem~\ref{theorem:G-surface}: the vanishing of $\Theta_\rho(K)$ is equivalent to the existence of a $G$-surface (Definition~\ref{def:G-surface}) which fits into Diagram~\eqref{eq:comm-triangle-general}.  
Corollary~\ref{cor:categories} establishes that the answer is independent of category (smooth or locally flat).

Part~II applies the general obstruction theory to the case of dihedral groups. In Section~\ref{Sec:pairings}, we collect the algebraic and topological preliminaries needed. 
In Section~\ref{sec:dihedral-criteria} we specialize the obstruction $\Theta_\rho$ to the case of dihedral groups.  Theorem~\ref{thm:equivalence} gives several equivalent and computable conditions for the existence of an orientable dihedral surface in $B^4$. Theorem~\ref{cor:Dn-Dn-bottle} proves an analogous result for the closely related $n$-bottle groups (Definition~\ref{def:n-bottle}). Theorems~\ref{cor:nonor-variants} and~\ref{cor:disk-in-W} address the existence of non-orientable surfaces in $B^4$ and of surfaces in more general four-manifolds with boundary $S^3$.

In Section~\ref{sec:n-bottle} we build an explicit dihedral surface by hand whenever the Seifert matrix condition in Theorem~\ref{thm:intro-dihedral} is satisfied. Section~\ref{sec:proof-of-conjecture} is dedicated to applications and computations.  
Of independent interest is Theorem~\ref{thm:n=3}, which shows that 
when $3 \mid n$, the Seifert-matrix condition 
$[\beta]^T(V+V^T)[\beta] \equiv 0 \pmod{9}$ is necessary for the 
existence of \emph{any} surface for $K$ -- orientable 
or not, in any oriented $4$-manifold with $S^3$ boundary -- over which the dihedral quotient determined by $\beta$ extends. This 
result, which predates the present work and provided its initial 
impetus, follows from the $\Xi_n$ formula of~\cite{kjuchukova2018dihedral} 
without the obstruction theory of Part~I. For $n = 3$, the condition 
is also sufficient, yielding a complete characterization; see 
Section~\ref{sec:n3-thm}.

\part*{Part I. General obstruction theory}

\section{Based quotients of knot groups and the obstruction space}\label{sec:obstruction}

Given a knot $K\subseteq S^3$ and a based epimorphism $\rho\colon \pi_1(E_K)\twoheadrightarrow G$, we construct a space $KG$ and a homotopy class $\Theta_\rho(K)\in\pi_3(KG)$ that is the obstruction to extending $\rho$ over the exterior of a surface in $B^4$. This section defines the relevant objects: based groups, $G$-knots, the space $KG$, and the invariant $\Theta_\rho(K)$. 

The invariant $\Theta_{\rho}(K) \in \pi_3(KG)$ is defined homotopy-theoretically; the proof that it captures the geometry of  the extension problem relies on transversality and handle  decompositions. In Part~II, where we specialize to dihedral groups, we 
translate the obstruction into intersection-theoretic data that can be computed from a Seifert matrix; see Theorem~\ref{thm:equivalence}.

\subsection{Based groups and $G$-knots}\label{sec:based-groups}

Throughout, $K$ will always denote a knot in $S^3$ and $F$ a surface in a 4-manifold with $S^3$ boundary (usually, but not always, $B^4$) such that $\partial(F) = K$.  We require that $F$ be smooth or locally flat but rarely distinguish between the two: as noted in Remark~\ref{rem:category}, for our purposes the two cases are equivalent.  We use $E_K$ and $E_F$ to denote the exteriors of $K$ and $F$ in their ambient manifolds.

When we speak of the knot group, $\pi_1(E_K),$ we will always assume that the basepoint is contained on the boundary of $E_K$, so that the 0-framed longitude of $K$ and a preferred meridian (the boundary of the fiber of the normal $B^2$ bundle on $K$ containing the basepoint) unambiguously represent elements of the group. 

Throughout this paper we consider $G$-knots, that is, knots $K$ equipped with quotients $\rho: \pi_1(E_K)\twoheadrightarrow G$ as in Definition~\ref{def:G-Knot}. Knots and surfaces are oriented
unless otherwise indicated. $G$ will always denote a discrete group. Whenever $G$ is the quotient of a knot group, the abelianization of $G$ is cyclic.  Since $G$ is discrete, the classifying space, $BG$, is the Eilenberg-MacLane space, $K(G, 1)$. 

When $G$ is a quotient of a knot group, we will regard $G$ as a {\em based group}, see Definition~\ref{BasedDefn}. Knot groups will always be based by a choice of meridian, and, given an epimorphism $\rho: \pi_1(E_K)\twoheadrightarrow G$, the image of this meridian in $G$ will serve as a basing for the group $G$.

\begin{definition}\label{BasedDefn}
Let $G$ be a group with $H_1(G) \cong \Z$ or $H_1(G) \cong \Z_n$ and such that the abelianization homomorphism $G\to G^{ab}$ splits. Choose a  splitting homomorphism $\iota \colon G^{ab} \to G$ of the abelianization homomorphism on the group $G$. By assumption, $G^{ab}$ is cyclic, so the value of $\iota(1)$ determines the above splitting. We say $(G, \iota(1))$ is a {\em based group}.
\[
\begin{tikzcd}
G^{ab} \arrow[r,"\iota"] \ar[rr, bend right=50,"id"] & G \arrow[r,"ab"] & G^{ab}
\end{tikzcd}
\]
\end{definition}

\begin{example}
    If $G^{ab}\cong\Z,$ $G$ always admits a basing since $\Z$ is free, that is, the abelianization homomorphism always splits.
\end{example}

                                    \begin{example}
                                        If $G\cong D_n,$ a dihedral group with $n$ odd, the abelianization homomorphism $D_n\to \Z_2$ splits and a choice of basing is equivalent to a choice of reflection in $D_n$.
                                    \end{example}
                                    
                                    \begin{definition}\label{defn:basing}
                                    A {\em based homomorphism} between based groups  $(G, \iota_G(1))$ and $(H, \iota_H(1))$ is a group homomorphism $\rho \colon G \to H$ which preserves the basing, that is, $\rho(\iota_G(1))=\iota_H(1)$. 
                                    \end{definition}
                                    
                                    A composition of based homomorphisms is a based homomorphism.	
                                    
                                    \begin{example}
                                        Let $K\subseteq  S^3$ be a knot and $\mu\in \pi_1(E_K)$ a class represented by a meridian of $K$. Then $(\pi_1(E_K), \mu)$ is a based group. When we refer to a knot group as {\em based}, we will always mean that we have fixed a preferred meridian. 
                                    \end{example}

                                    \begin{definition}~\label{def:G-Knot}
                                    Let $G$ be a based group.  A {\em $G$-knot}, $(K, \rho)$,  is a knot $K\subseteq S^3$, together with a map $\rho\colon E_K \to BG$ satisfying the following conditions:

                                    \begin{enumerate}
                                    \item[i)]  $\rho_{\ast}: \pi_1(E_K)\twoheadrightarrow{G}$ is a based epimorphism.
                                    \item[ii)] The image of the $0$-framed longitude for $K$ is nullhomotopic in $BG$.
                                    \end{enumerate}
                                    When the second condition is satisfied, we will assume that the image of the $0$-framed longitude for $K$ is the basepoint in $BG$. 
                                    \end{definition}
                                    
                                    \begin{example}
                                    If $K$ is a knot with an epimorphism $\rho \colon \pi_1(E_K) \to \mathbb{D}_n$,  we call $K$ a $\mathbb{D}_n$-knot.  If the context demands more detail, we may say that $(K,\rho)$ is a $\mathbb{D}_n$ knot: condition {\it ii)} is satisfied automatically since the second derived subgroup of $\mathbb{D}_n$ is trivial. Similarly, a knot $K$ equipped with a surjective homomorphism $\rho \colon \pi_1(E_K) \to D_n$ is a $D_n$-knot. We might also call such a $K$ a dihedral knot, if the value of $n$ is either clear from the context or irrelevant.
                                    \end{example}

                                    \begin{example}
                                    Suppose $G$ is metabelian with $G^{ab}\cong\Z$.
The $0$-framed longitude for $K$ lies in the commutator subgroup $\pi'$ (since it bounds a Seifert surface) and commutes with the meridian $\mu$. Since $\mu - 1$ acts as an isomorphism on $\pi'/\pi''$ (see Lemma~\ref{lem:meta-is-taut}), the longitude in fact lies in $\pi''$. As $G$ is metabelian, $G'' = 1$, so the image of the longitude in $G$ is trivial. Thus, 
                                    given any choice of meridian for $K$ to use as a basing, the existence of an epimorphism $\pi_1(E_K) \to G$ automatically implies that $K$ is a $G$-knot.
                                    \end{example} 
                                    
                                    \begin{example}\label{eg:MK} Let $K\subseteq S^3$ be a knot and denote  $0$-surgery on $K$ by $M_K$. Choose a meridional element of the knot group.  We have that $K$, equipped with the inclusion-induced epimorphism 
                                    \[
                                    \pi_1(E_K) \to \pi_1(M_K)=: G,
                                    \]
                                    is a $G$-knot.
                                    \end{example}

                                    \subsection{The space $KG$ and the invariant $\Theta_\rho(K)$}\label{Gknots}
                                    
                                    Given a $G$-knot $K$, we now construct the space $KG$,  announced in the introduction, whose homotopy type carries the 
                                    obstruction to the existence of a $G$-surface bounded by $K$.
                                    
                                     As usual, we fix a meridian $\mu$ of $K$ and regard $\rho \colon \pi_1(E_K) \to G$ as a based epimorphism, by endowing $G$ with the basing $\iota(1): = \rho(\mu)$. 
                                    We denote the Eilenberg-MacLane space $K(G, 1)$ by $BG$, since $K(G,1)$ is the classifying space for the discrete group $G$. Let $S := \partial B^2$ denote the $1$-sphere with a fixed basepoint, and let $S\hookrightarrow B$  be a basepoint-preserving inclusion of CW complexes whose image is a loop that represents the basing $\iota(1) \in G$. (We can assume that $S \hookrightarrow BG$ is an inclusion of CW complexes by replacing $BG$ with the (based) mapping cylinder of a cellular map $S \hookleftarrow BG$.)
                                    
                                    Let $KG$ denote the push-out given by the following diagram of spaces.  Since inclusions of CW complexes are cofibrations, the pushout given below is well-defined up to homotopy type, and we may regard the 2-cell $B^2$ as embedded in $KG$. We will refer to $B^2 \subseteq  KG$ as the {\em distinguished 2-cell in $KG$}. 
                            
\begin{center}
  
\begin{tikzcd}[column sep=large]
S = \partial B^2 \ar[r, hook] \ar[d, hook', "i", shift left=.7em]	& BG \ar[d, dashed, hook']\\
B^2 \ar[r, dashed, hook] & BG \cup_{S} B^2 \defeq :KG
\end{tikzcd} 
  
\end{center}
                                    
This completes our construction of the space $KG$ where we will hang our $G$-knot invariant.  Here, $K$ is a $G$-knot with respect to the based epimorphism $\rho \colon \pi_1 (E_K )\to G$. That is, the image of the longitude of $K$ is trivial in $G$. We consider the composition%
                                    \[
                                    \begin{tikzcd}
                                    \pi_1(\partial E_K) \ar[r, "inc_\ast"] &  \pi_1(E_K)  \ar[r, "\rho"]   &   G. 
                                    \end{tikzcd}
                                    \] 
Recall that the basepoint for $\pi_1(E_K)$ lies in $\partial{E_K}$.  Parameterize  the boundary, $\partial E_K$, as $\lambda \times \mu$ where $\mu$ is the basing for $\pi_1(E_K)$ and $\lambda$ is the $0$-framed longitude of $K$. 
The above composition of homomorphisms is induced by a map
\[
\wp\colon \partial E_K \to BG.
\]
Up to homotopy, we may assume this map is given by projection onto the meridian, $\mu$, followed by a degree one map to the previously defined image of the based meridian, $S \subseteq BG$.  Since $BG$ is an Eilenberg-MacLane space, and since  
                                    \[
                                    \rho_{\vert \pi_1(\partial E_K)} = (\wp_\vert)_{\ast}
                                    \]
                                    the map $\wp$ determines an  extension  to a map $E_K \to BG$, unique up to homotopy rel boundary, and inducing the epimorphism $\rho$.

                                    Denote a regular neighborhood of $K$ in $S^3$ by $N(K)$. Also,
                                    denote by $\theta$ the map on pushouts induced by the following diagram, where the front-most square of maps is the square used to define~$KG$. Since $BG$ is a $K(\pi, 1)$ space, by a slight nonchalance with notation we will sometimes use $\rho$ to mean both the map on spaces and the induced homomorphism of fundamental groups.
                                    \begin{equation}\label{eqn:theta}
                                    \begin{tikzcd}
                                    \partial E_K \ar[r, hook,"inc"] \ar[d,hook',"inc"] & E_K 
                                    \arrow[hook', d, near start, "inc"] \ar[rrd,"\rho"] & \phantom{f}\\
                                    N(K) \arrow[r,"inc",hook] \ar[rrd, "proj_{B^2}" ']  & S^3 \ar[rrd, blue, near start, "\theta", dashed]  & S \ar[r,hook] \ar[from=llu , crossing over] &   BG \ar[d,"inc", hook']\\
                                    &  & B^2 \ar[r, "inc"',hook] \ar[from={u}, near start, "inc", crossing over, hook'] & KG
                                    \end{tikzcd}
                                    \end{equation}

                                    \begin{definition}\label{inv_univ} With the notation established above, define 
                                    \[
                                    \Theta_{\rho}(K): = [{\color{blue}\theta}] \in \pi_3(KG).
                                    \]	
                                    \end{definition}

                                    \noindent We observe that the knot $K$ and the based epimorphism $\rho \colon \pi_1 (E_K )\to G$ determine the map $\theta$ up to based homotopy, and thus, also determine the $G$-knot invariant $\Theta_{\rho}(K)$.
                                    
                                    \begin{remark}\label{rem:basing-stuff}
                                        By definition, $\Theta_{\rho}(K)$ is an invariant of a $G$-knot, $K$, equipped with a quotient $\rho \colon \pi_1(E_K) \to G$, where $G$ is a {\em based} group.  In other words, part of the structure of a $G$-knot is a choice of basing for the group $G$, which is determined by a choice of meridian, $\mu$, of $K$.  Therefore, strictly speaking, we have defined an invariant $\Theta_{\rho}$ of the triple $(K, \rho, \mu)$. It is clearly preferable to avoid specifying the auxiliary information that is the choice of $\mu$, and indeed we can avoid it, as explained next.

                                        Given a based epimorphism $\pi_1(E_K)\to G$, consider a different choice of meridian, $\mu'$, of $K$. Since any two meridians of $K$ are conjugate and $E_K$ is a $K(\pi, 1)$ space, there is a homotopy equivalence of pairs,  $(E_K, \partial{E_K}) \to (E_K, \partial{E_K})$, sending $\mu$ to $\mu'$ (and extending to a homotopy equivalence of $S^3$ to itself). 
                                        Moreover, since $\rho$ is surjective, there is an inner automorphism of $G$ sending $\rho(\mu)$ to $\rho(\mu')$. This induces a homotopy equivalence of $BG$, which in turn extends to a homotopy equivalence $KG \xrightarrow{_{\sim}}  KG$ sending the basing $\rho(\mu)$ to $\rho(\mu')$. This induces an automorphism of the group $\pi_3(KG)$. The above homotopy equivalences fit into the commutative diagram~\eqref{eqn:basing-indep}.

                                    \begin{equation}\label{eqn:basing-indep}
                                    \begin{tikzcd}
                                     E_K \ar[r, "\simeq"] \ar[d,hook',"inc"] & E_K 
                                    \arrow[hook', d, near start, "inc"] \ar[rrd,"\rho"] & \phantom{f}\\
                                    S^3 \arrow[r,"\simeq"] \ar[rrd,  blue, "\theta" ', dashed]  & S^3 \ar[rrd, blue, near start, "\theta'", dashed]  & BG \ar[r,"\simeq~"] \ar[from=llu , crossing over, near end, "\rho"] &   BG \ar[d,"inc", hook']\\
                                    &  & KG \ar[r, "\simeq"] \ar[from={u}, near start, "inc", crossing over, hook'] & KG
                                    \end{tikzcd}
                                    \end{equation}
                                     
                                        It follows that the induced isomorphism on $\pi_3(KG)$ sends the class $\theta$ of diagram~(\ref{eqn:theta}) determined by the meridian $\mu$ to the class $\theta'$ determined by the meridian $\mu'$. Therefore, the vanishing of $\Theta_{\rho}(K)$ is independent of the choice of basing for $G$ or, equivalently, of a choice of meridian for the knot $K$. Since we shall be primarily concerned with whether or not $\Theta_{\rho}(K)=0,$ we can, indeed, ignore the choice of meridian, and we will suppress $\mu$ from the notation.
                                    \end{remark}
                                    

                                    \section{$G$-surfaces and the general existence theorem}\label{sec:g-surf-thms}
                                    
                                    We now prove that $\Theta_\rho(K)$ is the obstruction to the existence of a $G$-surface extending $\rho$. We first gather the geometric preliminaries needed for the proof, then state and prove the $G$-Surface Theorem, and finally derive consequences for a class of groups we call taut.
                                    
                                    \subsection{Geometric preliminaries on surface exteriors}\label{sec:geometric-prelim}

                                    In order to keep the manuscript self-contained, we prove two well-known results about codimension-two embeddings of oriented manifolds in $\R^k$, $k = 3, 4$.   
                                    We use these results for the proof of the $G$-Surface Theorem.
                                    
                                    \begin{lemma}\label{Lemma:2_disk_bundle}
                                    Let $( F, K ) \subseteq (B^4, S^3)$ be an inclusion of pairs, where $F$ is an oriented surface with boundary $K$.  Then $F$ has a trivial normal bundle.  That is, the embedding $(F, K) \subseteq (B^4, S^3)$ extends to an embedding $(F, K) \times B^2 \subseteq (B^4, S^3)$.
                                    \end{lemma}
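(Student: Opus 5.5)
Since $F$ is an oriented, properly embedded surface in the oriented manifold $B^4$, its normal bundle $\nu$ is an oriented real $2$-plane bundle over $F$. Oriented $2$-plane bundles are classified by maps $F\to BSO(2)\simeq \mathbb{CP}^\infty = K(\Z,2)$, so the only obstruction to triviality is the Euler class $e(\nu)\in H^2(F;\Z)$. The plan is to show that $e(\nu)$ vanishes for purely topological reasons, without any hypothesis on $F$ beyond connectedness of its components having nonempty boundary.

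The first step is to note that every component of $F$ has nonempty boundary. A closed component would be a closed surface embedded in the interior of $B^4$; in our setting $F$ is connected with $\partial F=K\neq\emptyset$, so this case does not arise. It suffices to argue componentwise, and we may assume $F$ is connected with nonempty boundary.

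A compact connected surface with nonempty boundary deformation retracts onto a wedge of circles, so $H^2(F;\Z)=0$. Therefore $e(\nu)=0$. Equivalently, the classifying map $F\to K(\Z,2)$ factors up to homotopy through a $1$-dimensional complex and is null-homotopic, since $\pi_1(K(\Z,2))=0$. Hence $\nu\cong F\times\R^2$.

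The remaining step, and the only place where care is needed, is the boundary. We want the trivialization to be an embedding of pairs $(F,K)\times B^2\subseteq (B^4,S^3)$, so the normal bundle should restrict on $K$ to the normal bundle of $K$ in $S^3$. To arrange this, take $F$ to meet $\partial B^4$ transversally, and, after an isotopy, to be a collar $K\times[0,\epsilon)$ near the boundary. Use a collar $S^3\times[0,\epsilon)$ of $\partial B^4$ and choose the inward normal direction tangent to $F$ along $K$. Then $\nu_F|_K=\nu_{K\subseteq S^3}$, and the tubular neighborhood theorem for neat submanifolds gives the embedding $\nu_F\hookrightarrow B^4$ compatible with a tubular neighborhood $N(K)\subseteq S^3$. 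Composing with the trivialization from the previous paragraph produces the required embedding $(F,K)\times B^2\subseteq(B^4,S^3)$.

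In the locally flat topological category, the same argument applies once one invokes the existence of normal bundles for locally flat codimension-two submanifolds of $4$-manifolds (Freedman–Quinn). We expect quoting this existence result, rather than the vanishing of $e(\nu)$, to be the main subtlety.
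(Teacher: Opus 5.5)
Your proof is correct, but it takes a different route from the paper.

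The paper's argument is extrinsic. It cites Milnor--Stasheff (Corollary~11.4) for the vanishing of the Euler class of the normal bundle of an oriented codimension-two submanifold of $B^4$. In effect, $e(\nu)$ is the restriction to $F$ of the image of the Thom class in $H^2(B^4)=0$. Since $e(\nu)=0$, the $2$-plane bundle has a nowhere-zero section and is therefore trivial.

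You argue intrinsically instead. Because each component of $F$ has nonempty boundary, $F$ is homotopy equivalent to a wedge of circles, so $H^2(F;\Z)=0$. Hence every oriented plane bundle over $F$ is trivial, however $F$ sits in $B^4$.

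What your route buys: it is more elementary, needing no Thom class or ambient duality. You also make explicit two steps the paper leaves implicit. One is the boundary compatibility: choosing a collar in which $F$ looks like $K\times[0,\epsilon)$, and taking a neat tubular neighborhood that restricts to $N(K)\subseteq S^3$. The other is the appeal to Freedman--Quinn for normal bundles in the locally flat case.

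What the paper's route buys: it also handles closed components, since a closed oriented surface in $B^4$ has trivial normal bundle as well. Your argument deliberately excludes that case. The exclusion is harmless, because the lemma is only ever applied to connected $F$ with $\partial F=K$.
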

                                    
                                    \begin{proof} 
                                    The Euler class of the normal bundle of a codimension-two oriented surface in $B^4$ vanishes -- see, for example,~\cite[Corollary 11.4]{milnor1974characteristic}.  Therefore, this normal bundle has a non-vanishing section.  Since the bundle is 2-dimensional, the bundle is trivial.
                                    \end{proof}

                                    \begin{lemma}\label{lemma:0-framed-Section}
                                    Let $(F, K) \subseteq  (B^4, S^3)$ be a properly embedded, oriented surface and let $p \colon E_F \to S^1$ induce the abelianization epimorphism $p \colon \pi_1(E_F) \xrightarrow{ab} \Z$. Let $S(F) \cong F \times S^1$ be the normal sphere bundle of $F \subseteq B^4$. Then there is a section, $F \xrightarrow{s} S(F) \subseteq \partial{E_F}$, of the sphere bundle $S(F) \to F$ parameterizing $F \times S^1$ so that the composition
                                    \[
                                    F \xrightarrow{s} S(F) \xrightarrow{inc} E_F \xrightarrow{p} S^1
                                    \] 
                                    is nullhomotopic (where we denote by $p$ both the map on spaces and the induced map on the fundamental group).
                                    \end{lemma}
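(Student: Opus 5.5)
We start from the trivialization provided by Lemma~\ref{Lemma:2_disk_bundle}: fix an identification $S(F)\cong F\times S^1$, and let $s_0\colon F\to S(F)$ be the corresponding constant section $x\mapsto (x,1)$. Any other section has the form $s_\phi(x)=(x,\phi(x))$ for a map $\phi\colon F\to S^1$. Modifying the section by $\phi$ changes the composite $p\circ inc\circ s$ by pointwise multiplication by $p\circ inc\circ(\{x_0\}\times\phi)$. The restriction of $p\circ inc$ to a normal fiber $\{x\}\times S^1$ is a meridian, which has degree $\pm 1$ under abelianization. After choosing orientations, we may therefore arrange
\[
p\circ inc\circ s_\phi \simeq (p\circ inc\circ s_0)\cdot \phi .
\]
This identity uses the group structure on $S^1$; it can be checked directly on $\pi_1$, or on $H^1$, since the target is $K(\Z,1)$.

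Homotopy classes of maps $F\to S^1$ are classified by $H^1(F;\Z)$, because $S^1=K(\Z,1)$ and $F$ is a CW complex. Set $c=(p\circ inc\circ s_0)^*[S^1]\in H^1(F;\Z)$. Then choose $\phi\colon F\to S^1$ representing $-c$, and put $s=s_\phi$. The induced class of $p\circ inc\circ s$ on $H^1$ is $c+(\pm1)(\pm c)$. With the sign conventions fixed so that the meridian maps to $+1$, this class is $c-c=0$. Hence the composite is nullhomotopic.

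The main point to handle carefully is the additivity formula in the first paragraph. It says that $(p\circ inc)^*[S^1]$, restricted to $H^1(F\times S^1)\cong H^1(F)\oplus H^1(S^1)$, has fiber component equal to the meridian's degree, which is $\pm1$. The Künneth decomposition gives this immediately. The pullback along $s_\phi=(\mathrm{id},\phi)$ then equals $c+(\pm1)\phi^*[S^1]$. No condition on $F$ beyond orientability, which is needed for the trivial bundle, is required.

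Optionally, we can identify $s(F)$ as the section whose restriction to $K$ is the $0$-framed longitude. The argument does not need this, but it is consistent: the $0$-framed longitude has image $0$ in $H_1(E_K)$, and hence also in $H_1(E_F)$.
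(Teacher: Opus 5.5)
Your argument is correct and is essentially the paper's proof: both twist the trivializing section by a map $F\to S^1$ chosen to cancel the class of $p\circ inc\circ s_0$. The paper does this by prescribing degrees on the loops of a minimal $1$-skeleton of $F$, while you choose $\phi$ to represent $-c\in H^1(F;\Z)$; your K\"unneth computation also makes explicit the additivity, with fiber degree $\pm1$, that the paper's cellular argument uses tacitly.
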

                                    
                                    \begin{definition}\label{def:longF} We call the image, $s(F) \subseteq S(F)$, of a section as in Lemma~\ref{lemma:0-framed-Section}, a {\em $0$-framed section} or a {\it longitude} of $F$ in $S(F) \subseteq E_F$.
                                    \end{definition}
                                    
                                    \begin{proof}[Proof of Lemma~\ref{lemma:0-framed-Section}]
                                    We show a section $s\colon F \to S(F)$ exists by making the above composition nullhomotopic.  
                                    By Lemma~\ref{Lemma:2_disk_bundle}, the sphere bundle $\pi \colon S(F) \to F$ is trivial.  Choose arbitrary bundle coordinates $F \times S^1  \xrightarrow{\iota} \partial{E_F}\cong S(F)$ for the total space of this bundle.  We have a commutative diagram of maps
                                    \begin{equation}\label{triangle}
                                    \begin{tikzcd}
                                    F \times S^1  \arrow[r, "\iota"]  \arrow{rd}[swap]{proj_{_F}}  & S(F)  \ar[d, "\pi", labels=left] \arrow[r, "inc"] & E_F   \ar[r, "p"] & S^1.\\
                                    & F
                                    \end{tikzcd}
                                    \end{equation}
                                    Let $(a, b)\in F\times S^1=S(F)$ denote a basepoint.
                                    Note that sections of the trivial bundle $F \times S^1 \xrightarrow{proj_{F}} F$ are continuous maps $s \colon F \to F \times S^1$ of the form $x \mapsto (x, \alpha(x))$. 
                                    
                                    Suppose the rank of $H_1(F) = n$.  Choose a minimal $1$-skeleton for $F$ consisting of $n$ loops $\{c_i\},$  $i = 1, \ldots, n,$ each based at the projection of the basepoint, $a\in F$. We can choose our section $x \mapsto (x, \alpha(x)) \in F \times S^1$ so that the degree of the following composition is zero on each $1$-cell $c_i$ of $F$, $i = 1, \ldots, n$.
                                    \begin{equation}\label{comptoKPi1}
                                    x \mapsto (x, \alpha(x)) \mapsto p \circ inc \circ \iota (x, \alpha(x)).
                                    \end{equation}
                                    We do so by choosing $\alpha \colon c_i \to S^1$ so that the 
                                    \[ 
                                    \text{deg}(\alpha) = -\text{deg}(p\circ inc \circ \iota{([c_i\times \{b\}])}).
                                    \] Since $S^1$ is a $K(\pi, 1)$ 
                                    space, and the displayed composition of functions, (\ref{comptoKPi1}), is null-homotopic on the $1$-skeleton, this section extends over $F$, producing the desired section, $s$.
\end{proof}

\begin{remark}
The section $s$ produced above can in fact be chosen so that $p\circ\mathrm{inc}\circ s$ is constant. Since $\phi:=p\circ\mathrm{inc}\circ\iota\circ s\colon F\to S^1$ is null-homotopic, it lifts to a map $\tilde{\phi}\colon F\to\mathbb{R}$. Because the bundle $S(F)\cong F\times S^1$ is trivial and $p$ restricts on each fiber $\{x\}\times S^1$ to a map of degree $\pm 1$, we may define a new section $s'(x)=(x,\,\alpha(x)-\tilde{\phi}(x))$, which is again a global section and satisfies $p\circ\mathrm{inc}\circ\iota\circ s'\equiv\mathrm{const}$. That said, since all applications of this lemma require only that $s$ induce the zero map on $\pi_1$, we omitted this condition.
\end{remark}

\subsection{The $G$-Surface Theorem}\label{sec:g-surf-thm}

\begin{definition}\label{def:G-surface}. Let $K\subseteq S^3$ be a $G$-knot with respect to a map $\rho: E_K\to BG$. A $G$-surface for $K$ is an oriented surface $F \subseteq B^4$ equipped with a map $\bar{\rho} \colon E_F \to BG$ satisfying the following$\colon$

\begin{enumerate}
\item[i)] $\bar{\rho}$ extends the epimorphism $\rho$, that is, $\bar{\rho}$ fits into the commutative diagram below.
\item[ii)] the image of the $0$-framed section  (Definition~\ref{def:longF}) of the circle bundle for $F$ in $E_F$, has  trivial image in $BG$. 
\end{enumerate}

\[
\begin{tikzcd}
\pi_1(E_K) \arrow[r, "\rho_{\ast}", two heads] \ar[d, "inc_*", labels=left] & G\\
\pi_1(E_F) \arrow[ru, "\overline{\rho}_{\ast}", two heads, labels=below right] &
\end{tikzcd}
\]
\end{definition}

In this section we prove the central theorem in this paper. Recall that, given a surface $F\subseteq  B^4$ with $\partial(F)= F \cap S^3 = K$ and $F \pitchfork \partial{S^3}$, and a group homomorphism $\rho: \pi_1(E_K)\to G$, we say that $\rho$ {\it extends over $F$} if there exists  a group homomorphism  $\bar{\rho}: \pi_1(E_F)\to G$ which fits into the commutative diagram

\begin{equation}
    \label{eq:triangle}
\begin{tikzcd}
\pi_1(E_K) \arrow[r, "\rho"] \ar[d, "i_*", labels=left] & G.\\
\pi_1(E_F) \arrow[ru, dashed, "\overline{\rho}", labels=below right] &
\end{tikzcd}
\end{equation}

In Theorem~\ref{theorem:G-surface} we prove that, for a $G$-knot $K$ (Definition~\ref{def:G-Knot}), a $G$-surface exists if and only if $\Theta_\rho(K)$ vanishes. In Corollary~\ref{thm:main}, we show that when $G$ is taut, the $G$-knot and $G$-surface conditions are automatically met, and the vanishing of $\Theta_\rho(K)$ is equivalent simply to the existence of a surface over which $\rho$ extends.

\begin{theorem}[The $G$-Surface Theorem]~\label{theorem:G-surface} A $G$-knot $K$ bounds a smooth $G$-surface in $B^4$ if and only if the homotopy class $\Theta_{\rho}(K)$ vanishes.
\end{theorem}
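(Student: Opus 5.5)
We prove both directions by relating maps $B^4\to KG$ extending $\theta$ to $G$-surfaces. The guiding principle is that $\theta^{-1}$ of an interior point $c$ of the distinguished $2$-cell $B^2\subseteq KG$ recovers $K$. Indeed, by construction $\theta$ maps $N(K)\cong K\times B^2$ to $B^2$ by projection, maps $E_K$ into $BG$, and the longitude goes to the basepoint. So $\Theta_\rho(K)=0$ should mean exactly that $\theta$ extends to $\Phi\colon B^4\to KG$. Making $\Phi$ transverse to $c$ should then give a surface $F=\Phi^{-1}(c)$ bounded by $K$.

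\textbf{($\Leftarrow$) Surface gives null-homotopy.} Suppose $F\subseteq B^4$ is a smooth $G$-surface with map $\bar\rho\colon E_F\to BG$. By Lemma~\ref{Lemma:2_disk_bundle}, $F$ has a product neighborhood $F\times B^2$. By Lemma~\ref{lemma:0-framed-Section} and condition ii) of Definition~\ref{def:G-surface}, we may choose coordinates $S(F)\cong F\times S^1$ in which the $0$-framed section maps trivially. Then $\bar\rho|_{S(F)}$ is homotopic, rel $\partial E_K$, to the map $F\times S^1\to S^1\to S\subseteq BG$. This holds because $BG$ is a $K(G,1)$ and, on $\pi_1(F\times S^1)$, the two maps agree: the $F$-factor goes to $1$ and the fiber goes to the meridian image $\iota(1)$. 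Here we need a relative homotopy argument on the pair $(F\times S^1, K\times S^1)$, which is fine since only $\pi_1$ data matter for maps into a $K(\pi,1)$, and the homotopy extension property lets us adjust $\bar\rho$ on a collar. Next, define $\Phi$ on $F\times B^2$ as the projection to $B^2\subseteq KG$, and on $E_F$ as $\bar\rho$. This gives $\Phi\colon B^4\to KG$ restricting to $\theta$ on $S^3$. Since $B^4$ is contractible, $[\theta]=0$ in $\pi_3(KG)$.

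\textbf{($\Rightarrow$) Null-homotopy gives a surface.} Suppose $\Theta_\rho(K)=0$, so $\theta$ extends to $\Phi\colon B^4\to KG$. After a homotopy rel $S^3$, make $\Phi$ smooth near $\Phi^{-1}(\mathring{B}^2)$ and transverse to the center $c$. Then $F:=\Phi^{-1}(c)$ is a properly embedded smooth surface with $\partial F=K$. It is oriented, with trivial normal bundle pulled back from $c$, and $\Phi^{-1}$ of a small disk $D\ni c$ is $F\times D$. The exterior $E_F=B^4\setminus F\times\mathring D$ maps to $KG\setminus\mathring D$, which deformation retracts onto $BG$. This yields $\bar\rho\colon E_F\to BG$ extending $\rho$, and the normal circles map with degree one to $S$. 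The pushoff $F\times\{pt\}$ maps to a point of $\partial D$, and hence to a constant under the retraction, so it is a section with trivial image in $BG$. Because $F\times\{pt\}$ maps trivially, its composite with the abelianization map $E_F\to S^1$ is also trivial, so this section is a $0$-framed section in the sense of Lemma~\ref{lemma:0-framed-Section}. This gives condition ii).

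\textbf{Main obstacle.} The remaining issues are surjectivity of $\bar\rho_*$ and connectedness of $F$. Surjectivity is automatic, because $\bar\rho\circ i_*=\rho$ is onto. Connectedness is the main obstacle. $F$ might have closed components, which we would tube to the boundary component; we need to check that tubing preserves the extension of $\bar\rho$. Adding a tube along an arc in $E_F$ changes the exterior by attaching handles. Choosing the arc and the tube's framing so that the new meridian relations are compatible (meridians of different components both map to conjugates of $\iota(1)$, since each is the preimage of the same oriented cell) should let $\bar\rho$ extend. If necessary, I would instead perform the tubing at the level of $\Phi$ before taking preimages, by homotoping $\Phi$ along arcs in $B^4$ joining components of $F$ into the $2$-cell. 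This is where I expect the most care is needed.
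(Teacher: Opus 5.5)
Your direction from surface to null-homotopy and the transversality start of the converse are the paper's argument. The step you flag as the main obstacle is left at ``should let $\bar\rho$ extend,'' and that is where the content lies, so as written there is a gap.

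\textbf{The missing idea: choosing the connecting arc.} Let $F_1\supseteq K$ and $F_2$ be components of $\Phi^{-1}(c)$. Tube them along a $3$-dimensional $1$-handle whose core is an arc $a$ in $E_F$, chosen compatibly with the orientations (always possible). The new exterior $E_{F'}$ is obtained from $E_F$ by deleting a neighborhood of $a$, which does not change $\pi_1$ and to which the map restricts. One then attaches a single $2$-handle along $\mu_1\, a\,\mu_2^{-1}a^{-1}$.
\begin{itemize}
\item Since $BG$ is aspherical, $\bar\rho$ extends over $E_{F'}$ if and only if this loop maps to $1$.
\item Both $\bar\rho(\mu_i)$ are conjugates $g_i\iota(1)g_i^{-1}$, by your orientation observation.
\item Replacing $a$ by $\lambda a$, for a loop $\lambda$, left-multiplies the conjugator of $\bar\rho(\mu_2)$ by $\bar\rho(\lambda)$. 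General position keeps the arc embedded and off $F$.
\item By surjectivity of $\pi_1(E_F)\to G$, inherited from $\rho$, $\bar\rho(\lambda)$ can be any element of $G$. So you can choose the arc so that the relation holds.
\end{itemize}
You would then still have to recheck condition ii) for the tubed surface.

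The paper does exactly this at the level of maps, which is essentially your fallback. It attaches a $5$-dimensional $1$-handle to $B^4\times[0,1]$ along $\nu_\pm\subseteq B^4\times\{1\}$ and extends $\bar\theta\times\mathrm{id}$ over it by projecting to the distinguished $2$-cell. It then cancels the handle with a $2$-handle attached along $\gamma\ast\alpha\ast\beta$, where $\beta$ is chosen by the same surjectivity so that the loop is null in $BG$. The map is extended constantly over this $2$-handle. The result is a homotopy of $\bar\theta$ rel $S^3$ to a map whose transverse preimage of $0$ has one fewer component. The new surface is again a transverse preimage, so it inherits the extension and the section property without further checking.

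\textbf{The framing claim.} Your argument that $F\times\{pt\}$ is $0$-framed because it maps trivially to $BG$ is valid only when $G^{ab}\cong\Z$. When $G^{ab}\cong\Z_m$ (e.g.\ $G=D_n$, $m=2$, the case Part~II uses), triviality in $G$ only forces the abelianization of the pushoff of a loop $\gamma$ to be divisible by $m$. Indeed, reparametrizing the normal bundle of a $D_n$-surface by a twist of $2$ along a curve gives an extension $\Phi$ whose pushoff has framing $2$.

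The conclusion still holds, by a different argument:
\begin{itemize}
\item Once $F$ is connected, let $\ell(\gamma)\in\Z$ be the abelianization of the $\Phi$-pushoff $s_\Phi(\gamma)$.
\item The composite $\pi_1(E_F)\to G\to G^{ab}$ is abelianization reduced mod $m$, so $m\mid\ell(\gamma)$.
\item On the torus over $\gamma$, the $0$-framed section is $s_\Phi(\gamma)\mu^{-\ell(\gamma)}$.
\item Its image is therefore a conjugate of $\iota(1)^{-\ell(\gamma)}$, which equals $1$ because $\iota(1)^m=1$.
\end{itemize}
So condition ii) does hold, but by this argument rather than the one you gave.
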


\begin{proof}[Proof of Theorem~\ref{theorem:G-surface}]
 $(\implies)$ Let $K$, equipped with a quotient $\rho: \pi_1(E_K)\to G$, be a $G$-knot bounding a $G$-surface $F$. By hypothesis, $F \subseteq B^4$ is a locally flat, oriented, connected, and properly embedded surface in $B^4$ such that $\partial F = K$.  By Lemma~\ref{Lemma:2_disk_bundle}, $F$ has a trivial regular neighborhood, whose exterior we denote, as usual, by $E_F \subseteq  B^4$. By hypothesis,  $\rho$ admits an extension to an epimorphism $\bar{\rho} \colon \pi_1(E_F) \to G$.  This extension determines the commutative diagram~\eqref{eq:triangle} of group homomorphisms.   (To simplify notation, we use $\rho$ and $\overline{\rho}$ to denote both the group homomorphisms in question and the maps inducing them.) 
 
Denote the 0-framed longitude of $K$ by $K_0$, so  $K_0\subseteq\partial E_K$ and $\partial E_K=K_0\times\mu$ where $\mu$ is the preferred meridian of $K$. Since  $\rho$ restricts on $\pi_1(\partial E_K)$ to projection to $\mu$, we may assume, by the homotopy extension theorem, and using that $BG \sim K(G, 1)$, that we have a commutative diagram of spaces 
\[
\begin{tikzcd}
E_K \arrow[r, "\rho"] \ar[d, "inc_*", labels=left] & BG\\
E_F \arrow[ru, dashed, "\overline{\rho}", labels=below right] &
\end{tikzcd}
\]
inducing the given diagram of homomorphisms on fundamental groups, and such that $\rho$ sends $K_0 \times \mu = \partial{E_K}$ to $\mu\cong S^1$ by projection. That is, $\rho$ maps the longitude $K_0$ to a point in BG.  By Lemma~\ref{lemma:0-framed-Section}, we may assume the sphere bundle of $F$ is parameterized so that the restriction of 
$\overline{\rho}$ to $F \times S^1$ is given by projection onto the $S^1$ factor. Therefore, the map 
$\overline{\rho}$ extends over the regular neighborhood of $F$ by projecting $F \times B^2$ to 
$B^2\subseteq  KG,$ where $B^2$ denotes the distinguished 2-cell whose interior equals $KG\backslash BG$. Thus $\theta \colon S^3 \to KG$ extends over $B^4$, 
implying that $\Theta_{\rho}(K) = 0$ as claimed.
\vspace{3mm}

 $(\impliedby)$
 This argument uses  transversality to construct a (possibly disconnected) surface, which we then modify, attaching handles, to produce a connected one.

By hypothesis, $\Theta_{\rho}(K) = 0 \in \pi_3(KG)$.  Thus,  a map  $\theta\colon S^3 \to KG$  representing $\Theta_{\rho}(K)$ extends to a map  $\bar{\theta} \colon B^4 \to KG$ which, after a homotopy rel boundary, we may assume is transverse to
\[
0 \in B^2 \subseteq BG \cup_{S^1} B^2 \defeq KG.
\]
Thus, ${\bar{\theta}}^{-1}(0)$ is a (possibly disconnected) oriented and properly embedded surface $F \subseteq B^4$. By construction, $\theta^{-1}(\{0\})=K$ and $\bar{\theta}|_{S^3}=\theta$. Hence 
\[\partial{F} = (\bar{\theta}|_{S^3})^{-1}(\{0\}) = {\theta}^{-1}(\{0\}) =K.
\]
Note also that $\bar{\theta}|_{B^4\backslash F}$ is a map from a surface complement to $KG\backslash\{0\}\simeq BG$ whose restriction to $S^3\backslash K$ induces $\rho$. In other words, if $F$ were connected, then the map $(\bar{\theta}|_{B^4\backslash F})_{\ast}:\pi_1(E_F)\to G$ would be the desired extension $\bar{\rho}$. $F$ satisfies all the claims in the stated theorem, except that $F$ may have additional closed components.

Assume that $\bar{\theta}^{-1}(0)$ is disconnected.  We will change the map $\bar{\theta} \colon B^4 \to KG$ by a homotopy rel boundary to a new map $\bar{\theta'}$ such that $\bar{\theta'}^{-1}(0)$ has one fewer components. Since we are changing $\bar{\theta}$ by a homotopy rel boundary, the new surface $\bar{\theta}'^{-1}(0)$ still has boundary $K$ and satisfies the conclusions of the theorem. Since $\bar{\theta}^{-1}(0)$ is compact, it suffices to show that it's possible to do this once.  

We reduce the number of components in two steps. We will first attach a 5-dimensional 1-handle to  $B^4\times\{1\}\subseteq  B^4\times [0,1]$. We use this handle to connect two components of $F$ and then extend the map $\bar{\theta}$ over the new surface exterior. We will then attach a 2-handle to cancel the 1-handle, once again extending our map. The result will be a homotopy rel $B^4\times\{0\}$ from $\bar{\theta}$ to a map $\bar{\theta}': B^4\to KG$ with the property that $\bar{\theta}'^{-1}(0)$ has one fewer components than $\bar{\theta}^{-1}(0)$.

To achieve the above, we begin with the product homotopy, 
\[
\bar{\theta} \times id \colon B^4 \times [ 0, 1 ] \to KG.
\]
Choose two points, denoted $p_{\pm}$, that lie in distinct components,  denoted $F_{\pm} \times \{1\}$, of
\[
(\theta \times\{1\})^{-1}(\{0\}) \subseteq  B^4 \times \{1\}.
\]
As before,  $0$ denotes the center of the distinguished 2-cell $B^2 \subseteq KG$.

The points $p_{\pm}$ lie in small $2$-disks, $d^2_{\pm}$, one in each component $F_{\pm} \times \{1\} \subseteq B^4 \times \{1\}$ of our surface $F$. Let $\nu_{\pm}$ be the $4$-disks, $d^2_{\pm} \times B^2$, which are the restriction of the tubular neighborhood of $F$ to each of the two $2$-disks $d^2_{\pm}$:
\[
\nu^4_{\pm}:= (d^2_{\pm} \times B^2) \subseteq (B^4 \times \{1\}).
\]
Since $F = \bar{\theta}^{-1}(\{0\})$, it follows that for each point $x\in d_\pm \subseteq B^4 \times 1$, the restriction $\bar{\theta}_{|{x\times B^2}}$ is projection to the 2-cell $B^2\subseteq BG \cup_{S^1} B^2$.

We now attach a 5-dimensional $1$-handle,
\[
H \cong (B^2 \times B^2) \times [ -1, 1 ]
\]
to $\nu_{\pm} \subseteq B^4 \times \{1\}$ by identifying $(B^2\times B^2)\times\{\pm1\}$ with $\nu_\pm$ to obtain a new oriented $5$-manifold 
\[
W: = (B^4 \times [0,1]) \cup_{\nu_{\pm}} H.
\]
By construction, the restriction of $\bar{\theta}$ to the image of each foot of the 1-handle projects the first $B^2$ factor to $0$ and identifies the second $B^2$ factor with $B^2\subseteq KG$. Thus, we may extend the map 
\[
\bar{\theta} \times id \colon B^4 \times [ 0, 1 ] \to KG.
\]
over our newly attached $1$-handle by projecting, for each $t \in [-1,1]$, the 4-disk 
\[
B^2\times B^2\times \{t\} \subseteq H
\]
to the second $B^2$ factor and then to the $2$-disk  
\[
B^2 \subseteq BG \cup_{S^1} B^2 = KG.
\]
Since $\bar{\theta}$ maps each component of the attaching region, $d_\pm\times B^2\times \pm 1$, of $H$ by projecting onto the second $B^2$ factor and then identifying that with the corresponding 2-cell in $KG$, the above extension is well-defined and continuous.

Let $(x,y)\in d_+\times \partial(B^2)$ be a point such that $\bar{\theta}(x,y)=b,$ the basepoint of $\pi_1(KG)$. Let $\gamma:=(x, y)\times [-1,1]$, that is, $\gamma$ is a saddle of the newly attached $1$-handle, and regard $\gamma$ as a path from $(x, y)\times \{1\}$ to $(x, y)\times \{-1\}$. The image of $\gamma$ under the extension of $\bar{\theta}$ defined above is a point $b$ in the boundary of the distinguished 2-cell $B^2\subseteq KG$. Now let $\alpha$ be any path connecting $(x,y)\times\{-1\}$ to $(x,y)\times\{1\}$ and whose interior is contained in the complement of the attaching region of the 1-handle $H$. 
Since $\pi_1(E_F) \twoheadrightarrow G$ is onto, we can find a second path, $\beta$, this time from $(x,y)\times\{1\}$ to itself, 
such that the image $\bar{\theta}_\ast(\alpha\ast\beta)$ of the concatenation of paths is null-homotopic in $BG$. It follows that $\bar{\theta}_\ast(\gamma\ast\alpha\ast\beta)$ is trivial as well, since the image of $\gamma$ is a constant loop at $b$. After a small isotopy of $\gamma\ast\alpha\ast\beta$ and a homotopy of $\bar{\theta}$, we may assume that the path $\gamma\ast\alpha\ast\beta$ is embedded and has a tubular neighborhood mapped to a point. 

Attaching a $2$-handle to this saddle loop cancels the $1$-handle, creating a $5$-ball, $B^4\times [0, 1]$. Extending our map via a constant map over this $2$-handle provides the desired homotopy  from our initial map $\bar{\theta}$, on $B^4\times\{0\}$, to a new map $\bar{\theta'}: B^4\to KG$, whose domain is the newly constructed $B^4 \times \{1\}$, with the property that $\bar{\theta}'^{-1}(0)$ has one fewer components than $\bar{\theta}^{-1}(0)$. This proves the last claim in our theorem.  
\end{proof}

\subsection{Taut groups and consequences}\label{sec:taut-groups}

\begin{definition}\label{tautgroup}
A based group $(G, \iota(1))$ is \emph{taut} if whenever an element $g \in [G, G]$ satisfies $\iota(1)\, g\, \iota(1)^{-1} = g$, then $g = e$.
\end{definition}
 
\begin{example}\label{ex:dihedral-taut}
An easy direct calculation shows that for $n$ odd the dihedral group $D_n$, based by the reflection $u$, and the $n$-bottle group $\mathbb{D}_n$ (Definition~\ref{def:n-bottle}), based by the generator $u$, are taut. This also follows from the upcoming Lemma~\ref{lem:meta-is-taut}.
\end{example}

Note that for $K$ a non-trivial knot, the group $\pi_1(E_K)$ is {\em not} taut. Indeed, the 0-framed longitude of $K$ is a non-trivial element in the commutator subgroup of $\pi_1(E_K)$ and commutes with the basing.

The following Lemma shows that taut quotients of knot groups abound. Indeed, every knot with non-trivial determinant admits a dihedral quotient and then, by Corollary~\ref{cor:char-knots-induce-both}, also a based metabelian quotient.

\begin{lemma}\label{lem:meta-is-taut}
Let $K\subseteq S^3$ be a knot and $\pi: = \pi_1(E_K)$. Any based metabelian  quotient of $\pi$ is taut.
\end{lemma}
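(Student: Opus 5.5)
We will show that if $(G,\iota(1))$ is a based metabelian quotient of $\pi$, with basing $t:=\iota(1)=\rho(\mu)$, then conjugation by $t$ has no nontrivial fixed points on $[G,G]$. The proof is algebraic: reduce to the Alexander module of $K$, then use the classical fact that $t-1$ acts invertibly there.

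First we identify $[G,G]$ as a module. Since $\rho\colon\pi\twoheadrightarrow G$ is surjective, $\rho([\pi,\pi])=[G,G]$. Because $G$ is metabelian, $G''=1$, so $\rho(\pi'')\subseteq G''=1$, and $\rho$ factors through $\pi/\pi''$. Hence $[G,G]$ is a quotient of the abelian group $\pi'/\pi''$, and the surjection is equivariant for the action of $\mu$ by conjugation on $\pi'/\pi''$ and of $t$ by conjugation on $[G,G]$. Now $\pi'/\pi''\cong H_1(\widetilde{E_K};\Z)$ is the Alexander module $A$, a $\Lambda=\Z[t^{\pm1}]$-module with $t$ acting as conjugation by $\mu$. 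So $[G,G]\cong A/N$ for some $\Lambda$-submodule $N$, written additively, with $t$ acting by conjugation by $\iota(1)$. In this language, $g\in[G,G]$ commuting with $\iota(1)$ means exactly $(t-1)g=0$.

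The main step is to show that $t-1$ acts as an automorphism of $A$. The standard argument uses the Wang sequence of the infinite cyclic cover, or equivalently a presentation matrix $tV-V^T$ of $A$ from a Seifert matrix $V$. Setting $t=1$ gives $V-V^T$, the intersection form on the Seifert surface, which is unimodular. Hence $A/(t-1)A\cong H_1(E_K)/\langle\text{image}\rangle$ vanishes; more directly, $A\otimes_\Lambda \Lambda/(t-1)=0$. This gives surjectivity of $t-1$ on $A$. For injectivity we use that $A$ is $\Z$-torsion free of finite rank, and is annihilated by $\Delta_K(t)$ with $\Delta_K(1)=\pm1$, so $t-1$ is coprime to the annihilator. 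Alternatively, surjectivity alone suffices for what follows.

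Finally we pass to the quotient. The relevant fact is that $t-1$ is invertible as an element of $\Lambda/(\Delta_K)$, because $\Delta_K(1)=\pm1$ gives $\Delta_K(t)=\pm1+(t-1)q(t)$, so $(t-1)q(t)\equiv\mp1$ modulo $\Delta_K$. Since $\Delta_K$ annihilates $A$, it also annihilates every quotient $A/N$. Therefore $t-1$ acts invertibly on $[G,G]\cong A/N$, and $(t-1)g=0$ forces $g=0$, that is, $g=e$. This proves $G$ is taut.

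The main obstacle is the claim that $\Delta_K$ annihilates $A$ with $\Delta_K(1)=\pm1$. We will derive it from the square presentation matrix $tV-V^T$, using the adjugate to see that $\det(tV-V^T)$ annihilates $A$, together with $\det(V-V^T)=\pm1$. Everything else is routine.
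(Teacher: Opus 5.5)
Your proof is correct, and it follows a different route from the paper's.

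\textbf{What the two proofs share.} Both identify $[G,G]$ with a quotient $A/N$ of the Alexander module. Both then reduce tautness to injectivity of $t-1$ on $A/N$.

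\textbf{How $[G,G]$ is identified with $A/N$.} The paper works topologically. It lifts $\rho$ to covers of $E_K$ and $BG$ and checks that an element fixed by conjugation lifts to a homology class killed by $\mu-1$. You get the same identification algebraically from $\rho(\pi'')\subseteq G''=1$. In additive notation on $G'$, commuting with $\iota(1)$ is then literally $(t-1)g=0$, which is shorter.

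\textbf{How invertibility passes to the quotient.} This is the genuine difference.
\begin{itemize}
\item The paper cites Levine for the fact that $\mu-1$ is an automorphism of $A$. It notes that $\mu-1$ is therefore surjective on $A/N$. It then appeals to Eisenbud's Corollary~4.4(a) to conclude that $\mu-1$ is still an isomorphism on every finitely generated quotient.
\item You instead observe that $\Delta_K=\det(tV-V^T)$ annihilates $A$, by the adjugate, and that $\Delta_K(1)=\det(V-V^T)=\pm1$. Hence $t-1$ is a unit in $\Lambda/(\Delta_K)$, with explicit inverse $\mp q(t)$. This makes $t-1$ invertible on $A$ and on all its quotients at once, with no external citations.
\end{itemize}

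\textbf{A redundant paragraph.} Your final paragraph with $N=0$ already proves that $t-1$ is an automorphism of $A$. So your ``main step'' paragraph can be deleted, including the unproved and unneeded claim that $A$ is $\Z$-torsion free.

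\textbf{A small bonus.} Your argument never uses a splitting $G\cong G'\rtimes\Z$ or a $\Z$-cover of $BG$, both of which the paper's write-up invokes. So it applies verbatim when $G^{ab}$ is finite cyclic, as for $D_n$.
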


\begin{proof}
Suppose we are given an epimorphism $\rho: \pi\twoheadrightarrow G$ where $G$ is a metabelian group, $\pi$ is based by a meridian $\mu,$ and its image, $\rho(\mu)=:m$, is a basing for $G$. 

Let $\pi':=[\pi, \pi]$ and $G':=[G, G].$ We have $\pi\cong \pi'\rtimes \mathbb{Z}$ and  $G\cong G'\rtimes \mathbb{Z}$. Since $\rho$ is a based homomorphism, it is a map $\pi'\rtimes \mathbb{Z} \to G'\rtimes \mathbb{Z}$, that is, it respects the semi-direct product structure and the action on $\pi'$ by $\mu$ descends to an action on $G'$ by $m$. 

Let $\widetilde{E_K}$ and $\widetilde{BG}$ denote the $\mathbb{Z}$ covers induced by the abelianization homomorphisms of $\pi$ and $G,$ respectively. Denoting by $\rho$ a map $E_K\to BG$ inducing the given quotient $\rho:\pi\to G$, we obtain a commutative diagram:
\[
\begin{tikzcd}
\widetilde{E_K} \arrow[r, dashed, "\tilde{\rho}"] \ar[d, "p" left ] & \widetilde{BG} \ar[d]\\
E_K \arrow[r, "\rho"] & BG
\end{tikzcd}
\]
where the map $\tilde{\rho}$ is the lift of the composition $\widetilde{E_K}\to BG$. (This exists since the fundamental group of each covering map has image the respective commutator subgroup. Moreover, surjectivity of $\rho: \pi\twoheadrightarrow G$ implies that $\tilde{\rho}$ induces a surjection on first homology.) 

Note that 
\[H_1(E_K; \mathbb{Z}[\mu, \mu^{-1}])\cong H_1(\widetilde{E_K};\mathbb{Z})\cong H_1(\pi'; \mathbb{Z})\cong \pi'/[\pi',\pi'], 
\]
so $H_1(\widetilde{E_K};\mathbb{Z})$, the Alexander module of $K$, is a $\mathbb{Z}[\mu, \mu^{-1}]$ module on which $\mu$ acts by translation. This action has the property, surely well known, that if an element in $\pi'$ is fixed under conjugation by $\mu$, its lifts to $H_1(\widetilde{E_K};\mathbb{Z})$ are in the kernel of $\mu-1$. We recall this below.  

Let $t\in E_K$ be the basepoint and denote its lifts to $\widetilde{E_K}$ by $t_i, i\in \mathbb{Z}$, of course labeled so that $\mu\cdot t_i=t_{i+1}.$  Given any loop based at $t$ which represents an element $x\in \pi'$, the lift $\tilde{x}_i$ of $x$ based at $t_i$ is also a loop and thus represents a homology class. By definition, $\mu\cdot\tilde{x}_i=\tilde{x}_{i+1}$. Now suppose $x\in \pi'$ is fixed under conjugation by $\mu,$ that is, $\mu^{-1} x\mu=x$. The lifts of $x$ and $\mu^{-1} x\mu$ based at $t_i$ are thus homotopic and therefore homologous. But the latter can be written as $\tilde{\mu}^{-1}_{i}\tilde{x}_{i+1}\tilde{\mu}_i$, where concatenation of paths is to be read left to right and $\tilde{\mu}_i$ of course denotes the lift of $\mu$ based at $t_i$. We derive the following equality in $H_1(\widetilde{E_K};\mathbb{Z})$:
\[
[\tilde{x}_i]=[\tilde{\mu}^{-1}_{i}\tilde{x}_{i+1}\tilde{\mu}_i]=[\tilde{x}_{i+1}]=\mu\cdot[\tilde{x}_i],
\]
from which it follows that $[\tilde{x}_i]$ is in the kernel of $\mu-1$, as we set out to show.  

The analogous statements hold for the covering map $\widetilde{BG}\to BG$. That is, the conjugation action of $m$ on $G'$ lifts to an action by $m$ on $H_1(\widetilde{BG},\mathbb{Z}),$ a $\mathbb{Z}[m, m^{-1}]$ module;
and,  if $y\in G'$ is fixed by conjugation by $m$, then $m$ fixes the homology class represented by $\tilde{y}$, where $\tilde{y}$ is any lift of $y$.

Now suppose $y\in G'$ has the property that $mym^{-1}=y$. Our aim is to show that $G$ is taut, in other words, that $y=e$.  Let $\tilde{y}\in \pi_1(\widetilde{BG})\cong G'$ be a lift of $y$. By the above, we have that $m\tilde{y}=\tilde{y}$ or $(m-1)\tilde{y}=0$. 

If $K \subseteq S^3$ is any knot and $A := H_1(E_K; \Z[\mu, \mu^{-1}])$ denotes its Alexander module, multiplication by $\mu-1$ is an isomorphism on $A$~\cite[Proposition~{1.2}]{MR461518}. Since $A$ is a finitely generated module over $\Z[\mu, \mu^{-1}]$, it follows from~\cite[Corollary~4.4 (a)]{eisenbud1995}, that multiplication by $(\mu-1)$ descends to an isomorphism on any quotient module of $A$. In other words, $(\mu-1)\cdot\tilde{y}=0$ implies $\tilde{y}=0$. Thus, $y=e$ as claimed.
\end{proof}

The hypotheses of the $G$-Surface Theorem require $K$ to be a $G$-knot and the surface $F$ to be a $G$-surface (Definitions~\ref{def:G-Knot} and~\ref{def:G-surface}). Both conditions ask that certain longitudes map trivially under $\rho$ and $\bar{\rho}$. When $G$ is taut, these conditions are satisfied automatically, as shown by the following lemma and proposition.

\begin{lemma}\label{lem:taut-G-knot}
Let $G$ be a taut group and $\rho \colon \pi_1(E_K) \twoheadrightarrow G$ a based epimorphism. Then the $0$-framed longitude of $K$ maps trivially under $\rho$. In particular, $K$ is a $G$-knot.
\end{lemma}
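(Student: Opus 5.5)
The plan is to check the two facts the definition of tautness needs about the image $\rho(\lambda)$ of the $0$-framed longitude $\lambda$: it lies in $[G,G]$, and it commutes with the basing $\iota(1)=\rho(\mu)$. Tautness (Definition~\ref{tautgroup}) then forces $\rho(\lambda)=e$.

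\textbf{Step one: $\rho(\lambda)\in[G,G]$.} The $0$-framed longitude bounds a Seifert surface in $E_K$, so $\lambda$ lies in the commutator subgroup $\pi':=[\pi_1(E_K),\pi_1(E_K)]$. Equivalently, $\lambda$ has linking number zero with $K$, so it maps to $0$ in $H_1(E_K)\cong\Z$. Any homomorphism carries commutators to commutators, so $\rho(\lambda)\in\rho(\pi')=[G,G]$; this uses surjectivity of $\rho$ only to write $\rho(\pi')=[G,G]$, and the inclusion is all we need anyway.

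\textbf{Step two: $\rho(\lambda)$ commutes with $\rho(\mu)$.} With the basepoint on $\partial E_K$, the classes $\lambda$ and $\mu$ both come from $\pi_1(\partial E_K)\cong\pi_1(T^2)\cong\Z^2$, so $\mu\lambda\mu^{-1}=\lambda$ in $\pi_1(E_K)$. Applying $\rho$ gives $\rho(\mu)\rho(\lambda)\rho(\mu)^{-1}=\rho(\lambda)$. Since $\rho$ is based, $\rho(\mu)=\iota(1)$, so $\iota(1)\,\rho(\lambda)\,\iota(1)^{-1}=\rho(\lambda)$.

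\textbf{Conclusion.} Tautness now gives $\rho(\lambda)=e$. For the ``in particular'' claim, condition i) of Definition~\ref{def:G-Knot} holds by hypothesis. For condition ii), take a map $E_K\to BG$ inducing $\rho$, which exists and is unique up to homotopy because $BG$ is a $K(G,1)$. The longitude then maps to a nullhomotopic loop, since its class in $\pi_1(BG)=G$ is trivial.

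There is no real obstacle. The only point to be careful about is that $\rho(\mu)$ is exactly the basing element $\iota(1)$, not merely a conjugate of it. This is ensured by the convention that $G$ is based by the image of the chosen meridian. If a different meridian is used, Remark~\ref{rem:basing-stuff} lets us change basings by an inner automorphism, under which both tautness and the triviality of $\rho(\lambda)$ are preserved.
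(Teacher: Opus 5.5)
Your proof is correct and matches the paper's argument. The longitude lies in the commutator subgroup because it bounds a Seifert surface, and it commutes with the basing meridian in $\pi_1(\partial E_K)$, so its image is an element of $[G,G]$ fixed by conjugation by $\iota(1)$, which tautness forces to be trivial. Your remarks on condition ii) of the $G$-knot definition and on the basing convention are sound additions that the paper leaves implicit.
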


\begin{proof}
The $0$-framed longitude of $K$ lies in $[\pi_1(E_K), \pi_1(E_K)]$, since it bounds a Seifert surface. Its image under $\rho$ therefore lies in $[G, G]$. Moreover, the longitude commutes with any meridian, so its image commutes with the basing $m = \rho(\mu)$. Since $G$ is taut, any element of $[G, G]$ fixed by conjugation by $m$ is trivial.
\end{proof}

\begin{proposition}\label{prop:dihedral-surface-section}
Let $(F, K) \subseteq (B^4, S^3)$ be a properly embedded, connected, oriented surface and let $\bar{\rho} \colon \pi_1(E_F) \twoheadrightarrow G$ be a based surjection to a taut group $G$. Then $F$ is a $G$-surface; that is, $F$ admits a section of its normal circle bundle whose image under $\bar{\rho}$ is trivial.
\end{proposition}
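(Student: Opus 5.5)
The plan is to mimic the proof of Lemma~\ref{lem:taut-G-knot}, now applied to the whole image of a $0$-framed section rather than to a single longitude. By Lemma~\ref{lemma:0-framed-Section}, choose a section $s\colon F\to S(F)\subseteq \partial E_F$ such that $p\circ inc\circ s$ is nullhomotopic. The image $s(F)$ is a connected subspace of $E_F$. Since $BG$ is a $K(G,1)$, requiring that $s(F)$ have trivial image in $BG$ (up to homotopy) amounts to showing that the homomorphism
\[
\bar{\rho}_\ast\circ inc_\ast\circ s_\ast\colon \pi_1(F)\to G
\]
is trivial. Here we use a path in $S(F)$ to connect the basepoint of $s(F)$ to the basepoint of $E_F$ on the boundary.

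\textbf{Step 1: the image lies in $[G,G]$.} For $c\in\pi_1(F)$, the element $inc_\ast s_\ast(c)$ maps to $0$ under the abelianization $p\colon\pi_1(E_F)\to\Z$, by the choice of $s$. Now $\pi_1(E_F)^{ab}\cong\Z$, generated by the meridian, since $H_1(E_F)\cong\Z$ by Alexander duality. Because $\bar{\rho}$ is surjective, the induced map on abelianizations sends the kernel of $p$, which is the commutator subgroup, into $[G,G]$. Hence $g:=\bar{\rho}_\ast inc_\ast s_\ast(c)\in[G,G]$.

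\textbf{Step 2: $g$ commutes with the basing.} The circle bundle $S(F)\cong F\times S^1$ is trivial by Lemma~\ref{Lemma:2_disk_bundle}. Therefore $\pi_1(S(F))\cong\pi_1(F)\times\Z$, and the $\Z$ factor is generated by the fiber, which is a meridian of $F$. Basing at a point on $S(F)\cap\partial E_K$, this fiber is the preferred meridian $\mu$ of $K$. Thus $s_\ast(c)$ commutes with $\mu$ in $\pi_1(S(F))$, and hence in $\pi_1(E_F)$. Applying $\bar{\rho}$, and using that it is based so that $\bar{\rho}(\mu)=\iota(1)$, we get $\iota(1)\,g\,\iota(1)^{-1}=g$.

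\textbf{Step 3: conclude.} Tautness (Definition~\ref{tautgroup}) now gives $g=e$ for every $c\in\pi_1(F)$. So the composite map $F\to BG$ is nullhomotopic, and after a homotopy we may take it to be constant at the basepoint. This is exactly condition ii) of Definition~\ref{def:G-surface}.

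The main obstacle is the basepoint bookkeeping in Step 2. We must make sure that the fiber meridian through the chosen basepoint of $s(F)$ really maps to the basing $\iota(1)$, and not merely to a conjugate of it. I would handle this by choosing the basepoint of $s(F)$ on $s(K)\subseteq\partial E_K$, at the basepoint used for $\pi_1(E_K)$. Then the fiber there is $\mu$ on the nose, and commutativity of diagram~\eqref{eq:triangle} gives $\bar{\rho}(\mu)=\rho(\mu)=\iota(1)$. If a conjugate appeared anyway, conjugating $g$ by the same element would reduce to the previous case, since $[G,G]$ is normal.
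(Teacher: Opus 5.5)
Your proposal is correct and follows the paper's own proof. Both arguments take a $0$-framed section, show that its image in $G$ lies in $[G,G]$ and commutes with the basing via the product structure of $S(F)\cong F\times S^1$, kill it by tautness, and conclude null-homotopy because $BG$ is a $K(G,1)$. Your basepoint bookkeeping (basing on $\partial E_K$, and the reduction of the conjugate case using normality of $[G,G]$) fills in a detail the paper passes over silently.
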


\begin{proof}
By Lemma~\ref{lemma:0-framed-Section}, there exists a $0$-framed section $\sigma \colon F \to S(F) \subseteq \partial E_F$, meaning the composition $F \xrightarrow{\sigma} S(F) \xrightarrow{\mathrm{inc}} E_F \xrightarrow{p} S^1$ is constant, where $p$ induces the abelianization $\pi_1(E_F) \to \Z$. We show that $(\bar{\rho} \circ \sigma)_*$ is trivial on $\pi_1(F)$.

Let $\{a_i\}$ be generators for $\pi_1(F)$. Since $F$ is orientable, $S(F) \cong F \times S^1$, and $\sigma_*(a_i)$ commutes with the meridian $\mu$ in $\pi_1(F \times S^1) = \pi_1(F) \times \Z$. This commutativity is preserved under $\mathrm{inc}_* \colon \pi_1(S(F)) \to \pi_1(E_F)$. Hence $\bar{\rho}_*(\sigma_*(a_i))$ commutes with the basing $m = \bar{\rho}_*(\mu)$ in $G$.

Since $\sigma$ is $0$-framed, $\sigma_*(a_i)$ lies in the kernel of the abelianization $\pi_1(E_F) \to \Z$, so $\bar{\rho}_*(\sigma_*(a_i)) \in [G, G]$. We thus have an element of $[G,G]$ that is fixed by conjugation by $m$. Since $G$ is taut, this element is trivial.

Since $BG$ is an Eilenberg--MacLane space and $F$ is $2$-dimensional, $\bar{\rho} \circ \sigma$ is nullhomotopic, so $F$ is a $G$-surface.
\end{proof}

\begin{remark}\label{rem:nonor-G-surface}
The non-orientable analogue of this result does not follow from tautness alone. Lemma~\ref{lemma:nonor-section} establishes the corresponding statement for $G = D_n$ ($n$ odd), but uses the specific structure of the dihedral group (the centralizer of a reflection is $\{1, s\}$ and $s^2 = 1$); see also Remark~\ref{rem:nonor-taut}.
\end{remark}

\begin{corollary}[Main Extension Result]\label{thm:main}
Suppose $\rho\colon \pi_1(E_K) \to G$ is a based epimorphism to a {\em taut} group $G$.  Then, there is an oriented smooth, connected surface $F \subseteq B^4$ with $\partial F = F \cap S^3 = K$ for which there exists an extension of $\rho$ to an epimorphism $\bar{\rho} \colon \pi_1(E_F) \to G$ if and only if  
\[
0 = \Theta_{\rho}(K) \in \pi_3(KG).
\]
\end{corollary}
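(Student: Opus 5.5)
The plan is to reduce Corollary~\ref{thm:main} to the $G$-Surface Theorem~\ref{theorem:G-surface}, using Lemma~\ref{lem:taut-G-knot} and Proposition~\ref{prop:dihedral-surface-section} to remove the extra hypotheses of that theorem. First, since $G$ is taut and $\rho$ is a based epimorphism, Lemma~\ref{lem:taut-G-knot} shows that the $0$-framed longitude of $K$ maps trivially. So $(K,\rho)$ is a $G$-knot in the sense of Definition~\ref{def:G-Knot}, realized by a map $E_K\to BG$ that sends the longitude to the basepoint. Hence $\Theta_\rho(K)\in\pi_3(KG)$ is defined, and by Remark~\ref{rem:basing-stuff} its vanishing does not depend on the choice of meridian.

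For the ``if'' direction, assume $\Theta_\rho(K)=0$. Theorem~\ref{theorem:G-surface} then produces a smooth $G$-surface $F\subseteq B^4$ with $\partial F=K$, oriented and properly embedded. The proof of that theorem also makes $F$ connected, since the $1$-handle/$2$-handle argument removes closed components. It further gives a map $\bar\rho\colon E_F\to BG$ extending $\rho$. The extension is automatically surjective on $\pi_1$, because $\rho=\bar\rho_*\circ i_*$ is already onto. This is exactly the surface required.

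For the ``only if'' direction, suppose $F$ is an oriented, smooth, connected surface with $\partial F=K$ and an epimorphism $\bar\rho\colon\pi_1(E_F)\to G$ with $\bar\rho\circ i_*=\rho$. The basing is $\bar\rho(\mu)=\rho(\mu)$, so $\bar\rho$ is a based surjection onto the taut group $G$. Proposition~\ref{prop:dihedral-surface-section} then says $F$ is a $G$-surface. We realize $\bar\rho$ by a map $E_F\to BG$, which is possible since $BG$ is a $K(G,1)$. We arrange that its restriction to $E_K$ agrees with the chosen $\rho$ rel boundary, by the homotopy extension property together with the uniqueness of maps into $BG$ up to homotopy inducing a given homomorphism. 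The forward direction of Theorem~\ref{theorem:G-surface} then gives $\Theta_\rho(K)=0$.

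The main obstacle is a compatibility check in this last direction. The $0$-framed section of Lemma~\ref{lemma:0-framed-Section} restricts on $\partial F=K$ to a section of the normal circle bundle of $K$. This must be the $0$-framed longitude of $K$, so that the parametrization $F\times S^1$ on the boundary matches the parametrization $\lambda\times\mu$ used to define $\theta$. I would argue this by noting that the section's restriction to $K$ has linking number zero with $K$: it is null in $H_1(E_F)\cong\mathbb{Z}$, which is detected by linking with $K$ (equivalently, by intersection with $F$). Any discrepancy between the two framings would be a multiple of $\mu$, and its image under $p$ would then be nonzero, a contradiction. Everything else is direct citation.
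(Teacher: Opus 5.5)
Your proposal is correct and follows the paper's proof exactly: Lemma~\ref{lem:taut-G-knot} makes $(K,\rho)$ a $G$-knot, the $G$-Surface Theorem~\ref{theorem:G-surface} gives the ``if'' direction, and Proposition~\ref{prop:dihedral-surface-section} followed by the forward direction of Theorem~\ref{theorem:G-surface} gives the converse. The paper leaves your extra checks implicit, and they are sound: surjectivity of $\bar\rho$, connectedness via the handle argument, and the fact that the $0$-framed section restricts on $K$ to the $0$-framed longitude (which in fact holds for any section, since $[K]=0$ in $H_1(F)$).
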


\begin{proof}
By Lemma~\ref{lem:taut-G-knot}, $K$ is a $G$-knot. If $\Theta_\rho(K) = 0$, the $G$-Surface Theorem~\ref{theorem:G-surface} produces a $G$-surface $F$; in particular, $\rho$ extends over $E_F$. Conversely, if $\rho$ extends over some oriented surface $F$, Proposition~\ref{prop:dihedral-surface-section} implies $F$ is a $G$-surface, and the $G$-Surface Theorem gives $\Theta_\rho(K) = 0$.
\end{proof}

\begin{example}
Let $G=\mathbb{Z}$ and let $\rho: \pi_1(E_K)\to \Z$ be the abelianization. The space $KG$, that is $K\Z$ in this case, has the homotopy type of a point, consistent with the fact that for any knot $K \subseteq  S^3$ the abelianization homomorphism extends, for instance over a pushed-in Seifert surface. 
\end{example}

Recall that, by Lemma~\ref{lem:meta-is-taut}, all based metabelian quotients of knot groups are taut. Therefore, for any quotient $\rho$ of a knot group onto a metabelian group, our result gives a sharp obstruction to the existence of an extension of $\rho$ over the exterior of a smooth, connected oriented surface $F \subseteq B^4$. In Theorem~\ref{thm:equivalence} we show that the vanishing of $\Theta_\rho(K)$ also determines precisely when a dihedral quotient of $\pi_1(E_K)$ extends over a surface in $B^4$. It follows, for instance, that a 3-colorable twist knot $K_n$ bounds such a surface if and only if $n\equiv 2\mod{9},$ see Example~\ref{ex:twist}.

As noted in Remark~\ref{rem:category}, the word ``smooth'' in the statements of the $G$-Surface Theorem~\ref{theorem:G-surface} and Corollary~\ref{thm:main} can be replaced by ``locally flat''.

\begin{corollary} \label{cor:categories}
    A $G$-knot $K$ bounds a locally flat $G$-Surface if and only if $K$ bounds a smooth $G$-Surface.
\end{corollary}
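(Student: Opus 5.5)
The plan is to route both directions through the homotopy class $\Theta_\rho(K)\in\pi_3(KG)$, which is defined purely homotopy-theoretically and does not depend on any category. The smooth-to-locally-flat direction is immediate, since a smooth $G$-surface is in particular a locally flat one; all the work is in the converse.

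Suppose $F\subseteq B^4$ is a locally flat, oriented, connected $G$-surface for $K$ with extension $\bar\rho\colon E_F\to BG$. I would rerun the $(\implies)$ direction of the proof of Theorem~\ref{theorem:G-surface} in the topological category, to conclude that $\theta$ extends over $B^4$ and hence $\Theta_\rho(K)=0$. Once that is done, the $(\impliedby)$ direction of Theorem~\ref{theorem:G-surface} produces a \emph{smooth} $G$-surface, because it uses smooth transversality to $0\in B^2\subseteq KG$ on $B^4$, together with smooth handle attachments. This gives the corollary.

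The topological input needed in the $(\implies)$ argument is a normal bundle for $F$. Locally flat surfaces in $4$-manifolds admit normal vector bundles (Freedman--Quinn, \S9.3), so the topological analogue of Lemma~\ref{Lemma:2_disk_bundle} holds. Its proof is the same once a normal bundle exists: the relative Euler class is the self-intersection, which vanishes because $H_2(B^4,S^3)=0$. Hence $F$ has a tubular neighborhood $F\times B^2$ with exterior $E_F$ and boundary circle bundle $F\times S^1$.

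With the tubular neighborhood in hand, Lemma~\ref{lemma:0-framed-Section} is purely homotopy-theoretic and produces a $0$-framed section. Condition ii) of Definition~\ref{def:G-surface} then lets me homotope $\bar\rho$ rel $E_K$ so that on $F\times S^1$ it is projection to $S^1\subseteq BG$. After that, extending over $F\times B^2$ by projection to the distinguished $2$-cell goes through verbatim, and the resulting map $B^4\to KG$ restricts to $\theta$ on $S^3$.

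The main obstacle is the existence of the topological normal bundle: this is where locally flat genuinely differs from smooth, and I would simply cite Freedman--Quinn for it. A secondary point is to check that the homotopy of $\bar\rho$ can be taken rel $E_K$, so that its restriction to $S^3$ is still $\theta$ up to based homotopy. Since $BG$ is aspherical and the pair $(E_F,E_K\cup\partial)$ has the homotopy extension property, this should be standard.
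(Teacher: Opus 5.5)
Your proposal is correct and is essentially the paper's argument. The paper also uses the existence of tubular neighborhoods for locally flat surfaces to rerun the $(\implies)$ direction of the $G$-Surface Theorem and conclude $\Theta_\rho(K)=0$, then invokes the $(\impliedby)$ direction, which yields a smooth surface via smooth transversality. You fill in more detail than the paper does (the Freedman--Quinn citation, the vanishing relative Euler number, the homotopy rel $E_K$), but the route is the same.
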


\begin{proof} Assume a $G$-knot $K$ bounds a topologically locally flat $G$-surface $F\subseteq B^4$. Since locally flat surfaces admit tubular neighborhoods, by the same argument as in proof of Theorem~\ref{theorem:G-surface}, the class $\Theta_\rho(K)$ vanishes. Hence, again by Theorem~\ref{theorem:G-surface}, a smooth $G$-surface exists as well.
\end{proof}

\begin{remark}\label{rem:gompf}
    Bob Gompf~\cite{gompf-email} pointed out an alternative proof of the above corollary, relying on~\cite{freedman1982topology}: a locally flat surface can be topologically isotoped to have only isolated singularities, then locally smoothed in small neighborhoods of these singularities, while preserving the existence of $\bar{\rho}$.
\end{remark}

If a $G$-knot bounds a $G$-surface in $B^4$, the following corollary shows that it in fact bounds a $G$-disk in some oriented $4$-manifold.
\begin{corollary}\label{cor:surgery}
  Let $K$ be a $G$-knot bounding a $G$-surface $F \subseteq B^4$. Then there exists a compact, oriented $4$-manifold $W$ with $\partial W=S^3$ such that $K$ bounds a smooth $G$-disk in $W$.
\end{corollary}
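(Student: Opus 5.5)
The plan is to surger the surface $F$ down to a disk while preserving the extension $\bar\rho$. Each surgery will be an ambient surgery along a curve of $F$ whose normal push-off maps trivially to $BG$. Surgery is performed inside $B^4$, which changes the ambient manifold, and it is the reason $W$ appears in the statement.

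Concretely, take a $G$-surface $F\subseteq B^4$ with $\partial F=K$. Choose $\bar\rho\colon E_F\to BG$ and a $0$-framed section $s\colon F\to S(F)$ with $\bar\rho\circ s$ null-homotopic, as in Definition~\ref{def:G-surface}. By Lemma~\ref{Lemma:2_disk_bundle} and Lemma~\ref{lemma:0-framed-Section}, identify a tubular neighborhood with $F\times B^2$ so that $\bar\rho|_{F\times S^1}$ is projection to $S^1\subseteq BG$ followed by the basing loop. By the first half of the proof of Theorem~\ref{theorem:G-surface}, this gives a map $\Phi\colon B^4\to KG$ with $\Phi^{-1}(0)=F$.

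Let $g$ be the genus of $F$ and choose a symplectic basis of simple closed curves $a_1,b_1,\dots,a_g,b_g$ on $F$. Take a curve $a=a_i$. Its neighborhood in $B^4$ is $a\times B^1\times B^2\cong S^1\times B^3$, and $\Phi$ restricted to it is projection to the $B^2$ factor. Remove $S^1\times B^3$ from $B^4$ and glue in $B^2\times S^2$ along $S^1\times S^2$; this is surgery on the circle $a$. The framing is the product framing coming from $F$ and the normal bundle. Inside $B^2\times S^2$, write $S^2=(B^1\times B^2)\cup(\text{cap})$, so that $F\cap(S^1\times B^3)=S^1\times B^1\times\{0\}$ is replaced by $B^2\times\partial B^1\times\{0\}$, two disks. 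This compresses $F$ along $a$ and lowers its genus by one.

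The map extends over the new piece as follows. On $B^2\times(B^1\times B^2)$, use projection to the $B^2$ factor. The complementary region is $B^2\times B^3$ with boundary contribution $S^1\times(\text{cap})$. On $S^1\times\partial(\text{cap})$ the map already lies in $BG$ and equals the basing loop in the $\partial B^2$ direction and $\bar\rho$ along $a$ pushed off. Extending over $B^2\times(\text{cap})$, a $4$-ball, amounts to extending a map from its boundary $S^3$ into $KG$. This is possible if the boundary map factors, up to homotopy, through something contractible in $KG$. The map on this $S^3=\partial(B^2\times B^2)$ is the join-type map built from the basing loop on one circle and $\bar\rho(s(a))$ on the other.

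Because $\bar\rho(s(a))$ is null-homotopic in $BG$ (the $G$-surface condition), the map is homotopic to one that is the standard map $\partial(B^2\times B^2)\to B^2\subseteq KG$ on the first factor. That map is null-homotopic, since it extends by projection, and so the extension exists. Preimage of $0$ stays the compressed surface, and the map remains transverse.

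After compressing along $a_1,\dots,a_g$, the surface is a disk $D$ with $\partial D=K$. It lies in $W=B^4\#_g(S^2\times S^2)$, or possibly twisted $\mathbb{CP}^2$ sums; in either case $W$ is compact and oriented with $\partial W=S^3$. The complement map restricted to $E_D$ still extends $\rho$ and is surjective because $\rho$ is. The $0$-framed section condition is vacuous on a disk. I would also check $D$ is connected, which holds since compressions along nonseparating curves keep it connected. Hence $D$ is a smooth $G$-disk.

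The main obstacle is the extension step over $B^2\times\text{cap}$. One must verify carefully that the boundary map is null-homotopic in $KG$, using the null-homotopy of $\bar\rho\circ s$ restricted to $a$ together with the product structure. If this direct argument is awkward, I would instead argue transversally. Extend $\Phi$ over the trace of the surgery, $B^4\times I\cup(\text{5-dim }2\text{-handle})$, noting that the attaching circle $s(a)$ maps null-homotopically into $BG\subseteq KG\setminus\{0\}$. Then take the preimage of $0$ in the new boundary $W$.
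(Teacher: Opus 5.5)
Your strategy is the paper's: ambient surgery on $F$ along $a_1,\dots,a_g$, with the $G$-surface condition $\bar\rho_*(s(a_i))=1$ as the reason the map survives each surgery. The paper simply asserts that step. Your surrounding bookkeeping is fine: the genus drops by one each time, $F$ stays connected, $W$ is compact and oriented with $\partial W=S^3$, and the section condition is vacuous on a disk.

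The gap is in the step you flag as the main obstacle, because your local model there is wrong. Write $S^2=\partial(B^1\times B^2)=(\partial B^1\times B^2)\cup(B^1\times\partial B^2)$. The normal disk bundles of the two new disks are $B^2\times\partial B^1\times B^2$. What remains to be filled is therefore $R=B^2\times B^1\times\partial B^2\cong S^1\times B^3$, with $\partial R\cong S^1\times S^2$. It is not a $4$-ball bounded by an $S^3$ carrying a ``join-type'' map.

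Moreover, $R$ must be mapped into $BG\simeq KG\setminus\{0\}$, not merely into $KG$. An extension into $KG$ can add closed components to $\Phi^{-1}(0)$, which contradicts your claim that the preimage is just the compressed surface. Posing the problem as null-homotoping a map $S^3\to KG$ also puts you in $\pi_3(KG)$, the group that carries the very obstruction the paper studies. Your argument there does not show that any such class vanishes.

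The correct step is easy. Relative to its gluing region $a\times B^1\times\partial B^2\subseteq\partial E_F$, $R$ consists of a $2$-handle attached along the push-off $s(a)$, followed by a $3$-handle. Since $BG=K(G,1)$, $\bar\rho$ extends over $R$ into $BG$ exactly when $\bar\rho_*(s(a))=1$, which is the $G$-surface hypothesis.

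With your normalization that $\Phi$ is projection on $F\times B^2$, it is simpler still. Define the new map on all of $B^2\times S^2$ by $(x,(t,y))\mapsto y\in B^2\subseteq KG$. This map agrees with $\Phi$ on $\partial B^2\times S^2$, is transverse to $0$, and has $0$-preimage $B^2\times\partial B^1\times\{0\}$.

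Your fallback has a related error. Suppose the $5$-dimensional $2$-handle is attached along $s(a)$, a circle disjoint from $F$, and mapped into $BG$. Then the $0$-preimage in the new boundary is still $F$, and nothing is compressed. The handle must be attached along $a\subseteq F$ itself, with $\Phi$ extended over $B^2\times B^1\times B^2$ by projection to the last factor. In that version the $G$-surface condition enters only through the existence of the product normalization of $\Phi$ near $F$.
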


\begin{proof}
Since $F$ is a $G$-surface, the $0$-framed section $s\colon F\to S(F) \subseteq \partial E_F$ satisfies $\bar{\rho}\ \circ\ \mathrm{inc \circ \, s}_*=0$ on $\pi_1(F)$. Choose a half-symplectic basis $\{a_1, \ldots, a_g\}$ for $H_1(F)$; these are represented by simple closed curves in $F^{\circ}$ whose images in $\pi_1(E_F)$ (via the section) are mapped trivially by $\bar{\rho}$.
Perform ambient surgery on $F$ along each $a_i$: for each $a_i$, remove a tubular neighborhood $S^1\times D^3$ of $a_i$ in $B^4$ and glue in $D^2\times S^2$. This replaces $B^4$ by a new manifold and reduces the genus of $F$ by one. Since each $a_i$ maps trivially under $\bar{\rho}$, the extension of $\rho$ over the surface exterior is preserved at each step. After $g$ surgeries, $F$ has become a disk in $W$.
\end{proof}

\begin{remark}
For $G = D_n$ or $G = \mathbb{D}_n$, the converse also holds: if $K$ is a $G$-knot which bounds a $G$-disk (or indeed any $G$-surface, orientable or not) in some oriented $W$ with $\partial W = S^3$, then $K$ bounds a $G$-surface in $B^4$. This is shown in Section~\ref{sec:dihedral-criteria}.
\end{remark}

We end this section by proving that the obstruction space $KG$ is no bigger than it needs to be.

\begin{theorem} \label{thm:subgroup}
Suppose $G$ is a based non-abelian group which abelianizes to $\Z$ and $x \in \pi_3(KG)$ is any non-trivial element. Then there exists a based inclusion of based groups $H \xhookrightarrow{\iota} G$, where $H$ is non-abelian, a knot $K \subseteq S^3$, and a based epimorphism $\rho \colon \pi_1(E_K) \to H$ such that $\Theta_\rho(K) \not = 0 \in \pi_3(KH)$ and $\iota_{\ast}(\Theta_{\rho}(K)) = x \in \pi_3(KG)$.
\end{theorem}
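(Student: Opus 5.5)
The plan is to realize $x$ directly by transversality, in the spirit of the $(\impliedby)$ direction of the $G$-Surface Theorem~\ref{theorem:G-surface}, but one dimension lower. Represent $x$ by a based map $f\colon S^3\to KG=BG\cup_S B^2$ and homotope it to be transverse to the center $0$ of the distinguished $2$-cell. Then $L:=f^{-1}(0)$ is an oriented link in $S^3$. On a tubular neighborhood $L\times B^2$, $f$ is projection to the $B^2$ factor followed by the inclusion $B^2\subseteq KG$. The complement $S^3\setminus \nu(L)$ is mapped into $KG\setminus\{0\}$, which deformation retracts to $BG$.

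The link $L$ is nonempty. Otherwise $f$ factors, up to homotopy, through $BG$, which is aspherical, so $\pi_3(BG)=0$ and $x=0$, a contradiction.

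\textbf{Step 1: make $L$ a knot.} The next step is to reduce $L$ to a single component by a homotopy of $f$, mimicking the $1$-handle/$2$-handle argument from the proof of Theorem~\ref{theorem:G-surface}.
\begin{itemize}
\item Work in $S^3\times[0,1]$ and attach a $4$-dimensional $1$-handle $(B^1\times B^2)\times[-1,1]$ with feet on small arcs $d_\pm\times B^2$ around points of two different components of $L\times\{1\}$.
\item Extend the map over the handle by projection to the second $B^2$ factor.
\item Cancel the $1$-handle with a $2$-handle attached along a loop $\gamma*\alpha$. Here $\gamma$ is a core arc of the handle at a point mapping to the basepoint $b$, and $\alpha$ is an arc in the old $S^3$ joining its feet and avoiding $L$.
\end{itemize}
For the constant extension over the $2$-handle we need $\bar f(\gamma*\alpha)$ to be null in $BG$. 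I expect this to be the main obstacle, because we have no surjectivity onto $G$ to correct $\alpha$ as was done in the earlier proof. What I would try first is to exploit the freedom in choosing $\alpha$.
\begin{itemize}
\item Changing $\alpha$ by loops in $S^3\setminus L$ multiplies the image by arbitrary elements of the image subgroup $H_0:=\mathrm{im}(\pi_1(S^3\setminus L)\to G)$.
\item Because $S^3$ is simply connected, the closed loop formed by $\alpha$ and any auxiliary arc in $S^3$ maps null in $KG$. Hence $\bar f(\alpha)$ lies in $H_0$ modulo the meridian images.
\item The meridian images are all conjugates of $m=\iota(1)$, by transversality and orientations, and so they also lie in $H_0$.
\end{itemize}
Therefore $\alpha$ can be rechosen so that its image is trivial. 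If this argument proves too loose, the fallback is to perform an ordinary band sum in $S^3$ along such an $\alpha$ and redefine $f$ on a neighborhood of the band, using the same null-homotopy. Either way, after finitely many steps $f^{-1}(0)=K$ is a knot and $[f]=x$ is unchanged.

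\textbf{Step 2: define $H$ and $\rho$, and verify the $G$-knot conditions.} Let $H:=\mathrm{im}(\rho)\le G$, where $\rho:=f_*\colon \pi_1(E_K)\to G$. Based at a preferred meridian $\mu$, $\rho(\mu)=m$, so $H\ni m$ and the inclusion $H\hookrightarrow G$ is based. The pulled-back normal framing of $K$ gives a longitude $\lambda'$ that $f$ sends to the point $b$. Composing with $G\to G^{ab}\cong\Z$, the class of $\lambda'$ in $H_1(E_K)\cong\Z$ is zero. Hence $\lambda'$ is the $0$-framed longitude and maps trivially, so $(K,\rho)$ is an $H$-knot. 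Comparing with diagram~\eqref{eqn:theta}, the map $\theta$ built for $(K,\rho)$ agrees with $f$ after composing with $KH\to KG$. Thus $\iota_*\Theta_\rho(K)=x$.

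\textbf{Step 3: nontriviality and non-abelianness.} Since $x\neq 0$, we get $\Theta_\rho(K)\neq 0$. If $H$ were abelian, then $H$, being a quotient of a knot group with based meridian image $m$ of infinite order in $G^{ab}$, would be $\Z$. In that case $KH\simeq *$, forcing $\Theta_\rho(K)=0$, which is a contradiction. Hence $H$ is non-abelian, completing the plan.
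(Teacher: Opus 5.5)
Your route is the paper's: make $f$ transverse to the centre of the $2$-cell, merge the components of $f^{-1}(0)$ with a $1$-handle/$2$-handle pair, let $H$ be the image, and rule out $H\cong\Z$ via $K\Z\simeq *$. Steps~2 and~3 are fine; your check that the $f$-framed longitude is the $0$-framed one is a detail the paper omits. The gap is where you expected it, in Step~1.

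Rechoosing $\alpha$, and the winding of $\gamma$, only moves $f(\gamma*\alpha)$ within a single coset $g_0H_0$. Here $H_0$ is the image of $\pi_1(X,p_+)$ and $X=S^3\setminus\nu(L)$. So what you actually need is $g_0\in H_0$, and your argument does not give it. Simple connectivity of $S^3$ only shows that $g_0$ dies in $\pi_1(KG)=G/\langle\langle m\rangle\rangle$. That means $g_0$ is a product of conjugates $hm^{\pm1}h^{-1}$ with $h$ \emph{arbitrary} in $G$. The meridian images lying in $H_0$ are conjugates by images of paths in $X$, but a conjugate by an arbitrary $h$ need not lie in $H_0$. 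Your sentence ``the meridian images are all conjugates of $m$, \ldots\ and so they also lie in $H_0$'' conflates the two. The band-sum fallback needs the same trivially mapped core, so it inherits the problem.

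This cannot be patched, because the step fails, and so does the theorem as written. Take $G=A_5\times\Z$, based at the central element $m=(e,1)$. Then $\pi_1(KG)\cong A_5$. In the universal cover, the preimage of $BG$ is a $K(\Z,1)$, and the $60$ lifts of the $2$-cell are attached along generators. Hence $\widetilde{KG}\simeq\bigvee_{59}S^2$ and $\pi_3(KG)\neq 0$. Yet any $H$ receiving a based epimorphism from a knot group is normally generated by $m$. Since $m$ is central, this forces $H=\langle m\rangle\cong\Z$, so no nonzero $x$ is realized.

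Geometrically, a band core with trivial image in $G$ lifts to a closed loop. It can therefore only join components of $f^{-1}(0)$ lying over the same lift of the $2$-cell. If all components lie over one lift, then $f$ is null-homotopic.

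The paper's proof handles this step only by ``an argument analogous to'' the $G$-Surface Theorem. There, surjectivity of $\rho$ supplied the correcting path $\beta$. That ingredient is unavailable here too, so the paper has the same gap.

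At a minimum you must add the hypothesis $G=\langle\langle m\rangle\rangle$, i.e.\ $\pi_1(KG)=1$; this is automatic for quotients of knot groups. Even then you still owe an argument that the coset $g_0H_0$ can be made to contain $e$. For split links, for instance, you can insert an arbitrary loop into $f$ on a collar of a separating sphere. This changes $g_0$ freely and, since $\pi_1(KG)=1$, leaves $[f]$ unchanged.
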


\begin{proof} We note that in what follows, $\rho$ denoted both a map $\pi_1(E_K)\to KH$ and its composition with the given inclusion $\iota: H\to G$. 

The proof is similar to that of the $G$-surface Theorem.  Given $x$ as above, choose a map $\varphi \colon S^3 \to KG$ representing the given element $x \in \pi_3(KG)$ and transverse to 
\[
\{0\} \in B^2 \subseteq KG \defeq BG \cup_S B^2.
\]  
We can assume, after a homotopy, that $\varphi$ maps onto $B^2 \subseteq KG$.  By an argument analogous to the one used in the proof of Theorem~\ref{theorem:G-surface}, we can connect components of $\varphi^{-1}(\{0\})$ without changing the relevant homotopy class, so that, $\varphi^{-1}(\{0\})$ is a knot $K \subseteq S^3$.

The restriction of $\varphi$ to $E_K$ induces a homomorphism $\rho\colon \pi_1(E_K) \to G$. Let $H \subseteq G$ denote its image.  Furthermore, since $K= \varphi^{-1}(\{0\})$, we may parametrize the regular neighborhood of  $K$ in $S^3$ so that $\varphi|_{K\times B^2}$ maps  to $B^2 \subseteq KG$ by projecting each fiber to $B^2 \subseteq KG$. Therefore, the subgroup $H$ contains a basing given by the image of a meridian of $K$.

Observe that $H$ cannot be abelian.  Indeed, since $H$ is based, it admits  $\Z$ as a quotient, so, in particular, it is infinite. Therefore, since $H$ is a quotient of a knot group,  if $H$ is abelian  then $H$ is an infinite quotient of $\pi_1(E_K)^{ab}$, so in fact $H\cong \Z$. But this would imply $\pi_3(KH) \cong 0$, since 
\[
K\Z\simeq S^1\cup_S B^2 \simeq \{*\}, 
\]  
contradicting the assumption that  $x \neq 0$. 

By construction, the group $H$ has all the properties stated in the theorem. In particular, we see that $\iota_{\ast}(\Theta_{\rho}(K)) = x \in \pi_3(KG)$ by regarding $KH$ as a subcomplex of $KG$.
\end{proof}

We end this section with a remark on potential further applications for our 
$G$-Surface Theorem, independent of our goals in this paper. 
Homology cobordism plays a significant role in the classification 
problem in low-dimensional manifold theory. The $G$-Surface 
obstruction may suggest new paths in homology cobordism theory and 
knot concordance. Recall that $G$ can be any quotient 
of a knot group, as long as the 0-framed longitude of $K$ is mapped trivially. When our 
obstruction vanishes, transversality and handlebody theory allow us 
to construct $G$-surfaces for knots in $B^4$. Moreover, 
intersection theory and duality provide computable criteria for the 
vanishing of the obstruction~(see, for instance, 
Theorem~\ref{thm:equivalence}). For solvable groups $G$, 
$L^2$-signature theory may prove useful in extracting  new concordance 
invariants from $G$-surfaces.

\newpage

 \part*{ Part II. The case of dihedral groups}

We now specialize the general obstruction theory to the case of dihedral quotients of knot groups. In this context, we translate the invariant $\Theta_\rho(K)$ into classical data about the double branched cover and, consequently, about the Seifert form of the knot. Furthermore, we present an explicit construction of a dihedral surface when one exists. We conclude the paper with an array of examples and computations.

\section{Dihedral machinery}\label{Sec:pairings}

In this section, we specialize to the case where the target group is the dihedral group $D_n$ or its larger cousin the $n$-bottle group $\mathbb{D}_{n}$, defined shortly (Definition~\ref{def:n-bottle}). We first collect the algebraic properties of $D_n$ and $\mathbb{D}_n$ (\ref{section:linking}), then recall the Cappell--Shaneson theory of characteristic knots (\ref{sec:dih-knots}--\ref{sec:beta}), which provides the bridge between dihedral quotients and the Seifert form. We then embed our invariant in $\pi_3(K\mathbb{D}_{(2)})$ and establish the key bijection with $H_1(\Sigma_2 K)$ (\ref{section:theta}--\ref{twocovers}). The $\sfrac{\Z_{(2)}}{\Z}$-linking pairing and its Seifert-matrix formulas are reviewed in \ref{linkingpairing}--\ref{torlinking}.

Much of this section should be familiar to experts, and we provide it to keep the paper self-contained. We also note that, throughout, colimits are indexed over the directed set of {positive, odd} integers. We will always use $n$ to denote an odd integer.

\subsection{Properties of dihedral groups.}\label{section:linking}

Consider the following two families of groups, indexed by positive {odd integers}, $n$.  These are based groups with a basing denoted $u$.

\[
\mathbb{D}_n = \langle t, u \mid t^n, \ utu^{-1}=t^{-1}\rangle \cong \Z_n \rtimes \Z
\text{\hspace{.25in} and \hspace{.25in}} D_n=\mathbb{D}_n/\langle u^2\rangle \cong \Z_n \rtimes \Z_2
\]

\begin{definition}\label{def:n-bottle}
The group $\mathbb{D}_{n}$ defined above will be called the \emph{$n$-bottle group}. 
\end{definition}
We note that the \emph{Klein bottle group} is $B \coloneqq \langle t,u \mid utu^{-1}=t^{-1}\rangle,$ so, as the name captures, each $n$-bottle group is a quotient of $B$: \(\mathbb{D}_{n} \cong B/\langle t^n\rangle\).

Recall that the groups $D_n$ and $\mathbb{D}_{n}$ are taut, so we may apply Corollary~\ref{thm:main} to epimorphisms from a knot group to these groups.

The first family of groups abelianizes to $\Z$, generated by $u$, which clearly splits.  The second family consists of dihedral groups of order $2n$, $n$ odd. Each group in this family splits its abelianization,  $\Z_2$, again generated by $u$.  

As before, we let $BG$ represent the classifying space of a discrete group $G$. Then $BG \sim K(G, 1)$, an Eilenberg MacLane space and in each case we build the space $KG$ like we did in Section~\ref{Gknots}, by attaching a 2-cell along a 1-cell representing $u$.

In what follows, we denote the generator of the commutator subgroup $\Z_n \subseteq D_n$ by the number $1$, and the generator of the commutator subgroup of $\mathbb{D}_n$, that is, $\Z_n \subseteq \mathbb{D}_n$, by $t \in \mathbb{D}_n$.

\begin{lemma}\label{H3(G)}
Consider the homomorphism $\iota \colon \Z_n \to \mathbb{D}_n$ given by $\iota(1) = t$, with $n$ odd.  Then
\begin{enumerate}
\item[(i)] $
\Z_n \cong H_3(\Z_n) \xrightarrow{\iota_{\ast}}  H_3(\mathbb{D}_n)$
is an isomorphism, and
\item[(ii)] $H_2(\mathbb{D}_n) \cong 0$.
\end{enumerate}
\end{lemma}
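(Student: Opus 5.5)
We use the split extension $1\to \Z_n \to \mathbb{D}_n \to \Z \to 1$, with $\Z=\langle u\rangle$ acting on $\Z_n=\langle t\rangle$ by inversion. Since the quotient is $\Z$, which has cohomological dimension one, the Lyndon--Hochschild--Serre spectral sequence has only two nonzero columns, $p=0,1$. It therefore collapses to a Wang-type short exact sequence
\[
0\to H_0(\Z; H_q(\Z_n))\to H_q(\mathbb{D}_n)\to H_1(\Z; H_{q-1}(\Z_n))\to 0 .
\]
Equivalently, $\mathbb{D}_n \cong \Z_n\rtimes\Z$ is the fundamental group of the mapping torus of a map $B\Z_n\to B\Z_n$ realizing inversion. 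We then get a long exact sequence
\[
\cdots\to H_q(\Z_n)\xrightarrow{\,1-\phi_*\,} H_q(\Z_n)\to H_q(\mathbb{D}_n)\to H_{q-1}(\Z_n)\xrightarrow{\,1-\phi_*\,} H_{q-1}(\Z_n)\to\cdots
\]
Here $\phi$ is the automorphism $t\mapsto t^{-1}$, and the map $H_q(\Z_n)\to H_q(\mathbb{D}_n)$ in this sequence is exactly $\iota_*$.

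The key input is the action of $\phi_*$ on $H_q(\Z_n)$. For cyclic groups, multiplication by $k$ acts on $H_{2j-1}(\Z_n)\cong\Z_n$ as multiplication by $k^{j}$. This follows from the cup-product structure: $H^{2}(\Z_n)$ is generated by the Bockstein of the degree-one generator, and $H^{2j}=(H^2)^{j}$. Taking $k=-1$ gives:
\begin{itemize}
\item $\phi_*=-1$ on $H_1(\Z_n)$;
\item $\phi_*=+1$ on $H_3(\Z_n)$;
\item $\phi_*$ is the identity on the even-degree groups, which vanish for $q>0$ anyway.
\end{itemize}

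For (ii), take $q=2$. We have $H_2(\Z_n)=0$, so the left term is zero. The right term is the kernel of $1-\phi_*=2$ on $H_1(\Z_n)\cong\Z_n$. Since $n$ is odd, multiplication by $2$ is injective, so this kernel is also zero. Hence $H_2(\mathbb{D}_n)=0$.

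For (i), take $q=3$. The cokernel of $1-\phi_*=0$ on $H_3(\Z_n)$ is all of $\Z_n$, so $\iota_*$ is injective. The next term is the kernel of $1-\phi_*$ on $H_2(\Z_n)=0$, which vanishes, so $\iota_*$ is also surjective. Thus $\iota_*\colon H_3(\Z_n)\to H_3(\mathbb{D}_n)$ is an isomorphism.

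The main obstacle is justifying the sign of $\phi_*$ on $H_3(\Z_n)$. We would verify it either via the cup-product argument above, or directly on the standard periodic resolution of $\Z_n$ by constructing a chain map lifting $t\mapsto t^{-1}$ and reading off its degree-three component. A useful cross-check comes from the lens space $L(n,1)=S^3/\Z_n$, which is a $3$-skeleton of $B\Z_n$ carrying the generator of $H_3$. Complex conjugation on both coordinates of $S^3\subseteq\C^2$ induces inversion on $\Z_n$ and preserves orientation, since it is a composition of two reflections of $\R^4$. It therefore acts by $+1$ on $H_3$, which confirms the sign.
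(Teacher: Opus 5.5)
Your proof is correct and follows essentially the same route as the paper. Both use the Wang sequence of $B\Z_n \to B\mathbb{D}_n \to S^1$: the map $1-\phi_*$ is multiplication by $\pm 2$ on $H_1(\Z_n)$, which is invertible for $n$ odd and gives (ii), and $\phi_*=\mathrm{id}$ on $H_3(\Z_n)$ together with $H_2(\Z_n)=0$ gives (i). The only difference is that you justify the trivial action of inversion on $H_3(\Z_n)$ directly, via the ring $H^*(\Z_n;\Z)$ plus universal coefficients or via the orientation-preserving conjugation on $L(n,1)$, whereas the paper cites Weibel's dihedral-group example for this step.
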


\begin{proof}
Consider the  fibration 
\[
1 \to B\Z_n \to B\mathbb{D}_n \xrightarrow{\pi} B\Z \to 1.
\]
Here $B\Z \sim S^1$.  The Serre Spectral sequence determines a long exact sequence, often called the Wang Sequence in this context~\cite{MR28580}, as follows:
\[
\xrightarrow{} H_3(\Z_n) \xrightarrow{(c_u - id)= 0} H_3(\Z_n) \xrightarrow{\text{inc}_{\ast}} H_3(\mathbb{D}_n) \xrightarrow{\partial}  \cancelto{0}{H_2(\Z_n)}\to
\]
\[ 
\xrightarrow{c_u - id} \cancelto{0}{H_2(\Z_n)} \xrightarrow{\text{\hspace{3mm}inc}_{\ast}} H_2(\mathbb{D}_n)\vspace{3mm}\xrightarrow{\partial}
 H_1(\Z_n) \xrightarrow[c_u - id]{\times{(-2)}} H_1(\Z_n) \to 
\]
Here, $c_u$ is the automorphism induced by the conjugation action of $u$ on $\Z_n$ in the semi-direct product $\mathbb{D}_n \cong \Z_n \rtimes \Z \cong \langle u, t \mid t^n, \ utu^{-1}=t^{-1}\rangle$, $n$ odd.
We have $H_2(\Z_n) \cong 0$, and $H_3(\Z_n) \cong \Z_n$. Conjugation by $u$, denoted $c_u$, induces the trivial action on $H_3(\Z_n)$ by an elementary modification of an analogous argument for the dihedral group. (See, for instance,~\cite[Example 6.7.10]{MR1269324}.)
 Thus, the first homomorphism in the first row is zero, that is, $c_u - id = 0$, as displayed above, implying statement~$(i)$.  Since $H_1(\Z_n) \cong \Z_n$ and $n$ is odd, multiplication by $-2$ in the second row is an isomorphism. The Lemma follows.
\end{proof}

\subsection{Dihedral knots}\label{sec:dih-knots}

We use the following notation for dihedral groups
\begin{equation}  \label{eq:Gn-def}
D_n = \langle t, u \mid t^n, u^2, \ utu^{-1}=t^{-1}\rangle \cong \Z_n \rtimes \Z_2.
\end{equation}

Recall that a $G$-knot (Definition~\ref{def:G-Knot}) is a knot $K$ equipped with a quotient homomorphism $\rho \colon \pi_1(E_K) \to G$ with the property that the $0$-framed longitude of $K$ is mapped trivially under $\rho$. That is, we say $(K, \rho)$ is a $G$-knot, where $\rho\colon \pi_1(E_K) \twoheadrightarrow G$. 

Note that for any knot $K$, the longitude of $K$ bounds a Seifert Surface, and thus lies in the second derived subgroup of $\pi_1(E_K)$.  Thus, any knot $K$ which admits an epimorphism $\rho\colon \pi_1(E_K) \to D_n$ is a $D_n$ knot, and similarly for $\mathbb{D}_n$ knots. (This is a special case of Lemma~\ref{lem:taut-G-knot}, since $D_n$ and $\mathbb{D}_n$ are taut.)

Dihedral quotients of knot groups go back to Reidemeister and were studied extensively by Fox, who introduced Fox colorings. There is a combinatorial way of describing such quotients (and checking whether they exist for a given knot). It is well known that  homomorphisms $\rho: \pi_1(E_K) \to D_n$ correspond to characters $\chi: H_1(\Sigma_2 K; \Z)\to \Z/n\Z$. Therefore, for $n$ square-free, a knot $K$ is a $D_n$-knot if and only if $n$ divides the determinant of $K,$ since $|\Delta_K(-1)|=|H_1(\Sigma_2 K; \Z)|$. 

The existence of a dihedral quotient can also be determined using the Seifert form of the knot. This point of view turns out to be particularly useful for us, and we recall it next.

\subsection{Cappell-Shaneson characteristic knots}\label{sec:beta}

The following definition is used for detecting the existence of dihedral quotients for a knot $K$; evaluating our obstruction $\Theta_\rho(K)$; computing the $\Xi_n$ invariant of dihedral covers (see Section~\ref{sec:xi}); and building dihedral surfaces (see Section~\ref{sec:n-bottle}). Throughout, $n$ is odd.

\begin{definition} (\cite{CS1984linking}\label{def:chark})
Let $K \subseteq S^3$ be a knot. A knot $\beta \subseteq S^3$ is a {\em mod-$n$ characteristic knot for $K$} if there is a Seifert surface $S$ for $K$, containing $\beta$ in its interior, such that $\beta$ represents a nonzero (primitive) element in $H_1(S)$ and $(V+V^T)\cdot [\beta] \equiv 0 \mod n,$
where $V$ denotes a Seifert matrix associated to $S$.
\end{definition}

As explained in the proof of~\cite[Proposition~1.1]{CS1984linking}, a mod~$n$ characteristic knot $\beta$ for $K$ determines a dihedral quotient of the knot group,
\begin{equation}
   \rho_\beta: \pi_1(E_K)\twoheadrightarrow \langle u, t \mid u^2, \ t^n, \ utu^{-1}=t^{-1}\rangle ,
\end{equation}
as in Equation~\ref{eq:char-knot-quotient}. Recall that we have an isomorphism 
\[
\pi_1(E_K)\cong (\pi_1(E_S)\ast  \langle m\rangle)/\langle\langle m w^+ m^{-1}=w^-, w\in\pi_1(S) \rangle \rangle,
\]
where $m$ denotes the class represented by a fixed meridian of $K$ and $w^\pm$ are the positive and negative push-offs of $w$ determined by $S$. We then set

\begin{equation} \label{eq:char-knot-quotient}
\rho_\beta(x)=
\begin{cases}
u & \text{ if } x=m; \\
t^{\text{lk}(\beta,\gamma)} & \text{ if } x=\gamma,\ \gamma \subseteq E_S .
\end{cases}
\end{equation}

The condition that $\beta$ is a mod-$n$ characteristic knot precisely guarantees that the above map, which as written is defined on $\pi_1(E_S) \ast \langle m\rangle$, descends to a quotient of $\pi_1(E_K)$. To verify this, we need to check that, for any $w\in\pi_1(S)$, 

\begin{equation}\label{eq:homomorphism-check}
\rho_\beta(m w^+ m^{-1})=\rho_\beta(w^-).    
\end{equation}

By Equation~\ref{eq:char-knot-quotient}, we have:
\[
\rho_\beta(m w^+ m^{-1})=u t^{\text{lk}(\beta,w^+)} u^{-1} = t^{-\text{lk}(\beta,w^+)} =t^{\text{lk}(\beta,w^-)}=\rho_\beta(w^-),
\]
where the second equality is a relation in $D_n$ and the penultimate one uses the defining property of a characteristic knot: 
\[
\text{lk}(\beta,w^+) + \text{lk}(\beta,w^-) \equiv0\mod n.
\]

Furthermore, it is shown in~\cite[Proposition~1.2]{CS1984linking} that any dihedral quotient of $\pi_1(E_K)$ arises in this way. We thus obtain the following

\begin{corollary}\label{cor:char-knots-induce-both}
    A knot $K\subseteq S^3$ admits a mod~$n$ characteristic knot if and only if the group of $K$ admits a quotient to the $n$-bottle group $\mathbb{D}_n$,
    \[
     \tilde{\rho}_\beta: \pi_1(E_K)\twoheadrightarrow \langle u, t \mid t^n, \ utu^{-1}=t^{-1}\rangle .
    \]
    In particular, $ \pi_1(E_K)$ surjects to $D_n$ if and only if it surjects to $\mathbb{D}_n$.
\end{corollary}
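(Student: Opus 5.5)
The plan is to prove the two implications separately, then read off the final sentence from them.

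\emph{Characteristic knot gives a $\mathbb{D}_n$-quotient.} Suppose $\beta\subseteq\mathring S$ is a mod~$n$ characteristic knot. Define $\tilde\rho_\beta$ on $\pi_1(E_S)\ast\langle m\rangle$ by the same formula as Equation~\eqref{eq:char-knot-quotient}: send $m\mapsto u$ and $\gamma\mapsto t^{\mathrm{lk}(\beta,\gamma)}$ for $\gamma\subseteq E_S$, now with target $\mathbb{D}_n$. The second assignment factors through $H_1(E_S)$, so it is a well-defined homomorphism into the abelian subgroup $\langle t\rangle\cong\Z_n$. The verification of \eqref{eq:homomorphism-check} carried out above uses only the relations $utu^{-1}=t^{-1}$ and $t^n=1$, never $u^2=1$. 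It therefore goes through verbatim in $\mathbb{D}_n$, and $\tilde\rho_\beta$ descends to $\pi_1(E_K)$.

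\emph{Surjectivity of $\tilde\rho_\beta$.} Since $\beta$ is primitive in $H_1(S)$, Alexander duality supplies a class $\gamma\in H_1(E_S)$ with $\mathrm{lk}(\beta,\gamma)=1$. Hence $t$ lies in the image, and so does $u$. Composing with the quotient $\mathbb{D}_n\to D_n$ recovers $\rho_\beta$, which is also onto.

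\emph{A $\mathbb{D}_n$-quotient gives a characteristic knot.} Suppose $\sigma\colon\pi_1(E_K)\twoheadrightarrow\mathbb{D}_n$. Composing with $\mathbb{D}_n\to D_n=\mathbb{D}_n/\langle u^2\rangle$ gives a homomorphism onto $D_n$: the images of $t$ and $u$ still generate. By \cite[Proposition~1.2]{CS1984linking}, every dihedral quotient of $\pi_1(E_K)$ arises from some mod~$n$ characteristic knot $\beta$, so such a $\beta$ exists.

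\emph{The ``in particular''.} If $\pi_1(E_K)$ surjects onto $D_n$, Cappell--Shaneson give a characteristic knot, and the first step produces a surjection onto $\mathbb{D}_n$. Conversely, a surjection onto $\mathbb{D}_n$ composed with $\mathbb{D}_n\to D_n$ is a surjection onto $D_n$.

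The main obstacle is making sure the Cappell--Shaneson correspondence is invoked in the form we need. Their statement produces $\beta$ from a dihedral quotient, but it only realizes that quotient up to an automorphism of $D_n$ and a choice of meridian. This is harmless here, because the corollary asserts only existence of a quotient, not that a given $\sigma$ equals $\tilde\rho_\beta$.

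If one did want that sharper statement, the plan would be as follows. Conjugate $\sigma$ so that $\sigma(m)=u$, which is possible since $\sigma(m)$ maps to a generator of $\mathbb{D}_n^{ab}\cong\Z$ and $n$ is odd. Then $\sigma$ restricted to $\pi_1(E_S)$ lands in the kernel of abelianization, which is $\langle t\rangle$. That restriction factors through $H_1(E_S)\cong\mathrm{Hom}(H_1(S),\Z)$, so by duality it is $\gamma\mapsto t^{\mathrm{lk}(\beta,\gamma)}$ for a class $[\beta]$. Applying the relation $m w^+m^{-1}=w^-$ shows $(V+V^T)[\beta]\equiv 0 \pmod n$. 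Surjectivity of $\sigma$ forces $[\beta]$ to be primitive mod $n$; after adjusting by multiples of $n$ it is primitive, and then it can be represented by a simple closed curve on $S$.
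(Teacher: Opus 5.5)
Your proposal is correct and follows the paper's proof. For one direction you compose with $\mathbb{D}_n\to D_n$ and invoke Cappell--Shaneson's Proposition~1.2; for the other you reuse the formula \eqref{eq:char-knot-quotient}, noting that the homomorphism check never uses $u^2=1$. Your surjectivity argument (primitivity of $[\beta]$ plus Alexander duality gives a curve $\gamma$ with $\mathrm{lk}(\beta,\gamma)=1$) is slightly more explicit than the paper's appeal to the surjectivity of $\rho_\beta$, and matches the argument the paper later uses in Theorem~\ref{thm:Dn-iff-char-surface}.
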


\begin{proof}
    If $\pi_1(E_K)$ admits $\mathbb{D}_n$ as a quotient, by composing with the natural map $\mathbb{D}_n\twoheadrightarrow{D_n}$, we obtain a quotient $\pi_1(E_K)\twoheadrightarrow{D_n}$. By~\cite[Proposition~1.2]{CS1984linking}, $K$ admits a mod~$n$ characteristic knot.
    
    Conversely, assume that $K$ has a mod~$n$ characteristic knot $\beta\subseteq S$, where $S$ is a Seifert surface for $K$. We may define the desired quotient $\tilde{\rho}_\beta$ exactly as in Equation~\ref{eq:char-knot-quotient}. The proof of Equation~\ref{eq:homomorphism-check}, repeated verbatim, shows that the relation $\tilde{\rho}_\beta(m w^+ m^{-1})=\tilde{\rho}_\beta(w^-)$ holds for any $w\in\pi_1(S)$. Hence, this produces a group homomorphism, as desired. Since $\rho_\beta$ is surjective (and, in particular, its image contains the generators $u$ and $t$), so is $\tilde{\rho}_\beta$.
\end{proof}

Thus, {\it we may regard a mod~$n$ characteristic knot for $K$ as inducing $D_n$ and $\mathbb{D}_n$ quotients of the group of $K$}. Lastly, we note that two mod~$n$ characteristic knots $\beta_1$ and $\beta_2$ for $K$ are {\it equivalent} if they determine the same dihedral (or $n$-bottle) quotient of $\pi_1(E_K)$. When $\beta_1$ and $\beta_2$ lie on the same Seifert surface for $K,$ they are equivalent if and only if $[\beta_1]-[\beta_2]\equiv0\mod{n}$. In particular, it follows that for any dihedral (or $n$-bottle) quotient of a knot group, there is an infinite family of associated characteristic knots in any Seifert surface for $K$.

\subsection{The lifted invariant $\Theta_\rho(K)$ in $\pi_3(K\mathbb{D}_{(2)})$}\label{section:theta}

Recall that the sequence of groups and group homomorphisms
\begin{equation}\label{Eq:colimit}
\Z_3 \xrightarrow{\times 5} \Z_{15} \xrightarrow{\times 7} \Z_{105} \xrightarrow{\times 9} \Z_{945} \xrightarrow{\times 11}  \cdots
\end{equation}
has as colimit the group $\sfrac{\Z_{(2)}}{\Z}$, the subgroup of $\Q/\Z$ represented by the elements below, with $n$ odd.
\[
\sfrac{\Z_{(2)}}{\Z} := \left\{ \frac{m}{n} + \Z \in\Q/\Z \mid  n \text{ odd }
 \right\}.
\]  
Let $\Z$ act on $\sfrac{\Z_{(2)}}{\Z}$ by negation, and define   
\[
\mathbb{D}_{(2)}:= \text{colim}\ \mathbb{D}_n = (\sfrac{\Z_{(2)}}{\Z}) \rtimes \Z.
\]

Now let
\[
K\mathbb{D}_{(2)}:= B\mathbb{D}_{(2)}\cup_S B^2,
\]
where, as usual, $S = S^1$ denotes an embedded circle which represents the basing $u \in \mathbb{D}_{(2)}$.  Note that the inclusion of spaces $B\mathbb{D}_n \hookrightarrow B\mathbb{D}_{(2)}$ induces an inclusion $K\mathbb{\mathbb{D}}_n \hookrightarrow K\mathbb{D}_{(2)}$.

\noindent
{\it Technical note.} 
    In this paper we repeatedly make use of the fact that {homology commutes with colimits}.  A more precise statement can be found in work of John Milnor~\cite{MR0159327}.  In particular,  the homology of CW complexes commutes with colimits of cellular maps; and, of course, up to homotopy, maps between CW complexes can be assumed to be cellular inclusions by taking mapping cylinders of cellular maps.

\begin{lemma}\label{lemma:1-1}
 The injection, $\iota \colon \Z_n \to \frac{\Z_{(2)}}{\Z}$ determines a map
 \[
 \iota \rtimes id \colon K\mathbb{D}_n \to K\mathbb{D}_{(2)}
 \]
 which induces an injection $H_3(K\mathbb{D}_n)\to H_3(K\mathbb{D}_{(2)})$.
 \end{lemma}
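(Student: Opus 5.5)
The plan is to compute $H_3$ of both spaces via the Mayer--Vietoris (equivalently, the long exact sequence of the pair) for the pushout $KG = BG\cup_S B^2$, and compare. For any based group $G$, the pair $(KG, BG)$ has relative homology concentrated in degree $2$: $H_*(KG,BG)\cong H_*(B^2,S)$ is $\Z$ in degree $2$ and zero otherwise. The long exact sequence
\[
H_3(BG)\to H_3(KG)\to H_3(KG,BG)=0
\]
together with
\[
0=H_4(KG,BG)\to H_3(BG)\to H_3(KG)
\]
shows that $H_3(BG)\to H_3(KG)$ is an isomorphism. This holds for both $G=\mathbb{D}_n$ and $G=\mathbb{D}_{(2)}$, and the map $\iota\rtimes id$ is compatible with these identifications. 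Indeed, $\iota\rtimes id$ preserves the basing $u$, so it can be taken to send $S$ to $S$ and $B^2$ to $B^2$ identically, giving a map of pairs that induces the identity on relative homology. By naturality, the statement therefore reduces to showing that $H_3(\mathbb{D}_n)\to H_3(\mathbb{D}_{(2)})$ is injective.

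Next I will use Lemma~\ref{H3(G)}(i): $H_3(\Z_n)\to H_3(\mathbb{D}_n)$ is an isomorphism, via the Wang sequence of $B\Z_n\to B\mathbb{D}_n\to S^1$. I will run the same Wang sequence for $\mathbb{D}_{(2)}=(\Z_{(2)}/\Z)\rtimes\Z$, which is also an extension of $\Z$ by $A:=\Z_{(2)}/\Z$. Since homology commutes with colimits (the technical note), $H_k(A)=\operatorname{colim}_n H_k(\Z_n)$. Hence $H_2(A)=0$ (because $H_2(\Z_n)=0$), and $H_3(A)=\operatorname{colim} H_3(\Z_n)$. The action of $u$ on $H_3(A)$ is trivial, being the colimit of the trivial actions. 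The Wang sequence then gives $H_3(A)\cong H_3(\mathbb{D}_{(2)})$, and naturality of the Wang sequence under the map of fibrations induced by $\iota$ produces a commutative square. So it suffices to show that $H_3(\Z_n)\to \operatorname{colim}_m H_3(\Z_m)$ is injective.

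This last step is the main obstacle: I must identify the maps $H_3(\Z_m)\to H_3(\Z_{mk})$ induced by the inclusions $\Z_m\hookrightarrow\Z_{mk}$ and confirm that none of them kill anything. The plan is to use the standard identification $H_3(\Z_m)\cong\Z_m$, under which the inclusion $\Z_m\hookrightarrow \Z_{mk}$ induces the map $\Z_m\to\Z_{mk}$ given by multiplication by $k$. One way to see this is via the identification $H_{2i-1}(\Z_m)\cong \Z_m$ generated by the fundamental class of the lens space $S^{3}/\Z_m\to B\Z_m$. The inclusion corresponds to the $k$-fold covering $S^3/\Z_m\to S^3/\Z_{mk}$, which has degree $k$. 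Multiplication by $k$ from $\Z_m$ to $\Z_{mk}$ is injective, so every map in the directed system is injective. Since the colimit is over a directed set, each $H_3(\Z_n)\to\operatorname{colim}$ is then injective, in fact with colimit $\Z_{(2)}/\Z$. Combining this with the two reductions above proves that $H_3(K\mathbb{D}_n)\to H_3(K\mathbb{D}_{(2)})$ is injective.

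If the lens-space degree argument proves awkward, I would instead compute with the transfer. For the inclusion $\iota\colon\Z_m\hookrightarrow\Z_{mk}$, the composite $\mathrm{tr}\circ\iota_*$ is multiplication by the index $k$ only in the other order, so I would rely on the explicit periodic resolution of $\Z_{mk}$ restricted to $\Z_m$ to read off the chain map in degree $3$ as multiplication by $k$.
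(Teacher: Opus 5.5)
Your argument is correct and has the same skeleton as the paper's proof. In both, $H_3$ of $K\mathbb{D}_n$ and $K\mathbb{D}_{(2)}$ is identified with $H_3$ of the cyclic normal subgroups via Lemma~\ref{H3(G)}, and one then passes to the colimit.

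The paper reaches this identification by showing that both spaces are $2$-connected and invoking Hurewicz; that step is really needed only for the $\pi_3$ statement in Lemma~\ref{lemma:homologyKG}. It then simply says the lemma ``follows from Lemma~\ref{H3(G)} since homology commutes with colimits.''

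Your route differs in the supporting steps:
\begin{enumerate}
\item You obtain $H_3(BG)\cong H_3(KG)$ directly from the long exact sequence of the pair $(KG,BG)$.
\item You run the Wang sequence for $(\Z_{(2)}/\Z)\rtimes\Z$ itself.
\item Most importantly, you check that the transition maps $H_3(\Z_m)\to H_3(\Z_{mk})$ are injective. They are multiplication by $k$, via the degree-$k$ covering $S^3/\Z_m\to S^3/\Z_{mk}$ of lens spaces, whose fundamental classes map to generators.
\end{enumerate}
The third check is exactly what turns ``homology commutes with colimits'' into injectivity of $H_3(\Z_n)\to H_3(\Z_{(2)}/\Z)$. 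An injective group homomorphism need not be injective on $H_3$ in general. The paper leaves this step implicit, so your version is the more complete one.

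One small correction to your fallback. Since $\Z_{mk}$ is abelian, $\mathrm{tr}\circ\iota_*$ \emph{is} multiplication by $k$ on $H_3(\Z_m)$. That is useless for injectivity when $\gcd(k,m)>1$ (e.g.\ $\Z_3\hookrightarrow\Z_9$). Only the lens-space computation or the explicit-resolution computation does the job.
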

\begin{proof}
 Recall that both $K\mathbb{D}_n$ and $K\mathbb{D}_{(2)}$ are constructed by attaching a 2-cell to an Eilenberg-MacLane space  along a 1-cell, $S$, which represents the basing of the corresponding group, $\mathbb{D}_n$ or $\mathbb{D}_{(2)}$. The class represented by $S$ normally generates $\pi_1(B\mathbb{D}_n)$ and $\pi_1(B\mathbb{D}_{(2)})$. Thus, the space $K\mathbb{D}_n$ is simply-connected. Furthermore,  by Lemma~\ref{H3(G)}, $H_2(\mathbb{D}_n) \cong 0$ for all $n$. Since homology commutes with colimits, $H_2(\mathbb{D}_{(2)}) \cong 0$ as well. Therefore, the spaces $K\mathbb{D}_n$ and $K\mathbb{D}_{(2)}$ are 2-connected. By the Hurewicz Theorem, we conclude that $\pi_3(K\mathbb{D}_{(2)}) \cong H_3(K\mathbb{D}_{(2)})$ and $\pi_3(K\mathbb{D}_n)\cong H_3(K\mathbb{D}_n)$ for all $n$.   
 Our lemma now follows from Lemma~\ref{H3(G)} since homology commutes with colimits.  
\end{proof}

\begin{lemma}\label{lemma:homologyKG}
The inclusion-induced homomorphisms $\pi_3(K\mathbb{D}_n) \to \pi_3(K\mathbb{D}_{(2)})$ are injective for all (odd) $n$.	
\end{lemma}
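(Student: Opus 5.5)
The plan is to deduce the statement from Lemma~\ref{lemma:1-1} using the Hurewicz isomorphisms established in its proof, together with naturality of the Hurewicz map.

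\emph{Step 1: both spaces are 2-connected.} The spaces $K\mathbb{D}_n$ and $K\mathbb{D}_{(2)}$ are simply connected, because the attached 2-cell kills the basing $u$. The element $u$ normally generates $\mathbb{D}_n$, since $t = u^{-1}\cdot(u t u^{-1})^{-1}\cdot u$ lies in its normal closure; the same holds in $\mathbb{D}_{(2)}$. Van Kampen then gives $\pi_1 = 1$.

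To see that $H_2 = 0$, I would use the Mayer--Vietoris (or cellular) sequence of the pushout $BG\cup_S B^2$:
\[
H_2(BG)\to H_2(KG)\to H_1(S)\to H_1(BG).
\]
Here $H_2(\mathbb{D}_n)=0$ by Lemma~\ref{H3(G)}(ii), and $H_2(\mathbb{D}_{(2)})=0$ because homology commutes with colimits. The map $H_1(S)\cong\Z\to H_1(BG)\cong\Z$ sends the generator to the generator $u$, so it is an isomorphism. Hence $H_2(KG)=0$, and by Hurewicz $\pi_3\cong H_3$ for both spaces.

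\emph{Step 2: reduce to the group-homology statement.} Since the Hurewicz map is natural, the inclusion $K\mathbb{D}_n\hookrightarrow K\mathbb{D}_{(2)}$ induces a commutative square. Its horizontal maps are Hurewicz isomorphisms, and its vertical maps are the induced maps on $\pi_3$ and on $H_3$. Injectivity on $\pi_3$ is therefore equivalent to injectivity on $H_3$, which is exactly the conclusion of Lemma~\ref{lemma:1-1}.

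The content of that lemma, which I would re-verify, is as follows. The same exact sequence, together with $H_2(S)=H_3(S)=0$, gives $H_3(BG)\cong H_3(KG)$ naturally. By Lemma~\ref{H3(G)}(i), $H_3(\mathbb{D}_n)\cong H_3(\Z_n)\cong \Z_n$, and the connecting maps in the colimit system \eqref{Eq:colimit} are the injections $\Z_n\to\Z_{nm}$.

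\emph{Step 3: the expected obstacle.} The key point is to check that $H_3(\Z_n)\to H_3(\Z_{nm})$, induced by the injective multiplication-by-$m$ map, is itself injective. The difficulty is that $H_3$ of a cyclic group is covariant in a nontrivial way, so this does not follow automatically from injectivity on groups.

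Write $H_3(\Z_k)\cong \Z_k$, generated via the periodic resolution (the Bockstein of the cup-square of the degree-one class, dually). I would check that the inclusion $\Z_n\hookrightarrow\Z_{nm}$ acts on $H_3$ by the transfer-compatible map of multiplication by $m$ on the generator. This map is injective, being the inclusion $\tfrac1n\Z/\Z\subseteq \tfrac1{nm}\Z/\Z$ in the identification $H_3(\Z_k)\cong \tfrac1k\Z/\Z$ coming from the lens space $L(k,1)$. An alternative check uses a restriction/corestriction argument: the composite $H_3(\Z_n)\to H_3(\Z_{nm})\to H_3(\Z_n)$ of inclusion followed by transfer is multiplication by the index $m$, which is invertible only when $\gcd(m,n)=1$, so the lens-space identification is the argument that works in general.

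The colimit $H_3(\mathbb{D}_{(2)})\cong \Z_{(2)}/\Z$ then receives each $\Z_n$ injectively, and Step 2 completes the proof.
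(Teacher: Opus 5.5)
Your proposal is correct and follows the paper's proof. Both spaces are $2$-connected, so naturality of the Hurewicz isomorphism reduces injectivity on $\pi_3$ to injectivity on $H_3$ (Lemma~\ref{lemma:1-1}), and your lens-space check that $H_3(\Z_n)\to H_3(\Z_{nm})$ is multiplication by $m$ supplies a step that the paper's proof of Lemma~\ref{lemma:1-1} leaves implicit.

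One slip to fix: $u^{-1}(utu^{-1})^{-1}u$ equals $t^{-1}$, and it is not exhibited as a product of conjugates of $u$. The same formula would wrongly show that $u$ normally generates the Klein bottle group. Instead, note that $utu^{-1}t^{-1}=t^{-2}$ lies in $\langle\langle u\rangle\rangle$, and since $n$ is odd this gives $t\in\langle\langle u\rangle\rangle$.
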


\begin{proof}
As shown in Lemma~\ref{lemma:1-1}, both $K\mathbb {D}_{(2)}$ and $K\mathbb{D}_n$ are $2$-connected for every odd positive integer, $n$. The Hurewicz Theorem implies the homomorphisms labeled $h$ in the commutative diagram  below are isomorphisms. 

\begin{center}  
\begin{tikzcd}
\pi_3(K\mathbb{D}_n) \ar[r,"h","\cong"'] \ar[d,"inc_{\ast}"] & H_3(K\mathbb{D}_n) \ar[d, "inc_{\ast}"]  & H_3(\Z_n) \ar[l, "\cong"] \ar[d,"inc_{\ast}"] \\ 
\pi_3(K\mathbb{D}_{(2)}) \ar[r,"h","\cong"'] & H_3(K\mathbb{D}_{(2)})  & H_3(\sfrac{\Z_{(2)}}{\Z}) \ar[l, "\cong"]
\end{tikzcd}
\end{center} 
The middle vertical arrow is an injection by Lemma~\ref{lemma:1-1}. Therefore, the leftmost vertical arrow is an injection, as claimed.
\end{proof}

\begin{remark}
The above lemma can also be proved using the righthand side of the above diagram, by showing that the horizontal homomorphisms on the right, $H_3(\Z_n)\to H_3(\mathbb{D}_n)$ and $H_3(\Z_{(2)}/\Z)\to H_3(K\mathbb{D}_{(2)})$, are isomorphisms. We encourage the reader to check these computations.
\end{remark}

Summarizing, for any $n$ we have a commutative diagram of {\em based homomorphisms}~(see Definition~\ref{BasedDefn}) where each vertical isomorphism is induced by a splitting, which is determined by a basing in $\mathbb{D}_{(2)}$ -- a basing inherited from a basing for $\mathbb{D}_n$. 
\begin{center}
\begin{equation}
\begin{tikzcd}\label{diagram{iota}}
& \mathbb{D}_n \arrow[r,"\iota_n"]\ar[d,"\cong"] & \mathbb{D}_{(2)} \ar[d, "\cong"]\\
& \Z_n \rtimes \Z \arrow[r, "j_n \rtimes \text{ id}"] & \sfrac{\Z_{(2)}}{\Z} \rtimes \Z
\end{tikzcd}
\end{equation}
\end{center}
Here, $j_n(1) = 1/n$ for $1 \in \Z_n$. As noted earlier, the based inclusion homomorphism $\mathbb{D}_n \to \mathbb{D}_{(2)}$ induces a map (well-defined up to homotopy) 
\[
K\mathbb{D}_n = B\mathbb{D}_n\cup_S B^2 \to B\mathbb{D}_{(2)}\cup_S B^2 = K\mathbb{D}_{(2)},
\]
and an injection (by Lemma~\ref{lemma:homologyKG}) of homotopy groups 
\[
\pi_3(K\mathbb{D}_n) \xrightarrow{\iota_n} \pi_3(K\mathbb{D}_{(2)}).
\]
Note that we are using $\iota_n$ to represent an inclusion of groups as in Diagram~(\ref{diagram{iota}}), as well as a map of spaces, and the induced homomorphism  on homotopy groups.

Using the injection $\iota_n$ above, we may view $\Theta_{\rho}(K)\in \pi_3(K\mathbb{D}_n)$ as an element in $\pi_3(K\mathbb{D}_{(2)})$.  We make this formal in the following definition.

\begin{definition}\label{theta-in-k}
Let $K \subseteq S^3$ be a knot equipped with a based epimorphism $\rho \colon \pi_1(E_K) \to \mathbb{D}_n$, and thus determining a map $\bar{\rho}\colon S^3 \to K\mathbb{D}_n$ which sends a meridian for $K$ to the basing in $\mathbb{D}_n$. Recall from Definition~\ref{inv_univ} that
\[
\Theta_{\rho}(K) \defeq [\bar{\rho}] \in \pi_3(K\mathbb{D}_n).
\]
We now set
\begin{equation}\label{eq:limit-theta}
    \widetilde{\Theta}_{\rho}(K) \defeq \iota_n (\Theta_{\rho}(K)) \in \pi_3( K\mathbb{D}_{(2)}).
\end{equation}
\end{definition}
As we see shortly, to determine whether the original obstruction, $\Theta_{\rho}(K)$, vanishes, it suffices to evaluate $\widetilde{\Theta}_{\rho}(K)$. This will be used in Theorem~\ref{thm:equivalence}, which reveals the connection between these invariants and well-explored linking forms on knots. Specifically,  we will be using Definition \ref{theta-in-k} to relate these obstructions to the $\sfrac{\Z_{(2)}}{\Z}$-linking pairing on the two-fold branched cover of the knot, $K$.

\begin{corollary}~\label{cor:n-surface} 
Suppose $K$ is a $\mathbb{D}_n$-knot. Then  
\[
\widetilde{\Theta}_{\rho}(K) = 0 \in \pi_3(K\mathbb{D}_{(2)}) \iff \Theta_{\rho}(K) = 0 \in \pi_3(K\mathbb{D}_n).
\]
\end{corollary}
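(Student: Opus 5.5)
This follows directly from Definition~\ref{theta-in-k} and Lemma~\ref{lemma:homologyKG}. By definition, $\widetilde{\Theta}_{\rho}(K)=\iota_n(\Theta_{\rho}(K))$, where
\[
\iota_n\colon \pi_3(K\mathbb{D}_n)\to\pi_3(K\mathbb{D}_{(2)})
\]
is the homomorphism induced by the based inclusion $K\mathbb{D}_n\hookrightarrow K\mathbb{D}_{(2)}$. Both directions of the biconditional then reduce to properties of this single group homomorphism.

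For the implication ($\Leftarrow$), I would only use that $\iota_n$ is a homomorphism. If $\Theta_\rho(K)=0$, then $\widetilde{\Theta}_\rho(K)=\iota_n(0)=0$.

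For the implication ($\Rightarrow$), I would invoke Lemma~\ref{lemma:homologyKG}, which says $\iota_n$ is injective for every odd $n$. Then $\iota_n(\Theta_\rho(K))=0$ forces $\Theta_\rho(K)=0$. Recall the content of that lemma: $K\mathbb{D}_n$ and $K\mathbb{D}_{(2)}$ are $2$-connected (Lemma~\ref{lemma:1-1}, using $H_2(\mathbb{D}_n)=0$ from Lemma~\ref{H3(G)} and the fact that homology commutes with colimits). Hurewicz therefore identifies $\pi_3$ with $H_3$, and on $H_3$ the map is injective because it is induced by $\Z_n\hookrightarrow \Z_{(2)}/\Z$ through the isomorphism $H_3(\Z_n)\cong H_3(\mathbb{D}_n)$.

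The only point needing care is well-definedness: the map $K\mathbb{D}_n\to K\mathbb{D}_{(2)}$ must be the based one, sending the distinguished $2$-cell to the distinguished $2$-cell. Otherwise $\iota_n$ on $\pi_3$ would not be the map appearing in Lemma~\ref{lemma:homologyKG}. This holds by construction in diagram~(\ref{diagram{iota}}), since the basing $u$ is preserved. I expect no real obstacle beyond citing these earlier results.
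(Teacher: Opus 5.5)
Your proposal is correct and takes the same approach as the paper, which simply cites Lemma~\ref{lemma:homologyKG}. Since $\widetilde{\Theta}_\rho(K)=\iota_n(\Theta_\rho(K))$ and $\iota_n$ is an injective homomorphism on $\pi_3$, one class vanishes if and only if the other does; your recap of why $\iota_n$ is injective ($2$-connectivity, Hurewicz, and $H_3(\Z_n)\cong H_3(\mathbb{D}_n)$ injecting into the colimit) is accurate but not needed beyond the citation.
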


\begin{proof} This is an immediate consequence of Lemma~\ref{lemma:homologyKG}.  
\end{proof}

For the following Theorem, the reader may wish to review Definitions~\ref{def:G-Knot} and~\ref{def:G-surface}. 
\begin{theorem}\label{Dn-Surface}
 Suppose $K \subseteq S^3$ is a $\mathbb{D}_n$-knot with epimorphism
 \[
\begin{tikzcd}
\pi_1(E_K) \arrow[r, "\rho", two heads] & \mathbb{D}_n.
\end{tikzcd}
\]
Then $K$ bounds a $\mathbb{D}_k$-surface in $B^4$ with $k > n$ if and only if $K$ bounds a $\mathbb{D}_n$-surface.
\end{theorem}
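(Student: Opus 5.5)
The plan is to reduce the statement to the vanishing of $\Theta$ via the $G$-Surface Theorem~\ref{theorem:G-surface}, and then compare obstructions through the injectivity results of Lemma~\ref{lemma:homologyKG}. First I have to fix what ``$\mathbb{D}_k$-surface'' means here. Since $K$ comes with $\rho\colon\pi_1(E_K)\twoheadrightarrow\mathbb{D}_n$, I read $k$ as an odd multiple of $n$, for instance a later term in the colimit system~\eqref{Eq:colimit}. The $\mathbb{D}_k$ structure on $K$ is the composite $\rho_k:=\iota_{n,k}\circ\rho$, where $\iota_{n,k}\colon\mathbb{D}_n\hookrightarrow\mathbb{D}_k$ is the based inclusion $t\mapsto t^{k/n}$, $u\mapsto u$. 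This composite is not surjective. So a $\mathbb{D}_k$-surface should be understood as a surface $F$ with a map $\bar\rho\colon E_F\to B\mathbb{D}_k$ extending $\rho_k$ and killing the $0$-framed section, with surjectivity dropped. The proof of Theorem~\ref{theorem:G-surface} never really uses surjectivity except in the component-merging step, so the theorem applies in this setting.

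For the forward direction, suppose $F$ is a $\mathbb{D}_k$-surface. The ``$\Rightarrow$'' argument of Theorem~\ref{theorem:G-surface} extends the map $\theta_k\colon S^3\to K\mathbb{D}_k$ over $B^4$. Here $\theta_k$ is just $\theta$ composed with $K\mathbb{D}_n\to K\mathbb{D}_k$, so $(\iota_{n,k})_*\Theta_\rho(K)=0$ in $\pi_3(K\mathbb{D}_k)$. Pushing further into $K\mathbb{D}_{(2)}$ gives $\widetilde\Theta_\rho(K)=0$. Corollary~\ref{cor:n-surface} then gives $\Theta_\rho(K)=0\in\pi_3(K\mathbb{D}_n)$, and Theorem~\ref{theorem:G-surface} produces a $\mathbb{D}_n$-surface.

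For the converse, take a $\mathbb{D}_n$-surface $(F,\bar\rho)$ and compose $\bar\rho$ with $B\iota_{n,k}$. This extends $\rho_k$, and the $0$-framed section still maps trivially. So $F$ is a $\mathbb{D}_k$-surface in the relaxed sense. Alternatively, $\Theta_\rho(K)=0$ pushes forward to $0$.

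The step I expect to need the most care is the forward direction's reliance on injectivity of $\pi_3(K\mathbb{D}_n)\to\pi_3(K\mathbb{D}_k)$. The route through $\mathbb{D}_{(2)}$ handles this, because the map to $K\mathbb{D}_{(2)}$ factors through $K\mathbb{D}_k$ when $n\mid k$. If $k$ is allowed not to be a multiple of $n$, there is no based map $\mathbb{D}_n\to\mathbb{D}_k$ compatible with the colimit. In that case I would reinterpret the hypothesis through the common colimit, meaning both classes land in $\pi_3(K\mathbb{D}_{(2)})$. I would also check that the extension $\bar\rho$ over $E_F$ has image in the copy of $\mathbb{D}_n$, which follows because $\pi_1(E_K)\to\pi_1(E_F)$ is surjective, the meridian normally generating both. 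With that, $\bar\rho$ lands in $\mathbb{D}_n$, and the forward direction also follows directly without homotopy theory. I would present this as the primary argument and the $\Theta$ comparison as a confirmation.
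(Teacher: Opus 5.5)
Your $\Theta$-comparison argument is correct and is essentially the paper's proof. The paper combines Corollary~\ref{thm:main} for $\mathbb{D}_n$ with the injectivity of Corollary~\ref{cor:n-surface}. You also supply three points the paper leaves implicit: $\iota_{n,k}\circ\rho$ is not onto; the $(\Rightarrow)$ half of Theorem~\ref{theorem:G-surface} uses no surjectivity; and $K\mathbb{D}_n\to K\mathbb{D}_k\to K\mathbb{D}_{(2)}$ is the map $\iota_n$ when $n\mid k$. Only that $(\Rightarrow)$ half is justified in the non-surjective setting, since the merging step of $(\Leftarrow)$ uses surjectivity. So for the converse, use your direct composition rather than the ``alternatively''.

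The shortcut you intend to make the primary argument, however, is false. That the meridian normally generates both $\pi_1(E_K)$ and $\pi_1(E_F)$ does not make $i_*\colon\pi_1(E_K)\to\pi_1(E_F)$ surjective. The image contains the meridian, but not necessarily its conjugates by elements of $\pi_1(E_F)$ lying outside the image. Surjectivity holds for ribbon disks, not for general surfaces. Already for the unknot bounding $D\#\Sigma$, with $D$ a trivial disk and $\Sigma$ a spun trefoil, $i_*$ is the non-surjective map $\Z\to\pi_1(E_{\mathrm{trefoil}})$.

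Nor is the conclusion you want true. Take a $\mathbb{D}_n$-surface $(F,\bar\rho)$ and a $2$-knot $\Sigma\subseteq S^4$ whose group admits a based surjection $\psi$ onto $\mathbb{D}_k$, for example the spin of $T(2,k)$. For $F'=F\#\Sigma$, van Kampen gives $\pi_1(E_{F'})\cong\pi_1(E_F)*_{\langle\mu\rangle}\pi_1(S^4\setminus\Sigma)$. Then $\iota_{n,k}\circ\bar\rho$ and $\psi$ (conjugated if necessary) glue to an extension of $\iota_{n,k}\circ\rho$ whose image is all of $\mathbb{D}_k$. By Proposition~\ref{prop:dihedral-surface-section}, $F'$ is a genuine $\mathbb{D}_k$-surface whose extension does not land in $\mathbb{D}_n$.

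This matches the paper's caveat after the theorem: a $\mathbb{D}_k$-surface need not itself be a $\mathbb{D}_n$-surface, and the $\mathbb{D}_n$-surface produced may be a different one. So the forward direction genuinely requires the injectivity $\pi_3(K\mathbb{D}_n)\hookrightarrow\pi_3(K\mathbb{D}_{(2)})$ of Lemma~\ref{lemma:homologyKG}. Present the $\Theta$ argument as the proof and discard the shortcut.
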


\begin{proof}
 Since the group $\mathbb{D}_n$ is taut, we know by Corollary~\ref{thm:main} that $K$ bounds a $\mathbb{D}_n$-surface if and only if the invariant $\Theta_\rho(K)$ vanishes. The result then follows from Corollary~\ref{cor:n-surface}. 
\end{proof}

We note that the above Theorem does not imply that every $\mathbb{D}_k$ surface, $k > n$, is a $\mathbb{D}_n$ surface.  It states that if $\rho: \pi_1(E_K)\to \mathbb{D}_n$ is onto, then $K$ bounds a $\mathbb{D}_k$-surface, extending the composition of $\rho$ with the inclusion $\mathbb{D}_n\hookrightarrow\mathbb{D}_k$, if and only if $K$ bounds a (possibly different) $\mathbb{D}_n$ surface.

\subsection{Two-fold branched covers.}\label{twocovers}

Let $\Sigma_2 K$ be the $2$-fold branched cover of $S^3$ branched over $K$. The first homology of this cover is a module over $\Z[\Z_2]$, the group ring of $\Z_2$, where $\Z_2$ acts on $\Sigma_2 K$ via deck transformations.  In particular, the action of the generator $u \in \Z_2$ on $H_1(\Sigma_2 K)$ has order two.

Given a based group $G$ with abelianization homomorphism $G \rightarrow \Z$, let
\[
H := \ker\{G \to \Z\}.
\]
Observe that $H = [G, G]$ is the commutator subgroup of $G$. The generator of the quotient, $\Z$, acts on $H$ by conjugation, and the group $G$ splits, 
\[
G \cong H \rtimes \Z.
\]

As previously observed, given a knot $K$ with a fixed meridian, and a quotient group $G$ of $\pi_1(E_K)$, the meridian determines a choice of splitting, $\iota\colon \Z \to G$, of the abelianization epimorphism. We denote the set of {\em based epimorphisms} $\pi_1(E_K)\twoheadrightarrow G$ by 
\[
\hom^\flat(\pi_1(E_K), G).
\]
The following is an easy exercise using the semi-direct product splittings established in Section~\ref{section:linking}.

\begin{lemma}\label{lemma:lifting}
If the conjugation action of the generator of $\Z$, acting on $H \subseteq H \rtimes \Z$, has finite order $k$, then there is a one-to-one correspondence
\[
\hom^\flat(\pi_1(E_K), H \rtimes \Z) \longleftrightarrow \hom^\flat(\pi_1(E_K), H \rtimes \Z_k) 
\]
with the left to right map determined by composing with the quotient homomorphism $\Z \to \Z_k$.
\end{lemma}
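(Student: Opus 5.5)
The plan is to construct the two maps and check that they are mutually inverse. The forward map is composition with the quotient $q\colon H\rtimes\Z\to H\rtimes\Z_k$, where $q = \mathrm{id}_H\rtimes(\Z\to\Z_k)$. This is a well-defined group homomorphism because the action of the generator $u$ of $\Z$ on $H$ has order $k$, so it factors through $\Z_k$. The map $q$ is surjective, and it sends the basing $u$ to the basing $\bar u$. Hence composition with $q$ takes based epimorphisms to based epimorphisms.

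For the inverse map, start with a based epimorphism $\sigma\colon\pi_1(E_K)\twoheadrightarrow H\rtimes\Z_k$, and write $\pi := \pi_1(E_K) \cong \pi'\rtimes\langle\mu\rangle$ with $\mu$ the fixed meridian. The restriction $\sigma|_{\pi'}$ lands in the kernel of $H\rtimes\Z_k\to\Z_k$, which is $H$. This holds because $\pi'$ is perfect relative to abelian quotients: $\pi'$ is the commutator subgroup, so it maps into the commutator subgroup of the target, and that lies in $H$ since $(H\rtimes\Z_k)/H\cong\Z_k$ is abelian. Define
\[
\tilde\sigma(x\mu^j) := \sigma(x)\,u^j, \qquad x\in\pi',\ j\in\Z.
\]
To see that $\tilde\sigma$ is a homomorphism on $\pi'\rtimes\Z$, it suffices to check that it respects the conjugation action:
\[
\sigma(\mu x\mu^{-1}) = \bar u\,\sigma(x)\,\bar u^{-1}.
\]
This equals $u\,\sigma(x)\,u^{-1}$ in $H\rtimes\Z$, because the actions of $u$ and $\bar u$ on $H$ agree by construction. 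Hence $\tilde\sigma$ is a homomorphism and it is based. It is surjective because its image contains $u$ and also $\sigma(\pi')$, which is all of $H$: any $h\in H$ is $\sigma(x\mu^j)$ for some $x\in\pi'$ and $j\in\Z$, and then $\bar u^j\in H$ forces $j\equiv0 \pmod k$. Writing $j=km$, we get $h = \sigma(x)\,\bar u^{km} = \sigma(x)$.

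Finally, we check that the two constructions are inverse. We have $q\circ\tilde\sigma=\sigma$ directly from the definition. In the other direction, let $\rho$ be a based epimorphism to $H\rtimes\Z$. Then $\rho$ is determined by $\rho|_{\pi'}$, which lands in $H$, together with $\rho(\mu)=u$, and $q$ does not change either of these. The main point of care, and not really an obstacle, is the argument that $\pi'$ maps into $H$ and still surjects onto $H$; everything else is bookkeeping with the semidirect product splittings from Section~\ref{section:linking}.
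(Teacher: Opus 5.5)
Your proof is correct and is essentially the argument the paper has in mind. The paper does not write the proof out, calling it an easy exercise with the semidirect product splittings $\pi_1(E_K)\cong\pi'\rtimes\langle\mu\rangle$ and $H\rtimes\Z$, and your lift $x\mu^j\mapsto\sigma(x)u^j$ is the same device the paper uses in Propositions~\ref{prop:bijection} and~\ref{prop:quotients-are-equivalent}; as a small shortcut, surjectivity of the lift also follows at once from $q\circ\tilde\sigma=\sigma$ together with $\ker q=\langle u^k\rangle\subseteq\Im\tilde\sigma$.
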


\begin{proposition}\label{prop:bijection}
There is a one-to-one bijective correspondence
\begin{equation}\label{leftrightiso}
{hom^\flat(\pi_1(E_K), \mathbb{D}_{(2)})\hspace{10pt}\tikzmark{A}\hspace{50pt}\tikzmark{B}\hspace{10pt}\hom(H_1(\Sigma_2 K), \sfrac{\Z_{(2)}}{\Z})},
\begin{tikzpicture}[overlay,remember picture]
\draw[->] ([shift={(0,0.13)}]A)--([shift={(0,0.13)}]B) node[midway, above]{tr};\draw[<-] ([shift={(0,0)}]A)--([shift={(0,0)}]B) node[midway, below]{\tiny{$\rho$}};
\end{tikzpicture}	
\end{equation}
where $\Sigma_2 K$ is the $2$-fold cover of $S^3$ branched along the knot $K$.
\end{proposition}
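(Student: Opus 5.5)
We construct the two maps explicitly and check that they are mutually inverse. Write $\pi=\pi_1(E_K)$ and $\pi'=[\pi,\pi]$, so that $\pi\cong\pi'\rtimes\langle\mu\rangle$ with the meridian $\mu$ acting by conjugation.

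\emph{Step 1: reduce to the two-fold cover.} A based homomorphism $\phi\colon\pi\to\mathbb{D}_{(2)}=(\sfrac{\Z_{(2)}}{\Z})\rtimes\Z$ sends $\mu\mapsto u$. Composing with abelianization gives the abelianization of $\pi$, so $\phi(\pi')\subseteq \sfrac{\Z_{(2)}}{\Z}$. Since $u^2$ acts trivially on this abelian group, $\phi$ is determined by its restriction to the index-two subgroup $\pi_2:=\ker(\pi\to\Z_2)=\pi'\rtimes\langle\mu^2\rangle$, together with $\phi(\mu)=u$. The group $\pi_2$ is $\pi_1$ of the two-fold cyclic cover $\widetilde{E}_2$ of $E_K$. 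On $\pi_2$ the restriction sends $\mu^2\mapsto u^2$ and $\pi'\to\sfrac{\Z_{(2)}}{\Z}$. Composing with the projection killing $u^2$ therefore gives a homomorphism $\pi_2\to\sfrac{\Z_{(2)}}{\Z}$ that kills the lifted meridian $\tilde\mu=\mu^2$. Since $\Sigma_2K=\widetilde{E}_2\cup(\text{solid torus})$ with $\tilde\mu$ bounding the meridian disk, the map factors through $H_1(\Sigma_2K)$. This defines $\mathrm{tr}(\phi)$. Surjectivity onto $\sfrac{\Z_{(2)}}{\Z}$ is not in question, because the statement uses $\hom$ on the right; on the left, ``epimorphism'' should be read as ``based homomorphism''. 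I would flag this and work with based homomorphisms throughout.

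\emph{Step 2: the inverse.} Given $\chi\colon H_1(\Sigma_2K)\to\sfrac{\Z_{(2)}}{\Z}$, define $\rho(\chi)$ on $\pi_2$ by $g\mapsto \chi([g])\,u^{2a(g)}$, where $a(g)$ is half the abelianization of $g$. Extend to $\pi$ by $\mu\mapsto u$. The key point is compatibility with the deck action. Conjugation by $\mu$ acts on $H_1(\Sigma_2K)$ by the covering involution $\tau$, and $\tau=-1$ on $H_1(\Sigma_2K)$ because $1+\tau$ factors through $H_1(S^3)=0$. Conjugation by $u$ acts on $\sfrac{\Z_{(2)}}{\Z}$ by negation, so $\chi(\tau x)=-\chi(x)=u\chi(x)u^{-1}$. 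This equivariance shows that the formula is a homomorphism on $\pi=\pi_2\sqcup\pi_2\mu$, and we may use Lemma~\ref{lemma:lifting} ($k=2$) to pass between $\Z$- and $\Z_2$-semidirect products cleanly.

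\emph{Step 3: check the two composites.} Both composites are the identity essentially by construction, since each map only restricts to or extends from $\pi_2$ and $H_1(\Sigma_2K)$ is a quotient of $H_1(\widetilde E_2)$. The delicate points are (a) the identity $\tau=-1$ and (b) the fact that $\pi'\cap\pi_2$-elements of $\pi'$ sit in $\pi_2$ and generate $H_1(\widetilde E_2)$ modulo $\tilde\mu$. That is, every class in $H_1(\Sigma_2K)$ is represented by an element of $\pi'$, which holds because $H_1(\widetilde E_2)\cong \Z\langle\tilde\mu\rangle\oplus H_1(\Sigma_2K)$.

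I expect Step 2, the homomorphism and well-definedness check on cosets of $\pi_2$, to be the main obstacle. I would handle it by presenting $\pi$ as an HNN-type extension of $\pi_2$ by $\mu$ with $\mu^2\in\pi_2$, and verifying the relations $\mu g\mu^{-1}\mapsto u\,\rho(g)\,u^{-1}$ and $\mu\cdot\mu\mapsto u^2$ directly, using the equivariance from Step 2.
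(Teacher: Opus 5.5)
Your proposal is correct and follows essentially the paper's route. Both arguments restrict a based homomorphism to the double cover, discard the $u^2$ factor to get a character on $H_1(\Sigma_2K)$, and invert by the semidirect-product construction with $\mu\mapsto u$; the only difference is that the paper builds the forward map topologically, by pulling back the branched double cover of $K\mathbb{D}_{(2)}$ along $\theta$ and projecting to $B(\qz)$, whereas you do it group-theoretically on $\pi_2$. You also supply details that the paper's ``clearly inverse'' argument leaves implicit: you check explicitly that the deck involution acts by $-1$ on $H_1(\Sigma_2K)$, which is exactly what makes the inverse a homomorphism, and you correctly read $\hom^\flat$ as based homomorphisms rather than epimorphisms (no finitely generated group surjects onto $\mathbb{D}_{(2)}$, so the set of based epimorphisms would be empty).
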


\begin{proof} The morphism left-to-right is a type of transfer function, arising from a $2$-fold branched cover. The morphism on the right has a $\Z_2$ action, free away from the  branching set $K$, and leaving the knot pointwise invariant. The commutative diagram below should help the reader follow the ensuing argument.

\begin{figure}[H]
\begin{tikzcd}
& B(\sfrac{\Z_{(2)}}{\Z}) \arrow[]{d}{\rho}\ar[dr,"id"]\\
\Sigma_2 K \ar[r,"\theta'"]\ar[d,"p"]  
		& B(\sfrac{\Z_{(2)}}{\Z} \times \Z) \cup_{\tilde{S}} \widetilde{B}^2 \ar[r,"proj"'] \ar[d,"q"]& B(\sfrac{\Z_{(2)}}{\Z}) \\
S^3 \ar[r,"\theta"] & B\mathbb{D}_{(2)}\cup_{S} B^2 \defeq K\mathbb{D}_{(2)}
\end{tikzcd}
\caption{\label{fig:trans} $\Sigma_2 K$ denotes the $2$-fold cover of $S^3$ branched along the knot $K$. The $2$-disk $\widetilde{B}^2$ is the $2$-fold cover of $B^2$ branched along~$\{0\}$ and $\widetilde{S}=\partial \widetilde{B}^2$.}
\end{figure}

Recall that the group $\mathbb{D}_{(2)} \cong (\sfrac{\Z_{(2)}}{\Z}) \rtimes \Z$ projects onto $\Z$, where $\Z$ is generated by $u \in \mathbb{D}_{(2)}$ and the conjugation action of $u\in \Z$ on the split normal subgroup, $\sfrac{\Z_{(2)}}{\Z}$, is given by negation.  Thus, the unique index two (normal) subgroup of $\mathbb{D}_{(2)}$ is the abelian group $(\sfrac{\Z_{(2)}}{\Z}) \times 2\Z \subseteq (\sfrac{\Z_{(2)}}{\Z}) \rtimes \Z$.

Every based homomorphism $\rho\in \hom^\flat(\pi_1(E_K), \mathbb{D}_{(2)})$ induces a map
\[\theta\colon S^3\to  B\mathbb{D}_{(2)}\cup_{S} B^2
\]
as in Diagram~(\ref{eqn:theta}) from Section~\ref{Gknots}. Since the group $\mathbb{D}_{(2)}$ has a unique homomorphism to $\Z_2$ and $H_1(\mathbb{D}_{(2)}) \cong \Z$, we can pull back the two-fold {\em branched cover} of $B\mathbb{D}_{(2)} \cup_S B^2$, branched over $0 \in B^2$, to obtain the second row of Figure~\ref{fig:trans}. Here, $\tilde{S}$ is the $2$-fold cover of the curve, $S = \partial{B^2} \subseteq K\mathbb{D}_{(2)}$.

Now consider $\tilde{\theta} \colon \Sigma_2 K \to B(\sfrac{\Z_{(2)}}{\Z})$, defined by $\tilde{\theta} = proj \circ \theta'$, where $proj$ is projection to the first factor.
Since $\sfrac{\Z_{(2)}}{\Z}$ is an abelian group, the map $\Sigma_2 K \to B(\sfrac{\Z_{(2)}}{\Z})$ induces a homomorphism $H_1(\Sigma_2 K) \to \sfrac{\Z_{(2)}}{\Z}$. 

This provides one direction of the desired bijection~\eqref{leftrightiso}.

We describe an inverse of the above bijection to complete our proof. Let $\widetilde{\mathcal{N}(K)} \subseteq \Sigma_2 K$ be the transverse pre-image under the map $p$ of an open tubular neighborhood of $K\subseteq S^3$. This is the same as the $2$-fold branched cover, with branched set $K$, of the regular neighborhood of $K \subseteq S^3$.  The $2$-fold cover of $E_K \subseteq S^3$ is $\widetilde{E_K} = \Sigma_2 K \setminus int(\widetilde{\mathcal{N}(K)}).$
The map restricting the domain, $proj\circ \theta'|_{\widetilde{E_K}}$, induces the homomorphism 
\[
proj\circ \theta'|_{\widetilde{E_K}_{\ast}} \colon H_1(\widetilde{E_K}) \to \sfrac{\Z_{(2)}}{\Z}.
\]

Recall that $\pi_1(E_K)$ and $\mathbb{D}_{(2)}$ are based groups~(see Definition~\ref{BasedDefn}), and their basings determine splittings, 
\[
\pi_1(E_K) \cong \pi_1(\widetilde{E_K}) \rtimes \Z, \text{ and } \mathbb{D}_{(2)} \cong (\sfrac{\Z_{(2)}}{\Z}) \rtimes \Z.
\]
We can now define a homomorphism
\[
\rho \colon \pi_1(\widetilde{E_K}) \rtimes \Z \to (\sfrac{\Z_{(2)}}{\Z}) \rtimes \Z \cong \mathbb{D}_{(2)}
\]
by $\rho([x], k) = ((\tilde{\theta}|_{{\widetilde{E_K}}})_{\ast}([x])\ , k)$. Composing with the given splitting isomorphisms we have constructed a homomorphism, also denoted by $\rho$,
\[
\rho \colon \pi_1(E_K) \to \mathbb{D}_{(2)}.
\]

We have now described both maps in the bijection~\eqref{leftrightiso}. These two maps arose by taking a branched $\Z_2$ cover in one direction, and modding out by the $\Z_2$ action in the other.  These are clearly inverse functions: the second map deconstructed the first map, step by step. This establishes our claimed bijection.  
\end{proof}

\subsection{The $\sfrac{\Z_{(2)}}{\Z}$ \-linking form on a $\Z_{(2)}$-homology sphere}\label{linkingpairing}
We review a well-known invariant: the torsion linking form on a $\Z_{(2)}$-homology sphere (a special case of the $\Q/\Z$-linking form on a rational homology sphere).  We begin by recalling the definition and well-know properties of this form.  

We do not claim any new results in this section. The linking form discussed here is well covered in the literature, although most references focus solely on the $\Q/\Z$-linking form for a rational homology sphere. 

The two-fold branched cover of a knot $K\subseteq S^3$, here denoted $\Sigma_2 K$, is a closed, oriented $\Z_{(2)}$-homology $3$-sphere.  Such a manifold comes with a {\em non-singular} $\sfrac{\Z_{(2)}}{\Z}$ valued linking pairing, as stated in the following well-known lemma.  We recall the result here, and derive an explicit Seifert-matrix formula for the linking form, which we use in the proof of Theorem~\ref{thm:equivalence}.

\begin{lemma}\label{lemma:linkingpairing}
Let $\Sigma_2 K$ be a closed, oriented $\Z_{(2)}$-homology $3$-sphere. Then there is an isomorphism
\begin{equation}\label{nonsingular-pairing}
H_1(\Sigma_2 K) \xrightarrow{\cong}  \hom(H_1(\Sigma_2 K); \ \sfrac{\Z_{(2)}}{\Z}) \quad \text{ given by }\quad  [c] \mapsto \lk([c], -).
\end{equation}
\end{lemma}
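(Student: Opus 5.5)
The plan is the standard Poincaré-duality / universal-coefficient argument with coefficients localized away from $2$. Write $M=\Sigma_2 K$, and set $R=\Z_{(2)}$ and $Q=\sfrac{\Z_{(2)}}{\Z}$. First we record that $H_1(M)$ is a finite group of odd order, since $|H_1(M)|=|\Delta_K(-1)|$ is odd. Hence $H_1(M;\Q)=0$ and $H_1(M)\otimes R\cong H_1(M)$. We also use that $M$ is a closed oriented $3$-manifold, so $H_2(M)$ is torsion-free of rank equal to that of $H_1$, which here is $0$. So $H_2(M)=0$ and, dually, $H^1(M)=0$.

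Next we define the pairing concretely and check it is well defined. Given $[c],[d]\in H_1(M)$ with $c,d$ disjoint $1$-cycles, choose an odd $m$ with $m[c]=0$. We pick a $2$-chain $\Sigma$ with $\partial\Sigma=mc$ and set $\lk([c],[d])=\tfrac1m(\Sigma\cdot d)\in Q$. Independence of the choice of $\Sigma$ follows because two choices differ by a $2$-cycle, whose intersection with $d$ is an integer (indeed $0$, since $H_2(M)=0$). Independence of the choice of $m$ is clear, and bilinearity and independence of the representative $d$ are routine.

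The main step is nonsingularity, which we prove homologically. The map $[c]\mapsto\lk([c],-)$ factors as the composition
\[
H_1(M)\xrightarrow{\ \mathrm{PD}^{-1}\ } H^2(M)\xleftarrow{\ \delta\ } H^1(M;Q)\xrightarrow{\ \mathrm{ev}\ }\hom(H_1(M),Q).
\]
Poincaré duality gives the first isomorphism. The Bockstein $\delta$ comes from the coefficient sequence $0\to\Z\to R\to Q\to 0$ (the $\Z$ here is just $\Z$, no localization). The last map comes from the universal coefficient theorem. We then verify the three isomorphisms in turn. The Bockstein is an isomorphism because $H^1(M;R)=\hom(H_1(M),R)=0$, as $H_1$ is torsion, and $H^2(M;R)\cong H^2(M)\otimes R$ (up to an $\mathrm{Ext}$ term that vanishes since $R$ is flat and $H_1$ has odd torsion) is torsion-free-free over $R$ after killing odd torsion. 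Concretely, $H^2(M;R)\cong \mathrm{Ext}(H_1,R)=0$ because odd numbers are units in $R$. For the UCT map, $\mathrm{Ext}(H_0(M),Q)=0$, so $\mathrm{ev}$ is an isomorphism.

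The remaining point, and the one I expect to need the most care, is checking that this composite agrees with the geometric $\tfrac1m(\Sigma\cdot d)$ formula, including sign conventions. We do this by the usual chain-level computation. Lift the Poincaré dual cocycle of $c$ to $R$-cochains, divide by $m$ using the chain $\Sigma$, and observe that the resulting $Q$-cocycle evaluated on $d$ is $\tfrac1m\,\Sigma\cdot d$. Since only the isomorphism, not the sign, matters for the statement, signs can be fixed by convention.
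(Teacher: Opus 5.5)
Your argument is correct and takes a genuinely different route from the paper. There is one slip to fix. Since $\Z_{(2)}$ inverts the odd integers (as you use later), $H_1(M)\otimes\Z_{(2)}=0$, not $\cong H_1(M)$; that vanishing is exactly the $\Z_{(2)}$-homology sphere hypothesis. Likewise, the sentence about $H^2(M;R)$ should just read $H^2(M;R)\cong\hom(H_2(M),R)\oplus\mathrm{Ext}(H_1(M),R)=0$. Neither slip is used anywhere else. With $H^1(M;R)=H^2(M;R)=0$ the Bockstein is an isomorphism, and the composite $\mathrm{ev}\circ\delta^{-1}\circ\mathrm{PD}^{-1}$ does the job.

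The paper gives no intrinsic proof. It treats the lemma as standard and derives it concretely for double branched covers, in three steps:
\begin{enumerate}
\item It uses the Seifert presentation $0\to H_1(S)\xrightarrow{A}H_1(E_S)\to H_1(\Sigma_2K)\to 0$ with $A=V+V^T$.
\item It takes $\lk([x],[y])=[x]^TA^{-1}[y]$ as the formula for the pairing, and leaves it as an Alexander-duality exercise that this agrees with the intersection formula $\frac1k(x\cdot d)$.
\item Nonsingularity is then implicit algebra: for symmetric $A$ with odd determinant, $x\mapsto x^TA^{-1}(-)$ identifies $\Z^{2g}/A\Z^{2g}$ with its $\qz$-dual.
\end{enumerate}

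Your Poincar\'e duality--Bockstein--universal-coefficient argument buys generality and independence from choices. It proves the lemma exactly as stated, for any closed oriented $\Z_{(2)}$-homology sphere, with no Seifert surface. Your chain-level identification of the composite with $\frac1m(\Sigma\cdot d)$ is essentially the mechanism behind the paper's Lemma~\ref{lemma:CupFormula}.

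The paper's route buys the explicit matrix formula, which is what is actually used downstream. It underlies identifying $[c]$ with $\frac1n A[\beta]$ in the proof of (iii)$\iff$(iv) of Theorem~\ref{thm:equivalence}, and the computations of Section~\ref{sec:QZcomp}. If you wanted those consequences, you would still need to match your composite with $[x]^TA^{-1}[y]$.
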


This isomorphism, which we will derive shortly, determines a non-singular $\qz$-linking form on the $\Z_{(2)}$-homology sphere, $\Sigma_2 K$.  Since $H_1(\Sigma_2 K)$ is finite, for any $[y] \in H_1(\Sigma_2 K)$ there is an odd integer $k$ such that $k\cdot y = \partial(d)$ for some $2$-chain $d$. Then
\[
\lk([x], [y]) = \frac{1}{k}(x \cdot d) \in \frac{\Z_{(2)}}{\Z}.
\]
where $x \cdot d$ is the algebraic intersection number of the $1$ and $2$-chains $x$ and $d$, respectively.

Together with Proposition~\ref{prop:bijection}, this proves the following corollary.

\begin{corollary}\label{cor:correspondence}
Recall that $\mathbb{D}_{(2)} \cong (\sfrac{\Z_{(2)}}{\Z}) \rtimes \Z.$ There is a bijective correspondence
\[
H_1(\Sigma_2K) \xleftrightarrow{\hspace{1mm}[c] \hspace{1mm} \longleftrightarrow  \hspace{1mm} \rho_{\ast} \hspace{1mm}} \hom^{\flat}(\pi_1(E_K), \mathbb{D}_{(2)}),
\]
where $\rho_{\ast}(x) = \lk([c], \ x)$.
\end{corollary}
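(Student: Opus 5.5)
The plan is to compose two bijections that are already available. Lemma~\ref{lemma:linkingpairing} gives the non-singular linking form, and Proposition~\ref{prop:bijection} supplies the transfer bijection. What remains is to check that the composite is given by the stated formula $\rho_\ast(x)=\lk([c],x)$.

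\emph{First bijection.} Since $\Sigma_2 K$ is a $\Z_{(2)}$-homology sphere, its first homology is finite of odd order. The adjoint map $[c]\mapsto \lk([c],-)$ is injective: if $\lk([c],-)\equiv 0$, choose $d$ with $k c=\partial d$ for odd $k$. Poincaré duality for the finite group $H_1$ then pairs $H_1$ with $H^2\cong \mathrm{Ext}(H_1,\Z)\cong \hom(H_1,\Q/\Z)$, and every character lands in $\Z_{(2)}/\Z$ because the orders are odd. Injectivity together with equal cardinalities of the finite sets $H_1(\Sigma_2K)$ and $\hom(H_1(\Sigma_2K),\Z_{(2)}/\Z)$ gives bijectivity. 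The key input is the standard identification of the linking form with the composite $H_1\cong H^2\cong \mathrm{Ext}(H_1,\Z)\cong\hom(H_1,\Q/\Z)$, computed via the Bockstein for $0\to\Z\to\Q\to\Q/\Z\to0$, together with well-definedness (independence of $k$ and $d$). I will cite this as standard rather than reprove it.

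\emph{Second bijection.} Proposition~\ref{prop:bijection} identifies $\hom(H_1(\Sigma_2K),\Z_{(2)}/\Z)$ with $\hom^\flat(\pi_1(E_K),\mathbb{D}_{(2)})$. Composing the two bijections gives the correspondence of Corollary~\ref{cor:correspondence}. For the formula, note that $\rho$ is built from the character $\chi=\lk([c],-)$ in the inverse construction of Proposition~\ref{prop:bijection}: an element $x\in\pi_1(\widetilde{E_K})\subseteq\pi_1(E_K)$ is sent to $\chi$ of its image in $H_1(\widetilde{E_K})\to H_1(\Sigma_2K)$. So on the index-two subgroup, $\rho_\ast(x)=\lk([c],x)$, where $x$ is read via the lift to the double cover. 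The meridian part is fixed by the basing.

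The main obstacle is bookkeeping rather than mathematics. I must check that the lift map $H_1(\widetilde{E_K})\to H_1(\Sigma_2K)$ is surjective, so that characters on $H_1(\Sigma_2K)$ correspond exactly to the relevant based homomorphisms. I also must check that the formula is compatible with the negation action of $u$, i.e.\ $\lk([c],\tau x)=-\lk([c],x)$ for the deck transformation $\tau$. I would verify this using $\tau_\ast=-1$ on $H_1(\Sigma_2K)$, which holds because $1+\tau_\ast$ factors through $H_1(S^3)=0$, and the fact that $\tau$ preserves orientation, so $\lk$ is $\tau$-invariant.
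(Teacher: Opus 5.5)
Your proposal is correct and is essentially the paper's argument: the paper deduces the corollary in one line by composing the linking-form isomorphism of Lemma~\ref{lemma:linkingpairing} with the transfer bijection of Proposition~\ref{prop:bijection}, exactly as you do. Your extra bookkeeping fills in details the paper leaves implicit: surjectivity of $H_1(\widetilde{E_K})\to H_1(\Sigma_2K)$, and $\tau_\ast=-1$ on $H_1(\Sigma_2K)$, which guarantees compatibility with conjugation by $u$. The $\tau$-invariance of $\lk$ is not actually needed, since $\tau_\ast=-1$ alone gives $\lk([c],\tau_\ast x)=-\lk([c],x)$.
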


 Given an element of $\hom(H_1(\Sigma_2 K); \ \sfrac{\Z_{(2)}}{\Z})$, the uniqueness of the corresponding element $[c]$ as in Lemma~\ref{lemma:linkingpairing}, as well as an explicit formula for a linking pairing in this lemma, plays a significant role in the results that follow.  Formulae and proofs for this Lemma can be found throughout the literature.  But, given its importance in  the proof of Theorem~\ref{thm:equivalence}, we derive a careful construction of this linking form, along with an explicit formula for the correspondence stated in Lemma~\ref{lemma:linkingpairing}.\\

\subsection{Computing the linking pairing of Lemma~\ref{lemma:linkingpairing}.}~\label{Subsection:Seifert}
In what follows, fix a knot $K$ and, as always, let $\Sigma_2 K$ denote the $2$-fold cover of $S^3$ branched along $K$.  A standard argument shows this $2$-fold cover of $K \subseteq S^3$ is a $\Z_{(2)}$-homology sphere. See, for example,~\cite[Chapter~8D]{rolfsen1976knots}.

Let $E_S \subseteq S^3$ denote the exterior of a Seifert surface $S$ for $K$.  Let $V$ be the Seifert matrix associated to the Seifert surface $S$ with a symplectic basis,  $e_1, \ldots, e_{2g}$,  where $g$ is the genus of $S$.  Alexander duality provides a dual basis of curves $f_1, \ldots, f_{2g} \in H_1(E_S)$.  That is, if $\lk_{S^3}$ denotes the linking number in $S^3$, then for all $1 \leq i, j \leq g$, we have
\[
\lk_{S^3}(f_i, e_j) = \delta_{ij}.
\]
To an element $f_i \in H_1(E_S)$ we associate an element $f_i^* \in \hom(H_1(S), \Z)$ by setting $f_i^*(e_j) = \lk_{S^3}(f_i, e_j) = \delta_{ij}$.

From the above, we observe the following formula:
\begin{equation}\label{eqn:inner_product}
\lk_{S^3}\left(\sum_{i=1}^{2g} a_i f_i,  \sum_{j=1}^{2g} b_j e_j\right) = \sum_{i=1}^{2g}\sum_{j=1}^{2g} a_i b_j f_i^*(e_j) = \sum_{i =1}^{2g} a_ib_i f_i^*(e_i) = \sum_{i=1}^{2g}\ a_ib_i \in \Z.
\end{equation}
This is the standard inner product if we represent two vectors by their coordinates with respect to the given bases.

\subsubsection{The homology of a 2-fold branched cover.}\label{2branches} We recall the well-known presentation for the homology of a $2$-fold branched cover, $\Sigma_2 K$, of a knot $K$.  We include this computation to make the text self-contained.

Consider the usual Mayer-Vietoris sequence to compute the Alexander module over $\Z[t, t^{-1}]$ of $K$. (For instance, see \cite{kervaire1978MR0521731}.)
\[
0 \to H_1(S)\otimes \Z[\Z]  \xrightarrow{V - t V^{T}} H_1(E_S)\otimes \Z[\Z] \to H_1(E_K; \Z[\Z]) \to \Z \to 0.
\]
Here we use the traditional notation, multiplication by $t$, to represent the conjugate action of the meridian of the knot $K$, which is the generator of $\Z$ and acts on these $\Z$ covers as the generator of the group of the deck transformations.  Tensoring with
\[
\Z[\Z_2] \cong \Z[t, t^{-1}]/(t^2 -1),
\]
we get an exact sequence of $\Z[\Z_2]$-modules as follows.
\[
0 \to H_1(S) \otimes_\Z \Z[\Z_2] \xrightarrow{V - t V^{T}} H_1(E_S)\otimes_\Z \Z[\Z_2] \to H_1(E_K; \Z[\Z_2]) \to \Z \to 0.
\]
Here, the Alexander polynomial is $p(t) = det(V-tV^T)$. Since $H_1(E_K) \cong \Z$, we know the det$(V-V^T) = 1$, and $V - V^T$ is invertible.

The meridian for $K$ splits the last homomorphism in this sequence, and appropriately attaching $S^1 \times B^2$ to the two-fold cover of $E_K$  along the bounding torus, we obtain a presentation of the $2$-fold branched cover over $K$ as follows:
\begin{equation}\label{Eq:presentation}
0 \to H_1(S) \otimes_\Z \Z[\Z_2] \xrightarrow{V - t V^{T}} H_1(E_S)\otimes_\Z \Z[\Z_2] \to H_1(\Sigma_2 K) \to  0.
\end{equation}

Finally, we have an equality of ideals
\begin{equation}\label{eq:T+1}
(t+1, p(t)) = (t^2-1, p(t)) \subseteq \Z[t, t^{-1}],
\end{equation}
where $p(t)$ is the Alexander polynomial for $K$. To see this, let $\mathscr{I} = (t^2 - 1, p(t))$. It clearly suffices to show $t+1 \in \mathscr{I}$ since $(t^2-1)\in (t+1)$.  Modulo a multiple of $t^2-1$, we know $p(t) = at + b$ for some integers $a, b$. We also know that $p(1)  = 1$ since $H_1(E_K, S^1) \cong 0$, where $S^1$ represents the meridian of $K$.  Therefore, $a+b=1$ and, again up to a multiple of $t^2-1$, we have $p(t) = at + (1-a)$.
 Thus, using that $at^2-a$ is in $\mathscr{I}$, we see that
\[
at + (1-a) \in \mathscr{I} \mbox{ and } a + (1-a)t \in \mathscr{I}.
\]
The second equality above is obtained by multiplying the first by the invertible element, $t$. Now adding these two elements of $\mathscr{I}$ we obtain:
\[
1 + t = at+(1 - a) +  a + (1-a)t \in \mathscr{I}.
\] 

We can tensor the displayed exact sequence~(\ref{Eq:presentation}) over $\Z[t, t^{-1}]$ with 
\[
\Z^t:= \Z[t, t^{-1}]/(t+1).
\]
Here $\Z^t$ is the integers as a $\Z[t, t^{-1}]$ module where $t$ acts by negation.   Since $t=-1$, the matrix $V - t V^T$ becomes $A: = V + V^T$, where $V^T$ is the transpose of $V$.  That is, we have an exact sequence as follows:
\begin{equation}\label{presentation2}
\{0\} \to H_1(S) \xrightarrow{A} H_1(E_S) \to H_1(\Sigma_2 K) \to \{0\} 
\end{equation}

\subsection{The linking pairing on $\Sigma_2 K$ and the Seifert matrix for $K$}\label{linkingM2K}
Recall that the $\sfrac{\Z_{(2)}}{\Z}$-linking pairing 
$$\lk \colon H_1(\Sigma_2 K) \to \Z_{(2)}/{\Z}$$ 
is defined as follows:
Let $x, y$ be loops in $E_S$ lifting  elements $[x], [y] \in H_1(\Sigma_2 K)$ from the sequence above. Then the value of the $\sfrac{\Z_{(2)}}{\Z}$-linking pairing on $H_1(\Sigma_2 K)$ is given by the following, where on the right side of the equality we are taking the homology classes of the lifts of $x, y$ in $H_1(S)$.

\begin{equation}\label{Eq:characteristic}
\lk([x], [y]) = [x]^T A^{-1}[y]\in  {\Z_{(2)}}/{\Z}.
\end{equation}

\noindent As noted previously, $A = V + V^T$, where $V$ is the Seifert matrix for $K$.  The vector $[x]^T$ is the transpose of an element in $H_1(E_S)$ with coordinates in the basis $\{f_1, \ldots f_{2g}\}$.  We know that $H_1(\Sigma_2 K)$ has odd order (it is the 2-fold branched cover of a sphere over a knot), or, equivalently, $\det(A)$ is odd.  So  $A^{-1}([y])$ is a vector in $H_1(S)\otimes \Z_{(2)}$ with coordinates in the basis $\{e_1, \ldots, e_{2g}\}$. As in equation~\eqref{eqn:inner_product} we have
\[
\lk_{S^3}([x], [y]) = \lk \left(\sum a_i f_i, \sum b_j e_j\right) = \sum a_ib_i \in \Z.
\]
That is, this is the standard inner product (mod $\Z$) where the vectors are written with coordinates in the given bases.

As before, we denote the Alexander polynomial of $K$ by $p(t)$, and $t$ also denotes the generator of action of the meridian of $K$ on the $\Z$ cover of $K$. That is, we regard the variable $t$ as denoting the generator of $H_1(E_K;\mathbb{Z})$. The action of $t$ on $H_2(\Sigma_2 K)$, as a deck transformation, is given by multiplication by $-1$ since $t+1 = 0 \in \Z^t:= \Z[t, t^{-1}]/(t+1)$. Thus, $\text{det}(A) = p(-1)$, which is an odd number since $p(t)$ is symmetric, and $p(1) = 1$.  Let $m = p(-1)$.  Then
\[
[x]^T A^{-1}[y] \in \Z\left[1/m\right]/\Z \subseteq \sfrac{\Z_{(2)}}{\Z},
\]
since $A^{-1}$ is a matrix with coordinates in $\Z[1/m]$.  

Suppose we are given two elements $[x], [y] \in H_1(\Sigma_2 K)$.  Since $\Sigma_2 K$ is a $\Z_{(2)}$-homology sphere, there is an odd integer $k$ such that $k[y] = 0$.  Then $ky = \partial w$ where $w$ is a $2$-chain in $\Sigma_2 K$.  It follows from Alexander Duality that  
\[
\lk_{\Sigma_2 K}([x], [y]) = (1/k) x\cdot w \in \sfrac{\Z_{(2)}}{\Z}.
\]
Here, $x \cdot w$ is the signed intersection number of $x$ and $w$.  

To clarify the reference to Alexander Duality, it is an exercise to see that the above formula coincides with the previously given formula $[x]A^{-1}[y]$ for $[x],[y] \in H_1(\Sigma_2 K)$.  To see this, one needs to push $x$ off the Seifert surface $S$ to get an element in $H_1(E_S) \cong \hom(H_1(S), \Z)$ where this isomorphism applied to these basis elements is $f_j \mapsto f_j^*$.

The following known result plays a key role in Theorem~\ref{thm:equivalence}, so we review it.

\begin{lemma}\label{lemma:CupFormula}
Suppose $[a] \in H_1(\Sigma_2 K)$ is an element of order $k$, and $[b] \in H_2(\Sigma_2 K; \Z_k)$ such that $\partial b=k\cdot a$.  Let $\alpha \in C^2(\Sigma_2 K; \Z_k)$ be defined by $\alpha (x) = a \cdot x \in \Z_k$, where $a \cdot x$ is the algebraic intersection number of $a$ and $x\mod k$. Here, $\alpha$ is a cocycle such that $[\alpha] \in H^2(\Sigma_2 K;\Z_k)$ is the Poincar\'e dual to $[a] \in H_1(\Sigma_2 K)$.   Similarly, $[\beta] = PD([b]) \in H^1(\Sigma_2 K; \Z_k)$ satisfies $\beta(y) = b \cdot y \mod k$, where $b\cdot y$ is the algebraic intersection number of $b$  and $y$. 

Then
\begin{equation}\label{linking:cuproduct}
\lk([a], [b]) = \iota(\langle [\alpha] \cup [\beta], [\Sigma_2 K] \rangle) \in \sfrac{\Z_{(2)}}{\Z}.
\end{equation}
Here, $\iota \colon \Z_k \to \sfrac{\Z_{(2)}}{\Z}$ is defined by $\iota(1) = \frac{1}{k} +\Z \in \sfrac{\Z_{(2)}}{\Z}$.
\end{lemma}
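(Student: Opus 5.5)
This is the standard identification of the torsion linking form with a cup product followed by a Bockstein/evaluation. The plan is to reduce both sides of \eqref{linking:cuproduct} to the same intersection number $a\cdot b$, reduced mod $k$ and then divided by $k$.

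Throughout I interpret the second argument of the linking pairing as the class $[\partial b/k]=[a']$ in $H_1(\Sigma_2 K)$; this is the only reading under which the left side of \eqref{linking:cuproduct} is defined. With this convention, $b$ is an integral $2$-chain with $\partial b = k a'$, and $b$ is a $\Z_k$-cycle, so $[b]\in H_2(\Sigma_2K;\Z_k)$.

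\textbf{Step 1 (linking side).} By the formula recalled in Section~\ref{linkingM2K}, if $k a' = \partial b$, then
\[
\lk([a],[a']) = \tfrac1k\,(a\cdot b) \in \sfrac{\Z_{(2)}}{\Z},
\]
where $a$ is put in general position with respect to $b$. This is independent of the choices made, since changing $b$ by an integral cycle changes $a\cdot b$ by the intersection of a torsion class with a $2$-cycle, which is $0$ in a $\Z_{(2)}$-homology sphere. So the linking side equals $\iota\bigl((a\cdot b) \bmod k\bigr)$.

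\textbf{Step 2 (cup product side).} I would use Poincar\'e duality with $\Z_k$ coefficients, in the form of the standard fact that, for transverse cycles, the cup product of Poincar\'e duals evaluated on the fundamental class equals the intersection number:
\[
\langle PD(x)\cup PD(y),[M]\rangle = \pm\, x\cdot y \pmod k,
\]
with the sign fixed by the degree convention. Here $\alpha$ is dual to the $1$-cycle $a$ (reduced mod $k$) and $\beta$ is dual to the mod-$k$ cycle $b$. Concretely, one represents $\alpha$ and $\beta$ by Thom-type cocycles supported near $a$ and $b$, and computes $\alpha\cup\beta$ locally at the transverse intersection points of $a$ and $b$. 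Each transverse point contributes its local sign. Combining this with the definitions $\alpha(x)=a\cdot x$ and $\beta(y)=b\cdot y$ gives
\[
\langle\alpha\cup\beta,[\Sigma_2K]\rangle \equiv a\cdot b \pmod k.
\]

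\textbf{Step 3 (conclusion).} Applying $\iota$ to the congruence in Step 2 and comparing with Step 1 yields \eqref{linking:cuproduct}.

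\textbf{Main obstacle.} The delicate point is Step 2. There are two issues to handle.
\begin{itemize}
\item Signs and ordering conventions in the cup/intersection duality. Since $\deg\alpha\cdot\deg\beta=2$, commuting the factors introduces no sign, so only the orientation convention matters. I would fix it to match the convention for linking numbers in Section~\ref{linkingM2K}.
\item Because $b$ is only a mod-$k$ cycle, its dual $\beta$ depends only on $b$ mod $k$. The identification $\beta(y)=b\cdot y$ is justified by the $\Z_k$ Poincar\'e duality cap product $[\Sigma_2K]\cap\beta=[b]$.
\end{itemize}
I would cite a standard reference, for example Bredon's treatment of intersection and cup products, or Hatcher's section on Poincar\'e duality, rather than rederive these facts.
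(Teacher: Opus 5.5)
Your proposal is correct and follows essentially the same route as the paper's proof. Both rest on the standard fact that, with $\Z_k$ coefficients, $\langle PD(a)\cup PD(b),[\Sigma_2K]\rangle$ is the mod-$k$ intersection number $a\cdot b$, which $\iota$ sends to $\frac{1}{k}(a\cdot b)$, the linking number; your Step~1, together with reading the ill-typed second argument as $[\partial b/k]$, just spells out the identification $\lk=\frac{1}{k}(a\cdot b)$ that the paper's proof leaves implicit.
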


\begin{proof}  This employs the well-known connection between intersection theory and Poincar\'e Duality.

By hypothesis, $[a] \in H_1(\Sigma_2 K)$ is an element of order $k$.  Then $k\cdot a = \partial b$.  This implies $b \in C_2(\Sigma_2 K)$ is represented by an immersed surface in $\Sigma_2 K$ with boundary $k\cdot a$.  In particular, this implies 
$[a], [b]\in H_*(\Sigma_2 K; \Z_k)$.

There is a $2$-cocycle $\alpha \colon  C_2(\Sigma_2 K) \to  \Z_k$ defined by $\alpha(x) = a \cdot x \in  \Z_k$, where $a \cdot x$ is the signed intersection number modulo $k$ of the cycles 
$a$ and $x$.  Note that $[\alpha]$ is the Poincar\'e dual of $[a]$, that is, 
\[
[\alpha] = PD([a]) \in H^2(\Sigma_2 K; \Z_k). 
\]

Similarly, let $[\beta] = PD([b]) \in H^1(\Sigma_2 K; \Z_k)$ be the Poincar\'e dual of $[b]$. That is, 
\[
\beta \colon C_1 (\Sigma_2 K) \to \Z _k \text{ is defined by } \beta (y) = b \cdot y \mod k.  
\]
where $b\cdot y$ is the algebraic intersection number of $b$ and $y$.

Since $H_3(\Sigma_2 K; \Z_k) \cong \Z_k$,  generated by the fundamental class $[\Sigma_2 K]$, the signed intersection number satisfies the formula
\begin{equation}\label{cupprod}
a \cdot b = \langle \alpha \cup \beta, [\Sigma_2 K] \rangle \in \Z_k \xhookrightarrow{\iota} \qz.
\end{equation}
Here, the brackets, $\langle -, - \rangle$, represent the Kronecker product, that is, the evaluation homomorphism to $\Z_k$, and $\iota(1) = \frac{1}{k}$.
\end{proof}

\subsection{Torsion linking forms and maps between $\Z_{(2)}$-homology spheres.}\label{torlinking}

We now review how to geometrically interpret a map between $\Z_{(2)}$-homology spheres as it relates to their $\sfrac{\Z_{(2)}}{\Z}$ linking forms.

Suppose
\[
\rho : M_1 \to M_2
\]
is an orientation preserving map between oriented $\Z_{(2)}$-homology $3$-spheres $M_1$ and $M_2$. Given an embedded loop 
\[
a \in C_1(M_2;\Z_k),
\]
we can assume, after a homotopy, that $\rho$ is transverse to $a$.  Then 
\begin{equation}\label{EQ:PD}
PD\circ\rho^{\ast}\circ PD^{-1}([a]) = [\rho^{-1}(a)].
\end{equation}

Now suppose we have an epimorphism $\rho_* \colon \pi_1(\Sigma_2 K) \to \Z_k$, and after abelianization, an epimorphism that we will also denote by $\rho_*$,
\[
\rho_* \colon H_1(\Sigma_2 K) \to \Z_k.
\]
This is the induced homomorphism of a map $\rho \colon \Sigma_2 K \to B\Z_k$.  Construct $B\Z_k$ by adding cells of dimension $\geq 4$ to the lens space $L(k, 1)$, where $L(k, 1)$ has the standard cellular structure with one cell in each dimension $\leq 3$.  After a homotopy we can assume $\rho$ is cellular, and, by restricting to the three-skeleton of the codomain, we have a map  $\Sigma_2 K \to L(k,1)$ which we still denote by $\rho$. 

Summarizing so far, we may assume that the given map $\rho_* \colon H_1(\Sigma_2 K) \to \Z_k$ is induced by a map
\[
\rho \colon \Sigma_2 K \to L(k,1).
\]

One easily checks that if $a$ is the embedded loop represented by the $1$-skeleton of $L(k,1)$, then $\lk([a], [a]) = 1/k$.  So if 
\[
\rho([x]) = m\cdot [a] \in H_1(L(k,1))
\]
then 
\[
\lk \colon H_1(L(k, 1))\times H_1(L(k, 1)) \to \qz
\]
satisfies 
\begin{equation}\label{Eqn:phi(x)}
\lk([a], \rho([x])) = m\cdot \lk([a], [a]) = \frac{m}{k} \in \qz.
\end{equation}

The next lemma is well-known; we could not locate a reference so we include the proof. By non-singularity of the torsion linking form, every homomorphism obtained as the following composition
\[
H_1(\Sigma_2 K) \to \Z_k \xrightarrow{1 \mapsto 1/k} \qz
\]
is given by linking with $[c] \in H_1(\Sigma_2 K)$ for some {\em unique} choice of $[c]$.  The lemma below identifies that element $[c] \in H_1(\Sigma_2 K)$.

\begin{lemma}\label{lemma:linking-correspondence} Suppose 
\[
\rho_{\ast} \colon H_1(\Sigma_2 K) \to \Z_k,
\]
$k$ odd, is an epimorphism. As noted previously, the above epimorphism is induced by a map 
\[
\rho \colon \Sigma_2 K \to L(k, 1)
\]
which is transverse to the single $1$- and $2$-cells in the standard cell structure on the lens space, $L(k,1)$.  

Let $a$ denote the 
$1$-skeleton of $L(k,1)$, so that $[a] \in H_1(L(k,1))$ is a generator.  Let $c = \rho^{-1}(a)$.
Then $[c] \in H_1(\Sigma_2 K)$ is the unique element with the property that for all $[x] \in H_1(\Sigma_2 K)$,
\[
\iota(\rho_*([x])) = \lk([c], [x]) \in \sfrac{\Z_{(2)}}{\Z},
\]
where, as before, $\iota$ is the homomorphism 
$\Z_k \xrightarrow{\iota} \sfrac{\Z_{(2)}}{\Z} \text{ given by } \iota(1) = 1/k.$

\end{lemma}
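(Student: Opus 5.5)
Uniqueness of $[c]$ is immediate from Lemma~\ref{lemma:linkingpairing}: the adjoint map $[c]\mapsto \lk([c],-)$ is an isomorphism $H_1(\Sigma_2 K)\to\hom(H_1(\Sigma_2 K),\qz)$. So the work is to show that the specific class $[c]=[\rho^{-1}(a)]$ satisfies $\lk([c],[x])=\iota(\rho_*([x]))$ for every $[x]\in H_1(\Sigma_2 K)$. I would prove this by computing both sides with the cup-product formula of Lemma~\ref{lemma:CupFormula} and transporting the computation along $\rho$ using equation~\eqref{EQ:PD}.

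First, in $L(k,1)$ I set up the reference computation. Let $e$ be the single $2$-cell, so $\partial e = k\cdot a$ and $e$ is a mod~$k$ cycle, and let $[\alpha_L]=PD([a])\in H^2(L(k,1);\Z_k)$. The $1$-cocycle $\beta_L(y)=e\cdot y \bmod k$ represents the generator of $H^1(L(k,1);\Z_k)$ that evaluates to $1$ on $[a]$, so $\rho^*[\beta_L]$ is exactly the class in $H^1(\Sigma_2 K;\Z_k)=\hom(H_1(\Sigma_2 K),\Z_k)$ given by $\rho_*$. Equation~\eqref{Eqn:phi(x)} records $\lk([a],[a])=1/k$.

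Next I pull back. Since $\rho$ is transverse to $a$ and $e$, $c=\rho^{-1}(a)$ is a $1$-manifold and $w=\rho^{-1}(e)$ is a surface with $\partial w = k\cdot c$, so $[c]$ has order dividing $k$. By~\eqref{EQ:PD}, $PD([c])=\rho^*[\alpha_L]$, and likewise $PD([w])=\rho^*[\beta_L]$ in mod~$k$ cohomology. For an arbitrary class $[x]$, represented by a loop $x$ in general position, the linking formula $\lk([c],[x])=\frac1k(x\cdot w)$ (equivalently, Lemma~\ref{lemma:CupFormula} with the pair $c,w$) reduces the claim to $x\cdot w\equiv\rho_*([x]) \pmod k$. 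Transversality gives $x\cdot\rho^{-1}(e)=\rho(x)\cdot e$ in $L(k,1)$, and $\rho(x)\cdot e=\beta_L(\rho(x))\equiv\rho_*([x])\pmod k$, since $e$ is dual to the $1$-cell $a$ and $\rho(x)$ is homologous to $\rho_*([x])\cdot a$. Multiplying by $1/k$ gives $\lk([c],[x])=\iota(\rho_*([x]))$.

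The main obstacle is the sign and orientation bookkeeping. I need to check that the coorientation of $w=\rho^{-1}(e)$ induced from $e$ matches the orientation convention in the definition of $\lk$, so that $\partial w=+k\,c$ and not $-k\,c$. I also need $\rho$ to be degree-compatible in the sense of~\eqref{EQ:PD}: the map is not orientation-preserving between manifolds of the same dimension here, so preimage orientations must come from the normal data. I would pin down the signs by testing the case $\Sigma_2 K=L(k,1)$ with $\rho=\mathrm{id}$, which reproduces $\lk([a],[a])=1/k$. A further point to verify is that $\partial e=k\cdot a$ with the correct sign in the standard cell structure of $L(k,1)$, which makes $w$ a legitimate $2$-chain bounding $k c$.
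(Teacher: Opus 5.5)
Your proposal is correct and follows essentially the same route as the paper. The paper likewise sets $d=\rho^{-1}(\text{2-cell})$ with $\partial d=k\,c$, writes $\lk([c],[x])=\frac1k(d\cdot x)$, uses $PD[d]=\rho^*PD[\text{2-cell}]$ (Equation~\eqref{EQ:PD}) and naturality to move the count to $L(k,1)$, and finishes with $\lk([a],[a])=1/k$; this last fact is your ``$e$ is dual to $a$'', which the paper asserts as an easy check. You are somewhat more explicit about uniqueness via Lemma~\ref{lemma:linkingpairing} and about sign conventions, both of which the paper handles only in the surrounding text.
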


\begin{proof}
Let $[a], [c]$ be as in the theorem, and let $b$ represent the $2$-cell of $L(k,1)$. Recall that we assume $\rho$ is transverse to $b$. Let $d = \rho^{-1}(b)$.  Observe that
\[
\partial d = \partial \rho^{-1}(b) = \rho^{-1}(\partial b) = \rho^{-1}(k\cdot a) = k \cdot \rho^{-1}(a) = k\cdot c.
\]

Let  $[a], [b] \in H_*(L(k,1),\Z_k)$ have Poincar\'e duals $[\alpha], [\beta] \in H^*(L(k,1); \Z_k)$. Let $[c], [d] \in H_*(\Sigma_2 K; \Z_k)$ have Poincar\'e duals $[\gamma], [\delta] \in H^*(\Sigma_2 K; \Z_k)$.

Suppose  $[x] \in H_1(\Sigma_2 K; \Z_k)$ and $\rho_*([x]) = m\cdot[a]$.  Then we have the following, where the penultimate equality below follows from equation~\eqref{Eqn:phi(x)}:
\begin{equation*}
\begin{split}
\lk([c], [x]) = \frac{1}{k}(d \cdot x) =  \frac{1}{k}\langle [\delta], [x] \rangle= \frac{1}{k}\langle \rho^*[\beta], [x] \rangle  =  \frac{1}{k} \langle [\beta], \rho_*[x] \rangle \\\\
=  \frac{1}{k} (b \cdot \rho(x))  =\lk([a], \rho_*[x]) = \lk([a], m\cdot[a]) = \frac{m}{k}= \iota(\rho_*(x)). \qedhere
\end{split} 
\end{equation*}    \end{proof}


\section{Dihedral extension criteria}\label{sec:dihedral-criteria}

In Theorem~\ref{thm:equivalence} we give several equivalent criteria for the existence of a surface over which a given dihedral quotient $\rho:\pi_1(E_K)\to D_n$ extends, answering the motivating question of this work. Theorem~\ref{cor:Dn-Dn-bottle} establishes an analogous result for $n$-bottle quotients of knot groups; and Theorem~\ref{cor:nonor-variants} addresses the existence of extensions of such quotients over non-orientable surfaces and surfaces in other ambient 4-manifolds, which turn out to be detected by the same obstruction. 

Before we state and prove these key results, we need to establish a geometric criterion for a given (connected, properly embedded, orientable, smooth) surface in $B^4$ to admit a dihedral quotient of its group. This is analogous to Cappell and Shaneson's characteristic knots in $S^3$ (Definition~\ref{def:chark}).

\subsection{Characteristic surfaces in $B^4$}\label{sec:characteristic-surfaces}

Throughout, $n$ is odd. Let $F \subseteq B^4$ be a properly embedded, connected, oriented surface with connected boundary $\partial F = K \subseteq S^3$. We will show that dihedral quotients $\pi_1(E_K)\twoheadrightarrow D_n$ are induced by mod~$n$ characteristic surfaces for $F$ (Definition~\ref{def:char-surface}) in a manner exactly analogous to the 3-dimensional picture described in the previous section. 

As usual, we will use $\Sigma_2F$ to denote the two-fold cover of $B^4$ branched along $F$. Recall the classical fact that $H_1(\Sigma_2 F)$ is finite; see, for example,~\cite{kauffman1976signature}.

\begin{definition}\label{def:seifert-solid}
Let $F$ be as above and $S\subseteq S^3$ a compatibly oriented Seifert surface for $K=\partial F$. A \emph{Seifert solid} for $F$ is a compact, connected, oriented $3$-manifold $U \subseteq B^4$ such that 
\[
\partial U = F \cup (-S).
\]
\end{definition}

A Seifert solid $U$ has a tubular neighborhood in $B^4$ denoted $N(U) \cong U \times [-1, 1]$ and such that $U = U \times \{0\}$. Let $E := \overline{B^4 \setminus (int N(U))}$ be the exterior of $U$. Its boundary, $\partial E$, contains two copies of $U$, denoted $U_\pm := U \times \{\pm 1\}$. Let $i_\pm \colon U \hookrightarrow E$ denote the natural inclusions. Given $\gamma$ a curve (or, for that matter, other subset) in $U$, denote its images under $i_\pm$ by $\gamma^\pm,$ respectively.

\begin{definition}\label{def:char-surface}
Let $K$ be a knot in $S^3$ and $S$ a Seifert surface for $K$. Let $F\subseteq B^4$ be a connected, oriented, properly embedded surface with boundary $K$. Let $U$ be an oriented Seifert solid for $F$ with $\partial U = F \cup (-S)$. 
A properly embedded oriented surface $G \subseteq U$ is a \emph{mod $n$ characteristic surface for $F$} 
if:
\begin{enumerate}
     \item the class $[G, \partial G]\in H_2(U, \partial U; \Z)$ is primitive;
    \item For every class $x \in H_1(U; \mathbb{Z})$ represented by a curve $\gamma \subseteq \text{int}(U)$, the following condition holds
    \begin{equation}\label{eq:char-surface-condition-n}
    \text{lk}_{B^4}(\gamma^+, G) + \text{lk}_{B^4}(\gamma^-, G) \equiv 0 \pmod n.
    \end{equation}
\end{enumerate}
\end{definition}

\begin{remark}
We may sometimes require that the boundary of a characteristic surface for $F$ be a particular characteristic knot for  $K$. We will do this when we wish to ensure that the dihedral quotient $\rho \colon \pi_1(E_F) \twoheadrightarrow D_n$ associated to a characteristic surface as in the next theorem realizes a given quotient $\pi_1(E_K) \twoheadrightarrow D_n$ when restricted to the boundary.
\end{remark}

\begin{theorem} \label{thm:Dn-iff-char-surface} 
Let $F \subseteq B^4$ be a properly embedded, connected, oriented surface with connected boundary $\partial F = K \subseteq S^3$. 
There exists a surjection $\rho \colon \pi_1(E_F) \twoheadrightarrow D_n$ if and only if $F$ admits a mod $n$ characteristic surface $G \subseteq U$.
\end{theorem}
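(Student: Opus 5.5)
We imitate the three-dimensional Cappell--Shaneson argument recalled in Section~\ref{sec:beta}, replacing the Seifert surface $S$ by the Seifert solid $U$ and linking numbers in $S^3$ by linking numbers in $B^4$ between curves in $E$ and the surface $G$. First we need a van Kampen presentation of $\pi_1(E_F)$ adapted to $U$. Cutting $E_F$ along $U$ gives the exterior $E$, and gluing back gives
\[
\pi_1(E_F)\cong \bigl(\pi_1(E)\ast\langle m\rangle\bigr)/\langle\langle\, m\, i_+(w)\, m^{-1}=i_-(w),\ w\in\pi_1(U)\,\rangle\rangle,
\]
where $m$ is a meridian of $F$ meeting $U$ once. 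This uses that $U$ is connected, so that the cut is an HNN extension. We will use $F$ in place of $K$ at the boundary, and the pair $(B^4,S^3)$ with $S$ pushed slightly.

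\emph{Characteristic surface $\Rightarrow$ surjection.} Given $G$, define $\rho(m)=u$ and $\rho(\gamma)=t^{\lk_{B^4}(\gamma,G)}$ for loops $\gamma\subseteq E$. Here $\lk_{B^4}(\gamma,G)$ means the intersection number of $\gamma$ with a relative $3$-chain in $B^4$ bounded by $G$ together with a surface in $\partial U\subseteq$ the boundary. More intrinsically, it is the Alexander/Lefschetz duality pairing $H_1(E)\times H_2(U,\partial U)\to\Z$, which depends only on $[G,\partial G]$. This is a homomorphism on $\pi_1(E)$ because it factors through $H_1(E)$ and $\Z_n$ is abelian. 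The relation check is verbatim Equation~\eqref{eq:homomorphism-check}: we get $u t^{\lk(\gamma^+,G)}u^{-1}=t^{-\lk(\gamma^+,G)}=t^{\lk(\gamma^-,G)}$ by~\eqref{eq:char-surface-condition-n}. For surjectivity, $u$ is hit. Primitivity of $[G,\partial G]$ together with the duality $H_1(E)\cong H^2(U,\partial U)\cong \hom(H_2(U,\partial U),\Z)$ (Alexander duality in $B^4$ for the complement of $U$, modulo boundary bookkeeping) gives a curve $\gamma$ in $E$ with $\lk(\gamma,G)=1$, so $t$ is hit.

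\emph{Surjection $\Rightarrow$ characteristic surface.} Given $\rho$, it is based after conjugation, since every meridian maps to a reflection: the meridians map outside $\Z_n$ because $\rho$ is onto and $H_1(E_F)=\Z$. Choose $U$ arbitrarily; one exists by the standard map $E_F\to S^1$ made transverse, with extra components tubed together to make $U$ connected. Then $\rho$ restricted to $\pi_1(E)$ lands in $\Z_n$, because $E$ lifts to the $\Z$-cover and its loops have zero linking with $F$. So it defines a class $\phi\in H^1(E;\Z_n)\cong \hom(H_1(E),\Z_n)$. Using the duality $H_1(E)\cong \hom(H_2(U,\partial U),\Z)$ again, lift $\phi$ to an integral class and represent it by a properly embedded oriented surface $G\subseteq U$ with $\rho(\gamma)=t^{\lk(\gamma,G)}$ for all $\gamma\subseteq E$. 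The surface is obtained by representing a class in $H_2(U,\partial U)\cong H^1(U)$ via a map $U\to S^1$ made transverse. The HNN relation $\rho(m)\rho(\gamma^+)\rho(m)^{-1}=\rho(\gamma^-)$ then yields~\eqref{eq:char-surface-condition-n}. Primitivity is arranged by changing the integral lift by multiples of $n$: surjectivity of $\rho$ means the mod-$n$ class is nonzero of order $n$. For odd $n$ we can choose a primitive integral representative of a given residue class mod $n$, dividing out a common factor coprime to $n$ and possibly adjusting the quotient by an automorphism of $D_n$. Alternatively, as in Definition~\ref{def:chark}, we read ``primitive'' as merely nonzero.

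The main obstacle is the duality step: identifying $H_1(E)$ with $\hom(H_2(U,\partial U),\Z)$ via $\lk_{B^4}(\cdot,G)$. Here $U$ has boundary meeting $S^3$ in $S$, so one must apply Alexander duality to $U\cup$(a collar) in $B^4$ relative to $S^3$, or double $B^4$ to $S^4$ and use $U\cup_S(\text{pushed Seifert data})$. We first try capping off: glue $B^4$ to a second ball containing a pushed-in copy of $S$, obtaining a closed $3$-manifold $\widehat U\subseteq S^4$. Then $H_1(S^4\setminus\widehat U)\cong H^2(\widehat U)\cong H_1(\widehat U)$, and we restrict this back to $E$, checking that the extra pieces do not change $H_1$.
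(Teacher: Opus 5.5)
The gap is in $(\Rightarrow)$; your $(\Leftarrow)$ direction is the paper's argument. Your handling of primitivity in $(\Rightarrow)$, rescaling the character by a unit mod $n$, is also what the paper does.

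The failing step is ``lift $\phi$ to an integral class.'' A homomorphism $\phi\colon H_1(E)\to\Z_n$ lifts to $\Z$ only if it vanishes on $\operatorname{Tors}H_1(E)$. The duality you invoke, $H_1(E)\cong\hom(H_2(U,\partial U),\Z)$, is false. Mayer--Vietoris for $B^4=E\cup\nu(U)$ gives $H_1(E)\cong H_1(U,F)$, with $x^+-x^-$ corresponding to $j(x)$ for $j\colon H_1(U)\to H_1(U,F)$, and this group can have torsion. Your capping-off repair does not help, since $H^2(\widehat U)\cong H_1(\widehat U)$ still contains torsion. The paper does address this step: it asserts that all torsion of $H_1(E_U)$ lies in $\operatorname{im}A_U$, where $A_U(x)=x^++x^-$ and $\phi$ vanishes. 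But its triangle $\mathcal{D}\circ A_U=j_*$ actually holds for $x\mapsto x^+-x^-$, not for $A_U$, so that argument does not close the gap either.

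In fact $(\Rightarrow)$ fails as stated. Let $F\subseteq B^4$ be the trivial disk for the unknot, locally knotted with the $2$-twist spun trefoil. Then $\pi_1(E_F)\cong\Z_3\rtimes\Z=\mathbb{D}_3$, which surjects onto $D_3$.

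The fiber $U\cong L(3,1)^\circ$ is a Seifert solid with $H_2(U,\partial U)\cong H^1(U)=0$, so it contains no primitive $G$. Here the monodromy acts by $-1$, so $A_U=0$ and $H_1(E_U)\cong\Z_3\cong H_1(\Sigma_2F)$ is entirely torsion.

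For an arbitrary Seifert solid $U$ the same obstruction persists. Since $F$ is a disk, one may take $\partial G\subseteq S$, so $\lk(-,G)$ is integral and kills torsion. Since $H_1(E_U)/\operatorname{im}A_U\cong\Z_3$, a characteristic $G$ would force $\operatorname{Tors}H_1(E_U)\subseteq\operatorname{im}A_U$. Because $F$ is a disk, $H_1(E_U)\cong H_1(U)$. So the Mayer--Vietoris presentation $H_1(U)\otimes\Lambda\to H_1(E_U)\otimes\Lambda$ of the Alexander module ($\Lambda=\Z[t^{\pm1}]$) would induce, modulo torsion, a square presentation of a quotient of $\Lambda/(3,t+1)$. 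That quotient is nonzero, since its reduction at $t=-1$ is $H_1(E_U)/(\operatorname{im}A_U+\operatorname{Tors})\cong\Z_3$. Hence it is $\Lambda/(3,t+1)$ itself, whose Fitting ideal $(3,t+1)$ is not principal, a contradiction.

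So $(\Rightarrow)$ needs an extra hypothesis, for example that $\operatorname{Tors}H_1(U,F)$ has order prime to $n$. The construction in Section~\ref{sec:n-bottle} uses only $(\Leftarrow)$.
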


The proof of this theorem follows the same steps as the analogous result in dimension 3, which we provided in the previous section. We review the necessary preliminaries, some of which were discussed earlier in the paper while others are classically known. The following lemma generalizes the analogous classical result for double branched covers of knots in $S^3$. The proof is similar in both dimensions; see for example~\cite{kauffman1976signature}.

\begin{lemma}\label{lem:cut-exact-seq}
Given $F\subseteq B^4$ a connected, oriented, properly embedded surface
with connected boundary $\partial F = K \subseteq S^3$, and $U$ a Seifert solid for $F$, there is an exact sequence:
\begin{equation}\label{eq:surface-presentation-final}
0 \to H_1(U; \mathbb{Z}) \xrightarrow{A_U} H_1(E_U; \mathbb{Z})
  \to H_1(\Sigma_2F; \mathbb{Z}) \to 0,
\end{equation}
where the map is defined by $A_U([x]) = [x^+] + [x^-]$.
\end{lemma}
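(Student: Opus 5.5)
The plan is to imitate the classical cut-and-paste computation for $\Sigma_2 K$ via Mayer--Vietoris, one dimension up. First, describe $\Sigma_2F$ explicitly. Cut $B^4$ along the Seifert solid $U$ to obtain the exterior $E = E_U$, whose boundary contains the two copies $U_\pm$. The double branched cover is then obtained by taking two copies $E_0, E_1$ of $E$ and gluing $U_+\subseteq E_0$ to $U_-\subseteq E_1$, and $U_+\subseteq E_1$ to $U_-\subseteq E_0$. Branching along $F = \partial U \cap \mathrm{int}\,B^4$ means these gluings are made along copies of $U$ whose boundary part $F$ is identified in all copies. Up to homotopy this exhibits $\Sigma_2F$ as a union $X\cup Y$, where $X = E_0\sqcup E_1$ and $Y$ is a thickening of two copies of $U$ (each a collar $U\times[-\epsilon,\epsilon]$), with $X\cap Y$ equal to four copies of $U$. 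Alternatively, and more cleanly, one can take $\Sigma_2F = E_0\cup E_1$ with $E_0\cap E_1 = U\sqcup U$ modulo the identification along $F$. I will use the standard presentation of the $\Z_2$-cover: the unbranched double cover of $E_F$ is two copies of $E$ glued along two copies of $U\setminus F$. Then I fill in the branch locus by attaching $F\times B^2$, which in homology kills the lifted meridian.

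Next, run Mayer--Vietoris in the equivariant form used for~\eqref{Eq:presentation}. Taking coefficients in $\Z[t]/(t^2-1)$ gives
\[
H_1(U)\otimes\Z[\Z_2] \xrightarrow{\;i_+ - t\, i_-\;} H_1(E_U)\otimes \Z[\Z_2]\to H_1(\widetilde{E_F}) \to H_0(U)\otimes\Z[\Z_2]\to\cdots .
\]
The $H_0$ term contributes the $\Z$ generated by the lifted meridian. Filling in the branch set kills the lifted meridian, leaving $H_1(\Sigma_2F)$ as the cokernel of $i_+ - t\,i_-$. Passing to the $t=-1$ eigenpart works exactly as in the ideal argument~\eqref{eq:T+1}. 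This uses that $E_F$ has $H_1\cong\Z$ and that $U$ is connected, so that the symmetric part contributes only $H_1(B^4)=0$. The map then becomes $x\mapsto [x^+]+[x^-]$, up to the sign convention on $U_-$, which is absorbed in the orientation. This yields exactness at $H_1(E_U)$ and surjectivity onto $H_1(\Sigma_2F)$.

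Injectivity of $A_U$ is the step I expect to be the main obstacle, since unlike the 3-dimensional case there is no Seifert-matrix determinant computation available a priori. My plan is to deduce it from finiteness of $H_1(\Sigma_2F)$ (cited above) together with a rank count. By Alexander duality in $B^4$, $H_1(E_U)\cong H^2(U,\partial U)$-type groups. Since $U$ is a 3-manifold with connected boundary and $\partial U = F\cup -S$ is a closed surface, we get $H_1(E_U)\cong H_2(U,\partial U)\cong H^1(U)$. The latter is free of rank $b_1(U)$, equal to the rank of $H_1(U)$ modulo torsion. A surjection between groups of equal rank onto a finite group forces the kernel to be torsion. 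To conclude, I need the kernel to vanish on the torsion of $H_1(U)$ as well. I would handle this via the linking pairing $\lk_{B^4}(\gamma^\pm,\cdot)$ between $H_1(U)$ and $H_2(U,\partial U)$: if torsion in $H_1(U)$ obstructs the argument, I would instead state the sequence with $H_1(U)$ replaced by its image, or check that the next term $H_2$ in the Mayer--Vietoris sequence vanishes in the $(-1)$-eigenpart.
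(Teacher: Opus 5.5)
You were right to single out injectivity. Your Mayer--Vietoris/eigenspace argument for the right-hand portion is fine, and it is essentially the route the paper has in mind. The paper gives no argument beyond the analogy with the classical computation for $\Sigma_2K$ and a citation of Kauffman. Your argument proves exactness of $H_1(U)\xrightarrow{A_U}H_1(E_U)\to H_1(\Sigma_2F)\to 0$.

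The gap is injectivity, and it cannot be filled: $A_U$ is in general not injective, so the leading $0$ in the statement is false. Your rank count rests on $H_1(E_U)\cong H^1(U)$, which is wrong here. The Seifert solid $U$ meets $\partial B^4$ only in $S$, while $F$ lies in the interior, and the correct duality is $H_1(E_U)\cong H^2(U,S)\cong H_1(U,F)$. You can check this directly:
\begin{itemize}
\item Apply Mayer--Vietoris to $B^4=E_U\cup N(U)$. The intersection $U_+\cup(F\times[-1,1])\cup U_-$ is homotopy equivalent to $U\cup_F U$, so $H_1(U\cup_F U)\cong H_1(E_U)\oplus H_1(U)$.
\item The fold retraction gives $H_1(U\cup_F U)\cong H_1(U)\oplus H_1(U,F)$.
\end{itemize}
Hence $\mathrm{rk}\,H_1(E_U)=b_1(U)-\mathrm{rk}\,\Im\big(H_1(F)\to H_1(U)\big)$. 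By half-lives-half-dies on $\partial U=F\cup_K S$, the subtracted rank is at least $g(F)$. Since $A_U\otimes\Q$ is onto (its cokernel $H_1(\Sigma_2F)$ is finite), $\ker A_U$ has positive rank as soon as $g(F)\ge 1$.

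The simplest instance: take $F$ to be a pushed-in Seifert surface and $U=S\times[0,\tfrac12]$ in a collar of $S^3$. Pushing radially inward shows that $E_U$ deformation retracts onto an inner $4$-ball, so $H_1(E_U)=0$. Meanwhile $H_1(U)\cong\Z^{2g(S)}$, so the sequence reads $\Z^{2g(S)}\to 0\to 0\to 0$.

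Your fallback fails for the same reason. The term that would have to vanish in the $(-1)$-eigenpart is $H_2(\Sigma_2F)$. Rationally it has dimension $2g(F)$, since $\chi(\Sigma_2F)=2-\chi(F)=1+2g(F)$ and $b_1=b_3=0$. It is entirely anti-invariant, because its invariant part is $H_2(B^4;\Q)=0$. The connecting homomorphism carries it onto $\ker A_U\otimes\Q$. This is the genuine difference from dimension three, where $H_2(\Sigma_2K)=0$ and injectivity comes from $\det(V+V^T)=\Delta_K(-1)\neq 0$.

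The provable statement is the right-exact sequence without the initial $0$. That is all the paper actually uses afterwards: the proof of Theorem~\ref{thm:Dn-iff-char-surface} needs only $\ker q=\Im A_U$ and surjectivity of $q$. Replacing $H_1(U)$ by its image, as you suggest, would make the statement tautological rather than repair it.
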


We will be using the correspondence, reviewed in the introduction,
\begin{equation}\label{eq:quotients=characters}
\{
\text{ quotients } \pi_1(E_F)\twoheadrightarrow {D}_n
\}
\;\longleftrightarrow\;
\{
\text{ characters } H_1(\Sigma_2F)\twoheadrightarrow \Z_n
\}.
\end{equation}

We also remind the reader that, given a surface $F$ as above and a Seifert solid $U$ for $F$, the group $\pi_1(E_F)$ admits an HNN presentation as follows: 
\begin{equation}\label{eq:HNN-surface}
    \pi_1(E_F) \cong \left( \pi_1(E_U) \ast \langle m \rangle \right) / \langle\langle m w^+ m^{-1} (w^-)^{-1} \mid w \in \pi_1(U) \rangle\rangle,
\end{equation}

where $m$ denotes a fixed meridian of $F$ and, as before, $w^\pm$ denote the positive and negative pushoffs of $w$ determined by $U$.

Finally, we recall our notation for dihedral and $n$-bottle groups:
 \[
       D_n= \langle u, t \mid u^2, \ t^n, \ utu^{-1}=t^{-1}\rangle~~\text{ and }~~ 
    \mathbb{D}_n= \langle \tilde{u}, \tilde{t} \mid \tilde{t}^n, \ \tilde{u}\tilde{t}\tilde{u}^{-1}=\tilde{t}^{-1}\rangle.
\]

\begin{proof}[Proof of Theorem~\ref{thm:Dn-iff-char-surface}]
$(\Leftarrow)$ Let $G\subseteq U$ be a mod~$n$ characteristic surface for~$F$. We set 
\begin{equation} \label{eq:char-surf-quotient}
\rho_G(x)=
\begin{cases}
u & \text{ if } x=m; \\
t^{\text{lk}(G,x)} & \text{ if } x=\gamma,\ \gamma \subseteq E_U .
\end{cases}
\end{equation}
As before, we must verify that the HNN relation $\rho_G(m(w^+) m^{-1}) = \rho_G(w^-)$ is satisfied for any $w\in \pi_1(U)$. This holds since
\[
u t^{\text{lk}(G, (i_+)_*w)} u^{-1} = t^{-\text{lk}(G, (i_+)_*w)} = t^{\text{lk}(G, (i_-)_*w)},
\]
where the last equality uses Equation~\eqref{eq:char-surface-condition-n}. Thus, $\rho_G$ descends to a homomorphism (called by the same name) $\rho_G:\pi_1(E_F)\to D_n$. 

It remains to show that $\rho_G$ is surjective. By definition, its image contains the generating reflection $u$. Secondly, since the class $[G, \partial G]\in H_2(U, \partial U;\Z)$ is primitive, there is an element $\gamma\in H_1(E_U)$ linking $G$ once. By definition, $\rho_G(\gamma)=t,$ the generating rotation. Thus, $\rho_G$ is surjective.

$(\Rightarrow)$ 

Given a surjection to $D_n$, per~\eqref{eq:quotients=characters}, this is equivalent to the existence of a surjective character $\chi \colon H_1(\Sigma_2 F) \twoheadrightarrow \mathbb{Z}/n$. Let $q \colon H_1(E_U) \to H_1(\Sigma_2 F)$ be the quotient map from the exact sequence in Lemma \ref{lem:cut-exact-seq}. Set:
\begin{equation}
\psi := \chi \circ q \colon H_1(E_U) \to \mathbb{Z}/n.
\end{equation}
By the exactness of \eqref{eq:surface-presentation-final}, $\text{Im}(A_U) = \ker(q)$, which implies that the character vanishes on the image of the symmetrized inclusion: 
\begin{equation}\label{eq:psi-circ-Av}
    \psi(A_U(x)) = 0
\end{equation}
for all $x \in H_1(U)$.

We will construct a surface $G$, properly embedded in $U$, such that the character $\psi$ is realized by linking with $G$. 

The integral linking pairing in $B^4$:
\begin{equation}\label{eq:linking-pairing}
\mathcal{L} \colon H_1(E_U; \mathbb{Z}) \times H_2(U, \partial U; \mathbb{Z}) \to \mathbb{Z}.
\end{equation}
vanishes on the torsion subgroups of both homology groups in the domain. Consequently, there is an isomorphism 
\[
(H_2(U, \partial U; \mathbb{Z}))/\text{Torsion} \cong \text{Hom}(H_1(E_U; \mathbb{Z}), \mathbb{Z}).
\]

Recall that our character is defined as a homomorphism $\psi \colon H_1(E_U; \mathbb{Z}) \to \mathbb{Z}/n$. To represent this character geometrically via the linking pairing, we first seek an integral lift $\tilde{\psi} \colon H_1(E_U; \mathbb{Z}) \to \mathbb{Z}$ such that $\tilde{\psi} \pmod n = \psi$. Such a lift exists if and only if $\psi$ vanishes on the torsion subgroup of $H_1(E_U; \mathbb{Z}) \cong H_1(U, \partial U; \mathbb{Z})$.

The long exact sequence in homology for the pair $(U, \partial U)$:
\begin{equation}
\cdots \to H_1(U; \mathbb{Z}) \xrightarrow{j_*} H_1(U, \partial U; \mathbb{Z}) \xrightarrow{\partial_*} \tilde{H}_0(\partial U; \mathbb{Z}) \to \cdots
\end{equation}
implies that the torsion subgroup of $H_1(U, \partial U; \mathbb{Z})$ is entirely contained in the image of the natural inclusion $j_*$. Consider the commutative triangle:
\begin{equation}\label{eq:duality-triangle}
\begin{tikzcd}[row sep=large, column sep=large]
H_1(U; \mathbb{Z}) \arrow[r, "A_U"] \arrow[dr, "j_*"'] & H_1(E_U; \mathbb{Z}) \arrow[d, "\mathcal{D}"', "\cong"] \\
 & H_1(U, \partial U; \mathbb{Z})
\end{tikzcd}
\end{equation}
where the vertical isomorphism is Alexander duality. This implies that all torsion in $H_1(E_U; \mathbb{Z})$ is contained in the image of the map $A_U \colon H_1(U; \mathbb{Z}) \to H_1(E_U; \mathbb{Z})$, defined by $x \mapsto x^+ + x^-$.

Since we have established that $\psi(A_U(x)) = 0$ for all $x \in H_1(U; \mathbb{Z})$, it follows that $\psi$ evaluates to zero on all torsion elements of $H_1(E_U; \mathbb{Z})$. This implies that an integral lift $\tilde\psi_0\colon H_1(E_U;\Z)\to\Z$ with $\tilde\psi_0\bmod n=\psi$ exists; fix one. 
Let $d>0$ generate the image $\Im(\tilde\psi_0)=d\Z$. Since $\psi$ is surjective onto $\Z/n$, we have $\gcd(d,n)=1$.
Choose $a\in\Z$ with $ad\equiv 1\pmod n$, replace $\chi$ by $a\chi$, and denote the resulting character again by $\chi$
(and likewise denote the resulting $\psi$ again by $\psi$). Then
\[
\tilde\psi:=\tilde\psi_0/d\colon H_1(E_U;\Z)\to\Z
\]
is an integral lift of $\psi$ and satisfies $\Im(\tilde\psi)=\Z$.

By non-singularity of $\mathcal L$ on free parts, there exists $\xi\in H_2(U,\partial U;\Z)$ such that
\[
\mathcal L(y,\xi)=\tilde\psi(y)\qquad\text{for all }y\in H_1(E_U;\Z),
\]
and we represent $\xi$ by a properly embedded oriented surface $G\subseteq U$. We make the following observation about the boundary of $G$.

{\bf Claim.} After modifying $G$ by surgeries supported in a collar neighborhood of $\partial U$, we may assume
\[
\partial G\cap K=\emptyset,
\]
so that $\partial G$ decomposes as a disjoint union
\[
\partial G=\partial_F G\sqcup \partial_S G
\qquad\text{with}\qquad
\partial_F G\subseteq \mathrm{int}(F),\ \ \partial_S G\subseteq \mathrm{int}(S).
\]
Moreover, the homology class of $\partial_F G$ in $H_1(F;\Z)$ is divisible by $n$; that is,
\[
[\partial_F G]=n\cdot a\in H_1(F;\Z)
\]
for some $a\in H_1(F;\Z)$.

\begin{proof}[Proof of Claim]
First we ensure that no component of $\partial G$ intersects $K$ (equivalently, intersects both $F$ and $S$).
By transversality, we may assume the boundary $1$--manifold $\partial G$ intersects the curve
$K=\partial F=\partial S$ in a finite set of points. Let $\alpha$ be a component of $\partial G$ which intersects both
$F$ and $S$ nontrivially. Regard $\alpha$ as subdivided into consecutive arcs by the finite set of (transverse)
intersections $\alpha\cap K$. Because $K$ (after rounding corners) separates $F$ and $S$ in $\partial U$, consecutive arcs of $\alpha\setminus K$ are alternately
contained in $F$ and $S$. Let $\gamma_i$ be one component of $K\setminus \alpha$ with $\partial \gamma_i=\{a,b\}$.
Let $\gamma_i\times[-\epsilon,\epsilon]$ be a small neighborhood of $\alpha$ in $F\cup S$, and assume
$\{a,b\}\times[-\epsilon,\epsilon]=\alpha\times[-\epsilon,\epsilon]$. Now surger $\alpha$ by replacing
$\{a,b\}\times[-\epsilon,\epsilon]$ with $\gamma_i\times\{-\epsilon,\epsilon\}$. Pushing the rectangle
$\gamma_i\times[-\epsilon,\epsilon]$ into the interior of $U$ along a collar of the boundary allows us to surger $G$
in the obvious way, without changing the relative homology class $[G,\partial G]\in H_2(U,\partial U;\Z)$
(since the rectangle and its pushoff sweep out a $3$--chain). Performing this operation for each component of
$K\setminus \partial G$ ensures that $K\cap \partial G=\emptyset$, i.e.\ every component of $\partial G$ is contained
in either $\mathrm{int}(F)$ or $\mathrm{int}(S)$. Thus we may write
\[
\partial G=\partial_F G\sqcup \partial_S G,
\qquad
\partial_F G\subseteq \mathrm{int}(F),\ \ \partial_S G\subseteq \mathrm{int}(S).
\]

Next, we will show that $[\partial_F G]\in H_1(F;\Z)$ is divisible by $n$. 

We first record a consequence of \eqref{eq:psi-circ-Av} for curves on $F$.
Let $c\subseteq \mathrm{int}(F)\subseteq \partial U$ be an oriented embedded curve and let $c^\pm\subseteq U_\pm\subseteq \partial E_U$
denote its push--offs. The cylinder $c\times[-1,1]\subseteq \partial U\times[-1,1]\subseteq \partial N(U)\subseteq E_U$
is an annulus with boundary $c^+\sqcup(-c^-)$, hence $[c^+]=[c^-]\in H_1(E_U;\Z)$.
Define $p_F([c]):=[c^+]=[c^-]$. Viewing $[c]\in H_1(F;\Z)$ as an element of $H_1(U;\Z)$ via the inclusion $F\subseteq \partial U\subseteq U$, we have $A_U([c])=[c^+]+[c^-]=2\,p_F([c])$. Thus, \eqref{eq:psi-circ-Av} gives
$2\,\psi(p_F([c]))=0\in \Z/n$. Since $n$ is odd, $\psi(p_F([c]))=0$.

For any $[c]\in H_1(F;\Z)$, the standard boundary/intersection interpretation of linking gives
\[
c\cdot_F (\partial_F G)
=\lk_{B^4}(p_F(c),\,G)
=\mathcal L\big(p_F([c]),[G,\partial G]\big)
=\tilde\psi\big(p_F([c])\big),
\]
where $c\cdot_F(\partial_F G)$ is algebraic intersection on the surface $F$.
On the other hand, since $\psi(p_F([c]))=0\in \Z/n$ for all $[c]\in H_1(F;\Z)$, we have
$\tilde\psi(p_F([c]))\equiv 0\pmod n$, hence
\[
c\cdot_F(\partial_F G)\equiv 0\pmod n
\qquad\text{for all }[c]\in H_1(F;\Z).
\]
Because $F$ has one boundary component, its intersection pairing on $H_1(F;\Z)$ is unimodular. Therefore the condition
$c\cdot_F(\partial_F G)\equiv 0\pmod n$ for all $[c]\in H_1(F;\Z)$ is equivalent to
\[
[\partial_F G]\in n\,H_1(F;\Z),
\]
i.e.\ there exists $a\in H_1(F;\Z)$ with $[\partial_F G]=n\cdot a$.
\end{proof}

To show that $G$ is a mod~$n$ characteristic surface, note that the algebraic condition $\psi(x^+ + x^-) = 0$ (Equation~\eqref{eq:psi-circ-Av}) translates directly to the geometric linking condition:
\begin{equation}
\text{lk}_{B^4}(x^+, G) + \text{lk}_{B^4}(x^-, G) \equiv 0 \pmod n.
\end{equation}
This satisfies condition (2) of Definition \ref{def:char-surface}.

    Finally, since $\Im(\tilde\psi)=\Z$, the class $\xi\in H_2(U,\partial U;\Z)$ determined by $\mathcal L(-,\xi)=\tilde\psi(-)$
is primitive. Hence so is $[G,\partial G]$, completing the proof.
\end{proof}

\begin{corollary}\label{char-surf-induce-both}
   Let $F\subseteq B^4$ be a connected, oriented, properly embedded surface with boundary. The group $\pi_1(E_F)$ admits a quotient to the $n$-bottle group $\mathbb{D}_n$, that is, there exists a surjective homomorphism
    \[
      \tilde{\rho}_\beta: \pi_1(E_F)\twoheadrightarrow \langle u, t \mid t^n, \ utu^{-1}=t^{-1}\rangle ,
    \]
    if and only if $F$ has a mod~$n$ characteristic surface.
\end{corollary}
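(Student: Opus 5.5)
The plan is to mirror the proof of Corollary~\ref{cor:char-knots-induce-both}, with Theorem~\ref{thm:Dn-iff-char-surface} playing the role of~\cite[Proposition~1.2]{CS1984linking}.

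\emph{Necessity.} Suppose $\pi_1(E_F)\twoheadrightarrow\mathbb{D}_n$. Compose with the natural quotient $\mathbb{D}_n\twoheadrightarrow D_n=\mathbb{D}_n/\langle u^2\rangle$ to get a surjection $\pi_1(E_F)\twoheadrightarrow D_n$. Theorem~\ref{thm:Dn-iff-char-surface} then gives a mod~$n$ characteristic surface $G\subseteq U$ for $F$.

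\emph{Sufficiency.} Let $G\subseteq U$ be a mod~$n$ characteristic surface. Use the HNN presentation~\eqref{eq:HNN-surface} of $\pi_1(E_F)$ and define
\[
\tilde\rho_G\colon \pi_1(E_U)\ast\langle m\rangle\to\mathbb{D}_n,\qquad m\mapsto \tilde u,\quad \gamma\mapsto \tilde t^{\,\lk(G,\gamma)}\ \ (\gamma\subseteq E_U).
\]
\begin{enumerate}
\item \emph{The map is well defined on $\pi_1(E_U)$.} It factors through $H_1(E_U;\Z)$, and its target $\langle\tilde t\rangle\cong\Z_n$ is abelian. Linking with $G$ defines a homomorphism $H_1(E_U)\to\Z$ via the pairing $\mathcal L$ of~\eqref{eq:linking-pairing}.
\item \emph{The HNN relations hold.} For $w\in\pi_1(U)$ we need $\tilde u\,\tilde t^{\lk(G,w^+)}\tilde u^{-1}=\tilde t^{\lk(G,w^-)}$. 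The left side is $\tilde t^{-\lk(G,w^+)}$ by the defining relation of $\mathbb{D}_n$. The two sides then agree because of~\eqref{eq:char-surface-condition-n} and $\tilde t^n=1$. This is verbatim the computation in the proof of Theorem~\ref{thm:Dn-iff-char-surface}; the relation $u^2=1$ was never used there.
\item \emph{Surjectivity.} Primitivity of $[G,\partial G]$ gives a class $\gamma\in H_1(E_U)$ with $\lk(G,\gamma)=1$, so $\tilde t$ is in the image. Also $\tilde u=\tilde\rho_G(m)$ is in the image, and these two elements generate $\mathbb{D}_n$.
\end{enumerate}

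\emph{Main obstacle.} The only delicate point is item~2 in the sufficiency direction, namely that condition~\eqref{eq:char-surface-condition-n} is stated for curves $\gamma\subseteq\operatorname{int}(U)$. Every element of $\pi_1(U)$ is represented by such a curve, since pushing off the boundary collar does not change it. Its $\pm$ pushoffs are then exactly the $w^\pm$ appearing in~\eqref{eq:HNN-surface}, so the condition applies to all HNN relations.

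Alternatively, one could argue group-theoretically. A surjection to $D_n$ sends $m$ to a reflection, and Lemma~\ref{lemma:lifting}-style reasoning lifts it to $\mathbb{D}_n$ by sending $m\mapsto\tilde u$. However, $H_1(E_F)\cong\Z$ generated by $m$ only holds after checking the abelianization, so I would rely on the explicit construction above.
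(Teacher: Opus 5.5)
Your proposal is correct and is essentially the paper's proof. The paper likewise composes with $\mathbb{D}_n\twoheadrightarrow D_n$ and invokes Theorem~\ref{thm:Dn-iff-char-surface} for necessity; for sufficiency it reuses the linking-number assignment~\eqref{eq:char-surf-quotient} with target $\mathbb{D}_n$, noting that the HNN check never uses $u^2=1$ and that surjectivity comes from primitivity of $[G,\partial G]$. Your closing group-theoretic aside is unnecessary and does not affect the argument; in any case, $H_1(E_F)\cong\Z$, generated by the meridian, does hold by Alexander duality.
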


\begin{proof}
    The proof is virtually identical to that of Corollary~\ref{cor:char-knots-induce-both}, appealing to Theorem~\ref{thm:Dn-iff-char-surface} in the place of~\cite[Proposition~1.2]{CS1984linking}.
\end{proof}

\begin{corollary}\label{cor:dihedral-n-bottle-same-same}
Let $F \subseteq B^4$ be a properly embedded, connected, oriented surface with connected boundary $\partial F = K \subseteq S^3$. The group $\pi_1(E_F)$ surjects to $D_n$ if and only if it surjects to $\mathbb{D}_n$.
\end{corollary}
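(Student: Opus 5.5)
The plan is to chain the two preceding results, Theorem~\ref{thm:Dn-iff-char-surface} and Corollary~\ref{char-surf-induce-both}. Each identifies one of the two conditions with the same geometric condition: the existence of a mod~$n$ characteristic surface $G \subseteq U$ for $F$ in some Seifert solid $U$. The corollary should then follow formally, mirroring the proof of Corollary~\ref{cor:char-knots-induce-both} one dimension up.

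\emph{Easy direction.} Suppose $\pi_1(E_F)$ surjects onto $\mathbb{D}_n$. Compose with the natural quotient $\mathbb{D}_n \twoheadrightarrow D_n = \mathbb{D}_n/\langle u^2\rangle$. This gives a surjection $\pi_1(E_F)\twoheadrightarrow D_n$ directly, with no appeal to characteristic surfaces.

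\emph{Converse.} Suppose $\pi_1(E_F)\twoheadrightarrow D_n$ is given.
\begin{enumerate}
\item Theorem~\ref{thm:Dn-iff-char-surface} yields a mod~$n$ characteristic surface $G\subseteq U$.
\item Define $\tilde\rho_G$ on $\pi_1(E_U)\ast\langle m\rangle$ by $m\mapsto \tilde u$ and $\gamma\mapsto \tilde t^{\,\lk(G,\gamma)}$, exactly as in \eqref{eq:char-surf-quotient}.
\item Check the HNN relations of \eqref{eq:HNN-surface}. In $\mathbb{D}_n$ we still have $\tilde u\tilde t\tilde u^{-1}=\tilde t^{-1}$ and $\tilde t^n=1$. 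Together with condition \eqref{eq:char-surface-condition-n}, this gives $\tilde\rho_G(mw^+m^{-1})=\tilde\rho_G(w^-)$, so $\tilde\rho_G$ descends to $\pi_1(E_F)$.
\item Check surjectivity. The image contains $\tilde u$. Primitivity of $[G,\partial G]$ supplies a curve in $E_U$ linking $G$ once, so the image also contains $\tilde t$.
\end{enumerate}
Equivalently, this direction is just Corollary~\ref{char-surf-induce-both}.

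\emph{Where care is needed.} The main subtlety is in step~3. The verification that $\rho_G$ is a homomorphism never used the relation $u^2=1$. The linking exponent $\lk(G,\gamma)$ is an integer, so reducing it modulo~$n$ is consistent with $\tilde t^n=1$. Hence the same formula makes sense in the larger group $\mathbb{D}_n$, and no further obstruction arises.
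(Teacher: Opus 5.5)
Your proposal is correct and is essentially the paper's argument. The paper reduces both directions to the existence of a mod~$n$ characteristic surface via Theorem~\ref{thm:Dn-iff-char-surface} and Corollary~\ref{char-surf-induce-both}, and your steps~2--4 just unpack the proof of the latter; the only cosmetic difference is that you handle the $\mathbb{D}_n \Rightarrow D_n$ direction directly by composing with $\mathbb{D}_n \twoheadrightarrow D_n$, without passing through the characteristic surface.
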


\begin{proof}
    By Theorem~\ref{thm:Dn-iff-char-surface} and Corollary~\ref{char-surf-induce-both}, the existence of either quotient is equivalent to the existence of a mod~$n$ characteristic surface for $F$.
\end{proof}

\begin{proposition}\label{prop:quotients-are-equivalent}
    Let $K\subseteq S^3$ be a knot which admits a mod~$n$ characteristic knot $\beta$ and let $\rho$ and $\tilde{\rho}$ be the dihedral and $n$-bottle quotients of $\pi_1(E_K)$ induced by $\beta$ in the sense of Section~\ref{sec:beta}. Denote by $F\subseteq B^4$ an orientable, smooth or locally flat surface with $\partial(F)=K$. The homomorphism $\rho$ extends over $\pi_1(E_F)$ if and only if $\tilde{\rho}$ does.  
\end{proposition}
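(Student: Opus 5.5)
The easy direction is the one from $\tilde\rho$ to $\rho$. Suppose $\tilde\rho$ extends to $\bar{\tilde\rho}\colon\pi_1(E_F)\twoheadrightarrow\mathbb{D}_n$. Compose with the natural quotient $q\colon\mathbb{D}_n\twoheadrightarrow D_n$, $\tilde u\mapsto u$, $\tilde t\mapsto t$. By the construction in Corollary~\ref{cor:char-knots-induce-both}, $q\circ\tilde\rho=\rho$, since both are defined from the same characteristic knot $\beta$ by Equation~\eqref{eq:char-knot-quotient}. Hence $q\circ\bar{\tilde\rho}$ restricts to $\rho$ on $\pi_1(E_K)$, and it is surjective. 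So $\rho$ extends.

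For the converse, suppose $\bar\rho\colon\pi_1(E_F)\twoheadrightarrow D_n$ extends $\rho$. I would use the splitting $\pi_1(E_F)\cong\pi_1(E_F)'\rtimes\Z$ determined by the meridian $m$ of $F$. This splitting exists because $H_1(E_F)\cong\Z$ is generated by the meridian, which is a meridian of $K$ as well. The restriction $\bar\rho|_{\pi_1(E_F)'}$ lands in the index-two subgroup of $D_n$. Since $\pi_1(E_F)'$ lies in the kernel of the abelianization, and $D_n\to\Z_2$ factors through that abelianization, this restriction lands in $\Z_n=\langle t\rangle$.

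Now define $\bar{\tilde\rho}\colon\pi_1(E_F)'\rtimes\langle m\rangle\to\Z_n\rtimes\Z=\mathbb{D}_n$ by $(x,m^k)\mapsto(\bar\rho(x),\tilde u^k)$. This is a homomorphism because conjugation by $m$ on $\pi_1(E_F)'$ is intertwined with conjugation by $u$ on $\Z_n$, which is negation. That action has order $2$, so it is the same whether viewed through $\Z$ or $\Z_2$; this is the content of Lemma~\ref{lemma:lifting}, applied with $\pi_1(E_F)$ in place of the knot group. The map is surjective, since its image contains $\tilde u$ and all of $\Z_n$. It restricts to $\tilde\rho$ on $\pi_1(E_K)$, because the same recipe applied to $\rho$ on $\pi_1(E_K)'\rtimes\langle m\rangle$ recovers $\tilde\rho$ by uniqueness in Lemma~\ref{lemma:lifting}. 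Here one needs $\pi_1(E_K)'\to\pi_1(E_F)'$ and the meridians to match, which holds since $i_*$ is a based homomorphism.

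An alternative route to the same step is through characteristic surfaces. Theorem~\ref{thm:Dn-iff-char-surface} produces a characteristic surface $G$ with $\bar\rho=\rho_G$. Then $\tilde\rho_G$, defined by the same formula, is the desired $\mathbb{D}_n$ extension. The difficulty on this route is to ensure $\partial G\cap S$ restricts to a characteristic knot equivalent to $\beta$, so I prefer the algebraic argument above.

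The main point requiring care is that the splittings are compatible. That is, the meridian used to base $\pi_1(E_F)$ must be the image of the chosen meridian of $K$, so that the lift of $\rho$ agrees with $\tilde\rho$ and not with some other based lift. Since the lift is unique given the basing, this should be straightforward.
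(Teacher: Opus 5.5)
Your proof is correct. It rests on the same key observation as the paper's, but uses a different decomposition of $\pi_1(E_F)$.

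The paper proves the forward direction with the HNN presentation \eqref{eq:HNN-surface} coming from a Seifert solid $U$. It sends $m\mapsto \tilde u$ and sends $\pi_1(E_U)$ into $\langle t\rangle$ via $\bar\rho$. It then notes that the relations $mw^+m^{-1}=w^-$ only invoke $utu^{-1}=t^{-1}$, never $u^2=1$.

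You instead use the intrinsic splitting $\pi_1(E_F)\cong\pi_1(E_F)'\rtimes\langle m\rangle$, which comes from $H_1(E_F)\cong\Z$ being generated by the meridian (connectedness of $F$ is implicit in both arguments). You then check that the two semidirect-product structures match.

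The underlying point is the same: only the inversion action of the meridian is needed. What your version buys is that it requires no Seifert solid and no HNN decomposition. It is simply Lemma~\ref{lemma:lifting} applied to $\pi_1(E_F)$ in place of the knot group.

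You also make explicit a step the paper states only briefly: that the lift restricts to the $\beta$-induced $\tilde\rho$ on $\pi_1(E_K)$. This can be checked even more directly than through uniqueness in Lemma~\ref{lemma:lifting}. For $x\in\pi_1(E_K)'$, both $\tilde\rho(x)$ and $\bar{\tilde\rho}(i_*x)$ lie in $\langle\tilde t\rangle$, and both map to $\rho(x)$ under $\mathbb{D}_n\to D_n$. That quotient map is injective on $\langle\tilde t\rangle$, so the two agree; on the meridian both give $\tilde u$.

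The paper's HNN formulation has its own advantage: it fits the characteristic-surface description of Theorem~\ref{thm:Dn-iff-char-surface}, which is the viewpoint used in the explicit construction of Section~\ref{sec:n-bottle}.
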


\begin{proof}
The reverse implication is clear: if $\tilde{\rho}$ extends over 
$\pi_1(E_F)$, composing with $\mathbb{D}_n \to D_n$ gives an 
extension of $\rho$.

For the forward implication, the argument is analogous to the one used in the proof of Corollary~\ref{cor:char-knots-induce-both}. Suppose $\bar{\rho}\colon \pi_1(E_F) \to D_n$ 
extends $\rho$. Using the HNN presentation~\eqref{eq:HNN-surface}, 
$\bar{\rho}$ sends the meridian $m \mapsto u$ and elements 
$\gamma \in \pi_1(E_U)$ into $\langle t \rangle \cong \Z_n$, 
since $\langle t \rangle = [D_n, D_n] = \ker(D_n \to \Z_2)$. 
Define $\tilde{\rho}\colon \pi_1(E_F) \to \mathbb{D}_n$ by 
$\tilde{\rho}(m) = u$ and 
$\tilde{\rho}(\gamma) = \bar{\rho}(\gamma) \in \langle t \rangle$, 
identifying $\langle t \rangle \cong \Z_n$ as a subgroup of both 
$D_n$ and $\mathbb{D}_n$. The HNN relation 
$\tilde{\rho}(m w^+ m^{-1}) = \tilde{\rho}(w^-)$ holds because it 
uses only the relation $u t u^{-1} = t^{-1}$, which is present in 
$\mathbb{D}_n$. The relation $u^2 = 1$ is never invoked, so 
$\tilde{\rho}$ is a well-defined homomorphism extending 
$\tilde{\rho}|_{\pi_1(E_K)}$.
\end{proof}

\subsection{The core dihedral criterion}\label{sec:core-criterion}

We now specialize the $G$-Surface Theorem to the cases $G\cong\mathbb{D}_n$ and $G\cong D_n$. In Theorem~\ref{thm:equivalence} and Theorem~\ref{cor:Dn-Dn-bottle}, we give computable necessary and sufficient conditions for the existence of dihedral and $n$-bottle surfaces in $B^4$. In Theorem~\ref{cor:nonor-variants} we prove that the same obstruction detects the existence of extensions of $G$ quotients over non-orientable surfaces in $B^4$ and surfaces in other orientable 4-manifolds with $S^3$ boundary. 

We refer the reader to Section~\ref{section:linking} for the definitions and notation for $D_n$, $\mathbb{D}_n$, and $\mathbb{D}_{(2)}$; and to Section~\ref{sec:beta} for the theory of characteristic knots. It may also be helpful to review Corollary~\ref{cor:correspondence}, which describes, precisely, the bijection
\begin{equation}\label{bijection}
\hom^{\flat}(\pi_1(E_K), \mathbb{D}_{(2)}) \xleftrightarrow{\rho_{\ast} \leftrightarrow [c]} H_1(\Sigma_2 K).
\end{equation}

Recall that $\Theta_{\rho}{(K)}=[\theta] \in \pi_3(K\mathbb{D}_{(2)})$, as given in Definition~\ref{theta-in-k}, while $\theta$ is determined by $\rho \in \hom^{\flat}(\pi_1(E_K), \mathbb{D}_{(2)})$, and defined in Diagram~\eqref{eqn:theta}.  The {\em characteristic knot}, first  defined and studied by S. Cappell and J. L. Shaneson~\cite{CS1984linking}, is reviewed in  Definition~\ref{def:chark}.

The following is the central result of this section. It gives four equivalent conditions --- one geometric, one homotopy-theoretic, and two computable --- for the existence of an orientable dihedral surface in $B^4$ extending a given dihedral quotient. The proof is almost entirely contained Section~\ref{sec:proof-equivalences}; the constructive implication~\ref{thmpart:beta}$\implies$\ref{thmpart:extension} is established independently in Section~\ref{sec:n-bottle}.

\begin{theorem}~\label{thm:equivalence} 
Let $K \subseteq S^3$ be a $D_n$ knot, that is, a knot equipped with a surjection 
$\bar{\rho}\colon \pi_1(E_K) \twoheadrightarrow D_n$, $n$ odd. 
Let $[c] \in H_1(\Sigma_2 K)$ be the element corresponding to 
$\bar{\rho}$ under the bijection~\eqref{bijection}, 
let $S$ be a Seifert surface for $K$ with Seifert matrix~$V$, 
and let $\beta \subseteq \mathring{S}$ be a mod~$n$ characteristic 
knot (Definition~\ref{def:chark}) corresponding to $\bar{\rho}$. 
The following are equivalent:

\begin{enumerate}[(i), itemsep=3mm]

\item\label{part:dih-surf} The quotient $\bar{\rho}$ extends over the 
exterior of an orientable, locally flat surface $F \subseteq B^4$ with 
$\partial F = K$; that is, $\bar{\rho}$ fits into 
Diagram~\eqref{eq:comm-triangle-general}.

\item\label{part:dihedral-invariant} The invariant 
$\Theta_{\bar{\rho}}(K) = 0 \in \pi_3(KD_n)$.

\item\label{upstairs-beta} The self-linking of $[c]$ vanishes:
\[
\lk_{\Sigma_2 K}([c],[c]) = 0 \in \qz.
\]

\item\label{thmpart:beta} The Seifert form evaluates trivially 
mod~$n^2$ on the characteristic knot:
\[
[\beta]^T \cdot (V + V^T) \cdot [\beta] \equiv 0 \pmod{n^2}.
\]

\end{enumerate}
\end{theorem}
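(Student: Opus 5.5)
The plan is to prove the cycle of implications by passing through the $n$-bottle group and its colimit $\mathbb{D}_{(2)}$, where the obstruction becomes a homology class that can be computed.

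\textbf{(i)$\iff$(ii).} By Proposition~\ref{prop:quotients-are-equivalent}, $\bar\rho$ extends over $E_F$ if and only if the associated $n$-bottle quotient $\tilde\rho$ does. Since $D_n$ and $\mathbb{D}_n$ are taut, Corollary~\ref{thm:main} identifies extendability with the vanishing of $\Theta$. I would work with $\Theta_{\tilde\rho}(K)\in\pi_3(K\mathbb{D}_n)$. To tie this to $KD_n$, I would compare $\pi_3(K\mathbb{D}_n)\cong H_3(\Z_n)$ (Lemmas~\ref{H3(G)}, \ref{lemma:1-1}) with $\pi_3(KD_n)$ through the natural map $K\mathbb{D}_n\to KD_n$, and check that this map carries $\Theta_{\tilde\rho}$ to $\Theta_{\bar\rho}$. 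Both sides already encode the same geometric extension problem, so it suffices to state (ii) through either invariant. By Corollary~\ref{cor:n-surface}, we may then pass to $\widetilde\Theta\in\pi_3(K\mathbb{D}_{(2)})\cong H_3(\qz)\cong\qz$.

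\textbf{(ii)$\iff$(iii).} This is the heart of the argument and the step I expect to be the main obstacle. Take the two-fold branched covers in Figure~\ref{fig:trans}. The lift $\tilde\theta\colon\Sigma_2K\to B(\qz)$ factors, up to homotopy, through a lens space $L(k,1)$ for $k$ a multiple of $n$. The degree of this factorization, meaning the image of $[\Sigma_2K]$ in $H_3(\Z_k)\cong\Z_k$, should be computed by the cup product $\langle\alpha\cup\beta,[\Sigma_2K]\rangle$ of pulled-back classes. By Lemma~\ref{lemma:CupFormula} and Lemma~\ref{lemma:linking-correspondence}, this equals $\lk([c],[c])$, since the preimage of the 1-skeleton is $c$ and the preimage of the 2-cell is a chain $d$ with $\partial d=kc$.

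What remains is to show that $\widetilde\Theta$ corresponds to this degree under the transfer $H_3(K\mathbb{D}_{(2)})\to H_3(\qz)$, up to a unit: the transfer multiplies by $2$, and $2$ is invertible in $\qz$. Here I would use that the transfer on $H_3$ of the branched double cover commutes with $\theta_*[S^3]$. I would also check that $c_u$ acts trivially on $H_3$, as in Lemma~\ref{H3(G)}, so that no sign ambiguity arises.

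\textbf{(iii)$\iff$(iv).} Use the Seifert presentation~\eqref{presentation2} and the linking formula~\eqref{Eq:characteristic}. By Equation~\eqref{eq:char-knot-quotient}, the character is $x\mapsto\lk(\beta,x)/n$. Writing $[c]$ as the image of a vector $y\in H_1(E_S)$, we need $y^TA^{-1}x=\beta^Tx/n$ for all $x$. Since $A\beta\equiv0\pmod n$, we can take $y=A\beta/n$, which is integral. Then
\[
\lk([c],[c])=\frac{(A\beta)^TA^{-1}(A\beta)}{n^2}=\frac{\beta^TA\beta}{n^2}\in\qz,
\]
and this vanishes exactly when $n^2\mid\beta^T(V+V^T)\beta$.

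The constructive direction, (iv)$\Rightarrow$(i) via an explicit surface, is deferred to Section~\ref{sec:n-bottle}. Logically it is not needed once the chain (i)$\iff$(ii)$\iff$(iii)$\iff$(iv) is established.
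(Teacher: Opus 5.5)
Your proposal is correct, but it closes the cycle by a different route than the paper.

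\textbf{The paper's route.} It gets (i)$\iff$(ii) straight from Corollary~\ref{thm:main}, using tautness of $D_n$. It then proves only (i)$\Rightarrow$(iii): it extends $\tilde\theta$ over $\Sigma_2F$, so $\tilde\theta_*[\Sigma_2K]=0$, and reads off $\lk([c],[c])=m/n$ from Lemma~\ref{lemma:CupFormula}. It does (iii)$\iff$(iv) exactly as you do. Finally, it closes the loop with the explicit surface of Section~\ref{sec:n-bottle} for (iv)$\Rightarrow$(i).

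\textbf{Your route.} You prove (ii)$\iff$(iii) in both directions. You identify $\widetilde\Theta_\rho(K)\in\pi_3(K\mathbb{D}_{(2)})\cong H_3(K\mathbb{D}_{(2)})\cong\qz$, up to an automorphism of $\qz$, with $\tilde\theta_*[\Sigma_2K]$, and hence with $\lk([c],[c])$. This is sound. Write $\tau$ for the transfer of the branched double cover $q$ in Figure~\ref{fig:trans}.
\begin{itemize}
\item Naturality gives $\tau\theta_*[S^3]=\theta'_*[\Sigma_2K]$.
\item The identity $q_*\tau=2$ alone makes $\tau$ injective on the odd-torsion group $H_3(K\mathbb{D}_{(2)})$. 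So your check that $c_u$ acts trivially is not needed at this step.
\item $\mathrm{proj}_*$ is an isomorphism on $H_3$ of the branched cover. That cover is $B(\qz)\times S^1$ with a $2$-cell attached along the circle factor, and $H_2(\qz)=0$.
\end{itemize}

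\textbf{A shortcut.} You can skip the transfer entirely. Since $p$ has degree $2$, $q_*\theta'_*[\Sigma_2K]=\theta_*p_*[\Sigma_2K]=2\,\theta_*[S^3]$. Moreover $q_*$ is an isomorphism on $H_3$ by Lemma~\ref{H3(G)} in the colimit. This is where the triviality of $c_u$ on $H_3$ actually enters.

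\textbf{A small correction.} The comparison map $K\mathbb{D}_n\to KD_n$ in your first step is unnecessary. It would not help much anyway: $H_2(KD_n)\cong\Z$, because the $2$-cell is attached along a class of order $2$ in $H_1$. So $KD_n$ is not $2$-connected, and $\pi_3(KD_n)$ is not simply $H_3$. Relating the two invariants through the geometric condition (i), as you ultimately do, is the right way.

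\textbf{What each route buys.} Yours turns the theorem into a homological computation and makes Section~\ref{sec:n-bottle} logically unnecessary. It also identifies the value of the obstruction, not just its vanishing. The paper's route avoids transfers for branched covers. When the obstruction vanishes, it produces a concrete surface $F$ with characteristic surface $G$, which is what applications such as computing $\Xi_n$ require.
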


\begin{remark} 
Julius L. Shaneson pointed out an alternative proof of the fundamental result~\ref{thmpart:extension}$\implies$\ref{thmpart:beta} of the above theorem, using characteristic classes~\cite{julius-email}.
\end{remark} 

\begin{remark} 
Before producing the proof, we comment on the practical significance of the theorem, as stated in~\ref{part:dih-surf}. The homotopy theoretic invariant $\Theta_\rho(K)$ informs whether a $\mathbb{D}_n$-knot $K$ bounds a $\mathbb{D}_n$-surface. Theorem~\ref{thm:equivalence} allows us to determine whether this obstruction vanishes using classical tools such as the $\qz$ linking pairing on $\Sigma_2 K$.  One easily computes the $\qz$ linking pairing for $K$ starting from a Seifert matrix for $K$; we give a detailed example in Section~\ref{sec:QZcomp}. In Example~\ref{ex:twist} we also show how to evaluate the obstruction using Condition~\ref{thmpart:beta}.
\end{remark}

\begin{theorem}[$\mathbb{D}_n$-surface equivalences]
\label{cor:Dn-Dn-bottle}
Let $K$ be a $D_n$ knot and adopt the notation of Theorem~\ref{thm:equivalence}. Let $\rho \colon \pi_1(E_K) \twoheadrightarrow \mathbb{D}_n$ be the 
lift of $\bar{\rho}$ given by 
Corollary~\ref{cor:char-knots-induce-both}. 
Conditions~\ref{part:dih-surf}--\ref{thmpart:beta} are also equivalent 
to each of the following:

\begin{enumerate}[(i), itemsep=3mm, resume]

\item\label{thmpart:extension} The $\mathbb{D}_n$-knot $(K, \rho)$ 
bounds an orientable $\mathbb{D}_n$-surface $F \subseteq B^4$.

\item\label{part:inv-not lifted} The invariant 
$\Theta_{\rho}(K) = 0 \in \pi_3(K\mathbb{D}_n)$.

\item\label{part:inv}  The stable invariant 
$\widetilde{\Theta}_{\rho}(K) = 0 \in \pi_3(K\mathbb{D}_{(2)})$.

(Recall that $\mathbb{D}_{(2)} = \varinjlim\, \mathbb{D}_n$ is the colimit 
defined in Section~\ref{section:theta} and 
$\widetilde{\Theta}_\rho(K)$ is the image of $\Theta_\rho(K)$ under 
the injection 
$\pi_3(K\mathbb{D}_n) \hookrightarrow \pi_3(K\mathbb{D}_{(2)})$ 
of Lemma~\ref{lemma:homologyKG}.)

\end{enumerate}
\end{theorem}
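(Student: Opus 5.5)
The plan is to link the new conditions (v)--(vii) to conditions (i)--(iv) of Theorem~\ref{thm:equivalence}, which we take as already proven, through two short chains. Let $\rho\colon\pi_1(E_K)\twoheadrightarrow\mathbb{D}_n$ be the lift of $\bar\rho$.

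The first chain stays entirely inside the $\mathbb{D}_n$ world. By Example~\ref{ex:dihedral-taut}, $\mathbb{D}_n$ is taut. Corollary~\ref{thm:main} then gives (v)$\iff$(vi), after one observation: an orientable $\mathbb{D}_n$-surface is the same as an orientable surface over which $\rho$ extends. This holds because Proposition~\ref{prop:dihedral-surface-section} automatically supplies the $0$-framed section condition. Corollary~\ref{cor:n-surface}, which rests on the injectivity in Lemma~\ref{lemma:homologyKG}, gives (vi)$\iff$(vii).

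The second chain joins (v) to (i). Proposition~\ref{prop:quotients-are-equivalent} says that, for any fixed orientable surface $F$ bounding $K$, $\bar\rho$ extends over $E_F$ exactly when $\rho$ does. There is one subtlety: condition (i) asks only for an extension $\pi_1(E_F)\to D_n$, while (v) as a $\mathbb{D}_n$-surface needs a surjective extension. Surjectivity is automatic in both cases. The image of the extension contains the image of $\pi_1(E_K)$, because the extension commutes with $i_\ast$, and $\rho$ and $\bar\rho$ are already onto. So (i)$\iff$(v), and since (i)--(iv) are equivalent by Theorem~\ref{thm:equivalence}, all seven conditions are equivalent.

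I expect the main obstacle to be making sure the basings match. The basing of $\mathbb{D}_n$ is $u=\rho(\mu)$, and the extension produced in Proposition~\ref{prop:quotients-are-equivalent} must send the meridian of $F$ to $u$; this is needed to apply Corollary~\ref{thm:main} as a based statement. I would check this directly from the HNN description in~\eqref{eq:HNN-surface}, where the meridian $m$ of $F$ maps to $u$ and is isotopic to the meridian of $K$. As a fallback, one could route everything through $\widetilde\Theta$ together with condition (iii). That would rely on the linking-form identification used in Theorem~\ref{thm:equivalence}, which does not distinguish between $D_n$ and $\mathbb{D}_n$, since both correspond to the same character on $H_1(\Sigma_2K)$.
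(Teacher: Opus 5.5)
Your proposal is correct and follows the paper's proof: (i)$\iff$(v) by Proposition~\ref{prop:quotients-are-equivalent}, (v)$\iff$(vi) by Corollary~\ref{thm:main} using that $\mathbb{D}_n$ is taut, and (vi)$\iff$(vii) by Corollary~\ref{cor:n-surface}. Your extra checks are details the paper leaves implicit: that the extension is surjective, that the section condition holds by Proposition~\ref{prop:dihedral-surface-section}, and that the meridian is sent to $u$ in the HNN presentation.
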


\begin{theorem}[Disk in a $4$-manifold]\label{cor:disk-in-W}
Let $K$ be a knot equipped with a quotient $\rho:\pi_1(E_K)\to \mathbb{D}_n$. Conditions~\ref{thm:equivalence}~\ref{thmpart:extension}--\ref{thmpart:beta} and \ref{cor:Dn-Dn-bottle}~\ref{thmpart:extension}--\ref{part:inv} are also equivalent to:
\begin{enumerate}[(i), itemsep=3mm, resume]
\item\label{part:disk} There is an orientable $4$-manifold $W$ with $\partial{W} = S^3$, and a locally flat disk $B^2$ properly embedded in $W$ with $\partial{B^2} = K \subseteq S^3$, such that $\rho$ extends over $\pi_1(E_{B^2})$, where $E_{B^2}$ denotes the exterior of the $2$-disk $B^2$ in $W$.

\begin{equation}~\label{diagramEB2}
\begin{tikzcd}
\pi_1(E_K) \arrow[r, "\rho"] \ar[d, "i_*", labels=left] & \mathbb{D}_{n}.\\
\pi_1(E_{B^2}) \arrow[ru, dashed, "\overline{\rho}", labels=below right] &
\end{tikzcd}
\end{equation}
\end{enumerate}
\end{theorem}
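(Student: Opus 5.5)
We prove the equivalence of \ref{part:disk} with \ref{thmpart:extension} by two implications, using Corollary~\ref{cor:surgery} for one and the stable obstruction $\widetilde{\Theta}_\rho(K)$ of Theorem~\ref{cor:Dn-Dn-bottle}~\ref{part:inv} for the other.

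\emph{\ref{thmpart:extension}$\implies$\ref{part:disk}.} Suppose $(K,\rho)$ bounds an orientable $\mathbb{D}_n$-surface $F\subseteq B^4$. Corollary~\ref{cor:surgery} performs ambient surgery on $F$ along a half-symplectic basis whose $0$-framed pushoffs map trivially under $\bar\rho$, which is possible because $\mathbb{D}_n$ is taut (Proposition~\ref{prop:dihedral-surface-section}). This produces an oriented $W$ with $\partial W=S^3$ and a disk in $W$ over whose exterior $\rho$ extends. We will check that the surgered $4$-manifold is orientable, which holds since each surgery replaces $S^1\times D^3$ by $D^2\times S^2$. We will also check that van Kampen gives a well-defined extension: the new relations killed in $\pi_1$ of the surface exterior are the pushed-off curves, and these lie in $\ker\bar\rho$.

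\emph{\ref{part:disk}$\implies$\ref{part:inv}.} This direction mirrors the ``$\implies$'' half of the $G$-Surface Theorem~\ref{theorem:G-surface}. Given the disk $D\subseteq W$ and the extension $\bar\rho\colon E_D\to B\mathbb{D}_n$, we extend $\bar\rho$ over the normal $D\times B^2$ by projection to the distinguished $2$-cell. The disk has trivial normal bundle, its section is unique up to homotopy, and its longitude is automatically nullhomotopic because $D$ is contractible. This gives a map $\Phi\colon W\to K\mathbb{D}_n$ restricting to $\theta$ on $S^3=\partial W$.

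It then suffices to show that $\theta$ bounding a compact oriented $4$-manifold forces $[\theta]=0$. By Lemma~\ref{lemma:1-1}, $K\mathbb{D}_n$ is $2$-connected and Hurewicz gives $\pi_3\cong H_3$. Hence
\[
\Theta_\rho(K)=\theta_*[S^3]=\Phi_*\partial[W]=0\in H_3(K\mathbb{D}_n),
\]
since $\partial[W]$ is a boundary in $W$. The Hurewicz identification applies because $\theta_*[S^3]$ is exactly the Hurewicz image of $[\theta]$. Thus $\Theta_\rho(K)=0$ in $\pi_3(K\mathbb{D}_n)$, which is condition~\ref{part:inv-not lifted}, and hence all the conditions hold by Theorem~\ref{cor:Dn-Dn-bottle}.

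The main obstacle is verifying that this homological argument is legitimate. It needs $2$-connectivity of $K\mathbb{D}_n$ (Lemma~\ref{H3(G)}(ii) together with normal generation by $u$) and the orientation of $W$ to define $[W]$. If the oriented bordism route raises doubts, the fallback is the cup-product description: compute $\lk([c],[c])$ from the intersection form of the double branched cover of $W$ along $D$, as the paper does for $B^4$. The direct Hurewicz argument seems sufficient and avoids that.
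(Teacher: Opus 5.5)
Your proposal is correct and follows essentially the same route as the paper. The forward direction is the surgery of Corollary~\ref{cor:surgery}, which is the paper's ``surface cobordant rel boundary to a disk.'' The reverse direction builds a map $W\to K\mathbb{D}_n$ extending $\theta$ and uses $2$-connectivity of $K\mathbb{D}_n$ together with Hurewicz to conclude $\Theta_\rho(K)=0$; you argue directly in homology via $\theta_*[S^3]=\Phi_*\partial[W,\partial W]=0$ in $H_3$, where the paper phrases the same step through $H_3(K\mathbb{D}_n)\cong\Omega_3^{SO}(K\mathbb{D}_n)$, and you also spell out the extension over the normal disk bundle that the paper leaves implicit.
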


\begin{corollary}[Fundamental-class pushforward]\label{cor:fund-class}
Under the hypotheses of Theorem~\ref{thm:equivalence}, conditions~\ref{thm:equivalence}~\ref{thmpart:extension}--\ref{thmpart:beta} are also equivalent to:
\begin{enumerate}[(i), itemsep=3mm, resume]
\item\label{test}  The $2$-fold branched cover $\Sigma_2 K$ of the knot $K \subseteq S^3$, under the homomorphism 
\[
\tilde{\theta} \colon H_1(\Sigma_2 K) \to \qz,
\]
satisfies
\[
\tilde{\theta}_{\ast}([\Sigma_2 K]) = 0 \in H_3(\qz) \cong \qz.
\]
\end{enumerate}
\end{corollary}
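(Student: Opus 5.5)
The plan is to prove the equivalence with condition~\ref{upstairs-beta} of Theorem~\ref{thm:equivalence}, namely $\lk_{\Sigma_2 K}([c],[c])=0$. The method is to compute $\tilde{\theta}_\ast([\Sigma_2 K])$ directly as a self-linking number.

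\textbf{Reduction to a lens space.} First I reduce to a finite cyclic target. The homomorphism $\tilde{\theta}\colon H_1(\Sigma_2 K)\to \qz$ is $\lk([c],-)$ by Corollary~\ref{cor:correspondence}. Since $H_1(\Sigma_2 K)$ is finite of odd order, $\tilde\theta$ has image in the cyclic subgroup $\iota(\Z_k)=\langle 1/k\rangle$ for some odd $k$. Hence the map $\Sigma_2 K\to B(\qz)$ factors, up to homotopy, through $B\Z_k$. Because homology commutes with colimits, $H_3(\Z_k)\cong\Z_k\to H_3(\qz)\cong\qz$ is the injection $1\mapsto 1/k$. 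So it suffices to decide when the image of $[\Sigma_2 K]$ in $H_3(\Z_k)$ vanishes.

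As in Section~\ref{torlinking}, I replace $B\Z_k$ by its $3$-skeleton $L(k,1)$. This gives a cellular map $\rho\colon\Sigma_2 K\to L(k,1)$, transverse to the $1$-cell $a$ and the $2$-cell $b$. The inclusion $L(k,1)\to B\Z_k$ is an isomorphism on $H_3$ with $\Z_k$ coefficients, since cells of dimension $\geq 4$ only kill $H_3(\,\cdot\,;\Z)\cong\Z$ down to $\Z_k$. Therefore the class in question is $\deg(\rho)\bmod k$.

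\textbf{Computing the degree.} Next I express $\deg\rho \bmod k$ through cup products. Let $x\in H^1(L(k,1);\Z_k)$ and $y\in H^2(L(k,1);\Z_k)$ be the Poincar\'e duals of $[b]$ and $[a]$. For $k$ odd, $x\cup y$ generates $H^3(L(k,1);\Z_k)$ and evaluates to $1$ on the fundamental class. Naturality then gives
\[
\deg\rho \equiv \langle \rho^\ast x\cup\rho^\ast y,[\Sigma_2 K]\rangle \pmod k .
\]
By equation~\eqref{EQ:PD} and the proof of Lemma~\ref{lemma:linking-correspondence}, $\rho^\ast y$ and $\rho^\ast x$ are Poincar\'e dual to $c=\rho^{-1}(a)$ and $d=\rho^{-1}(b)$, where $\partial d=k\,c$. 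Lemma~\ref{lemma:CupFormula} then identifies $\iota\langle \rho^\ast y\cup\rho^\ast x,[\Sigma_2K]\rangle$ with $\lk([c],[c])\in\qz$, up to the sign coming from graded commutativity. Consequently $\tilde\theta_\ast([\Sigma_2 K])=\pm\lk([c],[c])$ under $H_3(\qz)\cong\qz$, and it vanishes exactly when condition~\ref{upstairs-beta} holds. Theorem~\ref{thm:equivalence} then yields equivalence with all the other conditions.

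\textbf{Expected obstacle.} The delicate step is the identification of the cup-product evaluation with the linking number. Lemma~\ref{lemma:CupFormula} is phrased for a pair $(a,b)$ with $\partial b=k a$; I must check that it applies with $a=c$, $b=d$ and yields $\lk([c],[c])$, not $\lk([c],[d])$ in some degenerate sense. I must also check that the generator convention $x\cup y\mapsto 1$ on $L(k,1)$ is compatible with $\lk([a],[a])=1/k$ from~\eqref{Eqn:phi(x)}. Both signs and units are irrelevant for vanishing, so I would only track that the generator maps to a unit of $\Z_k$.

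\textbf{Fallback.} If this bookkeeping becomes awkward, I would instead argue via Figure~\ref{fig:trans}. There the branched double cover of $K\mathbb{D}_{(2)}$ relates $\tilde\theta_\ast[\Sigma_2K]$ to a transfer of $\widetilde\Theta_\rho(K)\in H_3(K\mathbb{D}_{(2)})\cong H_3(\qz)$, and multiplication by $2$ is invertible on $\qz$. That would give equivalence with condition~\ref{part:inv} of Theorem~\ref{cor:Dn-Dn-bottle} directly.
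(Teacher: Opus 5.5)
Your proof is correct, but it is organized differently from the paper's. The paper does no new computation for this corollary. It observes that its proof of \ref{part:dih-surf}$\Rightarrow$\ref{upstairs-beta} in Theorem~\ref{thm:equivalence} already passes through \ref{test}. First, \ref{part:dih-surf}$\Rightarrow$\ref{test} holds because $\tilde\theta$ extends over the branched cover $\Sigma_2F$, whose boundary is $\Sigma_2K$. Second, \ref{test}$\Rightarrow$\ref{upstairs-beta} holds by writing $\tilde\theta_*[\Sigma_2K]=m[L(n,1)]$ and reading off $\lk([c],[c])=m/n$ from Lemma~\ref{lemma:CupFormula}. So \ref{test} is absorbed into the cycle of equivalences, whose return leg to \ref{part:dih-surf} runs through the explicit construction of Section~\ref{sec:n-bottle}.

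You instead prove the cup-product computation as a two-sided identity, $\tilde\theta_*[\Sigma_2K]=\pm\lk([c],[c])$ under $H_3(\qz)\cong\qz$. This gives \ref{test}$\iff$\ref{upstairs-beta} directly. What your route buys:
\begin{itemize}
\item The equivalence becomes intrinsic to the closed $3$-manifold $\Sigma_2K$. It is independent of both the surface construction and the extension of the transfer over $\Sigma_2F$.
\item You supply bookkeeping the paper leaves implicit: $H_3(L(k,1);\Z)\to H_3(\Z_k)$ is reduction mod $k$, and $H_3(\Z_k)\to H_3(\qz)$ is injective.
\end{itemize}
The paper's route is shorter because it only needs one direction of that identity.

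Your worry about Lemma~\ref{lemma:CupFormula} is unfounded. The paper applies it exactly as you do: its ``$\lk([a],[b])$'' means $\tfrac1k(a\cdot b)=\lk([a],[a])$. No sign appears, since the classes have degrees $1$ and $2$.

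Your fallback is also sound. It uses $q_*\theta'_*[\Sigma_2K]=\theta_*p_*[\Sigma_2K]=2\,\theta_*[S^3]$ from Figure~\ref{fig:trans} and the invertibility of $2$ on $\qz$. Note that $H_3$ of the branched cover of $K\mathbb{D}_{(2)}$ is carried by $B(\qz)$, so $q_*$ there is $proj_*$ followed by the isomorphism $H_3(\qz)\cong H_3(K\mathbb{D}_{(2)})$. This would tie \ref{test} directly to \ref{part:inv} without linking forms, a route the paper does not take.
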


\subsection{Proof of the computable equivalences}\label{sec:proof-equivalences}

\begin{proof}[Proof of Theorem~\ref{thm:equivalence}]
 
\ref{part:dih-surf} $\iff$ \ref{part:dihedral-invariant}: 
This is immediate from Corollary~\ref{thm:main}, since $D_n$ is taut. 
\qed
 
\medskip
\noindent \ref{part:dih-surf} $\implies$ \ref{upstairs-beta}: 
Suppose $\bar{\rho}$ extends over the exterior of an orientable surface 
$F \subseteq B^4$ with $\partial F = K$. By 
Corollary~\ref{cor:char-knots-induce-both}, $\bar{\rho}$ lifts to a 
based epimorphism $\rho \colon \pi_1(E_K) \twoheadrightarrow 
\mathbb{D}_n$, and by 
Proposition~\ref{prop:quotients-are-equivalent}, $\rho$ also extends 
over~$E_F$. The $2$-fold transfer construction 
(analogous to Proposition~\ref{prop:bijection}), applied with branch set 
$F \subseteq B^4$, produces a commutative diagram
\[
\begin{tikzcd}
\Sigma_2 K = \partial{(\Sigma_2 F)} \arrow[r, "\tilde{\theta}"] 
  \ar[d, "i_*", labels=left] & B\Z_n \arrow[r] & B\qz.\\
\Sigma_2 F \arrow[ru] &
\end{tikzcd}
\]
Since $\Sigma_2 K = \partial(\Sigma_2 F)$, the homomorphism 
$i_* \colon H_3(\Sigma_2 K) \to H_3(\Sigma_2 F)$ is the zero map, 
so $\tilde{\theta}_*([\Sigma_2 K]) = 0 \in H_3(\Z_n)$.
 
It remains to show that $\tilde{\theta}_*([\Sigma_2 K]) = 0$ implies 
$\lk([c],[c]) = 0$. 
Suppose $\tilde{\theta}_*([\Sigma_2 K]) = m \cdot [L(n,1)]$. 
By the cup product calculation of 
Lemma~\ref{lemma:CupFormula},
\[
\lk([c],[c]) = \frac{m}{n} \in \qz.
\]
Since $\tilde{\theta}_*([\Sigma_2 K]) = 0$, we have $n \mid m$, 
so $\lk([c],[c]) = 0$. \qed
 
\medskip
\noindent \ref{upstairs-beta} $\iff$ \ref{thmpart:beta}: 
 
Recall from Equation~\eqref{Eq:characteristic} that the $\qz$-linking form on $H_1(\Sigma_2 K) \cong \Z^{2g}/A\Z^{2g}$ is given by $\lk([x],[y]) = [x]^T A^{-1} [y] \in \qz$, where $A = V + V^T$.

The class $[c] \in H_1(\Sigma_2 K)$ corresponding to $\bar{\rho}$ under the bijection~\eqref{bijection} is related to the characteristic knot $\beta$ as follows. By Definition~\ref{def:chark}, $A[\beta] \equiv 0 \pmod{n}$, so we may set $\mathbf{w} := \frac{1}{n}A[\beta] \in \Z^{2g}$. The non-singularity of the linking form (Lemma~\ref{lemma:linkingpairing}) and the correspondence of Lemma~\ref{lemma:linking-correspondence} identify $[c]$ with the class represented by $\mathbf{w}$ in $\Z^{2g}/A\Z^{2g}$. Indeed, the character determined by $\beta$ satisfies 
\[
\chi([x]) = \frac{1}{n}[\beta]^T [x] \in \qz
\] 
for all $[x] \in H_1(\Sigma_2 K)$; since $\chi = \lk([c], -)$ by Lemma~\ref{lemma:linking-correspondence}, we have $A^{-1}[c] = \frac{1}{n}[\beta]$ in $\Q^{2g}/\Z^{2g}$, that is, $[c] = [\mathbf{w}]$. Therefore,
\[
\lk([c],[c]) = \mathbf{w}^T A^{-1} \mathbf{w} = \tfrac{1}{n^2}[\beta]^T A \cdot A^{-1} \cdot A[\beta] = \tfrac{1}{n^2}\,[\beta]^T A\, [\beta] \in \qz.
\]
This vanishes in $\qz$ if and only if $[\beta]^T (V + V^T) [\beta] \equiv 0 \pmod{n^2}$.
\qed
 
\medskip
\noindent \ref{thmpart:beta} $\implies$ \ref{part:dih-surf}: 
This is proved constructively in Section~\ref{sec:n-bottle}, where we 
build an explicit dihedral surface whenever 
condition~\ref{thmpart:beta} is satisfied.
\end{proof}

\begin{proof}[Proof of Theorem~\ref{cor:Dn-Dn-bottle}]
These results have essentially been established earlier in the paper. For the convenience of the reader, we specify precisely where.

Theorem~\ref{thm:equivalence}\ref{part:dih-surf} $\iff$ 
Theorem~\ref{cor:Dn-Dn-bottle}\ref{thmpart:extension}: 
This is the content of Proposition~\ref{prop:quotients-are-equivalent}.

Theorem~\ref{cor:Dn-Dn-bottle}\ref{thmpart:extension} $\iff$ 
Theorem~\ref{cor:Dn-Dn-bottle}\ref{part:inv-not lifted}: 
This follows from Corollary~\ref{thm:main}, since $\mathbb{D}_n$ is taut.

Theorem~\ref{cor:Dn-Dn-bottle}\ref{part:inv-not lifted} $\iff$ 
Theorem~\ref{cor:Dn-Dn-bottle}\ref{part:inv}: 
This is Corollary~\ref{cor:n-surface}.
\end{proof}

\begin{proof}[Proof of Theorem~\ref{cor:disk-in-W}]
Theorem~\ref{cor:Dn-Dn-bottle}\ref{part:inv-not lifted}$\iff$\ref{part:disk}:  Since $H_2(B\mathbb{D}_{n}) \cong 0$, the space $K\mathbb{D}_{n}$ is $2$-connected. Using the Hurewicz Theorem for the first isomorphism, and the Atiyah-Hurzebruch spectral sequence (which in this case degenerates in low degrees) for the second isomorphism,  we observe that
\[
\pi_3(K\mathbb{D}_{n}) \cong H_3(K\mathbb{D}_{n}) \cong \Omega_3^{SO}(\mathbb{D}_{n}),
\]
where, $\Omega_3^{SO}\mathbb{D}_{n}$ denotes the oriented bordism group over $\mathbb{D}_{n}$.  Since \ref{thmpart:extension} $\iff$ \ref{part:inv-not lifted},  the obstruction $\Theta_\rho(K) = 0$  $\iff$ $K$ bounds a $\mathbb{D}_{n}$ surface, $F \subseteq B^4$, with trivial normal bundle $\nu(F) \cong F \times B^2 \subseteq B^4$. In this case, we have a commutative diagram as follows, where $\partial{F} = K$:
\[
\begin{tikzcd} 
F \times S^1  \ar[r, "proj"] \ar[d, "inc"] & S^1 \arrow[d,"inc"] \\
E_F \ar[r,dashed, "\overline{\rho}"] &  B\mathbb{D}_{n}\\
\end{tikzcd}
\]
Since a surface $F$ with $\partial F = S^1$ is orientation preserving cobordant (rel boundary) to a disk, $\rho$ extends over a surface exterior in $W$ if and only if it extends over the exterior of a disk in some (possibly different) 
oriented $4$-manifold $W'$ with $\partial W' = S^3$; that is, if and 
only if Diagram~\eqref{diagramEB2} commutes. 

To show that~\ref{part:disk} implies Theorem~\ref{cor:Dn-Dn-bottle}\ref{part:inv-not lifted}, note that Diagram~\eqref{diagramEB2} determines a map 
$W \to K\mathbb{D}_n$ extending $\theta \colon S^3 \to K\mathbb{D}_n$, 
giving a null-bordism of $\theta$ over $K\mathbb{D}_n$. Using the isomorphism $\pi_3(K\mathbb{D}_n) \cong \Omega_3^{SO}(K\mathbb{D}_n)$, 
we may conclude that $\Theta_\rho(K) = 0$, since the Hurewicz map sends $\Theta_\rho(K) = [\theta] \in \pi_3(K\mathbb{D}_n)$ to the bordism class $[S^3, \theta] \in \Omega_3^{SO}(K\mathbb{D}_n)$. Thus, a null-bordism of $\theta$ directly witnesses the vanishing of $\Theta_\rho(K)$.
\end{proof}

\begin{proof}[Proof of Corollary~\ref{cor:fund-class}]
In the proof of Theorem~\ref{thm:equivalence}, the implication~\ref{thm:equivalence}\ref{part:dih-surf}$\implies$~\ref{thm:equivalence}\ref{upstairs-beta} factors as \ref{part:dih-surf}$\implies$~\ref{cor:fund-class}\ref{test}$\implies$\ref{upstairs-beta}. Since conditions~\ref{part:dih-surf}--\ref{thmpart:beta} of Theorem~\ref{thm:equivalence} are equivalent, condition~\ref{test} is equivalent to them as well.
\end{proof}

\subsection{Non-orientable and ambient-4-manifold variants}\label{sec:nonor-variants}

The following two lemmas are the non-orientable counterparts of Lemma~\ref{lemma:0-framed-Section}. For non-orientable $F$, the normal bundle is non-trivial, so the proof of Lemma~\ref{lemma:0-framed-Section} does not apply. In the case $G = D_n$, however, the order-$2$ property of reflections allows us to provide a replacement.

\begin{lemma}\label{lemma:nonor-section}
Let $(F, K) \subseteq (B^4, S^3)$ be a properly embedded, compact, locally flat, non-orientable surface with $\partial F = K$. Let $f \colon E_F \to BD_n$ ($n$ odd) be a continuous map such that $f_*(\mu)$ is a reflection $s \in D_n$, where $\mu$ denotes a meridian of $F$. Then there exists a global section $\sigma' \colon F \to S(F)$ of the normal circle bundle such that $(f \circ \mathrm{inc} \circ\, \sigma')_* \colon \pi_1(F) \to D_n$ is trivial.
\end{lemma}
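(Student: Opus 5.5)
The plan is to work on a $1$-skeleton of $F$, exactly as in the proof of Lemma~\ref{lemma:0-framed-Section}, and to replace the orientability and abelianization bookkeeping used there with the structure of the centralizer of a reflection in $D_n$.

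\textbf{Step 1: a section exists.} Since $F$ is compact, connected, and has nonempty boundary $K$, it deformation retracts onto a wedge $W=\bigvee_{i=1}^r c_i$ of circles based at a point $a\in F$. The normal bundle of $F$ is a rank-$2$ real bundle. Its restriction to $W$ is determined over each $c_i$ by the orientation character: it is trivial if $c_i$ is orientation-preserving and nonorientable otherwise. The associated circle bundle over $c_i$ is therefore a torus or a Klein bottle. Over a $1$-complex a rank-$2$ bundle always admits a nonvanishing section, so sections of $S(F)|_W$ exist. Any section over $W$ extends to all of $F$: pull back along the retraction $F\to W$, which is homotopic to the identity, and use the covering homotopy property of the bundle. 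Since $\pi_1(F)$ is free on the classes $[c_i]$, it suffices to build $\sigma'$ on $W$ with $(f\circ\mathrm{inc}\circ\sigma')_*[c_i]=1$ for every $i$. Basepoints are fixed at $\sigma'(a)$, and $\mu$ is taken to be the fiber over $a$.

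\textbf{Step 2: the image of each lifted loop lies in the centralizer of $s$.} Fix any section $\sigma$ over $W$ and set $g_i:=f_*(\sigma_*[c_i])\in D_n$. In $\pi_1(S(F)|_{c_i})$, which is the torus or Klein bottle group, we have
\[
\sigma(c_i)\,\mu\,\sigma(c_i)^{-1}=\mu^{\pm1},
\]
with the sign $-$ exactly when $c_i$ is orientation-reversing. Applying $f_*$ and using $f_*(\mu)=s$ with $s^{-1}=s$ gives $g_isg_i^{-1}=s$ in either case. For $n$ odd the centralizer of a reflection is $\{1,s\}$, so $g_i\in\{1,s\}$.

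\textbf{Step 3: twist the section to kill each $g_i$.} Over a single circle $c_i$, two sections agreeing at $a$ differ by a number of fiber windings. Changing $\sigma$ on $c_i$ by one full fiber twist, supported near a point of $c_i$ away from $a$, changes $\sigma_*[c_i]$ by right multiplication by $\mu^{\pm1}$. In the Klein bottle case one checks that the insertion of one fiber loop still yields $\sigma(c_i)\mu^{\pm1}$ up to the sign convention. Hence $g_i$ becomes $g_is^{\pm1}=g_is$. For each $i$ with $g_i=s$ we perform one twist. This is independent over the different petals of the wedge, since the twists are supported away from $a$, and the result is a section $\sigma'$ over $W$ with all $(f\circ\mathrm{inc}\circ\sigma')_*[c_i]=1$. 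We then extend $\sigma'$ over $F$ as in Step 1. The extension does not change the induced map on $\pi_1(F)\cong\pi_1(W)$, so that map is trivial.

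\textbf{Main obstacle.} The delicate point is Step 3 in the nonorientable (Klein bottle) case. There one must verify carefully that twisting the section really multiplies the loop class by $\mu^{\pm1}$, rather than conjugating it or being ill-defined, and that basepoints are handled consistently. I would check this directly in the standard presentation $\langle x,\mu\mid x\mu x^{-1}=\mu^{-1}\rangle$ of the Klein bottle group, with the section loop playing the role of $x$. The same computation is what produces the relation used in Step 2.
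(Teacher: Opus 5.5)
Your proof is correct and is essentially the paper's argument: a section exists, each lifted generator maps into the centralizer $\{1,s\}$ of the reflection, and inserting one fiber twist kills the generators that map to $s$. The one difference is that you work on a spine $W$ onto which $F$ deformation retracts, so every section over $W$ extends automatically. This lets you skip the paper's check, via $\sigma'(a_i)^2=\sigma(a_i)^2$ in the Klein bottle group, that the twisted section still extends over the $2$-cell attached along $a_1^2\cdots a_g^2\,\partial^{-1}$, and it also handles orientation-preserving generators uniformly.
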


\begin{proof}
Since $F$ is compact with $\partial F \neq \emptyset$, we have $H^2(F; \mathbb{Z}^{w_1}) = 0$, where $\mathbb{Z}^{w_1}$ is the coefficient system twisted by the first Stiefel-Whitney class of the normal bundle. Therefore, a global section $\sigma \colon F \to S(F)$ exists.

Give $F$ a CW structure with one $0$-cell, $(g+1)$ one-cells $a_1, \ldots, a_g, \partial$ (where $\partial$ represents the boundary $K$ and $a_1, \ldots, a_g$ are interior generators), and one $2$-cell attached along $a_1^2 \cdots a_g^2 \cdot \partial^{-1}$.

cWe determine the image of $\sigma$ on the interior $1$-skeleton. The normal circle bundle restricted over each $a_i$ forms a Klein bottle, since $a_i$ is orientation-reversing. In this restricted bundle, $\sigma(a_i)$ and the meridian $\mu$ satisfy $\sigma(a_i)\, \mu\, \sigma(a_i)^{-1} = \mu^{-1}$. Applying $f_*$ gives $f_*(\sigma(a_i)) \cdot s \cdot f_*(\sigma(a_i))^{-1} = s$, so $f_*(\sigma(a_i))$ centralizes $s$. Since $n$ is odd, the centralizer of any reflection in $D_n$ is $\{1, s\}$. Hence $(f \circ \sigma)_*(a_i) \in \{1, s\}$ for each $i$.

Define a modified section $\sigma'$ by setting $\sigma'(a_i) = \sigma(a_i) \cdot \mu^{\epsilon_i}$ with $\epsilon_i \in \{0, 1\}$ chosen so that $(f \circ \sigma')_*(a_i) = 1$, and leaving $\sigma'$ unchanged on $\partial$.

We verify that $\sigma'$ extends over the $2$-cell. Because conjugation by $\sigma(a_i)$ inverts the meridian fiber, the added twist cancels:
\[
\sigma'(a_i)^2 = \sigma(a_i)\, \mu^{\epsilon_i}\, \sigma(a_i)\, \mu^{\epsilon_i} = \sigma(a_i)^2 \cdot \mu^{-\epsilon_i} \cdot \mu^{\epsilon_i} = \sigma(a_i)^2.
\]
Therefore $\prod_{i=1}^g \sigma'(a_i)^2 \cdot \sigma'(\partial)^{-1} = \prod_{i=1}^g \sigma(a_i)^2 \cdot \sigma(\partial)^{-1} = 1$, the last equality holding because $\sigma$ is a global section. Hence $\sigma'$ extends continuously over the $2$-cell.

Finally, $(f \circ \sigma')_*$ is trivial on all of $\pi_1(F)$: it vanishes on each $a_i$ by construction, and on $\partial$ by the attaching relation $(f \circ \sigma')_*(a_1^2 \cdots a_g^2) = (f \circ \sigma')_*(\partial)$.
\end{proof}

\begin{lemma}\label{lemma:nonor-extension}
Under the hypotheses of Lemma~\ref{lemma:nonor-section}, let $\nu(F)$ denote the tubular neighborhood of $F$ in $B^4$ (a disk bundle over $F$). Then the map $f \colon E_F \to BD_n \subseteq KD_n$ extends to a continuous map $\tilde{f} \colon \nu(F) \to KD_n$.
\end{lemma}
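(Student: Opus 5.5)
We extend $f$ over $\nu(F)$ in the same way as in the orientable case, the proof of Theorem~\ref{theorem:G-surface}. The map will be fiberwise radial projection to the distinguished $2$-cell $B^2\subseteq KD_n$. The twist of the normal bundle is absorbed by a reflection of $B^2$, which is compatible because $s$ has order two.

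\textbf{Step 1 (normalize $f$ on $S(F)$).} Fix the section $\sigma'$ from Lemma~\ref{lemma:nonor-section}, so that $(f\circ\sigma')_*$ is trivial on $\pi_1(F)$. Build the model map $g\colon S(F)\to BD_n$ cell by cell on a CW structure of $S(F)$ adapted to the bundle and to $\sigma'$.
\begin{itemize}
\item[(a)] Send $\sigma'(F)$ to the basepoint.
\item[(b)] On each fiber circle, use the loop $S\subseteq BD_n$ representing $s$.
\item[(c)] Over each $1$-cell $a_i$ along which the bundle is orientation reversing, the fiber circle is identified with itself by a reflection. We need $g$ to be equivariant there, so choose the loop $S$ in $BD_n$ to admit the involution given by reversal. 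Concretely, replace $BD_n$ near $S$ by the mapping cylinder of the double cover $S\to S/\pm$. Alternatively, use the inner automorphism by $s$, which fixes $s$ and sends $s^{-1}=s$ to itself.
\end{itemize}
We then show that $f|_{S(F)}$ and $g$ induce the same homomorphism on $\pi_1(S(F))$. Indeed, $\pi_1(S(F))$ is generated by $\mu$ together with the lifts $\sigma'(a_i)$ and $\sigma'(\partial)$; $f$ and $g$ both send $\mu\mapsto s$ and send each $\sigma'(\cdot)$ to $1$. Since $BD_n$ is aspherical, $f|_{S(F)}\simeq g$. The homotopy extension property then replaces $f$ by a homotopic map on $E_F$ that restricts to $g$ on $S(F)=\partial\nu(F)\cap E_F$.

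\textbf{Step 2 (extend over the disk bundle).} Define $\tilde f$ on $\nu(F)$ fiberwise. Each fiber $D^2_x$ is coned radially onto the distinguished $2$-cell $B^2$ attached along $S$, with the center of each fiber going to $0$. Along orientation-reversing loops the fiber disk is glued to itself by a reflection. The radial cone of a reflection-equivariant boundary map is again equivariant, because the reflection of $S$ from Step~1 extends linearly to a reflection of $B^2$ fixing $0$. Hence the fiberwise cone is well defined on the nontrivial bundle. It agrees with $g$ on $S(F)$, so it glues with $f$ to a continuous map $B^4\to KD_n$.

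\textbf{Main obstacle.} The delicate point is Step~1(c). We must check that the attaching loop $S$ of the $2$-cell can be parametrized so that reversing its orientation is realized by an actual self-map of $KD_n$ fixing $B^2$ setwise. That self-map must also be homotopic to the identity on $BD_n$ compatibly with $s$.

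If arranging the equivariance directly in the given model is awkward, I would instead use the homotopy invariance of $KD_n$ (Section~\ref{Gknots}). The plan is to redefine the model $BD_n\cup_S B^2$ by taking $S$ to be the image of an equivariant circle, using $s=s^{-1}$ in $D_n$ so that the reversed loop is freely homotopic to $S$ via the constant conjugation. Since $KD_n$ is well defined up to homotopy type, this substitution is allowed.

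A fallback is obstruction theory on the relative CW pair $(\nu(F),S(F))\simeq$ (Thom space cells of dimension $\ge 2$). The extension problem has obstructions in $H^{k}(\nu(F),S(F);\pi_{k-1}(KD_n))$. The degree-$2$ obstruction vanishes because $f$ sends the fiber to $s$, which dies in $\pi_1(KD_n)=0$. The remaining obstructions lie in $H^3$ and $H^4$ with coefficients $\pi_2(KD_n)=0$ (Lemma~\ref{lemma:1-1}-type argument for $D_n$) and $\pi_3$. By the Thom isomorphism with twisted coefficients these groups are $H^1(F;\pi_2^{w})$ and $H^2(F;\pi_3^{w})=0$, since $F$ has nonempty boundary.
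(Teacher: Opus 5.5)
Your Steps~1--2 are essentially the paper's own argument: normalize $f$ so that $\sigma'(F)$ goes to the basepoint, then cone each fiber disk onto the distinguished $2$-cell by a characteristic map $\Phi$ that is supposed to be compatible with the reflection gluing because $s=s^{-1}$. You correctly single out that compatibility as the crux. It cannot be achieved, either in your argument or in the paper's: for non-orientable $F$, no extension of $f|_{S(F)}$ over $\nu(F)$ exists at all.

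The fact your fallback gets wrong is that $\pi_2(KD_n)\neq 0$. Since $\pi_1(KD_n)=1$, $H_2(D_n)=0$ and $H_1(D_n)\cong\Z_2$, the pair $(KD_n,BD_n)$ gives $\pi_2(KD_n)\cong H_2(KD_n)\cong\ker\bigl(H_1(S)\to H_1(D_n)\bigr)\cong\Z$. The argument of Lemma~\ref{lemma:1-1} kills $H_2$ only for $\mathbb{D}_n$, where $H_1(\mathbb{D}_n)\cong\Z$ is hit isomorphically. Dually, $H^2(KD_n;\Z)\cong\Z$. Let $\omega\in H^1(BD_n;\Z_2)$ be the sign character and $\delta$ the Bockstein. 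Because $H^3(KD_n,BD_n;\Z)=0$, the restriction $H^2(KD_n;\Z)\to H^2(BD_n;\Z)\cong\Z_2$ is onto. Hence the generator $\iota$ of $H^2(KD_n;\Z)$ restricts to the nonzero class $\delta\omega$.

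Now take an orientation-reversing loop $a\subseteq F$ and the Klein bottle $T:=S(F)|_a$. Its fiber $\mu$ has order $2$ in $H_1(T)\cong\Z\oplus\Z_2$. Since $(f^*\omega)(\mu)=1$, the class $f^*\omega|_T$ is not the reduction of an integral class. Therefore $(f|_T)^*\iota=\delta(f^*\omega|_T)\neq 0$ in $H^2(T;\Z)\cong\Z_2$, for every choice of section and every homotopy of $f$. But any $\tilde f\colon\nu(F)\to KD_n$ pulls $\iota$ back into $H^2(\nu(F);\Z)\cong H^2(F;\Z)=0$. So the statement is false for every non-orientable $F$, already for an unknotted M\"obius band with $\mu\mapsto s$.

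Concretely, over $a$ your two cones $\Phi$ and $\Phi\circ r$, joined by the annulus over $a$, form a $2$-sphere of degree $\pm2$ on the distinguished cell. That is an odd multiple of the generator of $\pi_2(KD_n)$, while re-choosing the map on a fiber disk changes it only by even multiples. Similarly, any self-map of $KD_n$ reflecting the $2$-cell acts by $-1$ on $\pi_2(KD_n)$. So reflection-equivariance of the cone, as opposed to invariance, does not descend to a map on the twisted disk bundle.

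In your fallback, the obstruction group is $H^3(\nu(F),S(F);\pi_2)\cong H^1(F;\Z^{w})\cong\Z^{g-1}\oplus\Z_2$ for $F$ with $g$ cross-caps. This group is nonzero, and the obstruction class is odd on each orientation-reversing $1$-cell, so it does not vanish.

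The non-orientable implication in Theorem~\ref{cor:nonor-variants} therefore needs a different proof. One option is the double-branched-cover argument from the proof of Theorem~\ref{thm:equivalence}. It goes through because $\Sigma_2F$ is still oriented and $\mu^2\mapsto s^2=1$.
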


\begin{proof}
By Lemma~\ref{lemma:nonor-section}, there exists a global section $\sigma' \colon F \to S(F)$ such that $(f \circ \sigma')_*$ is trivial on $\pi_1(F)$. Since $BD_n$ is an Eilenberg--MacLane space and $F$ is $2$-dimensional, the map $f \circ \sigma'$ is nullhomotopic. By the Homotopy Extension Property, we may assume (after a homotopy of $f$) that $f$ maps $\sigma'(F) \subseteq S(F)$ to the basepoint $\ast \in BD_n \subseteq KD_n$.

Recall that $KD_n = BD_n \cup_s e^2$ is formed by attaching a $2$-cell along the loop representing $s$. Let $\Phi \colon D^2 \to e^2$ be the characteristic map of this $2$-cell, with $\Phi|_{S^1}$ tracing $s$. Since $s = s^{-1}$ in $D_n$, we may choose $\Phi$ to be equivariant under the reflection $(r, \theta) \mapsto (r, -\theta)$.

We extend $f$ over $\nu(F)$ fiber-wise: for each $x \in F$, parameterize the disk fiber $D^2_x$ using polar coordinates $(r, \theta)$ with $\theta = 0$ aligned to $\sigma'(x)$. Define $\tilde{f}(x, r, \theta) = \Phi(r, \theta)$. The transition functions of the non-orientable disk bundle act by fiber reflection $(r, \theta) \mapsto (r, -\theta)$ along orientation-reversing loops. By the equivariance of $\Phi$, the fiber-wise extensions are compatible across transition functions, yielding a continuous map $\tilde{f} \colon \nu(F) \to KD_n$.
\end{proof}

\begin{remark}\label{rem:nonor-taut}
Lemma~\ref{lemma:nonor-section} relies on two properties of $D_n$ ($n$ odd): the centralizer of a reflection is $\{1, s\}$, and $s^2 = 1$. This raises the question of whether an analogous result holds for all taut groups $G$: is it true that whenever a non-orientable surface $F \subseteq B^4$ admits a quotient $\pi_1(E_F) \twoheadrightarrow G$, there necessarily exists a section of the normal circle bundle that maps trivially to $BG$?
\end{remark}

Using the above lemmas, we can extend Theorem~\ref{thm:equivalence} to non-orientable surfaces and to arbitrary orientable ambient $4$-manifolds with boundary $S^3$.

\begin{theorem}[Non-orientable and ambient variants]\label{cor:nonor-variants}
Under the hypotheses of Theorem~\ref{thm:equivalence}, conditions~\ref{thm:equivalence}~\ref{thmpart:extension}--\ref{thmpart:beta} are also equivalent to each of the following:
\begin{enumerate}[(i), itemsep=3mm, resume]
\item\label{part:nonor-surf} There exists a locally flat, connected, properly embedded, \emph{non-orientable} surface $F \subseteq B^4$ with $\partial F = K$, such that $\bar{\rho}$ extends to a homomorphism $\pi_1(E_F) \to D_n$.

\item\label{part:nonor-in-W} There exists an orientable $4$-manifold $W$ with $\partial W = S^3$ and a locally flat, connected, properly embedded, non-orientable surface $F \subseteq W$ with $\partial F = K$, such that $\bar{\rho}$ extends to a homomorphism $\pi_1(E_F) \to D_n$.
\end{enumerate}
\end{theorem}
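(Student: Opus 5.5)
The plan is to show (i) $\Rightarrow$ (viii) $\Rightarrow$ (ix), and then (ix) $\Rightarrow$ the vanishing of $\Theta_{\bar\rho}(K)\in\pi_3(KD_n)$, which is condition~\ref{part:dihedral-invariant}.

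\textbf{From orientable to non-orientable.} Assume condition~\ref{part:dih-surf} holds, so $\bar\rho$ extends over an orientable surface $F$. Form the boundary connected sum of $F$, in the interior of $B^4$, with a small unknotted $\R P^2$ contained in a ball disjoint from $F$. The complement of that $\R P^2$ in $S^4$ has fundamental group $\Z_2$, generated by its meridian. Tubing identifies the two meridians, so by van Kampen $\pi_1(E_{F\#\R P^2})$ is $\pi_1(E_F)$ with an added relation $\mu^2=1$. Since $\bar\rho(\mu)$ is a reflection, this relation holds in $D_n$, and $\bar\rho$ extends. This gives (viii). The implication (viii) $\Rightarrow$ (ix) is trivial: take $W=B^4$.

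\textbf{From (ix) to the obstruction.} Take $F\subseteq W$ non-orientable with an extension $f\colon E_F\to BD_n$, where $f_*(\mu)=s$ is a reflection. First, Lemma~\ref{lemma:nonor-section} and Lemma~\ref{lemma:nonor-extension} need to hold in $W$ and not just in $B^4$. Their proofs only use that $F$ has nonempty boundary, so a section exists, and the structure of $D_n$; the ambient manifold plays no role, so I would note this. We then get a map $\tilde f\colon W\to KD_n$ extending $\theta\colon S^3\to KD_n$, where over $\nu(F)$ the map is the fiberwise equivariant characteristic map $\Phi$. One point to check is that on $\partial W=S^3$ this agrees with $\theta$ up to homotopy rel the knot data. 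Near $K$ the surface is orientable (a collar), and both maps are the projection of $N(K)$ to the $2$-cell, so the homotopy extension property should handle this.

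\textbf{The main obstacle.} It remains to show that extension over an oriented $W$ with $\partial W=S^3$ forces $[\theta]=0$. I would use the argument from the proof of Theorem~\ref{cor:disk-in-W}. There, $\pi_3(KD_n)\cong H_3(KD_n)\cong\Omega_3^{SO}(KD_n)$ because $KD_n$ is $2$-connected. That $2$-connectivity follows as for $K\mathbb{D}_n$, using $H_2(D_n)=0$ for odd $n$. A null-bordism $(W,\tilde f)$ of $(S^3,\theta)$ then kills $\Theta_{\bar\rho}(K)$.

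The step I expect to need care is precisely this identification for $D_n$ instead of $\mathbb{D}_n$. $H_3(KD_n)$ contains the $2$-torsion of $H_3(D_n)\cong\Z_{2n}$, and the Atiyah--Hirzebruch degeneration must be checked. Over $\Z$, $\Omega_3^{SO}(X)\cong H_3(X)$ for $2$-connected $X$, since $\Omega_i^{SO}=0$ for $i=1,2,3$, so this should hold. If it gives trouble, I would instead lift to $\mathbb{D}_n$ away from the $2$-torsion: the odd-order part of $H_3(KD_n)$ agrees with $\pi_3(K\mathbb{D}_n)\cong\Z_n$ by Lemma~\ref{H3(G)}, and $\Theta$ lies there since it is the image of $\Theta_\rho(K)\in\pi_3(K\mathbb{D}_n)$. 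Alternatively, the transfer argument in (i) $\Rightarrow$ (iii) goes through verbatim with $\Sigma_2F\to W$, since $\Sigma_2 K=\partial\Sigma_2F$ makes $\tilde\theta_*[\Sigma_2K]=0$. That route gives $\lk([c],[c])=0$ directly, and Theorem~\ref{thm:equivalence} then closes the loop.
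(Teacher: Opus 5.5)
Your construction of the non-orientable surface is the paper's. A connected sum with an unknotted $\R P^2$ is the same as replacing a small disk of $F$ by an unknotted M\"obius band, van Kampen adds only the relation $\mu^2=1$, and $\bar\rho(\mu)^2=s^2=1$. Passing from $B^4$ to $W$ is trivial.

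The hard direction, however, contains a false step, and your primary route shares it with the paper's own proof: $KD_n$ is \emph{not} $2$-connected. It is simply connected, because $s$ normally generates $D_n$. For $\mathbb{D}_n$, the distinguished $2$-cell maps $H_2(K\mathbb{D}_n,B\mathbb{D}_n)\cong\Z$ isomorphically onto $H_1(\mathbb{D}_n)\cong\Z$. For $D_n$, the corresponding map $\Z\to H_1(D_n)\cong\Z/2$ has kernel $2\Z$. Using $H_2(D_n)=0$, this gives $\pi_2(KD_n)\cong H_2(KD_n)\cong\Z$, generated by twice the $2$-cell plus a null-homology of $u^2$ in $BD_n$. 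Since $H_4(KD_n)\cong H_4(D_n)=0$, Whitehead's exact sequence $H_4\to\Gamma(\pi_2)\to\pi_3\to H_3\to 0$ shows that the Hurewicz map $\pi_3(KD_n)\to H_3(KD_n)$ has kernel $\Gamma(\Z)\cong\Z$. Your worry about the Atiyah--Hirzebruch step is misplaced, since $\Omega_3^{SO}(X)\cong H_3(X)$ for every $X$; what fails is Hurewicz. So a null-bordism over $KD_n$ kills only the Hurewicz image of $\Theta_{\bar\rho}(K)$.

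Your $K\mathbb{D}_n$ fallback repairs this particular point once stated correctly. $\Theta_{\bar\rho}(K)$ is the image of $\Theta_\rho(K)\in\pi_3(K\mathbb{D}_n)\cong\Z_n$, hence torsion, and the kernel $\Gamma(\Z)$ is torsion-free. But that route still needs a map $W\to KD_n$. The fiberwise construction you borrow from Lemma~\ref{lemma:nonor-extension} cannot work as written. Compatibility with the fiber reflections forces $\Phi(r,-\theta)=\Phi(r,\theta)$, so the boundary loop of $\Phi$ factors through an arc and is null-homotopic instead of representing $s$. Equivalently, a fiberwise homeomorphism onto the open $2$-cell would trivialize the non-orientable bundle $\nu(F)$.

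The argument that works is your last one, and it should be the main proof: go from a non-orientable $F\subseteq W$ directly to $\lk([c],[c])=0$. Restrict $\bar\rho$ to $\ker(\pi_1(E_F)\to\Z/2)$ to get a $\Z_n$-valued character. It extends over the double branched cover $\Sigma_2F\to W$ because a lifted meridian maps to $\bar\rho(\mu)^2=s^2=1$. Two things are not verbatim from the orientable case.
\begin{enumerate}
\item Skip the lift to $\mathbb{D}_n$ via Proposition~\ref{prop:quotients-are-equivalent}. It uses a Seifert solid, and no $\mathbb{D}_n$-extension exists over a non-orientable $F$ at all: an orientation-reversing loop conjugates $\mu$ to $\mu^{-1}$, while $u$ is not conjugate to $u^{-1}$ in $\mathbb{D}_n$.
\item Record that $\Sigma_2F$ is orientable because $W$ is; pull back the orientation away from the codimension-two branch locus. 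This is what makes $[\Sigma_2K]$ vanish in $H_3(\Sigma_2F)$.
\end{enumerate}
Then $\tilde\theta_*[\Sigma_2K]=0$, the cup-product step (Lemma~\ref{lemma:CupFormula}) gives $\lk([c],[c])=0$, and Theorem~\ref{thm:equivalence} closes the loop. Taking $W=B^4$ covers the $B^4$ case as well. So you need neither Lemmas~\ref{lemma:nonor-section}--\ref{lemma:nonor-extension} nor any statement about $\pi_3(KD_n)$.
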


\begin{proof}
\ref{part:nonor-surf}$\implies$\ref{part:dihedral-invariant}: By hypothesis, $\bar{\rho}$ extends to a map $f \colon E_F \to BD_n$ where $F \subseteq B^4$ is a non-orientable surface with $\partial F = K$. By Lemma~\ref{lemma:nonor-extension}, $f$ extends to a continuous map $\tilde{f} \colon \nu(F) \to KD_n$. Since $B^4 = E_F \cup_{S(F)} \nu(F)$, it remains to check that $f$ and $\tilde{f}$ agree on the overlap $S(F) = \partial E_F \cap \partial \nu(F)$. By the same argument as in the proof of the $G$-Surface Theorem~\ref{theorem:G-surface} (where the orientable case is treated), the restriction of $f$ to $S(F)$ factors, up to homotopy, through projection onto the meridian; the $2$-cell attached along $s$ in $KD_n$ fills in each meridional circle, so the two maps are compatible. This yields a map $B^4 \to KD_n$ extending $\theta \colon S^3 \to KD_n$, hence $\Theta_{\bar{\rho}}(K) = 0 \in \pi_3(KD_n)$.

\ref{part:dihedral-invariant}$\implies$\ref{part:nonor-surf}: By the equivalence \ref{part:dihedral-invariant}$\iff$\ref{part:dih-surf}, there exists an orientable $D_n$-surface $F \subseteq B^4$ with $\partial F = K$ over which $\bar{\rho}$ extends. Choose a small $4$-ball $B \subseteq \mathrm{int}(B^4)$ meeting $F$ in a trivially embedded disk $D \subseteq F^\circ$. Replace $D$ with an unknotted M\"obius band $M \subseteq B$ having $\partial M = \partial D$, and set $F' = (F \smallsetminus \mathrm{int}(D)) \cup M$. Then $F'$ is non-orientable, locally flat, and $\partial F' = K$. It remains to extend $\bar{\rho}$ over $E_{F'}$. Outside $B$, the surfaces $F$ and $F'$ coincide, so the existing extension $\bar{\rho} \colon \pi_1(E_F) \to D_n$ restricts to the complement of $B$. Inside $B$, the M\"obius band $M$ is unknotted, so $\pi_1(B \smallsetminus \nu(M)) \cong \Z/2$, generated by a meridian $\mu_M$ satisfying $\mu_M^2 = 1$. Sending $\mu_M \mapsto s$ defines a homomorphism $\Z/2 \to D_n$, since $s^2 = 1$. This is compatible with the restriction of $\bar{\rho}$ to $\partial B \smallsetminus \nu(\partial D)$, where both maps send the meridian to $s$. The two maps therefore glue to give a homomorphism $\pi_1(E_{F'}) \to D_n$ extending $\bar{\rho}$.

\ref{part:nonor-surf}$\implies$\ref{part:nonor-in-W}: Take $W = B^4$.

\ref{part:nonor-in-W}$\implies$\ref{part:dihedral-invariant}: The argument is the same as for \ref{part:nonor-surf}$\implies$\ref{part:dihedral-invariant}, with $W$ in place of $B^4$. We note that the proof of Lemma~\ref{lemma:nonor-extension} (and Lemma~\ref{lemma:nonor-section} on which it relies) applies in any orientable ambient $4$-manifold: the existence of a global section uses $H^2(F; \Z^{w_1}) = 0$, which holds because $\partial F \neq \emptyset$; and the bundle type of $\nu(F)$ is determined by $w_1(F)$ in any orientable $W$, since $w_1(\nu_F) = w_1(TF)$ when $TW$ is orientable. Neither ingredient depends on the ambient manifold. The resulting map $\tilde{f} \colon \nu(F) \to KD_n$ glues with $f$ on $E_F$ to yield a map $W \to KD_n$ extending $\theta \colon S^3 \to KD_n$. Since $KD_n$ is $2$-connected, a null-bordism over $KD_n$ suffices to kill $\Theta_{\bar{\rho}}(K)$ (see the proof of Theorem~\ref{cor:disk-in-W}), giving $\Theta_{\bar{\rho}}(K) = 0$.
\end{proof}


  \section{Constructing dihedral surfaces by hand}\label{sec:n-bottle} 

We have seen that Corollary~\ref{thm:main} guarantees the existence of a dihedral surface whenever $\Theta_\rho(K)=0$, via a transversality argument. In this section, we give an independent, constructive proof of the implication \ref{thmpart:beta}~$\implies$~\ref{thmpart:extension} of Theorem~\ref{thm:equivalence}: given a dihedral knot satisfying the Seifert matrix criterion, we build a dihedral surface explicitly. This explicit construction is of interest in its own right, as it produces a concrete surface that can be used, for instance, in computing the $\Xi_n$ invariant~\cite{kjuchukova2018dihedral} or in future applications requiring control over the topology of the branching set.

We use the notation for $D_n$ and $\mathbb{D}_n$ given in Section~\ref{section:linking}. As always, $n$ is odd.

We assume throughout this section that $K\subseteq S^3$ is a knot equipped with a surjection $\rho\colon\pi_1(E_K)\twoheadrightarrow D_n$ satisfying the equivalent conditions of Theorem~\ref{thm:equivalence}. In particular, for any Seifert surface $S$ for $K$ with Seifert matrix $V$ and any mod~$n$ characteristic knot $\beta\subseteq\mathring{S}$ corresponding to $\rho$, condition~\ref{thmpart:beta} gives 
\[
[\beta]^T(V+V^T)[\beta]\equiv 0\pmod{n^2}.
\]
Our construction proceeds in two stages. First, we show that (after stabilization) $\beta$ may be chosen to be $0$-framed (Definition~\ref{def:0-framed-beta}). Second, we use the $0$-framed characteristic knot to build a mod~$n$ characteristic surface $G\subseteq U$ (Definition~\ref{def:char-surface}) inside a Seifert solid $U$, and verify that our construction produces a dihedral surface (Definition~\ref{def:G-surface}).

\subsection{Existence of a 0-framed characteristic knot}

\begin{definition}\label{def:0-framed-beta}
Let $K\subseteq S^3$ be a knot. A knot  $\beta$ is a {\em $0$-framed mod~$n$ characteristic knot} for $K$ if $K$ admits a Seifert surface $S$ with Seifert matrix $V$ and $\beta\subseteq S^\circ$ is such that
\[
(V + V^{T})\cdot[\beta] \equiv 0\mod{n}
\]
and 
\[
[\beta]^T \cdot(V + V^{T})\cdot[\beta] = 0.
\]
\end{definition}

Since $\mathbb{D}_n$ is a quotient of $\pi_1(E_K)$, a group of weight one, we know that $n$ is odd. As seen in Equation~\ref{eq:self-lk-even},
\[
[\beta]^T\cdot(V  +V^{T})\cdot[\beta]=2[\beta]^T\cdot V \cdot[\beta],
\]
which implies that
\[
[\beta]^T\cdot(V  +V^{T})\cdot[\beta] \equiv 0\mod n \iff [\beta]^T\cdot V  \cdot[\beta]\equiv 0\mod n. 
\]
That is, the second condition in Definition~\ref{def:0-framed-beta} is equivalent to the condition that the Seifert surface containing $\beta$ determines the 0-framing on~$\beta$.
\begin{lemma}\label{lem:0-framed-beta}
Let $K\subseteq S^3$ be a knot with Seifert surface $S_1$ and corresponding Seifert matrix $V_1$. Assume that for some positive odd integer $n$, the knot $K$  admits a$\mod{n}$ characteristic knot $\beta_1\subseteq \overset{\circ}{S_1}$ such that $[\beta_1]^T\cdot (V_1 + V_1^{T})\cdot[\beta_1] \equiv 0\mod{n^2}$. Then, $K$ admits a (possibly different) Seifert surface $S$ with Seifert matrix $V$ and a 0-framed mod~$n$ characteristic knot $\beta$ which is equivalent to $\beta_1$ in the sense of ~\cite[Proposition~1.2]{CS1984linking}, that is, $\beta$ and $\beta_1$ determine the same representation $\pi_1(E_K)\twoheadrightarrow D_n$. 
\end{lemma}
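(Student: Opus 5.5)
The plan is to \emph{stabilize} $S_1$ by a trivial tube and add $n$ times a suitable class supported on the new handle to $[\beta_1]$. This kills the self-linking integrally while leaving both the characteristic condition mod~$n$ and the induced quotient unchanged.

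Set $A_1 = V_1+V_1^T$ and $q := [\beta_1]^T V_1[\beta_1]\in\Z$. By Equation~\eqref{eq:self-lk-even}, $[\beta_1]^TA_1[\beta_1]=2q$. The hypothesis gives $2q\equiv 0 \pmod{n^2}$, and since $n$ is odd, $q = n^2k$ for some $k\in\Z$.

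Now let $S$ be obtained from $S_1$ by attaching a small unknotted, unlinked tube (a trivial stabilization). With respect to the basis of $H_1(S_1)$ extended by the two new curves $x,y$ of the handle, the Seifert matrix is block diagonal,
\[
V = V_1\oplus\begin{pmatrix}0&1\\0&0\end{pmatrix},
\]
so $V+V^T = A_1\oplus\begin{pmatrix}0&1\\1&0\end{pmatrix}$. Define
\[
[\beta] := [\beta_1] + n\,(x - k\,y)\in H_1(S).
\]
Because the matrix is block diagonal, there are no cross terms, and
\[
[\beta]^TV[\beta] = q + n^2(x-ky)^T\begin{pmatrix}0&1\\0&0\end{pmatrix}(x-ky) = n^2k - n^2k = 0 .
\]
Hence $[\beta]^T(V+V^T)[\beta]=0$. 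The characteristic condition also holds: $(V+V^T)[\beta]=(A_1[\beta_1],\,n(-k,1))\equiv 0 \pmod n$. The class $[\beta]$ is primitive because its $H_1(S_1)$-component $[\beta_1]$ is primitive. Since $S$ has a single boundary component, a primitive class in $H_1(S)$ is represented by a simple closed curve in $\mathring S$; we take $\beta$ to be such a curve.

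It remains to check that $\beta$ and $\beta_1$ induce the same quotient $\pi_1(E_K)\twoheadrightarrow D_n$. I expect this to be the main point needing care, because the two knots lie on \emph{different} Seifert surfaces, so the criterion ``$[\beta_1]-[\beta_2]\equiv 0 \pmod n$ on a common surface'' does not apply directly. I would argue through Corollary~\ref{cor:correspondence} and the identification from the proof of Theorem~\ref{thm:equivalence}: the quotient induced by a characteristic knot $\beta$ corresponds to the class of $\mathbf w=\tfrac1n(V+V^T)[\beta]$ in $H_1(\Sigma_2K)\cong \operatorname{coker}(V+V^T)$, presented as in~\eqref{presentation2}.

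Under stabilization, the presentation of $H_1(\Sigma_2K)$ changes by direct sum with the unimodular block $\begin{pmatrix}0&1\\1&0\end{pmatrix}$. The inclusion $\Z^{2g}\to\Z^{2g+2}$ therefore induces the canonical isomorphism of cokernels, both being identified with $H_1(\Sigma_2 K)$ via $H_1(E_{S_1})\to H_1(E_S)$. The new $\mathbf w$ equals $(\mathbf w_1,\,(-k,1))$, and its extra component lies in the image of the unimodular block. So $\mathbf w$ and $\mathbf w_1$ define the same element $[c]\in H_1(\Sigma_2K)$, and by the bijection of Corollary~\ref{cor:correspondence} they determine the same based quotient.

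A direct alternative check uses Equation~\eqref{eq:char-knot-quotient}. For a loop $\gamma$ in $E_S\subseteq E_{S_1}$, we have $\lk(\beta,\gamma)\equiv \lk(\beta_1,\gamma) \pmod n$, because $\beta-\beta_1$ is $n$ times a class supported on the trivial handle. The meridian goes to $u$ in both cases. Since loops in $E_S$ together with the meridian generate $\pi_1(E_K)$, the two homomorphisms agree.
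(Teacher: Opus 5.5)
Your argument is correct, and it handles the key algebraic step more economically than the paper. The paper writes $[\beta_1]^T(V_1+V_1^T)[\beta_1]=an^2$ and notes that $a$ is even. It then uses Lagrange's four-square theorem to write $a/2=a_1^2+\dots+a_4^2$, stabilizes four times with blocks $\begin{bmatrix}0&-1\\0&0\end{bmatrix}$, and adds $n(a_i,a_i)$ on the $i$-th handle. Each such vector contributes $-2a_i^2n^2$, which is why the paper needs squares, hence four handles, plus a footnote for the opposite sign. You instead put the off-diagonal vector $n(1,-k)$ on a single handle, using the fact that $v\mapsto v_1v_2$ represents every integer. This shows that one stabilization always suffices, and it treats both signs at once.

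On the equivalence of quotients, the paper regards $\beta_1$ as a curve in $S$ (the stabilization avoids it) and applies the same-surface criterion $[\beta]-[\beta_1]\equiv 0 \pmod n$ in $H_1(S)$. In doing so it tacitly assumes that $\beta_1$ induces the same quotient whether viewed on $S_1$ or on $S$. Your worry about ``different surfaces'' therefore reduces to exactly this point. Your cokernel argument makes it explicit: the extra block is unimodular, $\mathbf w=(\mathbf w_1,(-k,1))$, and $(0,(-k,1))$ lies in the image of $V+V^T$. You also justify that a primitive class is realized by an embedded curve, which the paper leaves implicit.

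There is one slip in your alternative check. For a tube stabilization, $E_S\not\subseteq E_{S_1}$: the disks removed from $S_1$ lie in $E_S$, so loops threading the tube are in $E_S$ but cross $S_1$. You therefore cannot evaluate $\rho_{\beta_1}$ on arbitrary loops of $E_S$ by linking with $\beta_1$. Reverse the roles instead. $\pi_1(E_K)$ is generated by $m$ and by loops of $E_{S_1}$. These loops can be pushed off the small ball containing the handle, so they lie in $E_S\cap E_{S_1}$. On them, $\lk(\beta,\gamma)-\lk(\beta_1,\gamma)=n\,\lk(x-ky,\gamma)\equiv 0 \pmod n$. The same push-off is what actually supplies the map $H_1(E_{S_1})\to H_1(E_S)$ used in your main argument.
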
 

\begin{proof}
By assumption, there exists a non-negative integer $a$ such that 
\[
[\beta_1]^T\cdot (V_1 + V_1^{T})\cdot[\beta_1] = \pm an^2  \stackrel{\text{WLOG}}{=}an^2.
\]\footnote{If the self-linking number of $\beta$ is negative, the argument proceeds identically with the opposite choice of orientation for the stabilizing tori. This has the effect of replacing each instance of a $-1$ in the stabilized Seifert matrix by a 1, leading to the same conclusion.}

Since $([\beta_1]^T\cdot V_1\cdot[\beta_1])^T=[\beta_1]^T \cdot V_1^{T}\cdot[\beta_1],$
we have
\begin{equation}\label{eq:self-lk-even}
an^2 
= 2[\beta_1]^T \cdot V_1^{T}\cdot[\beta_1],
\end{equation}
so, since $n$ is odd, $a$ is even.
 
By Lagrange's four-square theorem, we can write $\frac{a}{2}=a^2_1+ a^2_2+a_3^2+a_4^2$ for some nonnegative integers $a_1, \dots, a_4$. Let $S$ denote the surface obtained by stabilizing $S_1$ with four trivial tori, so $S\cong S_1\#4 T^2.$ We perform the stabilization by taking the connected sum along 4 disks in the interior of $S_1$ which are disjoint from a neighborhood of $\beta_1.$ With respect to an appropriately chosen basis for $H_1(S;\mathbb{Z}),$ the Seifert matrix for $S$ is a block diagonal matrix $V_1\oplus 4\begin{bmatrix*}[r]
    0&-1\\
    0&0
\end{bmatrix*}=:V,$ and the symmetrized Seifert form on $S$, denoted $L_V:=V+V^T$, is given by $L_V=(V_1+V_1^T)\oplus 4\begin{bmatrix*}[r]
    0&- 1\\
    - 1&0
\end{bmatrix*}.$ 
 
Let $[\beta_1]\in H_1(S_1; \mathbb{Z})$ be represented by $(b_1, \dots, b_{2g_{1}})$, with respect to the basis used to obtain $V_1$. We may also regard $\beta_1$ as an embedded curve in $S$ representing the primitive class $(b_1, \dots, b_{2g_{1}}, 0, \dots 0)$. Set 
\[
\beta:=(b_1, \dots, b_{2g_{1}}, a_1n, a_1n, a_2n,a_2n,a_3n,a_3n,a_4n,a_4n).
\]
By assumption, the class $(b_1, \dots, b_{2g_{1}})$ is primitive, and therefore, so is $[\beta]$. We also have $[\beta]-[\beta_1]\equiv 0\mod n,$ which implies that $\beta$ is also a $\mod{n}$ characteristic knot for $K$ (this can also be checked from the definition, using that $[\beta_1]^T(V_1+V_1^T)[\beta]\equiv 0\mod{n}$) and that $\beta$ and $\beta_1$ belong to the same equivalence class of mod~$n$ characteristic knots. That is, $\beta$ and $\beta_1$ define the same dihedral quotient $\pi_1(E_K)\to D_n$. Lastly, we have:
\[
[\beta]^T \cdot(V + V^{T})\cdot[\beta] = an^2 -2\Sigma_{i=1}^4 a_i^2n^2 = n^2(a-2\cdot\frac{a}{2})= 0, 
\]
as claimed.
\end{proof}

\begin{remark}
    In practice, fewer than four -- and as few as zero -- stabilizations may be necessary in order to embed a zero-framed characteristic knot in a Seifert surface. For instance, as seen in Example~\ref{ex:twist}, for all 3-colorable twist knots, a 0-framed characteristic knot can be chosen to be an unknot in a minimal Seifert surface.
\end{remark}

\begin{corollary} \label{cor:0-framed-iff-0-mod-n2}
    Let $K$ be a knot and $\rho:\pi_1(E_K)\to D_n$ a dihedral quotient determined by a characteristic knot $\beta$ embedded in a Seifert surface $S$ for $K$ with Seifert matrix $V$. There is a  {\it 0-framed} characteristic knot which determines $\rho$ if and only if the symmetrized self-linking of $\beta$ satisfies:
    \[
    [\beta]^T(V+V^T)[\beta]\equiv 0 \mod n^2.
    \]
\end{corollary}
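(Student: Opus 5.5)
The "if" direction is Lemma~\ref{lem:0-framed-beta}, applied with $S_1=S$, $V_1=V$ and $\beta_1=\beta$. It produces a stabilized Seifert surface $S\#4T^2$ carrying a $0$-framed mod~$n$ characteristic knot. That knot is equivalent to $\beta$, so by the discussion following Corollary~\ref{cor:char-knots-induce-both} it determines the same quotient $\rho$.

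For the ``only if'' direction, suppose $\beta_0$ is a $0$-framed characteristic knot determining $\rho$. It lies on some Seifert surface $S_0$ with Seifert matrix $V_0$, and satisfies
\[
[\beta_0]^T(V_0+V_0^T)[\beta_0]=0\equiv 0 \pmod{n^2}.
\]
We must transfer this to the given $\beta\subseteq S$. I would not compare the two Seifert surfaces directly, since that would require $S$-equivalence. Instead I would pass through an intrinsic quantity: the self-linking $\lk_{\Sigma_2K}([c],[c])\in\qz$ of the class $[c]\in H_1(\Sigma_2 K)$ corresponding to $\rho$ under the bijection~\eqref{bijection}. This class depends only on $\rho$ (through Corollary~\ref{cor:correspondence}), not on the choice of Seifert surface or characteristic knot.

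The computation in the proof of Theorem~\ref{thm:equivalence}, step \ref{upstairs-beta}$\iff$\ref{thmpart:beta}, shows that for any Seifert surface with matrix $V$ and any characteristic knot $\beta$ inducing $\rho$,
\[
\lk([c],[c])=\tfrac{1}{n^2}[\beta]^T(V+V^T)[\beta] \in \qz.
\]
Applying this to $(S_0,V_0,\beta_0)$ gives $\lk([c],[c])=0$. Applying it to $(S,V,\beta)$ then gives $[\beta]^T(V+V^T)[\beta]\equiv 0 \pmod{n^2}$, which is the claim. Alternatively, one can cite the equivalence \ref{upstairs-beta}$\iff$\ref{thmpart:beta} directly, once for each pair.

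The main point to check is that the identification of $[c]$ with $\tfrac1n A[\beta]$ in that proof genuinely depends only on $\rho$. In particular, $\beta$ and $\beta_0$ have to correspond to the same character, not merely to characters differing by an automorphism of $\Z_n$. This needs care with the normalization of the generator $t$: rescaling $\beta$ by a unit $a$ mod $n$ multiplies the quantity by $a^2$, which does not affect its vanishing mod $n^2$. So even if the correspondence only holds up to such a rescaling, the conclusion survives.
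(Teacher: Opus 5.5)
Your proof is correct. Your ``if'' direction is the same as the paper's: both just invoke Lemma~\ref{lem:0-framed-beta}.

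For ``only if'' you take a genuinely different and more economical route than the paper.

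\textbf{The paper's route.} The paper argues through the four-dimensional statement. The $0$-framed knot $\beta'$ satisfies condition (iv) of Theorem~\ref{thm:equivalence}, so (iv)$\Rightarrow$(i) gives an orientable surface in $B^4$ over which $\rho$ extends. Condition (i) refers only to $\rho$, so (i)$\Rightarrow$(iv) then yields the congruence for the given $\beta\subseteq S$. Unwound, this chain passes through two geometric steps before reaching the algebraic step (iii)$\Rightarrow$(iv): the explicit surface construction of Section~\ref{sec:n-bottle}, and the branched-cover/fundamental-class argument for (i)$\Rightarrow$(iii).

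\textbf{Your route.} You drop the geometric detour and use the algebraic identity $\lk([c],[c])=\tfrac{1}{n^2}[\beta]^T(V+V^T)[\beta]$ twice. The intrinsic class $[c]\in H_1(\Sigma_2K)$ plays the role that the intrinsic geometric condition (i) plays in the paper.

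\textbf{Comparison.} Your approach shows the corollary is purely algebraic. The residue $\tfrac{1}{n^2}[\beta]^T(V+V^T)[\beta]\in\qz$ is an invariant of $\rho$, up to multiplication by squares of units if automorphisms of $D_n$ are allowed. It does not depend on the Seifert surface or the characteristic knot. The cost is that you must verify what the paper's route gets for free from the intrinsic nature of (i). First, $[c]$ is determined by $\rho$ alone: the based lift to $\mathbb{D}_n$ is unique by Lemma~\ref{lemma:lifting}, and the linking form is nonsingular. Second, normalization ambiguities only rescale by squares of units mod $n$, which preserves vanishing mod $n^2$. You address both points. The paper's version is a one-liner once Theorem~\ref{thm:equivalence} is treated as a black box.
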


\begin{proof}
    $\impliedby$ Lemma~\ref{lem:0-framed-beta} shows that if any characteristic knot which determines $\rho$ has framing $0\mod n^2$, there is also a 0-framed characteristic knot in the equivalence class of $\beta$. 

    $\implies$ assume there exists a 0-framed characteristic knot, $\beta'$, in the equivalence class of characteristic knots which determine $\rho$. By Theorem~\ref{thm:equivalence}, (specifically,~\ref{thmpart:beta} $\iff$ \ref{part:dih-surf}), $\rho$ extends over a dihedral surface in $B^4$. But then, condition~\ref{thmpart:beta} must be satisfied by any characteristic knot $\beta$ corresponding to $\rho$. 
\end{proof}

\subsection{Building a dihedral surface}

\subsubsection{Set-up}
Let $n$ be an odd integer and $K$ a knot equipped with a quotient $\rho:\pi_1(E_K)\twoheadrightarrow D_n$. We assume that $K$ satisfies one (and therefore all) of the equivalent conditions given in Theorem~\ref{thm:equivalence}. By Lemma~\ref{lem:0-framed-beta}, there exists a Seifert surface $S$ for $K$ and a {\it 0-framed} mod~$n$ characteristic knot $\beta\subseteq S^\circ$ such that $\beta$ induces the given homomorphism $\rho$. Let $M\subseteq S^3$ be an oriented Seifert surface for $\beta$.

Fix a regular neighborhood of $\beta$ in $S$ and choose an embedding
\[
\phi:S^1\times[-1,1]\hookrightarrow S^\circ,
\]
with $\phi(S^1\times\{0\})=\beta$ and boundary circles
\[
b^\pm := \phi(S^1\times\{\pm 1\}).
\]
Let $N(\beta):=\phi(S^1\times[-1,1])\subseteq S$.

Next, fix a collar of the boundary of $B^4$ and choose an embedding
\[
S^3\times[0,1]\hookrightarrow B^4,\qquad S^3\times\{0\}=\partial B^4.
\]
We will write $S^3_t:=S^3\times\{t\}$ and refer to the $t$ parameter as \emph{depth}.  For any subset $R\subseteq S^3$ we will denote by $R_t$ the pushoff of $R$ lying at depth $t$, that is, $R_t\subseteq S^3_t$.

The constructions below (of a Seifert solid, dihedral surface and characteristic surface) take place inside this collar of the boundary. We will specify, for each depth $t$, the relevant cross-section in $S^3_t$. Note that we will construct the Seifert solid first; then we will show that  its boundary is  a dihedral surface  (more precisely, the boundary minus the interior of the given Seifert surface $S$ in $S^3$).

\subsubsection{The Seifert solid $U$ and dihedral surface $F$}

We will construct a Seifert solid $U$ with boundary $\partial U= F\cup_K(-S)$. We will show that $F$ is a dihedral surface with mod~$n$ characteristic surface (Definition~\ref{def:char-surface}) properly embedded in $U$.

The 3-manifold $U$ will be built from three pieces: a thickening of $S$; a thickening of the regular neighborhood of $\beta$ in $S$ (appropriately pushed into $B^4$); and a thickening of $M$ (also pushed in). In order to show how the pieces glue together, we must parametrize everything carefully.

{\it First piece.} Set
\[
U_0 := S\times\Big[0,\tfrac18\Big]\subseteq S^3\times\Big[0,\tfrac18\Big],
\]
oriented as a product so that the induced boundary orientation on $S\times\{0\}$ equals $-S\subseteq S^3_0$.
Then
\[
\partial U_0 = \bigl(S\times\{0\}\bigr)\ \cup\ \bigl(S\times\{\tfrac18\}\bigr)\ \cup\ \bigl(K\times[0,\tfrac18]\bigr).
\]

{\it Second piece.} 
Recall that $N(\beta):=\phi(S^1\times[-1,1])\subseteq S$.
Define a linear function
\[
u:[\tfrac14,\tfrac34]\to[-1,1],\qquad u(t):=2-4t,
\]
so $u(\tfrac14)=1$, $u(\tfrac12)=0$, $u(\tfrac34)=-1$.
For $t\in[\tfrac14,\tfrac34]$ define the \emph{sliding circle}
\[
c_t := \phi\big(S^1\times\{u(t)\}\big)\subseteq S^3_t.
\]
Thus $c_{1/4}=b^+_{1/4}$, $c_{1/2}=\beta_{1/2}$, and $c_{3/4}=b^-_{3/4}$.

We define $U_1\subseteq S^3\times[\tfrac18,\tfrac34]$ by specifying its cross-section at depth $t$:
\begin{itemize}[leftmargin=2em]
\item for $t\in[\tfrac18,\tfrac14]$, set $U_1\cap S^3_t = (N(\beta))_t$ (the full annulus, lying at depth $t$);
\item for $t\in[\tfrac14,\tfrac34]$, set $U_1\cap S^3_t = \phi\bigl(S^1\times[-1,u(t)]\bigr)$,
a subannulus at depth $t$ whose ``outer'' boundary is always $b^-_t$ and whose ``inner'' boundary is $c_t$.
\end{itemize}

The boundary of the second piece, $\partial U_1$, is a union of the following pieces:
\begin{itemize}[leftmargin=2em]
\item  $(N(\beta))_{1/8}$;
\item the ``vertical'' annulus $b^+\times[\tfrac18,\tfrac14]$ ;
\item the ``vertical'' annulus $b^-\times[\tfrac18,\tfrac34]$;
\item the ``sliding'' annulus $A:=\bigcup_{t\in[\tfrac14,\tfrac34]} c_t\ \subset\ \bigcup_{t\in[\tfrac14,\tfrac34]} S^3\times[\tfrac14,\tfrac34]$, which is exactly the trace of the inner boundary $c_t$.
\end{itemize}
(The face at depth $\tfrac34$ degenerates because $c_{3/4}=b^-_{3/4}$.)

To glue $U_0$ to $U_1$, we identify $S\times\{\tfrac18\}$ with $S_{1/8}\subseteq S^3_{1/8}$ and glue
along  $(N(\beta))_{1/8}$. We note that the annulus $N(\beta)_{1/8}$ is no longer part of $\partial(U_0\cup U_1)$.

{\it Third piece.}
We construct the last piece of our Seifert solid, $U_2$, as a sweep of Seifert surfaces for the circles $c_t$ which make up the sliding annulus.

Since $\beta$ is a 0-framed characteristic knot, the surface framing on $\beta$ induced by $S$ agrees with the Seifert framing induced by $M$. Hence, after choosing a product neighborhood $M\times[-1,1]$ of $M$ in $S^3$, we may arrange that for each $s\in[-1,1]$ the pushed-off surface $M\times\{s\}$ is an embedded Seifert surface whose boundary is the parallel copy $\phi(S^1\times\{s\})\subseteq N(\beta)$. 

For \(t\in[\tfrac14,\tfrac34]\), set  $s=2-4t$ and $M_t:=(M\times(2-4t))_t\subseteq S^3_t.$ In other words, each $M_t$ is a particular parallel pushoff of $M,$ lying at depth $t$.  By construction, $\partial M_t=c_t.$ We set
\[
U_2:=\bigcup_{t\in[\tfrac14,\tfrac34]} M_t \subseteq S^3\times[\tfrac14,\tfrac34].
\]

Note that 
\[
\partial U_2 = M_{1/4}\ \cup\ M_{3/4}\ \cup\ A,
\]
where $A$ is the annulus traced by $\partial M_t=c_t$ which we defined previously and which is contained in $\partial(U_0\cup U_1)$. We glue on $U_2$ along $A$ and set
\[
U := U_0\cup U_1\cup U_2 \subseteq B^4.
\]

The boundary of $U$ consists of those parts of the boundaries of $U_0, U_1$ and $U_2$ which were not used for gluing. In other words, $\partial U=-S\cup_K F,$ where
\begin{equation}\label{eq:dih surface}
    F:=(S\setminus (N(\beta))^\circ)_\frac{1}{8}\cup \bigl(K\times[0,\tfrac18]\bigr)\ \cup\ \bigl(b^+\times[\tfrac18,\tfrac14]\bigr)\ \cup\
\bigl(b^-\times[\tfrac18,\tfrac34]\bigr)\ \cup\ M_{1/4}\ \cup\ M_{3/4}.
\end{equation}
One may of course round corners, producing a smooth surface.

To show that $\pi_1(E_F)$ admits a surjective homomorphism to $D_n,$  it suffices to prove that $U,$ a Seifert solid for $F,$ contains a mod~$n$ characteristic surface, $G$. We will construct $G$ with $\partial G=\beta$, so that the dihedral quotient induced by $G$ (which is defined on the complement of $U$ by linking with $G$) clearly extends that induced by $\beta$ (which is defined on the complement of $S$ by linking with~$\beta$).

\subsubsection{Constructing the characteristic surface $G$}

Define
\begin{equation}\label{eq:def-G}
    G := \bigl(\beta\times[0,\tfrac12]\bigr)\ \cup\ M_{1/2}\ \subset\ B^4.
\end{equation}
Recall that $M_{1/2}\subseteq S^3_{1/2}$ denotes the copy of $M$ with $\partial M_{1/2}=\beta_{1/2}$. Then $G$ is oriented and, as we will now see, properly embedded in $U$. We have $\partial G=\beta\subseteq -S\subseteq \partial U$. We will next show that $G^\circ\subseteq U^\circ$.

For $t\in(0,\tfrac18)$, $\beta_t\subseteq S^\circ_t\subseteq U_0^\circ$. For $t\in(\tfrac18,\tfrac14)$, the cross-section $U_1\cap S^3_t$ is the full annulus $(N(\beta))_t$, and $\beta_t\subseteq (N(\beta))_t^\circ$ since $\beta=\phi(S^1\times\{0\})$ lies in the interior of $\phi(S^1\times[-1,1])$. For $t\in[\tfrac14,\tfrac12)$, we have $u(t)>0$, hence $\beta_t$ lies in the interior of the annulus $\phi(S^1\times[-1,u(t)])=U_1\cap S^3_t$.

Lastly, $M_{1/2}\subseteq U_2$ and $M_{1/2}^\circ\subseteq U^\circ$. Thus, every interior point of $G$ lies in the interior of $U$, as required.

\begin{figure}[htb!]
\begin{center}
\begin{tikzpicture}[scale=0.75]
\draw[black, thick, <->](-6.75, -5) --(6.75, -5);

\draw[draw opacity = 40, shade]
(-5,-5) -- (-5,-3.3) -- (-3,-3.3) -- (-3,.80) -- (3,-.10) -- (3,-3.3) -- (5,-3.3) -- (5,-5) -- (-5,-5);

\fill (-5, -5) circle(3pt);
\fill (5, -5) circle(3pt);

\path (-5, -1.65) node{$\mathbf{B^4}$};
\path (5, -1.65) node{$\mathbf{B^4}$};

\draw[fill=violet] (2,-5) circle(3pt);
\draw[fill=violet] (-2,-5) circle(3pt);
\draw[violet,thick] (-2,-5) -- (-2,-0.325) -- (2,-0.925) -- (2,-5);

\path (-5,-5.5) node{$\mathbf K$};
\path (5, -5.5) node{$\mathbf K$};
\draw[red,thick](-5,-5) -- (5, -5); 
\path (0,-5.3) node{\color{red} $\mathbf{S}$};

\filldraw[ultra thin, white] (-1,-3.3) -- (-1,-1.45) -- (1,-1.75) -- (1,-3.3) -- cycle;
\draw[blue, ultra thick] (-1,-3.3) -- (-1,-1.45) -- (1,-1.75) -- (1,-3.3) -- (-1,-3.3);

\draw[blue,thick, even odd rule]
(-5,-5) -- (-5,-3.3) -- (-3,-3.3) -- (-3,.80) -- (3,-.10) -- (3,-3.3) -- (5,-3.3) -- (5,-5);

\path (-6.1, -4.65) node{$\mathbf{S^3}$};
\path (6.1, -4.65) node{$\mathbf{S^3}$};

\path (-2, -5.5) node{\color{blue} $\boldsymbol{\beta}$};
\path ( 2, -5.5) node{\color{blue} $\boldsymbol{\beta}$};

\path (-3.5, -4) node{\color{black} $\mathbf{N(\beta)}$};

\path (0,-1.88) node[rotate=-9] {$\color{blue}\mathbf{M_{1/4}}$};
\path (0,0.62) node[rotate=-9] {$\color{blue}\mathbf{M_{3/4}}$};
\path (2.35,-0.62) node {\color{violet} $\mathbf{G}$};

\end{tikzpicture}
\end{center}
    \caption{Constructing a characteristic surface. }\label{Fc}
    \label{diagramN}
\end{figure}
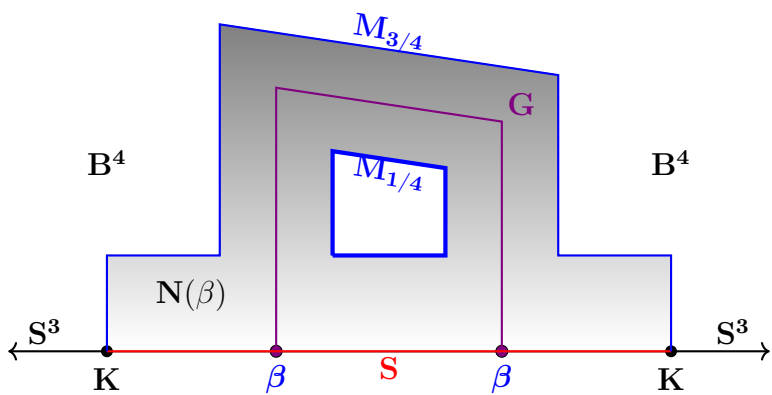

\begin{remark}\label{rem:why-slanted}
One can modify the construction by replacing the slanted annulus
$N(\beta)\times[0,\tfrac12]$ with a straight product annulus sitting in a
single $S^3$-slice, so that the $M$-block and the gluing cylinder occur at the
same depth.  This produces a surface $F$ isotopic to the one above and does
not change the topology of $U$ or $G$. However, with that choice the forthcoming proof of
Theorem~\ref{thm:G_is_char_surface} becomes more involved at the step where we verify the
mod–$n$ linking condition. In the slanted annulus
model this vanishing follows from a simple depth-separation argument,
whereas in the straight model the vanishing of $\Lambda_G$ on the
$H_1(M)$–summand requires an additional linking-number computation.
To avoid this computation, we use the slanted annulus model above.
\end{remark}

\subsubsection{Proof that $G$ is a characteristic surface for $F$}

\begin{theorem}\label{thm:G_is_char_surface}
In the above construction, $G\subseteq U$ is a mod $n$ characteristic surface (Definition~\ref{def:char-surface}) for $F$.
\end{theorem}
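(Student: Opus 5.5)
The plan is to check the two conditions of Definition~\ref{def:char-surface} directly, using the explicit depth-by-depth description of $U$ and $G$. Both conditions are statements about linking numbers $\lk_{B^4}(x,G)$ for curves $x\subseteq E_U$. First I would note that this quantity is well defined: $G$ is properly embedded with $\partial G=\beta\subseteq S^3$, so $\lk_{B^4}(x,G)$ is the algebraic intersection of $G$ with any $2$-chain in $B^4$ bounded by $x$. Moreover it is additive and depends only on $[x]\in H_1(B^4\setminus G)$. Hence in condition~(2) it suffices to check generators of $H_1(U;\Z)$.

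\emph{Step 1: homology of $U$.} $U_0\cong S\times[0,\tfrac18]$, and $U_1$ is attached along the annulus $N(\beta)_{1/8}$; it deformation retracts onto that annulus, so $U_0\cup U_1\simeq S$. Similarly, $U_2\cong M\times[\tfrac14,\tfrac34]$ is attached along the sliding annulus $A$, which is a collar of $\partial M$. Thus $U\simeq S\cup_\beta M$, and $H_1(U;\Z)$ is generated by:
\begin{itemize}
\item classes of curves $\gamma\subseteq S$, represented inside $U_0$ at a small depth $\varepsilon<\tfrac18$;
\item classes of curves $\gamma\subseteq M$, represented inside $M_t\subseteq U_2$ for some fixed $t\in(\tfrac12,\tfrac34)$.
\end{itemize}

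\emph{Step 2: curves from $S$.} For $\gamma\subseteq S_\varepsilon$, the pushoffs $\gamma^\pm$ (normal to $U_0$ inside $S^3_\varepsilon$) are the usual Seifert pushoffs, at depth $\varepsilon$. A bounding $2$-chain is $\gamma^\pm\times[0,\varepsilon]$ together with a Seifert surface for $\gamma^\pm$ in $S^3_0$. After pushing $G$ slightly off the boundary near $\beta$, this chain meets $G=\beta\times[0,\tfrac12]\cup M_{1/2}$ only along $\beta$, in $\lk_{S^3}(\gamma^\pm,\beta)$ signed points. Therefore
\[
\lk_{B^4}(\gamma^+,G)+\lk_{B^4}(\gamma^-,G)=\lk_{S^3}(\gamma^+,\beta)+\lk_{S^3}(\gamma^-,\beta)=[\gamma]^T(V+V^T)[\beta]\equiv 0 \pmod n,
\]
because $\beta$ is a mod~$n$ characteristic knot (Definition~\ref{def:chark}).

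\emph{Step 3: curves from $M$.} This is the depth-separation argument mentioned in Remark~\ref{rem:why-slanted}. For $\gamma\subseteq M_t$ with $t>\tfrac12$, the pushoffs $\gamma^\pm$ lie within a thin neighborhood of depth $t$, so at depth strictly greater than $\tfrac12$. They bound the chain $\gamma^\pm\times[t,1]$, coned off in the interior of $B^4$, and this chain lies entirely at depth $>\tfrac12$. Since $G$ lies at depth $\le\tfrac12$, the two are disjoint, so both linking numbers vanish.

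Steps 2 and 3 give condition~(2) of Definition~\ref{def:char-surface}. Here I must check carefully that the normal direction to $U_2$ along $M_t$ does not push $\gamma^\pm$ down to depth $\tfrac12$. I would handle this by taking $t$ close to $\tfrac34$ and the normal pushoff very small.

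\emph{Step 4: primitivity.} Because $[\beta]$ is primitive in $H_1(S)$, Alexander duality gives a curve $x\subseteq E_S\subseteq S^3$ with $\lk_{S^3}(x,\beta)=1$. Push $x$ to depth $\varepsilon$. At that depth $U\cap S^3_\varepsilon=S_\varepsilon$, so $x_\varepsilon\subseteq E_U$, and the computation of Step 2 gives $\lk_{B^4}(x_\varepsilon,G)=1$. Under the linking pairing $\mathcal L$ of \eqref{eq:linking-pairing}, the functional $\mathcal L(-,[G,\partial G])$ therefore takes the value $1$. Hence $[G,\partial G]$ cannot be a proper multiple, i.e.\ it is primitive, which is condition~(1).

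I expect the main obstacle to be the careful bookkeeping of pushoff directions and orientations in Steps 2 and 3. In particular, I need to confirm that the chain used in Step 2 really meets $G$ only along $\beta\times[0,\varepsilon]$ and its boundary region, with the correct signs. I would first settle this by a local model near $\beta\times\{0\}$.
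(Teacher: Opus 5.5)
Your argument is correct. Your treatment of condition~(2) is the paper's proof: the retraction $U\simeq S\cup_\beta M$, the reduction to curves in $S$ and in $M$, the identification of $\gamma^\pm$ with Seifert push-offs at small depth together with $(V+V^T)[\beta]\equiv 0 \pmod n$, and depth separation for curves in $M_t$ with $t>\tfrac12$. The boundary bookkeeping you worry about in Step~2 disappears if you span $\gamma^\pm$ (and $x_\varepsilon$) by a Seifert surface inside the slice $S^3_\varepsilon$ itself. There $G\cap S^3_\varepsilon=\beta_\varepsilon$, so you never need to move $G$ near its boundary.

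For condition~(1) you argue dually, and this is where you differ from the paper. The paper takes $\alpha\subseteq S^\circ$ meeting $\beta$ once and pushes it to $\alpha_\varepsilon\subseteq U^\circ$. It then reads off $\alpha_\varepsilon\cdot G=1$ in the Poincar\'e--Lefschetz pairing $H_1(U)\times H_2(U,\partial U)\to\Z$. That pairing is intrinsic to $U$ and gives primitivity at once.

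Your curve $x_\varepsilon\subseteq E_U$ with $\lk_{B^4}(x_\varepsilon,G)=1$ has an advantage: it is exactly the element the surjectivity argument of Theorem~\ref{thm:Dn-iff-char-surface} uses. But the step ``hence $[G,\partial G]$ is primitive'' needs $\lk_{B^4}(x_\varepsilon,-)$ to be a homomorphism on all of $H_2(U,\partial U)$, as \eqref{eq:linking-pairing} is stated. That is delicate. A surface with boundary on $F\subseteq\mathrm{int}\,B^4$ has no well-defined linking number with curves in $E_U$: adding a small $2$-sphere linking its boundary changes the count. The natural pairing is with $H_2(U,S)$, i.e.\ surfaces whose boundary lies in $S\subseteq\partial B^4$.

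You can close the gap as follows. Suppose $[G,\partial G]=k\,y$ in $H_2(U,\partial U)$.
\begin{enumerate}
\item The image of $y$ in $H_1(\partial U,S)\cong H_1(F)$ is killed by $k$. Since $H_1(F)$ is torsion-free, it vanishes.
\item Hence $y$ lifts to $\tilde y\in H_2(U,S)$, and $[G]-k\tilde y$ is a multiple of the image of $[F,K]$.
\item A push-in of $F$ links every curve of $E_U$ trivially. It cuts off a collar of $F$ in $U$, and a chain bounded by such a curve meets $U$ only in closed curves and in arcs ending on $F$.
\end{enumerate}
Thus $1=k\,\lk(x_\varepsilon,\tilde y)$, so $k=\pm1$. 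Alternatively, switch to the paper's curve $\alpha_\varepsilon$.
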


\begin{corollary}
    $F$ is a dihedral surface and the homomorphism $\pi_1(E_F)\twoheadrightarrow D_n$ is an extension of the homomorphism $\rho: \pi_1(E_K)\twoheadrightarrow D_n$ determined by the characteristic knot $\beta=\partial G$.
\end{corollary}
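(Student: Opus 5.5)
We must verify the two conditions of Definition~\ref{def:char-surface} for $G=(\beta\times[0,\tfrac12])\cup M_{1/2}\subseteq U$: primitivity of $[G,\partial G]\in H_2(U,\partial U;\Z)$, and the mod~$n$ linking condition $\lk_{B^4}(\gamma^+,G)+\lk_{B^4}(\gamma^-,G)\equiv 0 \pmod n$ for every closed curve $\gamma\subseteq \mathrm{int}(U)$.

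\textbf{Homology of $U$.} By construction, $U_0\cup U_1$ deformation retracts onto $S$. The piece $U_2\cong M\times[\tfrac14,\tfrac34]$ is glued to it along the annulus $A$, which is a collar of $\beta$ in $N(\beta)$. Hence $U\simeq S\cup_\beta M$, and Mayer--Vietoris gives
\[
H_1(U)\cong \bigl(H_1(S)\oplus H_1(M)\bigr)/\langle([\beta],-[\partial M])\rangle .
\]
Since $[\partial M]=0$ in $H_1(M)$, this is $H_1(S)/\langle[\beta]\rangle\oplus H_1(M)$. It therefore suffices to check the linking condition on curves of two kinds:
\begin{itemize}
\item curves $\gamma$ on $S$, pushed to depth near $\tfrac1{16}$ inside $U_0$;
\item curves on $M_t$, for $t$ away from $\tfrac12$.
\end{itemize}
The linking expression is additive in $\gamma$, so this is enough.

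\textbf{Curves on $S$.} For $\gamma\subseteq S_{t_0}$ with $t_0<\tfrac18$, the pushoffs $\gamma^\pm$ lie in $S^3_{t_0}$ just off $S_{t_0}$. The relevant part of $G$ near that slice is the vertical annulus $\beta\times[0,\tfrac12]$, capped by $M_{1/2}$ at greater depth. Compute $\lk_{B^4}(\gamma^\pm,G)$ as the intersection number of a 2-chain bounded by $\gamma^\pm$ with $G$. Take $\gamma^\pm\times[0,t_0]$ followed by a Seifert surface for $\gamma^\pm$ in $S^3_0=\partial B^4$; equivalently, cone off toward the boundary. This chain meets $G$ only along $\beta\times[0,t_0]$, in a count equal to $\lk_{S^3}(\gamma^\pm,\beta)$, up to a global sign fixed by orientation conventions. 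So
\[
\lk_{B^4}(\gamma^+,G)+\lk_{B^4}(\gamma^-,G)=\pm[\gamma]^T(V+V^T)[\beta]\equiv 0 \pmod n
\]
by the characteristic-knot condition of Definition~\ref{def:chark}. The same computation shows that a curve $\gamma$ with $\lk(\gamma,\beta)=1$ gives a dual class in $H_1(E_U)$ linking $G$ once. Via the nonsingular pairing $\mathcal L$ this establishes primitivity, and it uses that $[\beta]$ is primitive.

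\textbf{Curves on $M$ (depth separation).} Take a curve $\gamma$ on $M_t$ with, say, $t=\tfrac14$; the case $t=\tfrac34$ is analogous and is homologous to it inside $U_2$. Here I will exploit the slanted model, as Remark~\ref{rem:why-slanted} suggests. The pushoffs $\gamma^\pm$ in the $[-1,1]$ direction normal to $U$ are parallel copies $M\times\{s\pm\epsilon\}$ at depth $\tfrac14$. In the collar, bound each $\gamma^\pm$ by the 2-chain $\gamma^\pm\times[0,\tfrac14]$ union a 2-chain in $S^3_0$. This chain can meet $G$ only in $(\beta\times[0,\tfrac14])\cup\emptyset$; the cap $M_{1/2}$ lies at depth $\tfrac12>\tfrac14$ and is disjoint from it. The intersection with the vertical annulus equals $\lk_{S^3}(\gamma^\pm,\beta)$ with $\gamma^\pm\subseteq$ a pushoff of $M$. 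This vanishes because $\gamma$ lies on a Seifert surface pushoff of $\beta$ whose boundary is a parallel of $\beta$: a 0-framed parallel of $M$ is disjoint from $\beta$ and carries $\gamma$ away from it. So both terms are $0$.

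\textbf{Main obstacle.} The delicate part is getting the orientation conventions and pushoff directions right. Specifically:
\begin{itemize}
\item the "$\pm$" normal of $U$ along $U_2$ is the $s$-direction mixed with depth, because of the slant;
\item one must confirm that the chosen bounding chains genuinely avoid $M_{1/2}$ and the rest of $G$.
\end{itemize}
I expect to handle this by fixing explicit coordinates $(x,s,t)$ near $U_2$ and checking transversality slice by slice. The corollary then follows from Theorem~\ref{thm:Dn-iff-char-surface}, together with the formula \eqref{eq:char-surf-quotient}, which restricts on $E_S$ to linking with $\partial G=\beta$.
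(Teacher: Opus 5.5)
Your verification that $G$ is a mod~$n$ characteristic surface is essentially the paper's proof of Theorem~\ref{thm:G_is_char_surface}: you retract $U$ onto $S\cup_\beta M$, split $H_1(U)$ into classes from $S$ and from $M$, and apply the characteristic-knot condition to the $S$-classes. The paper's proof of the corollary simply cites that theorem together with Theorem~\ref{thm:Dn-iff-char-surface}.

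\textbf{The gap.} Your proposal does not establish the first clause of the statement in the sense the paper intends. A \emph{dihedral surface} is a $D_n$-surface in the sense of Definition~\ref{def:G-surface}. Besides the surjective extension, this requires condition~(ii): the image of a $0$-framed section $F\to S(F)\subseteq E_F$ must be trivial in $BD_n$. Theorem~\ref{thm:Dn-iff-char-surface} only produces the surjection $\pi_1(E_F)\twoheadrightarrow D_n$ and says nothing about such a section. The missing step is Proposition~\ref{prop:dihedral-surface-section}. For a $0$-framed section $\sigma$, each $\bar\rho_*\sigma_*(a_i)$ lies in $[D_n,D_n]$ because of the $0$-framing. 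It also commutes with $u=\bar\rho_*(\mu)$, since $S(F)\cong F\times S^1$. It is therefore trivial, because $D_n$ based at $u$ is taut (Example~\ref{ex:dihedral-taut}). Asphericity of $BD_n$ and $\dim F=2$ then make $\bar\rho\circ\sigma$ nullhomotopic. With this added, and with your observation that linking with $G$ restricts to linking with $\beta=\partial G$ on $E_S$ (while $m\mapsto u$ in both), the argument is complete.

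\textbf{Two smaller points.}

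First, for curves on $M$, working at depth $\tfrac14$ does not actually exploit the slant. A slab around $S^3_{1/4}$ meets $G$ in $\beta\times(\tfrac14-\epsilon,\tfrac14+\epsilon)$, so you genuinely need $\lk_{S^3}(\gamma^\pm,\beta)=0$. Disjointness from $\beta$ only makes this linking number defined. The correct reason it vanishes is that $\gamma^\pm$ projects into a parallel $M\times\{s\}$ with $s\neq 0$. Hence it misses the Seifert surface $M=M\times\{0\}$ of $\beta$, so $\lk_{S^3}(\gamma^\pm,\beta)=\gamma^\pm\cdot M=0$. The paper instead uses $M_{3/4}$. There the pushoffs lie in a thin slab around $S^3_{3/4}$, which has trivial $H_1$ and misses $G\subseteq S^3\times[0,\tfrac12]$. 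Both linking numbers then vanish with no computation; this is exactly the simplification Remark~\ref{rem:why-slanted} refers to.

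Second, your primitivity argument is a valid alternative to the paper's. You take a curve in $S^3\setminus S$ linking $\beta$ once, which exists by Alexander duality since $[\beta]$ is primitive, push it into $E_U$, and pair it with $[G,\partial G]$ via $\mathcal L$. The paper instead pushes a curve $\alpha\subseteq S$ with $\alpha\cdot_S\beta=1$ into $U$ and intersects it with $G$.
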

\begin{proof}
    The fact that $G$ determines a dihedral quotient $\hat{\rho}: \pi_1(E_F)\twoheadrightarrow D_n$ follows from Theorem~\ref{thm:Dn-iff-char-surface}. The fact that $F$ is in fact a dihedral surface (Definition~\ref{def:G-surface}) follows from Proposition~\ref{prop:dihedral-surface-section}. The fact that $\hat{\rho}$ extends $\rho$ is immediate. As explained in Section~\ref{sec:beta}, $\rho$ sends a fixed meridian $m$ of $K$ to a generating reflection in $D_n$; and the image of curves in the complement of a Seifert surface $S$ is determined by their linking with $\beta$. Similarly, as seen in Section~\ref{sec:characteristic-surfaces}, we let $\hat{\rho}$ send $m$ to the same reflection; and the image of a curve in the complement of $U$ is determined by linking with $G$. Since linking with $G$ restricts (in $S^3$) to linking with $\beta,$ the result follows.
\end{proof}

We will shortly make use of the following lemma. 
\begin{lemma}\label{lem:retract}
$U$ deformation retracts onto the $2$-complex $S\cup_\beta M$ obtained by gluing $M$ to $S$ along $\partial M=\beta\subseteq S^\circ$.
\end{lemma}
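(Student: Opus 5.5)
We build the deformation retraction piece by piece, following the decomposition $U=U_0\cup U_1\cup U_2$, and work depth by depth in the collar $S^3\times[0,1]$. At each stage we collapse one piece onto a subcomplex of the next, and only then check compatibility along the gluing loci.

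\emph{Step 1: collapse $U_2$.} Recall that $U_2=\bigcup_{t\in[1/4,3/4]}M_t$, where $M_t$ is the pushoff $M\times\{2-4t\}$ placed at depth $t$. Thus $U_2$ is homeomorphic to $M\times[-1,1]$, with the side annulus $A$ corresponding to $\partial M\times[-1,1]$. The product $M\times[-1,1]$ deformation retracts onto $M\times\{0\}\cup \partial M\times[-1,1]$, for instance by the standard retraction of $I\times I$ onto $\{0\}\times I\cup I\times\{0\}$ applied in a collar of $\partial M$, combined with projection to the middle slice. Here $M\times\{0\}$ is $M_{1/2}$. This deformation fixes $A$ pointwise, so it extends by the identity over $U_0\cup U_1$. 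After this step $U$ retracts onto $U_0\cup U_1\cup M_{1/2}$, where $M_{1/2}$ is attached along $\beta_{1/2}\subseteq A$.

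\emph{Step 2: collapse $U_1$.} The piece $U_1$ is a thickened annulus. Its cross-sections are $N(\beta)_t$ for $t\in[1/8,1/4]$ and $\phi(S^1\times[-1,u(t)])$ for $t\in[1/4,3/4]$, so it is homeomorphic to $S^1\times P$ with $P$ a polygon in the $(s,t)$-plane. In these coordinates:
\begin{itemize}
\item the gluing face to $U_0$ is the edge $t=1/8$;
\item the sliding annulus $A$ is the slanted edge $s=u(t)$;
\item the curve $\{s=0\}\cap A$ is the point $(0,1/2)$ on that edge.
\end{itemize}
Since $P$ is convex, it deformation retracts onto the union of the bottom edge $\{t=1/8\}$ and the vertical segment $\{s=0,\ 1/8\le t\le 1/2\}$. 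That segment ends exactly at the point where $M_{1/2}$ is attached. We cross this retraction with $S^1$.

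The catch is that this deformation moves $A$, whereas $M_{1/2}$ is attached only along $\beta_{1/2}$. So we must first use Step 1 in its strong form: retract $U_2$ all the way onto $M_{1/2}$, projecting $A$ onto $\beta_{1/2}$, rather than onto $M_{1/2}\cup A$. The resulting space is $U_0\cup U_1\cup_{\beta_{1/2}} M_{1/2}$, because $U_2\cong M\times[-1,1]$ deformation retracts onto $M\times\{0\}$ and $A$ then carries no extra attachments. The retraction of $U_1$ must fix $\beta_{1/2}$, which it does since $(0,1/2)$ lies on the vertical segment.

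The result is $U_0\cup(\beta\times[1/8,1/2])\cup M_{1/2}$, and the collar $\beta\times[1/8,1/2]$ collapses onto its bottom end. Note that this is precisely $U_0\cup G$.

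\emph{Step 3: collapse $U_0$.} Finally, $U_0=S\times[0,1/8]$ retracts onto $S\times\{1/8\}\cong S$. We glue everything by pushing $M_{1/2}$ down along the collar $\beta\times[0,1/2]$. The outcome is $S\cup_\beta M$.

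The main obstacle is making these deformations compatible along the gluing loci. In particular, the retraction of $U_2$ has to collapse the sliding annulus $A$ onto $\beta_{1/2}$ before $U_1$ is retracted, so that the attaching map of $M$ stays well defined. An alternative that avoids this bookkeeping is to prove only a homotopy equivalence, which likely suffices for later homology uses. One would apply the gluing lemma for homotopy equivalences of adjunction spaces along cofibrations, using $U_2\simeq M$ rel the core, $U_1\simeq S^1$, and $U_0\simeq S$.
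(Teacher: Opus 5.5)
This is essentially the paper's argument: collapse $U_2$ onto $M_{1/2}$, collapse $U_1$ transversally down to $\beta$, collapse $U_0$ onto $S$, and check compatibility along the gluing loci. Your retraction of $U_1\cong S^1\times P$ onto $S^1\times T$ (the bottom edge plus the stem up to $(0,\tfrac12)$) is a more precise version of the paper's ``collapse each cross-section to its core circle.'' It keeps the face $N(\beta)_{1/8}=U_0\cap U_1$ fixed, so it gives an honest deformation retraction.

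One step should be deleted: the ``strong form'' of Step~1. As stated, it is not a deformation of $U$. It moves the annulus $A$, which also lies in $U_1$, while $U_1$ is held fixed. It is also unnecessary. The weak form already leaves $M_{1/2}\cap(U_0\cup U_1)=\beta_{1/2}$, and $\beta_{1/2}$ lies in $S^1\times T$. Since the face $N(\beta)_{1/8}$ also lies in $S^1\times T$, the retraction of $U_1$ extends by the identity over $U_0\cup M_{1/2}$.

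Likewise, no final ``collapse of the collar'' or ``pushing $M_{1/2}$ down'' is needed, and neither is a deformation retraction. The subspace $S_{1/8}\cup(\beta\times[\tfrac18,\tfrac12])\cup M_{1/2}$ is already homeomorphic to $S\cup_\beta M$.
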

\begin{proof}
Collapse the interval direction in $U_0=S\times[0,\tfrac18]$ to $S$, collapse each annular cross-section of $U_1$
to its core circle at depth $\tfrac12$ (which is $\beta_\frac{1}{2}$), and collapse the sweep $U_2=\bigcup_{t}M_t$ along the depth direction to $M_{1/2}=M$.
These collapses are compatible with the gluings (since the gluing loci collapse to $\beta$), and give a deformation retraction onto $S\cup_\beta M$.
\end{proof}

\begin{proof}[Proof of Theorem~\ref{thm:G_is_char_surface}]
We have already checked that $G$ is properly embedded in $U$. We verify the remaining two conditions of Definition~\ref{def:char-surface}.

{\it Proof that the class $[G, \partial G]\in H_2(U,\partial U;\Z)$ is primitive.} 
We will produce a dual curve to $G$ in the interior of $U$.

Because $[\beta]\in H_1(S)$ is primitive, there exists a class $a\in H_1(S)$ whose algebraic intersection number with $\beta$ in $S$ equals 1. Choose an embedded oriented curve $\alpha\subseteq S^\circ$ representing $a$
and transverse to $\beta$ with $\alpha\cap\beta$ consisting of a single point of positive sign.

Fix a depth $0<\varepsilon<\frac{1}{8}$ and consider the embedded curve
\[
\alpha_\varepsilon= \alpha\times\{\varepsilon\}\ \subset\ U^\circ.
\] 
Recall that $G=\beta\times[0,\tfrac12]\cup M_{\frac{1}{2}}$. By construction,
 $\alpha_\varepsilon$ meets the cylinder $\beta\times[0,\tfrac12]\subseteq G$ transversely in exactly one point at depth $\varepsilon$; 
and $\alpha_\varepsilon$ is disjoint from $M_{1/2}$ since $\varepsilon<\frac{1}{2}$.
Therefore the algebraic intersection number satisfies
\[
\alpha_\varepsilon\cdot G = 1.
\]
This tells us that the value of the Poincar\'e-Lefschetz dual of $[G,\partial G]$ on $[\alpha_\varepsilon]$ equals $\alpha_\varepsilon\cdot G=1$.
Hence, the class $[G,\partial G]$ is primitive.

{\it Proof that $G$ satisfies the mod~$n$ linking condition.} 

Define a homomorphism $\Lambda_G:\ H_1(U;\mathbb Z)\longrightarrow \mathbb Z/n$ by
\[
\Lambda_G([\gamma]) := \lk_{B^4}(\gamma^+,G)+\lk_{B^4}(\gamma^-,G)\ \ (\mathrm{mod}\ n),
\]
where $\gamma\subseteq U^\circ$ is any oriented representative of the class $[\gamma]$. Our aim is to show $\Lambda_G\equiv 0\mod n$.

By Lemma~\ref{lem:retract}, $U$ deformation retracts onto $S\cup_\beta M$. Hence
\[
H_1(U)\cong H_1(S\cup_\beta M).
\]
Since $S\cup_\beta M$ is obtained by attaching $M$ to $S$ along the boundary circle
$\partial M=\beta\subseteq S$, every class in $H_1(S\cup_\beta M)$ may be represented by a cycle
lying in $S\cup M$, and therefore by a sum of classes represented by curves in $S$ and curves in $M$.
Because $\Lambda_G$ is a homomorphism, it suffices to show that $\Lambda_G$ vanishes on classes represented by curves in $S$
and on classes represented by curves in $M$.

First we establish the vanishing of $\Lambda_G$ on curves coming from $M$. 
Let $x\in H_1(U)$ be represented by an oriented curve
\[
\gamma\subseteq M_{3/4}^\circ\subseteq S^3_{3/4},
\]
pushed slightly into $U^\circ$.
Then $\gamma^\pm\subseteq B^4\setminus U$ lie in a small neighborhood of the slice $S^3_{3/4}$ in the collar $S^3\times[0,1]$. Without loss of generality, we may assume that each curve $\gamma^\pm$ lies in a slice $S^3_{3/4\pm\varepsilon}$. Hence each $\gamma^\pm$ bounds a surface $A^\pm\subseteq B^4\setminus U$
contained in an arbitrarily small neighborhood of $S^3_{3/4}$.
Since
\[
G\subseteq S^3\times[0,\tfrac12]\cup S^3_{1/2},
\]
we have $A^\pm\cap G=\varnothing$, and therefore
\[
\lk_{B^4}(\gamma^\pm,G)=A^\pm\cdot G=0.
\]
and therefore $\Lambda_G(x)=0$.

Now we establish the vanishing of $\Lambda_G$ on curves coming from $S$. Let $x\in H_1(U)$ be represented by an oriented curve $\gamma\subseteq S^\circ\subseteq S^3_0$.
Push $\gamma$ slightly into $U^\circ$ at some small depth $\varepsilon<\frac{1}{8}$.
Its normal push-offs $\gamma^\pm$ from $U$ lie in $B^4\setminus U$.
In the product region near depth $0$, the complement $B^4\setminus U$ can be identified with
\[
(S^3_0\setminus S)\times[0,\varepsilon]
\]
(up to corner-rounding), so each of $\gamma^\pm$ is isotopic in $B^4\setminus U$ to a push-off of $\gamma$
lying in $S^3_0\setminus S$.
Up to swapping the labels $+$ and $-$, the unordered pair
$\{\gamma^+,\gamma^-\}$ corresponds to the usual Seifert push-offs $\{\gamma_+,\gamma_-\}$ of $\gamma$ from $S$ in $S^3_0$.
Since we only use the sum $\lk(\gamma^+,G)+\lk(\gamma^-,G)$, we need not agonize over which sign corresponds to which pushoff.

Because $G$ is properly embedded in $U$ with $\partial G=\beta\subseteq S^3_0$, and because $\gamma_\pm$ are disjoint from $\beta$,
we may compute the linking numbers between $\gamma_\pm$ and $G$ using spanning surfaces in $S^3_0$:
\[
\lk_{B^4}(\gamma^\pm,G)=\lk_{S^3_0}(\gamma_\pm,\beta).
\]
Hence,
\[
\Lambda_G(x)\equiv \lk_{S^3_0}(\gamma_+,\beta)+\lk_{S^3_0}(\gamma_-,\beta)\mod n.
\]

Let $L_S$ be the Seifert pairing on $H_1(S)$, that is, $L_S(a,b)=\lk_{S^3_0}(a_+,b)$.
Since the unordered pair $\{\gamma^+,\gamma^-\}$ corresponds to the unordered pair of Seifert push-offs
$\{\gamma_+,\gamma_-\}$ in $S^3_0$ determined by $S$, we have
\[
\lk_{S^3_0}(\gamma^+,\beta)+\lk_{S^3_0}(\gamma^-,\beta)
=
\lk_{S^3_0}(\gamma_+,\beta)+\lk_{S^3_0}(\gamma_-,\beta).
\]
In other words,
\[
\Lambda_G(x)\equiv L_S([\gamma],[\beta]) + L_S([\beta],[\gamma])\mod n.
\]
Since $\beta$ is mod-$n$ characteristic with respect to $S$ (Definition~\ref{def:chark}),
the right-hand side is $0$ mod $n$ for all $[\gamma]\in H_1(S)$.
Hence $\Lambda_G$ vanishes on classes represented by curves in $S$.

\medskip
It follows that $\Lambda_G\equiv 0$ on $H_1(U)$, i.e. $G$ satisfies the mod~$n$ linking number condition. 
This completes the proof that $G$ is mod $n$ characteristic for $F$.

\end{proof}

\begin{corollary}
   The surface $F$ thus constructed admits a surjection
    $\pi_1(E_F)\twoheadrightarrow D_n$ extending $\rho$, and is 
    a $D_n$-surface in the sense of Definition~\ref{def:G-surface}.
\end{corollary}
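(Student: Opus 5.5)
The corollary collects three facts about the surface $F$ of~\eqref{eq:dih surface}: $\pi_1(E_F)$ surjects onto $D_n$, this surjection extends $\rho$, and $F$ is a $D_n$-surface. I would prove them in that order, as a direct assembly of earlier results.

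\textbf{Surjection.} By Theorem~\ref{thm:G_is_char_surface}, $G\subseteq U$ is a mod~$n$ characteristic surface for $F$. Here $U$ is a Seifert solid with $\partial U=F\cup(-S)$, and $F$ is connected, oriented and properly embedded with $\partial F=K$. So the construction $(\Leftarrow)$ in the proof of Theorem~\ref{thm:Dn-iff-char-surface} applies. It gives a homomorphism $\rho_G\colon\pi_1(E_F)\to D_n$, defined on the HNN presentation~\eqref{eq:HNN-surface} by $m\mapsto u$ and $\gamma\mapsto t^{\lk(G,\gamma)}$ for $\gamma\subseteq E_U$. It is surjective because $[G,\partial G]$ is primitive.

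\textbf{Extension of $\rho$.} Write $i\colon E_K\to E_F$ for the inclusion. Since $U\cap S^3=S$, this inclusion carries $E_S$ into $E_U$ and the meridian $m$ of $K$ to a meridian of $F$. We have $\partial G=\beta$, and $G$ meets the collar only in $\beta\times[0,\tfrac12]$ near $S^3$. Hence for any loop $\gamma\subseteq E_S\subseteq S^3$,
\[
\lk_{B^4}(\gamma,G)=\lk_{S^3}(\gamma,\beta).
\]
To justify this, I would bound $\gamma$ by a surface pushed slightly into the collar and count its intersections with the vertical cylinder over $\beta$, exactly as in the last step of the proof of Theorem~\ref{thm:G_is_char_surface}. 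Comparing with~\eqref{eq:char-knot-quotient}, $\rho_G\circ i_*$ and $\rho_\beta=\rho$ agree on the generators $m$ and $\pi_1(E_S)$, so they are equal.

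\textbf{$D_n$-surface.} The group $D_n$, based by $u$, is taut (Example~\ref{ex:dihedral-taut}), and $\rho_G$ is a based surjection. Proposition~\ref{prop:dihedral-surface-section} therefore gives a section of the normal circle bundle of $F$ whose image under $\rho_G$ is trivial. With the extension property above, this is exactly Definition~\ref{def:G-surface}.

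\textbf{Main obstacle.} The only point needing care is the linking identity in the extension step. Everything else is a direct citation.
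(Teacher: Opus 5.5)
Your proposal is correct and follows essentially the paper's argument. Theorem~\ref{thm:G_is_char_surface} together with the $(\Leftarrow)$ direction of Theorem~\ref{thm:Dn-iff-char-surface} gives the surjection; the fact that linking with $G$ restricts on $E_S$ to linking with $\beta=\partial G$ gives the extension of $\rho$ (the paper calls this ``immediate'' in the preceding corollary, and your pushed-in Seifert surface argument for $\lk_{B^4}(\gamma,G)=\lk_{S^3}(\gamma,\beta)$ supplies the detail); and tautness of $D_n$ with Proposition~\ref{prop:dihedral-surface-section} gives the $D_n$-surface condition, with surjectivity also following at once from $\rho_G\circ i_*=\rho$ being onto.
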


\begin{proof}
    This follows by combining the results of this section: Theorem~\ref{thm:G_is_char_surface} 
shows that $G$ is a mod~$n$ characteristic surface for $F$; by 
Theorem~\ref{thm:Dn-iff-char-surface}, $F$ therefore admits a surjection 
$\pi_1(E_F)\twoheadrightarrow D_n$; and by 
Proposition~\ref{prop:dihedral-surface-section}, $F$ is a $D_n$-surface 
in the sense of Definition~\ref{def:G-surface}.
\end{proof}

This completes the 
constructive proof of the implication 
\ref{thmpart:beta}~$\implies$~\ref{thmpart:extension} of 
Theorem~\ref{thm:equivalence}.


\section{Applications and computations}\label{sec:proof-of-conjecture}

This section provides both background and applications of the dihedral obstruction, Theorem~\ref{thm:equivalence}. First, in~\ref{sec:xi} we briefly recall the $\Xi_n$ invariant of dihedral covers of knots and its applications. Then, in~\ref{thm:n=3}, we provide an independent proof, in the case $n=3$, of the linking number criterion for the existence of a $D_n$-surface given in Theorem~\ref{thm:intro-dihedral}. In fact, Theorem~\ref{thm:n=3} shows the linking number criterion detects the existence of {\it any} locally flat surface extending the given quotient, be it orientable or not, embedded in {\it any} orientable four-manifold with $S^3$ boundary. This argument, which uses a formula for $\Xi_n$ from~\cite{kjuchukova2018dihedral}, has not been previously written anywhere; it was known to the first named author and provided the initial impetus for the present work. Next, in~\ref{section:examples}, we give several illustrations of how to evaluate the obstruction for dihedral groups in practice. For knots whose double branched cover have cyclic first homology we give an even simpler criterion than those previously presented: see Theorem~\ref{thm:2-bridge}. For twist knots whose determinant is $0\mod 3$ we show that, using only the self-linking number of a characteristic knot, we can simultaneously detect whether they bound a $D_3$ surface and obstruct ribbonness (except for $6_1$). Lastly, in~\ref{sec:QZcomp}, we describe in detail a completely general computational procedure for evaluating the dihedral extension criterion, using the $\Q/\Z$ linking form (\ref{sec:QZcomp}) on the double branched cover.

\subsection{The $\Xi_n$ invariant and signature applications}\label{sec:xi}
We review the $\Xi_n$ invariant of irregular dihedral branched covers which was introduced in~\cite{kjuchukova2018dihedral} and which gave the original motivation for this work.

Let $X^4$, $Y^4$ be two closed, oriented smooth (resp. topological) 4-manifolds. Consider an irregular dihedral $n$-fold cover $f: Y^4\to X^4$ whose branching set $B^2\subseteq X^4$ is smoothly (resp. locally flatly) embedded away from finitely many cone singularities $x_i, i=1,\dots, r$. Let $N(x_i)\cong B^4$ denote a small neighborhood of $x_i$ in $X$ with the property that $B\cap N(x_i)$ is  the cone of a knot, $K_i$. When $f^{-1}(\partial N(x_i))$ is connected, $K_i$ is a dihedral knot. Assuming this is the case for all $i,$ we have~\cite{kjuchukova2018dihedral, geske2018signatures}
\begin{equation}\label{eq:sign-dih-cover}
\sigma(Y)=n\sigma(X)-\frac{n-1}{4}e(B)-\sum_{i=1}^r\Xi_n(K_i),
\end{equation}
where $\sigma$ denotes the signature of a 4-manifold and $e$ the normal Euler number of a submanifold. In this setting, $\Xi_n(K_i)$ is the contribution to the signature of $Y$ resulting from the presence of a singularity of type\footnote{We are blissfully ignoring an ambiguity about the sign of $\Xi_n(K_i)$. The indeterminacy can be resolved by specifying -- which we do not do -- whether $K_i$ denotes the boundary of the cone $B\cap N(K_i)$ or the boundary of the punctured surface $B\backslash(B\cap\mathring{N}(K_i))$.} $K_i$ on the branching set. Indeed, if the branching set were smooth (resp. locally flat), the signature of an $n$-fold irregular dihedral cover would satisfy $\sigma(Y)=n\sigma(X)-\frac{n-1}{4}e(B)$, by a standard formula for the signature of a branched covers~\cite{viro1984signature} (resp.~\cite{geske2018signatures}).

As shown in~\cite{kjuchukova2018dihedral}, for a knot $K$ equipped with a dihedral quotient $\rho:\pi_1(E_K)\to D_n$, the quantity $\Xi_n(K)$, an invariant of the pair $(K,\rho)$, satisfies the following equation: 
    \begin{equation}\label{eq:Xi}
       	\Xi_n(K,\rho)=\dfrac{n^2-1}{6n}L_S(\beta,\beta)+\sigma(W(K,\beta))+\sum_{i=1}^{n-1}\sigma_{\zeta^i}(\beta).
    \end{equation}
   In the above formula, $S$ denotes a Seifert surface for $K$ and  $\beta\subseteq \mathring{S}$ is a mod~$n$ characteristic knot which determines the representation $\rho$ in the sense described in Section~\ref{sec:beta}. Furthermore, $L_S$ a symmetrized Seifert form associated to $S$ (that is, if $V$ is a Seifert matrix for $S,$ then $L_S(\beta,\beta)=[\beta]^T(V+V^T)[\beta]$). As before, $\sigma$ denotes the signature of a 4-manifold and $\sigma_{w}$ the Tristram-Levine $w$-signature of a knot for $w$ a complex number with $|w|=1$. By $\zeta$ we denote a primitive $n$-th root of unity. Lastly, the manifold $W(K,c)$ is the Cappell-Shaneson cobordism, constructed in~\cite{CS1984linking}, between the $n$-fold irregular dihedral cover of $K$ determined by $\rho$ and the cyclic $n$-fold cover of $c$. Techniques to compute the $\Xi_n$ invariant using the above expression are developed in~\cite{cahn2021computing, cahn2021linking}. When $K$ bounds a dihedral surface in $B^4$ and this surface is presented in a triplane diagram in the sense of~\cite{meier2017bridge}, a computer program for computing  $\Xi_n(K)$ can be found in~\cite{cahn2023algorithms}. In particular, finding a way to present the dihedral surfaces constructed in Section~\ref{sec:n-bottle} via triplane diagrams would enable an automated computation of the invariant.

One of the applications of the $\Xi_n$ invariant is to test if a $D_n$-knot is ribbon. In~\cite{geske2018signatures} it is shown that if a $D_n$-knot $K$ is ribbon, then there is a representation $\rho:\pi_1(E_K)\twoheadrightarrow D_n$ -- namely any representation that extends over a ribbon surface for $K$ -- for which the $\Xi_n$ invariant satisfies the equation.

\begin{theorem}~\cite[Theorem~3]{geske2018signatures}\label{thm:GKS}
    Let $n>1$ be odd and square-free and $K\subseteq S^3$ a $D_n$-knot. If $K$ is (homotopy) ribbon, then there is a representation $\rho:\pi_1(E_K)\twoheadrightarrow D_n$ whose associated $\Xi_n$ invariant satisfies the inequality
    \begin{equation}\label{eq:ribbon-obstruction}
    |\Xi_n(K, \rho)| \leq \text{rk } H_1(M_n) + \frac{n-1}{2},
\end{equation}
where $M_n$ is the irregular dihedral $n$-fold branched cover of $K$ induced by $\rho$.
\end{theorem}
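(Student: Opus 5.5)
The plan is to realize $\Xi_n(K,\rho)$ as (minus) the signature of an honest branched cover of $B^4$ over a ribbon disk, and then to bound that signature by the second Betti number, which we compute from the Euler characteristic.

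\emph{Step 1: a representation that extends over the disk.} Let $D\subseteq B^4$ be a homotopy ribbon disk for $K$, so that $i_*\colon\pi_1(E_K)\to\pi_1(E_D)$ is surjective. The double branched cover $\Sigma_2 D$ is a rational homology ball with $\partial\Sigma_2 D=\Sigma_2K$. Hence $|H_1(\Sigma_2 K)|=|H_1(\Sigma_2 D)|^2$, that is, $\det K=|H_1(\Sigma_2D)|^2$. Because $K$ is a $D_n$-knot and $n$ is square-free, every prime $p\mid n$ divides $\det K$, and therefore divides $|H_1(\Sigma_2 D)|$. This gives surjective characters $H_1(\Sigma_2 D)\to\Z_p$ for each such $p$. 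Since the $p$ are distinct, the Chinese remainder theorem combines them into a surjection $H_1(\Sigma_2D)\to\Z_n$. By the correspondence~\eqref{eq:quotients=characters}, this is a surjection $\bar\rho\colon\pi_1(E_D)\twoheadrightarrow D_n$. Set $\rho:=\bar\rho\circ i_*$. Then $\rho$ is surjective because $i_*$ is, and it is the representation in the statement.

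\emph{Step 2: $\Xi_n$ as a signature.} Let $Z\to B^4$ be the irregular $n$-fold dihedral cover branched along $D$ and induced by $\bar\rho$. Its boundary is $M_n$. Write $S^4=B^4\cup c(S^3)$, with singular branching set $B=D\cup c(K)$, and let $Y=Z\cup c(M_n)$. Apply formula~\eqref{eq:sign-dih-cover}, using $\sigma(S^4)=0$ and $e(B)=0$. Together with $\sigma(Y)=\sigma(Z)$ (the cone adds no middle homology; use Novikov additivity), this gives $\Xi_n(K,\rho)=-\sigma(Z)$, up to the sign convention mentioned in the footnote. So it suffices to show
\[
|\sigma(Z)|\le b_2(Z)\le \operatorname{rk}H_1(M_n)+\tfrac{n-1}{2}.
\]

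\emph{Step 3: Betti numbers of $Z$.} First, the Euler characteristic. A point of $D$ has $(n+1)/2$ preimages in an irregular dihedral cover, so
\[
\chi(Z)=n\chi(B^4)-\tfrac{n-1}{2}\chi(D)=\tfrac{n+1}{2}.
\]
Next, $b_1$. Surjectivity of $\pi_1(E_K)\to\pi_1(E_D)$ passes to the corresponding finite-index subgroups, and the branch loci add only meridian-type relations. Hence $H_1(M_n;\Q)\to H_1(Z;\Q)$ is onto, and so $b_1(Z)\le\operatorname{rk}H_1(M_n)$.

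Then $b_3$. By duality $b_3(Z)=\dim H^1(Z,\partial Z;\Q)$. This equals the kernel of $H^1(Z;\Q)\to H^1(\partial Z;\Q)$, which vanishes because the dual homology map is onto. So $b_3=0$. Clearly $b_4=0$ as well.

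Finally, $\chi(Z)=1-b_1+b_2$ gives
\[
b_2(Z)=\tfrac{n-1}{2}+b_1(Z)\le\tfrac{n-1}{2}+\operatorname{rk}H_1(M_n),
\]
which yields the inequality.

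\emph{Main obstacle.} The delicate point is Step 2: justifying the formula when the branch set has a cone point, and fixing the sign of $\Xi_n$. I would use the definition of $\Xi_n$ in~\cite{kjuchukova2018dihedral} as exactly this signature defect, so that Step 2 is essentially definitional. A secondary check is that the surjectivity on $\pi_1$ genuinely lifts to the branched covers in Step 3; I would verify this via the handle description of $E_D$ relative to $E_K$.
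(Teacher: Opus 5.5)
Your proof is correct. It is the standard argument of \cite{geske2018signatures}, which the paper quotes without proof. The paper's remark that $\rho$ may be any representation extending over a homotopy ribbon disk is your Step~1. Its use of the coned-off signature formula in the proof of Theorem~\ref{thm:n=3} is your Step~2. Your bound $|\sigma(Z)|\le b_2(Z)=\tfrac{n-1}{2}+b_1(Z)$, from $\chi(Z)=\tfrac{n+1}{2}$ and $\pi_1$-surjectivity, then finishes as you say.

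One small justification point: $|H_1(\Sigma_2K)|=|H_1(\Sigma_2D)|^2$ does not follow from $\Sigma_2D$ being a rational homology ball alone. In general one only gets $|H_1(\Sigma_2K)|=\bigl(|H_1(\Sigma_2D)|/|H_2(\Sigma_2D)|\bigr)^2$. Here, your Step~3 lifting argument applied to the double cover makes $H_1(\Sigma_2K)\to H_1(\Sigma_2D)$ onto, which forces $H_2(\Sigma_2D)=0$. In any case, you only need each prime $p\mid n$ to divide $|H_1(\Sigma_2D)|$, and that holds for any slice disk.
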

We remind the reader that the definition of $M_n$ was reviewed in the Introduction to this paper. The above inequality applies for any representation $\rho$ which extends over a homotopy ribbon surface for $K$. For knots with a single dihedral quotient, there is only one $\Xi$ invariant to check.  This is the case in Example~\ref{ex:twist}, where we see that Equation~\ref{eq:ribbon-obstruction} can be applied to detect, {\it using only the self-linking number of a characteristic knot}, the non-ribbonness of all twist knots with quotients to $D_3$ except of course the knot $6_1$.

More generally, the $\Xi_n(K)$ invariant can be used to give a lower bound on the genus of a ribbon dihedral surface in $B^4$ bounded by $K$. This bound is shown to be sharp on a family of knots whose ribbon dihedral genus is unbounded~\cite{cahnkju2018genus}.

\subsection{The $n=3$ theorem}\label{sec:n3-thm}

     We conclude with a surprising application of Equation~\ref{eq:Xi} and~\ref{eq:sign-dih-cover} to detect the non-existence of dihedral surfaces {\it in the broadest sense possible}, in the case $n=3$.

One compelling feature of Equation~\ref{eq:sign-dih-cover} is that it does not require orientability of the $D_n$-surface, nor is it restricted to dihedral surfaces in $B^4$. In combination with Equation~\ref{eq:Xi},
this allows us to give a condition, for $n\equiv0\mod{3}$, which guarantees the non-existence of dihedral surfaces {\it in the broadest sense possible}. This is a considerable strengthening of the implication~\ref{thmpart:extension}$\implies$\ref{thmpart:beta} of Theorem~\ref{thm:equivalence} in the case where 3 divides the order of the dihedral group.

    \begin{theorem}\label{thm:n=3}
        Let $K\subseteq S^3$ be a knot equipped with a quotient $\rho:\pi_1(E_K)\twoheadrightarrow D_n$ where $n\equiv0\mod{3}$. Let $S$ be any Seifert surface for $K$, $V$ a Seifert matrix determined by $S$, and $\beta\subseteq\mathring{S}$ a mod~$n$ characteristic knot corresponding to $\rho$.

        If there exists an orientable (smooth or topological) 4-manifold $X$ with $\partial(X)=S^3$ and a properly embedded (either smooth or locally flat; not necessarily orientable) surface $F\subseteq X$ with $\partial F=K$ such that the homomorphism $\rho:\pi_1(E_K)\twoheadrightarrow D_n$ factors through the inclusion map $i_\ast:\pi_1(E_K)\to \pi_1(X\backslash N(F))$, then
   \begin{equation}\label{eq:linking-condition}
        [\beta]^T(V+V^T)[\beta]\equiv 0\mod{9}.
    \end{equation}
    When $n=3$, the converse holds as well.
    \end{theorem}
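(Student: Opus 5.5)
The plan is to use the signature formula~\eqref{eq:sign-dih-cover} together with the expression~\eqref{eq:Xi} for $\Xi_n$, and to compare integrality of the two sides. Suppose $F\subseteq X$ is as in the statement. Cap off $(X,F)$ by gluing $B^4$ containing the cone $c(K)$ along $S^3$. This gives a closed oriented $4$-manifold $\hat X = X\cup_{S^3} B^4$ containing the closed surface $\hat F = F\cup c(K)$, which has a single cone singularity of type $K$. The hypothesis that $\rho$ factors through $\pi_1(X\setminus N(F))$ gives a surjection $\pi_1(\hat X\setminus \hat F)\to D_n$, since the cone adds nothing to the fundamental group (Remark~\ref{rem:cone-point}). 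Taking the irregular $n$-fold dihedral branched cover $Y\to\hat X$ and applying~\eqref{eq:sign-dih-cover} yields
\[
\Xi_n(K,\rho)= n\sigma(\hat X)-\tfrac{n-1}{4}e(\hat F)-\sigma(Y).
\]
The preimage of the link of the cone point is connected because $\rho$ is surjective on $\pi_1(E_K)$, so the formula applies. The case of non-orientable $F$ needs no change, since~\eqref{eq:sign-dih-cover} does not require orientability.

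The next step is integrality. The normal Euler number of a (possibly nonorientable) closed surface, or one with a single cone point, is even, because it is congruent to $2\chi$-type terms mod $4$ by Whitney–Massey-type congruences. A cleaner route I would try first is to note that $\tfrac{n-1}{4}e(\hat F)$ is an integer when $e$ is even and $n$ is odd, since then $(n-1)e/4 = \tfrac{n-1}{2}\cdot\tfrac e2$. Hence $\Xi_n(K,\rho)\in\Z$. Now read~\eqref{eq:Xi} mod~$\Z$. Both $\sigma(W(K,\beta))$ and the sum $\sum_i\sigma_{\zeta^i}(\beta)$ are integers, so integrality of $\Xi_n$ forces
\[
\tfrac{n^2-1}{6n}\,L_S(\beta,\beta)\in\Z.
\]
Write $L=[\beta]^T(V+V^T)[\beta]$. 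We know $n\mid L$ because $\beta$ is characteristic, and $L$ is even by~\eqref{eq:self-lk-even}. With $n=3m$, the factor $n^2-1$ is prime to $3$, and $(n^2-1)/2$ handles the factor of $2$. The $3$-adic part of the condition then requires $3n \mid (n^2-1)L/2\cdot\ldots$, and in particular $9\mid L$ once one checks that the factor $n$ in the denominator absorbs exactly the divisibility $n\mid L$. I expect the main bookkeeping obstacle to be this step: tracking the power of $3$ precisely, including the possibility $9\mid n$, and making sure there is no sign or orientation ambiguity of $\Xi_n$, which is harmless for integrality. I also need to confirm evenness of $e(\hat F)$ in the singular, nonorientable setting. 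If that fails, the fallback is to argue directly that the $2$-part of the denominators is controlled and that only the $3$-part matters for~\eqref{eq:linking-condition}.

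For the converse when $n=3$, the condition $9\mid L$ is exactly condition~\ref{thmpart:beta} of Theorem~\ref{thm:equivalence} with $n^2=9$. That theorem (via the construction of Section~\ref{sec:n-bottle}) then produces an orientable surface in $B^4$ over which $\rho$ extends, which is in particular a surface in $X=B^4$ as required.
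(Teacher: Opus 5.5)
This is correct and is essentially the paper's argument: you cone off to get a branch set with a single cone singularity of type $K$, use the signature formula to see that $\Xi_n(K,\rho)$ is integral, read~\eqref{eq:Xi} to get $6n\mid(n^2-1)L$ and hence $9\mid L$ (because $\gcd(3,n^2-1)=1$), and obtain the $n=3$ converse from Theorem~\ref{thm:equivalence}\ref{thmpart:beta}$\Rightarrow$\ref{part:dih-surf}. One correction: your Whitney--Massey justification for evenness of $e(\hat F)$ is false in a general $X$ (e.g.\ $\mathbb{C}P^1\subseteq\mathbb{C}P^2$ has $e=1$); evenness holds here because of the index-$2$ branching, but your fallback makes it unnecessary anyway, since $\tfrac{n-1}{4}e(\hat F)\in\tfrac12\Z$ always and $\Xi_n\in\tfrac12\Z$ already forces $v_3(L)\ge 1+v_3(n)\ge 2$.
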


     \begin{proof}
Given $K, \rho, F$ and $X$ as above, let $E_F$ denote, as usual, the exterior of $F$ in $X$. By assumption, there exists a homomorphism $\bar{\rho}$ which fits into the commutative diagram:
\begin{equation}
\begin{tikzcd}
\pi_1(E_K) \arrow[r, "\rho"] \ar[d, "i_*", labels=left] & G.\\
\pi_1(E_F) \arrow[ru, dashed, "\overline{\rho}", labels=below right] &
\end{tikzcd}
\end{equation}
Let $f:Y\to X$ be the irregular dihedral $n$-fold cover branched along $F$ and induced by $\bar{\rho}$. Coning off the boundaries of $X$ and $Y$, we obtain a branched cover\footnote{In this construction, the total space $Y\cup_\partial c(\partial(Y))$ is not necessarily a manifold, which is fine.} whose branching set $F\cup_K c(K)$ has an isolated cone singularity of type $K$. Thus, by~\cite[Theorem~2]{geske2018signatures}, we have
\[
\Xi_n(K)=n\sigma(X)-\frac{n-1}{4}e(F) -\sigma(Y),
\]
where, as before, $\sigma$ denotes the (Novikov) signature of a 4-manifold and $e$ the normal Euler number of a submanifold. 
This equation implies that $\Xi_n(K)$ is an integer, since $e(F)$ is even (the branching surface has pre-images of branching index 2) and $n$ is odd (the group $D_n$ is a quotient of a knot group).

We now rewrite Equation~\eqref{eq:Xi} as:
    
    \begin{equation}\label{eq:self-linking}
       \dfrac{n^2-1}{6n}L_V(\beta,\beta)=\Xi_n(K)-\sigma(W(K,\beta))-\sum_{i=1}^{n-1}\sigma_{\zeta^i}(\beta). 
    \end{equation}
    
    Since each terms on the right-hand side takes integer values, we conclude that $6n$ divides $(n^2-1)L_V(\beta,\beta)$. By assumption, $n\equiv0\mod{3}$ which means that $6n\equiv 0\mod{9}$ and moreover that $(3, n^2-1)=1$. Thus, $L_V(\beta,\beta)=[\beta]^T(V+V^T)[\beta]\equiv 0\mod{9},$ as claimed.

     The final statement is a direct application of Theorem~\ref{thm:equivalence}.  More specifically the statement that~\ref{thmpart:beta}$\implies$~\ref{thmpart:extension}.
     \end{proof}

Theorem~\ref{thm:n=3}, which was known to the first author since~\cite{kjuchukova2018dihedral} (though not to be found in the literature), gave the initial impetus for the present paper. Our results grew out of the belief that the hypothesis $n=3$ in Theorem~\ref{thm:n=3} is not needed and the analogous statement should hold for all $n$. This is indeed the case, as proved in Theorem~\ref{thm:equivalence}, though proving it required developing the machinery presented in Part~I.

\subsection{Two-bridge knots, twist knots, and other examples}
~\label{section:examples} 

In this section we offer a few explicit computations of our obstruction, using Theorem~\ref{thm:equivalence}. Specifically, we show that, in the case of two-bridge knots, there is an especially easy way to detect the existence of a dihedral surface. We also give an explicit computation using characteristic knots and the self-linking number condition, Theorem~\ref{thm:equivalence}~\ref{thmpart:beta}. See Section~\ref{sec:QZcomp} for a detailed example of the general computational procedure.

\subsubsection{Knots $K$ with $H_1(\Sigma_2K;\Z)$ cyclic}

\begin{theorem}\label{thm:2-bridge}
Let $K\subseteq S^3$ be a knot equipped with a dihedral quotient 
\[
\rho \colon \pi_1(E_K) \twoheadrightarrow D_n 
\]
and assume $K$ has the property that $H_1(\Sigma_2K;\Z)$ is cyclic (e.g. any two-bridge knot). Then $\rho$ extends over a smooth or locally flat orientable surface $F \subseteq D^4$ if  and only if $n^2$ divides $\vert H_1(\Sigma_2K) \vert$.
\end{theorem}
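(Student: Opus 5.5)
By Theorem~\ref{thm:equivalence}, the plan is to reduce the statement to condition~\ref{upstairs-beta}: $\rho$ extends over an orientable locally flat (equivalently smooth, by Corollary~\ref{cor:categories}) surface in $B^4$ if and only if $\lk_{\Sigma_2K}([c],[c])=0\in\qz$. Here $[c]\in H_1(\Sigma_2K)$ is the class corresponding to $\rho$ under the bijection~\eqref{bijection}. So it suffices to compute this self-linking number when $H_1(\Sigma_2K)\cong\Z_m$ is cyclic, where $m=|\Delta_K(-1)|$ is odd.

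First I would identify $[c]$. The character $\chi\colon H_1(\Sigma_2K)\twoheadrightarrow\Z_n\hookrightarrow\qz$ is surjective onto $\Z_n$, so $n\mid m$. By Lemma~\ref{lemma:linking-correspondence}, $\chi=\lk([c],-)$. Fix a generator $g$ of $\Z_m$. Nonsingularity of the linking form (Lemma~\ref{lemma:linkingpairing}) on a cyclic group forces $\lk(g,g)=q/m$ with $\gcd(q,m)=1$. Writing $[c]=kg$, we get $\lk([c],g)=kq/m$, and this must have order exactly $n$ in $\qz$. Hence $kq\equiv (m/n)\,r \pmod m$ for some $r$ prime to $n$. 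Since $q$ is a unit mod $m$, $k=(m/n)\,k'$ with $\gcd(k',n)=1$.

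Then I would compute
\[
\lk([c],[c])=k^2\,\frac{q}{m}=\frac{(m/n)^2k'^2q}{m}=\frac{m}{n^2}\,k'^2q \in \qz .
\]
This vanishes if and only if $n^2 \mid m\,k'^2 q$. Now $q$ is prime to $m$, hence prime to $n$, and $k'$ is prime to $n$. So the condition reduces to $n^2\mid m$, i.e.\ $n^2$ divides $|H_1(\Sigma_2K)|$. Combined with Theorem~\ref{thm:equivalence}, this gives the claimed equivalence.

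The main point requiring care is the identification $[c]=(m/n)k'g$ with $k'$ a unit mod $n$. This uses that $\rho$ is surjective, so $\chi$ has image exactly $\Z_n$, together with nonsingularity of the form. Everything else is arithmetic. For the parenthetical claim, recall that $\Sigma_2$ of a two-bridge knot $K(p,q)$ is the lens space $L(p,q)$, whose $H_1$ is cyclic.
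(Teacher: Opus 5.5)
Your proof is correct. In the reverse direction it matches the paper. The paper factors the character as $\Z_m\twoheadrightarrow\Z_{n^2}\twoheadrightarrow\Z_n$, lets $[c']$ be the class dual to the $\Z_{n^2}$-character, and notes $[c]=n[c']$, so $\lk([c],[c])=n^2\lk([c'],[c'])=0$. Your formula $\lk([c],[c])=\tfrac{m}{n^2}k'^2q$ encodes the same computation.

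The forward direction is genuinely different. The paper does not invoke Theorem~\ref{thm:equivalence} there. Instead it uses the long exact sequence of $(\Sigma_2F,\Sigma_2K)$, notes that $n$ divides $|I|$ for $I=\Im\bigl(H_1(\Sigma_2K)\to H_1(\Sigma_2F)\bigr)$, and counts orders using duality. That argument never uses cyclicity, so it gives the necessary condition $n^2\mid|H_1(\Sigma_2K)|$ for every knot.

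As written, however, it treats $H_2(\Sigma_2F)$ as finite. This holds only when $\Sigma_2F$ is a rational ball (e.g.\ $F$ a disk). For $F$ of genus $g$ one has $b_2(\Sigma_2F)=2g$, and the correct count is $|H_1(\Sigma_2K)|=|\det Q_{\Sigma_2F}|\cdot|I|^2$, which still gives the divisibility.

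Your route avoids this bookkeeping, since \ref{part:dih-surf}$\implies$\ref{upstairs-beta} only needs that $[\Sigma_2K]$ dies in $H_3(\Sigma_2F)$. It also recovers the general necessary condition. If $\lk([c],[c])=0$, then $\langle[c]\rangle$, which has order $n$ by nonsingularity, lies in $\ker\lk([c],-)$, a subgroup of index $n$; hence $n^2\mid|H_1(\Sigma_2K)|$.

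The price of your approach is that both directions depend on the full strength of Theorem~\ref{thm:equivalence}. The paper's forward direction, by contrast, is a direct homological count.
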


Recall $\vert H_1(\Sigma_2K) \vert = \Delta(-1) \defeq det(K)$, where $\Delta(t)$ is the Alexander polynomial of $K$. Recall that a (regular) $2n$-fold dihedral branched cover of $K$ is the same as a $n$-fold unbranched cover of $\Sigma_2K$. The former exists if and only if $\pi_1(E_K)$ admits a quotient onto $D_n$, and the latter exists if and only if $n$ divides $\vert H_1(\Sigma_2K) \vert$, since $H_1(\Sigma_2K)$ is cyclic. The above theorem tells us that such a $\rho$ extends over a surface if and only if $\vert H_1(\Sigma_2K) \vert$ is also divisible by $n^2$.

\begin{proof}
$(\implies)$ Let $F\subseteq D^4$ be as in the theorem statement and $\Sigma_2F$ its two-fold branched cover. In other words, $(\Sigma_2F, \Sigma_2K)$ is the two-fold branched cover of the pair, $(F, K) \subseteq (D^4, S^3)$, and we have the following commutative diagram:
\begin{equation}\label{Zk}
\begin{tikzcd}
\Sigma_2 K \ar[r,two heads,"\tilde{\rho}"]\ar[d,"i_\ast"]  
		& B\Z_n.\\
\Sigma_2F \ar[ur,two heads]
\end{tikzcd}
\end{equation}
Here, $\tilde{\rho}$ is the lift of $\rho$ to the $2$-fold branched cover of $K \subseteq S^3$. Since $H_2(\Sigma_2K) \cong 0$ by duality and universal coefficients, the long exact sequence of the pair  $(\Sigma_2F, \Sigma_2K)$ is as shown below. 
\begin{equation}\label{duality}
\begin{tikzcd}
\{0\} \ar{r} & H_2(\Sigma_2F)  \ar{r} & H_2(\Sigma_2F, \Sigma_2K) \ar{r} & H_1(\Sigma_2K)\\
\ar{r} & H_1(\Sigma_2F) \ar{r} &  H_1(\Sigma_2F, \Sigma_2K) \ar{r} & \{0\}.
\end{tikzcd} 
\end{equation}
Obtaining the desired conclusion from the above exact sequence is an exercise. We provide the details. 

Let
\[
I:=\Im\big(H_1(\Sigma_2K)\to H_1(\Sigma_2F)\big).
\]
Then \(|I|\) is divisible by \(n\), since the composition
\[
H_1(\Sigma_2K)\xrightarrow{i_\ast} H_1(\Sigma_2F)\xrightarrow{\tilde{\rho}_\ast} H_1(B\Z_n)\cong \Z_n
\]
is surjective.

Also, by exactness,
\[
0\to I \to H_1(\Sigma_2F)\to H_1(\Sigma_2F,\Sigma_2K)\to 0,
\]
so

\begin{equation}\label{eq:size-I}
|I|=\frac{|H_1(\Sigma_2F)|}{|H_1(\Sigma_2F,\Sigma_2K)|}.    
\end{equation}

Since the groups below are finite, duality and the Universal Coefficient Theorem imply
{\small
\begin{equation}\label{dduality}
|H_2(\Sigma_2F)|=|H_1(\Sigma_2F,\Sigma_2K)|
\qquad\text{and}\qquad
|H_2(\Sigma_2F,\Sigma_2K)|=|H_1(\Sigma_2F)|.
\end{equation}
}

Hence, Equation~\eqref{eq:size-I} can also be written as
\[
|I|=\frac{|H_2(\Sigma_2F,\Sigma_2K)|}{|H_2(\Sigma_2F)|}.
\]
Thus, exactness of \eqref{duality} gives
\[
|H_1(\Sigma_2K)|
=|I|\cdot \frac{|H_2(\Sigma_2F,\Sigma_2K)|}{|H_2(\Sigma_2F)|}
=|I|^2.
\]
Therefore, $n^2$ divides $|H_1(\Sigma_2K)|$.

$(\impliedby)$ We now assume that $(K, \rho)$ is a ${D}_n$-knot with $H_1(\Sigma_2K)$ cyclic and such that $n^2$ divides $m:=\vert H_1(\Sigma_2K) \vert$. 
As recalled earlier, we may regard $\rho$ as induced by a map $\tilde{\rho}: H_1(\Sigma_2 K)\twoheadrightarrow \Z_n,$ which in this case factors as follows:
\begin{equation}
\label{eq:factorization}
H_1(\Sigma_2 K)\cong \Z_m \twoheadrightarrow \Z_{n^2} \twoheadrightarrow \Z_n
\end{equation}
We also have that the map $H_1(\Sigma_2 K)\twoheadrightarrow \Z_{n^2},$ is given by linking with an element, $[c']$, determined as in Corollary~\ref{cor:correspondence}.  
That is, by Lemma~\ref{lemma:linking-correspondence}, we have that $[c'] \in H_1(\Sigma_2 K)$ is the unique element with the property that, for all $[x] \in H_1(\Sigma_2 K)$, the map $\Z_m\to \Z_{n^2}$ above is given by $[x]\mapsto \lk([c'], [x])$. Regarding both groups as embedded in $\Z_{(2)}/\Z$, the map 
\[
\Z_{n^2}=\langle \frac{1}{n^2} \rangle\longrightarrow \Z_n=\langle \frac{1}{n}\rangle
\]
sends $\frac{1}{n^2}\in \Z_{(2)}/\Z$ to $\frac{1}{n}\in \Z_{(2)}/\Z$. Thus, we have, denoting inclusion into $\Z_{(2)}/\Z$ by $\iota$, 
\[
\iota(\tilde{\rho}_*([x])) = n \cdot \lk([c'], [x])=\lk(n\cdot [c'], [x])=\lk([n\cdot c'], [x]).
\]
Hence, the class -- denote it $[c]$ -- corresponding to $\tilde{\rho}_*([x]): H_1(\Sigma_2 K)\to \Z_n$ under Corollary~\ref{cor:correspondence} and Lemma~\ref{lemma:linking-correspondence} satisfies $[c]=[nc']$. Thus,
\begin{equation}\label{eq:self-linking-is-right}
\lk([c],[c])=n^2\lk([c'],[c'])=0\in \Z_{(2)}/\Z,
\end{equation}
since the map  $[x]\mapsto \lk([c'], [x])$ has image $\Z_{n^2}$. 
By Theorem~\ref{thm:equivalence}, Equation~\eqref{eq:self-linking-is-right} implies that the homomorphism $\rho$ extends over a smooth or locally flat orientable $D_n$-surface. 
\end{proof}

\begin{remark}
    We may replace the dihedral group $D_n$ in the above theorem by its $n$-bottle counterpart, $\mathbb{D}_n$, thanks to Corollary~\ref{cor:char-knots-induce-both}, which gives a correspondence between dihedral and $n$-bottle quotients of knot groups; and Theorem~\ref{cor:Dn-Dn-bottle}, which tells us that one extends if and only if the other does.
\end{remark}

\begin{example}  Consider the knot $10_{87}$. The first homology of its double branched cover has order 81. It follows that $10_{87}$ 
is a $D_9$-knot {\em and} bounds a $D_9$-surface. Additionally, $10_{87}$ is a $D_3$-knot bounding a $D_3$-surface. It is also a $D_{27}$- and $D_{81}$-knot, but these quotients do not extend over (locally flat) surfaces.
\end{example}

\subsubsection{Applying the self-linking number condition} 
We illustrate how to apply Theorem~\ref{thm:equivalence}~\ref{thmpart:beta}. Since the knots in the next example are two-bridge, whether an extension exists can also be concluded using Theorem~\ref{thm:2-bridge}. That said, for these examples, computing the self-linking number of the characteristic knot has the additional consequence of obstructing an infinite family of topologically slice knots from being ribbon; we include this illustration of the usefulness of dihedral invariants. 

\begin{example}\label{ex:twist} Let $K_m$ denote the twist knot with $m$ half-twists (so $K_{-1}$ is the trefoil, $K_0$ the unknot and $K_1$ the stevedore knot). A Seifert matrix for $K_m$, obtained from a surface bounded by the standard diagram of $K_m$, is $\begin{bmatrix*}[r]  -1 & 1 \\ 0 & m\end{bmatrix*}$ and $|\Delta_{K_m}(-1)| = |4m+1|$. In particular, $K_m$ admits a quotient to $D_3$ if and only if $4m+1\equiv 0 \mod{3}$ if and only if $m \equiv 2 \mod{3}$. (Since twist knots have two bridges, if a $D_3$ quotient exists, it is unique up to conjugation in $D_3$. In particular, there is a unique class of mod~$3$ characteristic knots for each $K_m$ with $m\equiv 2\mod{3}$.)

We identify those characteristic knots next. The primitive homology class $\begin{bmatrix*}[r] -1 \\ 1    \end{bmatrix*}$ satisfies the condition of Definition~\ref{def:chark}:
\[
\begin{bmatrix*}[r]
    	-2&1 \\ 1&2m    \end{bmatrix*} \begin{bmatrix*}[r] -1 \\ 1    \end{bmatrix*} = \begin{bmatrix*}[r] 3 \\ 2m-1    \end{bmatrix*} \equiv \begin{bmatrix*}[r] 0 \\ 0    \end{bmatrix*} \mod{3}.
\]
Moreover, the class $\begin{bmatrix*}[r] -1 \\ 1    \end{bmatrix*}$ is represented by an unknot, denote it by $\beta_m$, in the Seifert surface for $K_m$. This observation will be useful later. 

To see for which $m$ the $D_3$-knot $K_m$ admits a $D_3$ surface, we may apply Theorem~\ref{thm:equivalence}~\ref{thmpart:beta}:
\begin{equation}\label{eq:twist-beta}
L_V(\beta, \beta)=
 \begin{bmatrix*}[r] -1&1 \end{bmatrix*} \begin{bmatrix*}[r] -2&1 \\ 1&2m    \end{bmatrix*} \begin{bmatrix*}[r] -1 \\ 1    \end{bmatrix*} = 2m-4  
\end{equation}
and $2m-4\equiv 0\mod{9} \iff m\equiv 2\mod{9}$.  Note that this agrees with the conclusion of Theorem~\ref{thm:2-bridge} since $|\Delta_{K_m}(-1)|= |4m+1|\equiv 0\mod{9} \iff m\equiv2\mod{9}$.

It follows for example that for the trefoil knot, which is the twist knot with parameter $m=-1$, this condition is not satisfied. Therefore, by Theorem~\ref{thm:n=3}, {\it the trefoil does not bound any} {\it dihedral surface}, locally flat or smooth; orientable or not; be it in $B^4$ or in another oriented $4$-manifold with $S^3$ boundary. We were somewhat astonished that we did not find this basic fact anywhere in the literature. 

Since Theorem~\ref{thm:2-bridge} is so easy to apply, it may seem that in this case we did a lot of unnecessary work evaluating the self-linking numbers of characteristic knots. However, this computation can also be used to conclude that 3-colorable twist knots (with the exception of $K_2=6_1$) are not ribbon. This conclusion falls out readily from the ribbon obstruction from~\cite{geske2018signatures}, Theorem~\ref{thm:GKS}. Since twist knots are 2-bridge, their Fox colorings are unique and the associated irregular dihedral covers are always $S^3$. (The covers admit Heegaard splittings of genus 0, as seen by lifting a bridge sphere for the knot.) Thus, whenever $K_m$ admits a Fox 3-coloring, Equation~\eqref{eq:ribbon-obstruction} reduces to 
\[
|\Xi_3(K_m)|\leq 1
\]
for any $K_m$ that is ribbon. On the other hand, Equation~\ref{eq:Xi} becomes
\[
\Xi_3(K_m)= \frac{4}{9}(2m-4)\pm 1.
\]
Here we use~(\ref{eq:twist-beta}), the fact that $\beta_m$ is unknotted, and\cite[Equation~2.20]{kjuchukova2018dihedral}, which gives that $\text{rank}(H_2(W(K_m,\beta_m))=1$ (since $n=3$ and we are working with a Seifert surface of genus one). Putting this together, when $m\equiv 2\mod{3}$ and $m\neq 2$, we have $|\Xi_3(K_m)|>1$, hence $K_m$ is not ribbon by~\cite{geske2018signatures}. We found some humor in the fact that this once groundbreaking realization can also be detected by examining the self-linking number of a characteristic knot.
\end{example}

\begin{remark} \label{rem:connected-sum} 
    We recall a standard fact, namely that quotients of knot groups to a fixed group $G$ are compatible with knot connected sum. Let $K_1, K_2$ be two knots in $S^3$ equipped with dihedral quotients $\rho_i: \pi_1(E_{K_i})\to G$ for $i=1, 2$. Assume that meridians of $K_1$ are mapped to the same conjugacy class as meridians of $K_2$. 
    The connected sum $K_1\# K_2$ admits a natural quotient $\rho_1\#\rho_2: \pi_1(E_{K_1\#K_2})\to G,$ defined as follows. From van Kampen's theorem, $\pi_1(E_{K_1\#K_2})$ is the amalgamated product $\pi_1(E_{K_1})\ast\pi_1(E_{K_2})/\langle \mu_1=\mu_2\rangle,$ where $\mu_i$ is the (Wirtinger) meridian of the strand of $K_i$ along which the connected sum is taken. Without loss of generality (by performing the connected sum along judiciously chosen meridians; and potentially composing $\rho_2$ with an inner automorphism of $G$) we may assume that $\rho_1(\mu_1)=\rho_2(\mu_2)$ so that the free product $\rho_1\ast\rho_2: \pi_1(E_{K_1})\ast\pi_1(E_{K_2})\to G$ descends to a map  $\pi_1(E_{K_1\#K_2})\to G$. 
    
    This basic fact can be illustrated diagrammatically via Fox colorings and their natural generalization to quotients to other groups. Recall that the homomorphism $\rho_i$ can be represented in any diagram $D(K_i)$ by assigning to each strand of $s$ of $D(K_i)$ the image of the Wirtinger meridian of $s$ under $\rho_i$. (These assignments are referred to as ``coloring'' the strands, after Fox. The strand colors at each crossings satisfy the Wirtinger relations at crossings, of course -- see~\cite{blair2025coxeter} for a review of this basic subject.) Performing the connected sum operation along strands labeled by the same element of $G$ -- again, two such strands always exist, possibly after composing with conjugation in $G$ (so-called permuting the colors) -- yields a valid coloring of the knot $K_1\# K_2$, which is a visual representation of the quotient $\rho_1\#\rho_2$ defined above. 
\end{remark}

\begin{example} Let $K$ be any knot equipped with a dihedral quotient $\rho: \pi_1(E_K)\to D_n$. The $n$-fold connected sum of $K$ with itself, $\#_nK$, admits a natural quotient $\rho^n:=\pi_1(E_{\#_nK})\to D_n$, defined by iterating the process described in Remark~\ref{rem:connected-sum}.  
The invariant $\Theta_{\rho^n}(\#_nK)$ always vanishes. That is, the $n$-fold connected sum $\#_nK$ bounds a dihedral surface. To see this, we will use Theorem~\ref{thm:equivalence}~\ref{thmpart:beta}. Fix a diagram $D$ of $K$ bounding a Seifert surface $S$ and let $\beta\subseteq S^\circ$ be a characteristic knot which determines the given quotient map $\rho$. The $n$-fold connected sum $\#_nK$ bounds $\natural_nS$, and the characteristic knot corresponding to $\rho^n$ can be chosen to be $\#_n\beta$. If $V$ is a Seifert matrix for $S,$ then a Seifert matrix for $\natural_nS$ is a block diagonal matrix $W$ whose blocks are $n$ copies of $V$. We thus see that 
\[
[\#_n\beta]^T (W+W^T)[\#_n\beta]=n([\beta]^T(V+V^T)[\beta])\equiv 0\mod n^2,
\]
where in the last step we use the fact that $[\beta]^T(V+V^T)[\beta]\equiv 0 \mod n$ since $\beta$ is a mod~$n$ characteristic knot. Thus, $\#_nK$ bounds a dihedral surface, as claimed.
\end{example}

\subsection{Computing the dihedral obstruction in general}\label{sec:QZcomp}
   We provide a general (detailed) procedure for evaluating our obstruction using the $\Q/\Z$ linking form on $\Sigma_2K$. As with much of Section~\ref{linkingpairing}, this will surely be familiar to experts, but we include it so that the computation of our obstruction is laid out in a self-contained manner.

\textit{Step 1.}
Given a Seifert matrix $V$ for $K$, set $A:=V+V^T$. Recall that $A$ is a presentation matrix for the homology of the double branched cover of $K$, that is, $H_1(\Sigma_2K)\cong \Z^m/A\Z^m$.

\textit{Step 2.}
Find the Smith Normal Form of $A$. That is, write 
\[
PAQ = D=\operatorname{diag}(1,\dots,1,d_1,\dots,d_r),
\]
where $P, Q \in GL_m(\Z)$ and $d_i|d_{i+1}$. We have $H_1(\Sigma_2K)\cong \Z_{d_1}\oplus\cdots\oplus \Z_{d_r}$.

\textit{Step 3.}
Let $k:=m-r$ and define torsion generators in $\Z^m/A\Z^m$ by
\[
\ell_i := Q e_{k+i}\qquad (i=1,\dots,r).
\]
Note that $[\ell_i]$ has order $d_i$.

\textit{Step 4.}
For $u,v\in \Z^m$, set
\[
\operatorname{lk}([u],[v]) \equiv u^T A^{-1} v \pmod{\Z}\in \Q/\Z.
\]
In the basis $(\ell_1,\dots,\ell_r)$, the $\Q/\Z$ linking form on $\Sigma_2K$ is presented by the matrix
\[
\Lambda=(\Lambda_{ij})\in(\Q/\Z)^{r\times r},\qquad 
\Lambda_{ij}:=\operatorname{lk}(\ell_i,\ell_j)\equiv \ell_i^T A^{-1}\ell_j \pmod{\Z}.
\]

\textit{Step 5.}
Write $c=\sum_{i=1}^r x_i\ell_i$ with $x_i\in \Z/d_i$.  Solve
\[
\operatorname{lk}(c,c)=\sum_{i,j=1}^r x_i x_j\,\Lambda_{ij}=0\in \Q/\Z.
\]

\textit{Step 6.}
For each solution $c$ found above, define the corresponding character $H_1(\Sigma_2K)\to \Z_n =\langle \frac{1}{n}\rangle\subseteq \Q/\Z$ by setting 
\[
\rho_c(\,\cdot\,):=\operatorname{lk}(c,\,\cdot\,)\in \Q/\Z.
\]

\textit{Step 7.}
To find the equivalence classes of characters (that is, the equivalence classes of epimorphisms $\pi_1(E_K)\twoheadrightarrow D_n$), quotient the set of solutions above by the action of $\operatorname{Aut}(\Z_n)\cong(\Z_n)^\times$.

\begin{example}\label{Ex-comp}
We illustrate the above computation on the knot $9_{37}$. 

\begin{figure}[H]
\centering
\includegraphics[width=.3\textwidth]{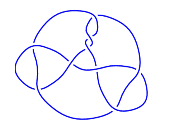}
\caption{The Poindexter Knot, $9_{37}$}
\end{figure}

 Denote its Seifert matrix by $V$. From Knotinfo~\cite{knotinfo}, we obtain:
\[
V=\begin{pmatrix*}[r]
1&0&0&0\\
0&1&0&0\\
1&0&-2&1\\
-1&-1&0&-1
\end{pmatrix*},
\qquad
H_1(\Sigma_2K)\cong \Z_3\oplus \Z_{15}.
\]

\textit{Step 1.} $A=V+V^T$.
\begin{comment}
\[
A=S+S^T=
\begin{pmatrix}
2&0&1&-1\\
0&2&0&-1\\
1&0&-4&1\\
-1&-1&1&-2
\end{pmatrix}.
\]
\end{comment}

\textit{Step 2.}
The Smith Normal form of the matrix $A$ is $\operatorname{diag}(1,1,3,15)$.

\textit{Step 3.}
We obtain the following generators for $H_1(\Sigma_2K)$: 
\[
\ell_1=(-1,-1,0,0)^T ~~\text{ and }~~ \ell_2=(-1,0,0,0)^T.
\]
Note that $\operatorname{ord}([\ell_1])=3$ and $\operatorname{ord}([\ell_2])=15$.

\textit{Step 4.}
\[
A^{-1}=
\begin{pmatrix*}[r]
\frac25&-\frac1{15}&\frac1{15}&-\frac2{15}\\
-\frac1{15}&\frac25&-\frac1{15}&-\frac15\\
\frac1{15}&-\frac1{15}&-\frac4{15}&-\frac2{15}\\
-\frac2{15}&-\frac15&-\frac2{15}&-\frac25
\end{pmatrix*},
\qquad
\Lambda=
\begin{pmatrix*}[r]
\frac23 & \frac13\\[4pt]
\frac13 & \frac25
\end{pmatrix*}\in (\Q/\Z)^{2\times 2}.
\]

\textit{Step 5.}
For $c=x\ell_1+y\ell_2$ with $x\in \Z/3$, $y\in \Z/15$,
\[
\operatorname{lk}(c,c)=\frac23x^2+\frac23xy+\frac25y^2\in \Q/\Z.
\]
The solutions to $\operatorname{lk}(c,c)=0$ are
\[
(0,0),\ (0,5),\ (0,10),\ (1,5),\ (2,10)\in (\Z/3)\oplus(\Z/15).
\]

\textit{Step 6.}
For $c=(a,b)$,
\[
\rho_{(a,b)}(x,y)=\begin{pmatrix}a&b\end{pmatrix}\Lambda\binom{x}{y}\in \Q/\Z.
\]
For the four nonzero solutions in the previous step, we get:
\[
\rho_{(0,5)}(x,y)\equiv \frac{2x}{3},\qquad
\rho_{(0,10)}(x,y)\equiv \frac{x}{3},
\]
\[
\rho_{(1,5)}(x,y)\equiv \frac{x+y}{3},\qquad
\rho_{(2,10)}(x,y)\equiv \frac{2(x+y)}{3}.
\]
Each $\rho$ has image $\Z_3$.

\textit{Step 7.}
Since $(\Z_3)^\times=\{1,2\}$, we have $\rho\sim 2\rho$, hence
\[
\frac{x}{3}\sim \frac{2x}{3},\qquad \frac{x+y}{3}\sim \frac{2(x+y)}{3},
\]
so there are exactly \emph{two} equivalence classes of nontrivial $\Z_3$-valued characters with $\operatorname{lk}(c,c)=0$. By Theorem~\ref{thm:equivalence}, both of these representations extend.
\end{example}


\section{Acknowledgements}
During the course of this work, AK was partially supported by NSF DMS 2204349, NSF DMS 1821257 and a Simons Foundation Travel grant for mathematicians. KO was partially supported by Simons Foundation funds. Some of this work was completed while the authors were at the Max Planck Institute in Bonn. We thank the MPIM for its support and hospitality.

We are especially grateful to Julius L. Shaneson for sharing his many insights. AK is especially indebted to him for encouraging her to pursue her linking number mod~$n^2$ conjecture, which, many years ago, gave this work its initial impetus.

We thank Jonathan Hillmann for a helpful discussion concerning our notion of a {\em taut} group, and for directing us to Theorem~1 in J. P. Levine's seminal paper, {\em Knot Modules I}.

\section*{The Red Asterisk}

\textit{Which dihedral homoms have the flair}\\
\textit{A surface in the four-ball to ensnare?}\\
\textit{Since the group is taut}\\
\textit{The characteristic knot}\\
\textit{Must self-link zero, mod $n$'s square.}

\bibliographystyle{amsalpha}
\renewcommand{\MR}[1]{}
\bibliography{res.bib}
\end{document}